\documentclass[12pt]{amsart}
\usepackage{amsmath, amssymb}

\newfont {\cyr} {wncyr10}

\pagestyle{plain} \frenchspacing

\hfuzz=5pt

\newtheorem{theorem}{Theorem}[section]

\newtheorem{lemma}[theorem]{Lemma}

\newcounter{claim}[theorem]
\renewcommand{\theclaim}{\noindent{(\thesection.\arabic{theorem}.\arabic{claim})}}

\newenvironment{claim}[1][\theclaim]{\begin{trivlist}\refstepcounter{claim}
\item[\hskip \labelsep {\bfseries #1}]}{\end{trivlist}}
\def\qedc {$\hfill \spadesuit$\bigskip}

\newenvironment{matrix4}{\left( \begin{smallmatrix}}
{\end{smallmatrix} \right)}

\def \G {\Gamma}

\def \PGL {{\rm PGL}}\def \SD {{\rm SDih}}
\def \GO {\mathrm{GO}}
\def \a {\alpha}

\def \l {\lambda}

\def \Sym {\mbox {\rm Sym}}

\def \SU {\mbox {\rm SU}}

\def \dim {\mbox {\rm dim}}

\def \CO     {\mbox {\rm CO}}\def \SO     {\mbox {\rm SO}}
\def \GSp {\mbox {\rm GSp}}

\def \Dih {{\mathrm {Dih}}}
\def \SDih {{\mathrm {SDih}}}

\def \signal {\mbox{ {{\cyr I}}}}

\def \SL {\hbox {\rm SL}}

\def \GL {\mbox {\rm GL}}

\def \syl {\hbox {\rm Syl}}\def \Syl {\hbox {\rm Syl}}

\def \ov {\overline}

\def \Aut{ \mathrm {Aut}}

\def \Out{\mbox {\rm Out}}

\def \Fi {\mbox {\rm Fi}}

\def \J{\mbox {\rm J}}

\def \Ly{\mbox {\rm Ly}}

\def \B{\mbox {\rm B}}

\def \M{\mbox {\rm M}}

\def \HN{\mbox {\rm HN}}

\def \HS{\mbox {\rm HS}}

\def \Th{\mbox {\rm Th}}

\def \ON {\mbox {\rm O'N}}

\def \Co {\mbox {\rm Co}}

\def \Ru {\mbox {\rm Ru}}

\def \Suz{\mbox {\rm Suz}}

\def \McL{\mbox {\rm McL}}

\def \He {\mbox {\rm He}}

\def \B  {\mbox {{\rm B}}}

\def \U {\mbox {{\rm SU}}}

\def \F {\mbox {{\rm F}}}

\def \PSp  {\mathrm {PSp}}

\def \Alt {\mbox {{\rm Alt}}}

\def \GG {\mbox {{\rm G}}}

\def \GF {{\mathrm {GF}}}

\def \Q {\mbox {{\rm Q}}}

\def \PSL {\mbox {{\rm PSL}}}

\def \Sp {\mbox {{\rm Sp}}}

\def \G {\Gamma}

\def \a {\alpha}

\def \l {\lambda}

\def \F4pic {

\begin{center}

\unitlength10mm

\begin {picture}(15,1)\thinlines

\put (0.0,.3){${\rm F}_4$}

\put (1.5,.5){\circle {.1}}

\put (2.5,.5){\circle {.1}}

 \put (3.5,.5){\circle {.1}}

\put (4.5,.5){\circle {.1}}

\put (1.55, .5){\line (1,0) {.9}}

 \put (2.5,.55){\line (1,0) {1.0}}

\put (2.5,.45){\line (1,0){1.0}}

\put (3.55,.5){\line (1,0) {.9}}

\put (2.9,.425){$>$}

\put (1.5,0.2){\scriptsize {$\a_1$}}

\put (2.5,0.2){\scriptsize {$\a_2$}}

\put (3.5,0.2){\scriptsize {$\a_3$}}

\put (4.5,0.2) {\scriptsize {$\a_4$}}

\end {picture}

\end{center}}

\def \G2pic {

\begin{center}

\unitlength10mm

\begin {picture}(15,1)\thinlines

\put (0.0,.3){$\GG_2$}

 \put (1.5,.5){\circle {.1}}

\put (2.5,.5){\circle {.1}}

 \put (1.5,.55){\line (1,0) {1.0}}

\put (1.5,.45){\line (1,0){1.0}}

 \put (1.55,.5){\line (1,0) {.9}}

\put (1.9,.422){$>$}

 \put (1.5,0.2){\scriptsize {$\a_1$}}

\put (2.5,0.2) {\scriptsize {$\a_2$}}

\end {picture}

\end{center}}

\begin{document}
\title  {A $3$-local characterization of $\Co_2$}
 \author{Christopher Parker}
\author{Peter Rowley}
\address{Christopher Parker\\
School of Mathematics\\
University of Birmingham\\
Edgbaston\\
Birmingham B15 2TT\\
United Kingdom} \email{c.w.parker@bham.ac.uk}
\address{Peter Rowley\\
School of Mathematics\\
University of Manchester\\
Oxford Road\\
M13 6PL\\
United Kingdom} \email{peter.j.rowley@manchester.ac.uk}

\date{\today}

\maketitle \pagestyle{myheadings}

\markright{{\sc Characterization of $\Co_2$}} \markleft{{\sc
Christopher Parker and Peter Rowley}}

\begin{abstract}
Conway's second largest simple group, $\Co_2$, is characterized by
the centralizer of an element of order $3$ and certain fusion data.
\end{abstract}

\section{Introduction}\label{Introduction}

The vistas revealed  by Goldschmidt in \cite{GO1} inspired many
investigations of amalgams, particularly in their application to
finite groups  and their geometries. One such was the fundamental
work of Delgado and Stellmacher \cite{delstell} in which weak $BN$
pairs were classified. Later Parker and Rowley \cite{WeakBN}
determined the finite local characteristic $p$ completions of weak
$BN$ pairs (when $p$ is odd and excluding the amalgams of type
$\PSL_3(p)$). However a number of exceptional configurations when
$p\in \{3,5,7\}$ required further attention--all but one of them
have been addressed in Parker and Rowley \cite{PRLy}, \cite{PRMcL},
Parker \cite{P} and Parker and Weidorn \cite{Monster}. The last one
is run to ground here in our main result which gives a characterization of
Conway's second largest simple group, $\Co_2$.

\begin{theorem}\label{Th1} Suppose that $G$ is a finite group,
$S \in \Syl_3(G)$, $Z =Z(S)$ and $C = C_G(Z)$.  Assume that $O_3(C)$
is extraspecial of order $3^5$, $O_{2}(C/O_3(C))$ is extraspecial of
order $2^5$ and $C/O_{3,2}(C) \cong \Alt(5)$.  If $Z$ is not weakly
closed in $S$ with respect to $G$, then
 $G$ is isomorphic to $\Co_2$.
\end{theorem}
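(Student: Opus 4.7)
The plan is to convert the $3$-local hypothesis into an amalgam of two maximal $3$-local subgroups of the shape predicted from \cite{WeakBN}, and then pass to a $2$-local to apply a standard sporadic recognition theorem for $\Co_2$.

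First I would pin down $C$ completely as an abstract group and module datum. The hypothesis already gives the three chief factors, so the remaining questions are (a) the type ($+$ or $-$) of the extraspecial group $O_3(C)$ and of the extraspecial quotient $O_2(C/O_3(C))$, and (b) the $\Alt(5)$-module structure on $O_3(C)/Z$ and on $O_2(C/O_3(C))/Z(O_2(C/O_3(C)))$. Using coprime action of $O_3(C)/Z$ on $O_2(C/O_3(C))/Z(O_2(C/O_3(C)))$ and the fact that the symplectic form induced by the commutator on $O_3(C)/Z$ must be preserved by $C/O_3(C)$, one should be forced into the $+$-type configuration and the $4$-dimensional irreducible $\GF(3)[\Alt(5)]$-module, recovering the shape $3^{1+4}_+.2^{1+4}_+.\Alt(5)$ occurring in $\Co_2$.

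Next, the failure of weak closure produces $g \in G$ with $Z \ne Z^g \le S$. Setting $A=\langle Z,Z^g\rangle$, a calculation inside $C$ using the module structure from the previous paragraph should show $A$ is one of a very small number of specific $3$-subgroups (a particular elementary abelian $3^2$ lying in $O_3(C)$), so that $M := N_G(A)$ is determined up to its structure by the interplay of $A$ with $C$. At this stage I would verify that $(C\cap M, M)$ together with $C$ yields a weak $BN$-pair of characteristic $3$ meeting the hypotheses of \cite{WeakBN}; since the theorem rules out all completions classified there except the deferred ``$\Co_2$-amalgam'', one obtains explicit isomorphism types for $C$, $M$ and $C\cap M$, and moreover controls $S$ and its fusion.

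The final step is to exit the $3$-local world. Using the now-known $3$-local amalgam together with the explicit fusion coming from the conjugation $Z\mapsto Z^g$, I would locate a $2$-central involution $t\in G$ whose existence and centralizer structure can be read off the normalizer of $O_2(C/O_3(C))$ lifted into $C$, and compute $C_G(t)$ as the unique $2$-local extension compatible with the $3$-local amalgam. The target is the involution centralizer of shape $2^{1+8}_+.\Sp_6(2)$ in $\Co_2$; once this centralizer is identified, the theorem follows from the known characterization of $\Co_2$ by the centralizer of a $2$-central involution.

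The main obstacle I expect is neither the initial module analysis nor the quotation of the amalgam result, but precisely the transition from the $3$-local amalgam to the involution centralizer: one must verify that $C_G(t)/O_2(C_G(t)) \cong \Sp_6(2)$ and that $O_2(C_G(t))$ has the correct extraspecial shape acted on as the natural orthogonal $\GF(2)^8$-module, and for this one needs enough global control (via fusion of $3$-elements through $M$) to rule out larger $O_2$ and to exclude non-$\Co_2$ completions with the same local $3$-data. The weak-closure hypothesis is the lever that drives this final identification.
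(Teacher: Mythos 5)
Your endgame target is the right one---the paper does finish by producing an involution $t$ with $C_G(t)\sim 2^{1+8}_+.\Sp_6(2)$ and quoting F.~Smith's characterization of $\Co_2$ (Theorem~\ref{Smith})---but the route you propose has two genuine problems. First, the appeal to \cite{WeakBN} is circular: the configuration arising here is precisely the exceptional case \cite[Theorem 1.5(ii)(c)]{WeakBN} that the classification of weak $BN$-pair completions \emph{left unresolved}, and the whole point of this paper is to dispose of it. The second maximal $3$-local is $M=N_G(J)$ with $J=J(S)$ elementary abelian of order $3^4$ and $M/J\cong\CO_4^-(3)$ (so $M_*\sim 3^4.\Alt(6)$, the ``root cause'' subgroup named in the introduction); one cannot read its identification off \cite{WeakBN}. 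Your intermediate structural guesses also go wrong: the extraspecial $2$-chief factor is forced to be of $-$-type, not $+$-type (since $\Out(2^{1+4}_+)$ contains no $\Alt(5)$ whereas $\Out(2^{1+4}_-)\cong\Sym(5)$; see Lemmas~\ref{Laction} and \ref{lem1}), and $Z^g$ does \emph{not} lie in $O_3(C)$ --- the paper proves $Z$ is weakly closed in $Q$ (Lemma~\ref{New2}), so the conjugates of $Z$ are trapped in $J$ but outside $Q$, and the subgroup $\langle Z,Z^g\rangle$ of order $3^2$ is not the right object to normalize.

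Second, the step you yourself flag as the main obstacle---passing from the $3$-local data to $C_G(t)\sim 2^{1+8}_+.\Sp_6(2)$---is where essentially all the work of the paper lives, and ``compute $C_G(t)$ as the unique $2$-local extension compatible with the amalgam'' is not an argument. The paper's actual mechanism is: (a) use Prince's theorem (Theorem~\ref{PrinceThm}) on $C_G(B)/B$, where $B=[J,t]$ is of order $3$, to get $N_G(B)\cong\Sym(3)\times\Aut(\U_4(2))$ after a signalizer argument shows $\signal_G(J,3')=\{1\}$; (b) build $O_2(C_G(t))$ explicitly as $R=\langle O_{3'}(C_K(A))\mid A\in\mathcal Z_1\rangle$, a central product of four copies of $\Q(8)$, and prove $\signal_K^*(J_K,3')=\{R\}$; (c) apply Prince again to get $N_K(R)/R\cong\Aut(\U_4(2))$ or $\Sp_6(2)$, and then invoke Theorem~\ref{closed}---whose proof rests on Goldschmidt's classification of groups with a strongly closed abelian $2$-subgroup---to force $C_G(t)=N_K(R)$. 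None of this machinery (Prince, the signalizer functor for $J_K$, Goldschmidt) appears in your outline, and without it there is no way to rule out larger $O_2(C_G(t))$ or a proper containment $N_G(R)<C_G(t)$. So the proposal identifies the correct destination but leaves the bridge to it unbuilt.
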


The hypothesis on the structure of $C$ in Theorem~\ref{Th1} amounts
to saying that $C$ has shape $3^{1+4}.2^{1+4}.\Alt(5)$. Note that no
assertion about the types of extension is included and the
extraspecial groups could have either $+$- or $-$-type. We remark,
as may be seen from  \cite{Wilson} or \cite{Atlas}, that $\Co_2$ actually satisfies
the hypothesis of Theorem~\ref{Th1}.  As a consequence of
Theorem~\ref{Th1} and earlier work on the exceptional cases arising
in \cite{WeakBN}, we can now see that part (ii) of \cite[Theorem
1.5]{WeakBN} does not occur. Theorem~\ref{Th1} investigates a more
general configuration than required to settle \cite[Theorem 1.5 (ii)
(c)]{WeakBN}. Though not immediately apparent, this configuration
rather quickly gives rise to a subgroup $M_*$ of shape
$3^4.\Omega_4^-(3) \cong 3^4.\Alt(6)$. This particular subgroup makes appearances in other
simple groups such as $\U_4(3)$, $\mathrm {PSU}_6(2)$ and $\McL$ and is the
root cause of the exceptional possibilities itemized in
\cite[Theorem 1.5 (ii)(a), (b) and (c)]{WeakBN}.

A number of the  sporadic simple groups have been characterized in
terms of  $3$-local data. The earliest being a characterization of
$\J_1$ by Higman \cite[Theorem 12]{Higman}. In \cite{ON}, O'Nan
determined the finite simple groups having an elementary abelian
subgroup $P$ of order $3^2$ such that for $x\in P^\#$,
$C_G(x)/\langle x \rangle$ is isomorphic to $\PSL_2(q)$, $\PGL_2(q)$
or $\mathrm P\Sigma\mathrm L_2(q)$ ($q$ odd). Thereby he
characterized the sporadic simple groups $\M_{22}$, $\M_{23}$,
$\M_{24}$, $\J_2$, $\HS$ and $\Ru$. For the remaining Janko groups,
$3$-local identifications for $\J_3$ were obtained first by Durakov
\cite{Durakov} and later by Aschbacher \cite{AschbacherJ3}, and for
$\J_4$ by Stroth \cite{Stroth}, Stafford \cite{Stafford} and
G\"ulo\u glu \cite{G}. The groups $\ON$ and $\He$ were dealt with,
respectively, by Il\'{}inyh \cite{AP} and Borovik \cite{Borovik}.
All of these results were obtained prior to 1990. Recently there has
been a resurgence of interest and activity in $3$-local
characterizations of finite simple groups partly prompted by the
revision project concerning groups of local characteristic $p$ (see,
for example, \cite{MSS}). The sporadic simple groups studied in this
renaissance period are $\Co_3$ (Korchagina, Parker and Rowley
\cite{CPI}), $\Fi_{22}$ (Parker \cite{P}), $\McL$ (Parker and
Rowley \cite{PRMcL}), $\M_{12}$ (Astill and Parker \cite{AstillParker}), $\Th$ (Fowler
\cite{Fowler}), and $\Co_1$, $\Fi_{24}^\prime$,
(Salarian \cite{SalerianCo1,SalerianF24} ) and  $\M$ (Salarian and Stroth \cite{SalerianM}).

With a few exceptions, to date, characterization results for finite
groups in terms of $3$-local data ultimately rely upon identifying
the target group(s) via $2$-local information. This is the case
here, F. Smith's Theorem \cite{Smith} providing the final
identification. Thus most of this paper is spent manoeuvering into a
position where we can use this result. We begin in Section~2 giving
background results-- F. Smith's Theorem appearing as
Theorem~\ref{Smith}. Another characterization result appearing in
Theorem~\ref{PrinceThm}, due to Prince, is employed in
Lemma~\ref{Prince2}. Lemma~\ref{Prince2}, which is the bridge to the
$2$-local structure of  $G$ ($G$ as in Theorem~\ref{Th1}), states
that $N_G(B) \cong \Sym(3)\times\Aut(\U_4(2))$ for a certain
subgroup $B$ of $G$ of order $3$. In $N_G(B)$ there is an involution
$t$ inverting $B$ and centralizing $O^3(C_G(B))\cong \Aut(\U_4(2))$.
Not only does this lemma fill out our knowledge of the $3$-local
subgroups but it also gives  us a toehold in $C_G(t)$. After
Lemmas~\ref{cen3psp43}--\ref{extraauto}, results which play minor
supporting roles,  we present Lemmas~\ref{quadratic form}, \ref{GO4} and \ref{GO4minus} which are pivotal for the identification
of the normalizer of $J$, the Thompson subgroup of  $S$, $S \in \Syl_3(G)$. It turns out that $J$ is elementary abelian of order $3^4$ and these lemmas allow us to assert in Lemma~\ref{LMStruct} that $N_G(J)/J \cong \CO_4^-(3)$, the group of all similitudes of a non-degenerate orthogonal form of $-$-type in dimension $4$. This opens the way for us to use facts about the action of this group on $J$. The pertinent facts are listed in Lemma~\ref{Modcalc}.
This plays an
important role in Lemma~\ref{JSig2} where we show that
$3'$-signalizers for $J$ are trivial.  Various properties of groups of shape
$2^{1+4}.\Alt(5)$ are given in Lemmas~\ref{Laction}, \ref{Laction-1}
and \ref{Laction2}. These results will be applied to bring the
structure of $C_G(Z)$  into sharper focus, where $Z = Z(S)$. We
conclude Section~2 with Lemmas~\ref{SP62facts} and \ref{NOTFF} which
concern the spin module for $\Sp_6(2)$, followed by an elementary
result on $\Aut(\U_4(2))$ in Lemma~\ref{88}.

The main result of Section~3, Theorem~\ref{closed}, anticipates the
end game in our analysis of $C_G(t)$, $t$ being the involution
mentioned earlier. In fact, Theorem~\ref{closed} will be applied to
$C_G(t)/\langle t\rangle$.

Section~4 sees us start the proof of Theorem~\ref{Th1}. After
Lemma~\ref{lem1} in which the structure of $C_G(Z)$ is examined
(where $Z=Z(S)$, $S\in\Syl_3(G)$), Lemmas~\ref{inv1} and \ref{inv2}
look at centralizers and commutators of certain involutions in
$C_G(Z)$. In Lemmas~\ref{SACTION}, \ref{New2} and \ref{centralizers}
it is $S$ and its subgroups that mostly occupy our attention. Two
subgroups of $S$ that will play central roles in the proof of
Theorem~\ref{Th1} are $Z$ and $J=C_S([Q,S])$ where $Q=O_3(N_G(Z))$.
In Lemmas~\ref{New1} and \ref{New2} we learn that $J$ is the Thompson subgroup of
$S$, $J$ is elementary abelian of order $3^4$ and that all
$G$-conjugates of $Z$ in $S$ are trapped inside $J$. Another
important subgroup of $S$, namely  $B$, along with the involution
$t$, already noted earlier, make their entrance after
Lemma~\ref{LMStruct}. In the latter part of Section~4, our attention
moves on to $N_G(Z)$, resulting in structural information about this
subgroup in Lemmas~\ref{M0} and \ref{JOrbs}. Drawing
upon the results in Section 4, in Section~5 we determine the
structure of $N_G(B)$. Our last section brings to bear all the
earlier results on $C_G(t)$ eventually yielding that $C_G(t)/\langle
t\rangle$ satisfies the hypotheses of Theorem~\ref{closed}. Then
using Theorem~\ref{closed} we rapidly obtain the hypotheses of
Theorem~\ref{Smith}, whence we deduce that $G \cong \Co_2$.

We follow the {\sc Atlas} \cite{Atlas}
notation and conventions there with a number of variations which we
now mention or hope are self explanatory. We shall use $\Sym(n)$ and $\Alt(n)$ to denote,
respectively, the symmetric and alternating groups of degree $n$ and
$\Dih(n)$, $\Q(n)$ and $\SD(n)$, respectively, to stand for the
dihedral group, quaternion group and semidihedral group of order
$n$. Finally $X\sim Y$ where $X$ and $Y$ are groups will indicate
that $X$ and $Y$ have the same shape.

The remainder of our notation is standard as given, for example, in
\cite{Aschbacher} and \cite{KurzStell}.

\noindent {\bf Acknowledgement.}  This paper is the fruit of a visit
to the Mathematisches Forschungsinstitut Oberwolfach as part of the
Research in Pairs Programme, 29th April--12 May, 2007. The authors
wish to thank the institute and its staff for the pleasant and
stimulating environment that they provided. It is also a pleasure to thank Ulrich Meierfrankenfeld for his comments on an earlier version of this paper.

\section{Preliminary Results}

\begin{theorem}[F. Smith]\label{Smith} Suppose that $X$ is a finite group with $Z(X)=
O_{2'}(X)=1$, and $Y$ is the centralizer of an involution in $X$. If
$Y /O_2(Y) \cong \Sp_6(2)$ and $O_2(Y)$ is a non-abelian group of
order $2^9$ such that the elements of order $5$ in $Y$ act fixed
point freely on $O_2(Y)/Z(O_2(Y))$, then $X$ is isomorphic to
$\Co_2$.
\end{theorem}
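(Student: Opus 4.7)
The plan is to first identify the fine structure of $Y$, then analyse fusion of involutions in $X$ to locate a second $2$-local subgroup, and finally recognize $X$ as $\Co_2$ via order considerations together with a permutation or geometric model.

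First I would pin down $Q := O_2(Y)$.  Since $t$ is central in $Y$, we have $\langle t\rangle \le Z(Q)$.  A Sylow $5$-subgroup of $\Sp_6(2)$ has order $5$, so an element $x$ of order $5$ in $Y$ acts on the $\mathbb F_2$-space $Q/Z(Q)$ with minimal polynomial dividing $X^5-1$; the fixed-point-free hypothesis forces this minimal polynomial to be $X^4+X^3+X^2+X+1$ and hence $\dim_{\mathbb F_2} Q/Z(Q)$ to be a multiple of $4$.  Combined with $|Q|=2^9$ and $Q$ non-abelian, this leaves only $|Z(Q)|=2$ (the alternative $|Z(Q)|=2^5$ is excluded because $\Sp_6(2)$ has no composition factor of $\mathbb F_2$-dimension $5$ on which commutation in $Q$ restricts compatibly).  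Thus $Q$ is extraspecial of order $2^9$ with $Z(Q)=\langle t\rangle$.  The only $8$-dimensional irreducible $\mathbb F_2\Sp_6(2)$-module carrying an element of order $5$ without fixed vectors is the spin module, so $V:=Q/\langle t\rangle$ is the spin module for $Y/Q\cong\Sp_6(2)$, and the induced invariant quadratic form (from squaring in $Q$) forces $Q\cong 2^{1+8}_+$ and $Y\cong 2^{1+8}_+.\Sp_6(2)$.

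Next I would work out the $Y$-orbits on involutions of $Q$ and determine their fusion in $X$.  The spin module has exactly two $\Sp_6(2)$-orbits on nonzero vectors (of lengths $120$ and $135$), which together with $\langle t\rangle$ produce three $Y$-classes of involutions in $Q$: the class $\{t\}$, and two further classes whose $Y$-lengths are $240$ and $270$ (after accounting for which coset representatives square to $t$ versus to $1$).  Using $Z(X)=1$ and $O_{2'}(X)=1$, a transfer argument (Thompson transfer applied inside a suitable subgroup of $Y$, using the known $\Sp_6(2)$-character of $V$) shows that $t$ is not isolated in its $X$-class, hence some involution $s\in Q\setminus\langle t\rangle$ is $X$-conjugate to $t$.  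Comparing the $Y$-orbit structure with the structure of $C_Y(s)$ for the two candidate classes singles out the orbit in which $s^2=1$ and $C_Y(s)/\langle s,t\rangle\cong 2^{1+6}_+.\Sp_4(2)$-like, which is the obstruction and the heart of the argument.

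Once this second class is identified, I would reconstruct $H:=C_X(s)$ from its intersection with $Y$ together with the involution $g\in X$ with $s^g=t$: $H\cap Y = C_Y(s)$ gives a large $2$-local piece, and conjugating by $g$ transports the piece $C_Y(t,s)$ from the $t$-side to the $s$-side, which together generate $H$.  A careful analysis (using that $C_Y(s)/O_2(C_Y(s))\cong \Sp_4(2)\cong \Sym(6)$ in the relevant case and that $O_2(C_Y(s))$ extends above $\langle s\rangle$) yields $H\cong 2^{1+6}_+.2^4.\Sp_4(2)$ on the nose, i.e.\ the $2B$-centralizer shape in $\Co_2$.

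Finally, with the two involution centralizers $Y$ and $H$ in hand, Thompson's order formula (or a direct count via the permutation character on $X/H$) gives $|X|=|\Co_2|=2^{18}\cdot 3^6\cdot 5^3\cdot 7\cdot 11\cdot 23$.  Identification is then achieved by constructing the coset action of $X$ on $X/N$ where $N$ is built from the known $2$-local subgroups as a subgroup of index $2300$ and verifying, via the subdegrees forced by the amalgam $(Y,H,Y\cap H)$, that this action agrees with the rank-$4$ primitive action of $\Co_2$ on $2300$ points, whence $X\cong\Co_2$.  The main obstacle throughout is Step~3: extracting enough of the structure of $C_X(s)$ from the single hypothesis on $C_X(t)$, since nothing in the hypothesis directly constrains the second $2$-local, and one must bootstrap via fusion controlled entirely by the spin module geometry inside $Q$.
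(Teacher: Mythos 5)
First, a point of orientation: the paper does not prove this statement at all --- it is F.~Smith's characterization of $\Co_2$, imported wholesale from \cite{Smith} (a full-length 1974 J.~Algebra article), and the paper's entire ``proof'' is that citation. So you are not reconstructing an argument internal to this paper but attempting to re-derive a substantial external theorem, and your outline, while pointing in a historically plausible direction (identify $O_2(Y)$ as $2^{1+8}_+$ with $O_2(Y)/\langle t\rangle$ the spin module, analyse fusion, locate a second $2$-local, count, identify), is a roadmap with a genuine logical breakdown at its centre.

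The breakdown is in your Step 3. You first argue that some involution $s\in Q\setminus\langle t\rangle$ is $X$-conjugate to $t$; but then $C_X(s)$ is simply a conjugate of $Y\sim 2^{1+8}_+.\Sp_6(2)$, so there is nothing new to ``reconstruct,'' and your subsequent claim that $C_X(s)\sim 2^{1+6}_+.2^4.\Sp_4(2)$ directly contradicts $s\sim_X t$. What the Thompson order formula and any identification actually require is the centralizer of an involution in a \emph{different} $X$-class, and extracting that from the single given hypothesis is the real content of Smith's paper; you assert it rather than prove it. Moreover the target you assert is wrong for $\Co_2$: the relevant second $2$-local has shape $(2^{1+6}\times 2^4).\Alt(8)$ with top factor $\Alt(8)\cong\GL_4(2)$, not $\Sp_4(2)\cong\Sym(6)$, and the action of $\Co_2$ on the $2300$ cosets of $\PSU_6(2){:}2$ is rank $3$ (subdegrees $1$, $891$, $1408$), not rank $4$ --- an identification argument aimed at the wrong centralizer and the wrong permutation character cannot close. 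Two smaller repairs: the exclusion of $|Z(O_2(Y))|=2^5$ should run through the fact that $\Sp_6(2)$, being simple with no nontrivial $\GF(2)$-module of dimension $4$, would then centralize $O_2(Y)/Z(O_2(Y))$, killing the fixed-point-free action of the $5$-elements; and the cleanest way to see that $t$ is not isolated is Glauberman's $Z^*$-theorem together with $Z(X)=O_{2'}(X)=1$, not an ad hoc transfer computation. If you need this theorem, do as the authors do and cite \cite{Smith}.
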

\begin{proof} See \cite{Smith}.
\end{proof}

\begin{theorem}[A. Prince] \label{PrinceThm}
Suppose that $Y$ is isomorphic to the centralizer of $3$-central element of
order $3$ in $\PSp_4(3)$ and that $X$ is a finite group with a non-trivial
element $d$ such that $C_X(d)\cong Y$. Let $P \in \Syl_3(C_X(d))$
and $E$ be the elementary abelian subgroup of $P$ of order $27$. If
$E$ does not normalize any non-trivial $3^\prime$-subgroup of $X$
and $d$ is $X$-conjugate to its inverse, then either
\begin{enumerate}
\item
$|X:C_X(d)| =2$;
\item $X$ is isomorphic to $\Aut(\U_4(2))$; or
\item $X$ is isomorphic to $\Sp_6(2)$.
\end{enumerate}
\end{theorem}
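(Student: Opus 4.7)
The plan is to exploit the tight $3$-local structure forced by $C_X(d)\cong Y$ together with the signalizer hypothesis on $E$, and then to identify $X$ using classical recognition theorems for $\Aut(\U_4(2))$ and $\Sp_6(2)$. Let $C=C_X(d)$ and $P\in \Syl_3(C)$. Since $|\PSp_4(3)|_3=|\Sp_6(2)|_3=3^4$, the group $P$ has order $3^4$, and from the known structure of $Y$ the elementary abelian subgroup $E\leq P$ of order $27$ is characteristic in $P$. A routine Sylow argument then shows $P\in \Syl_3(X)$ and $d$ is $3$-central in $X$. The hypothesis $d\sim_X d^{-1}$ furnishes an involution $t\in N_X(\langle d\rangle)$ inverting $d$, so $[N_X(\langle d\rangle):C]=2$. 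Since (up to inner automorphisms) there is essentially one involution of $\Aut(Y)$ which inverts $d$ and acts on $C/\langle d\rangle$ as a graph automorphism of $\PSp_4(3)$, I would set $\widetilde C:=\langle C,t\rangle$ and conclude that either $\widetilde C=X$ (yielding conclusion (i)) or $\widetilde C\cong \Aut(\U_4(2))$.

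Assuming (i) fails, the core of the argument is the determination of $N:=N_X(E)$. The signalizer hypothesis gives $O_{3'}(N)=1$, and since $E$ contains its centralizer in $P$ one obtains $C_N(E)=E$, whence $K:=N/E$ embeds in $\GL_3(3)$. From $\widetilde C$ we already know a subgroup $K_0:=N_{\widetilde C}(E)/E\leq K$ whose isomorphism type is dictated by the $3$-local structure of $\Aut(\U_4(2))$. Inspecting the subgroups of $\GL_3(3)$ that contain $K_0$, I would show that either $K=K_0$, or $K$ equals a uniquely determined overgroup $K_1$ matching the corresponding $3$-local structure of $\Sp_6(2)$. Alperin's fusion theorem, applied using the two normalizers $N_X(\langle d\rangle)$ and $N$, then controls all $3$-fusion in $X$.

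When $K=K_0$, every $3$-local datum of $X$ coincides with that of $\Aut(\U_4(2))$, and a centralizer-of-involution argument (matching an involution centralizer in $\widetilde C$ with its $\Aut(\U_4(2))$ counterpart) together with Thompson's order formula forces $X=\widetilde C$, giving (ii). When $K=K_1$, the enlarged normalizer introduces the $3$-fusion of $\Sp_6(2)$ and produces involutions of $X$ outside $\widetilde C$; to obtain (iii) I would identify a conjugacy class of $3$-transpositions in $X$ and invoke Fischer's classification, or alternatively pin down the centralizer of a central involution (to $2\times\Sym(6)$ or its appropriate analogue in $\Sp_6(2)$) and apply a standard recognition theorem. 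The chief obstacle is this last identification in case (iii): ruling out potential imposters sharing the same $3$-fusion pattern, which typically requires careful involution counts and an order-formula computation before the recognition theorem can be applied cleanly.
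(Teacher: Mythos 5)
You should first be aware that the paper does not prove this statement at all: Theorem~\ref{PrinceThm} is quoted from Prince, and the entire ``proof'' in the text is the citation \cite[Theorem 2]{Prince}. So there is no argument of the authors to measure your attempt against; the relevant comparison is with Prince's original work, which, broadly like your sketch, analyses the $3$-local subgroups $N_X(\langle d\rangle)$ and $N_X(E)$, controls $3$-fusion, and finishes with involution-centralizer recognition. At that level your outline points in a reasonable direction, and your opening steps ($|P|=3^4$, $P\in\Syl_3(X)$ via $Z(P)=\langle d\rangle$, $C_X(E)=E$) are sound.

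Nevertheless the proposal has concrete defects beyond the deferrals you acknowledge. First, since $t$ normalizes $C=C_X(d)$, the group $\widetilde C=\langle C,t\rangle=C\langle t\rangle$ has order $2|C|=1296$, so the asserted alternative ``$\widetilde C\cong \Aut(\U_4(2))$'' is impossible on order grounds ($|\Aut(\U_4(2))|=51840$); presumably you intended a statement about $X$, but that is precisely what remains to be proved. Second, and more seriously, your proposed branch point cannot separate conclusions (ii) and (iii): the normalizer of $E$ has the \emph{same} shape $3^3{:}(2\times\Sym(4))=3^3{:}\GO_3(3)$ in both $\Aut(\U_4(2))$ and $\Sp_6(2)$. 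Indeed $C_X(E)=E$ forces $N_X(E)/E\le \GL_4(3)$-free, i.e.\ into $\GL_3(3)$, where $\Sym(4)$ is maximal in $\SL_3(3)$ and any proper overgroup of $2\times\Sym(4)$ has order divisible by $13$, which does not divide $|\Sp_6(2)|$; hence $N_{\Sp_6(2)}(E)=N_{\U_4(2){:}2}(E)$. (The present paper exploits exactly this coincidence in Lemma~\ref{NJK}(iv),(v), where $N_K(J_K)$ induces $\GO_3(3)$ and is contained in the $\Aut(\U_4(2))$-subgroup even though the ambient quotient is eventually $\Sp_6(2)$.) Moreover $K_0=N_{\widetilde C}(E)/E$ has order at most $12$, whereas in both cases (ii) and (iii) one needs $|N_X(E)/E|=48$; so ``$K=K_0$'' actually corresponds to conclusion (i), not (ii). The genuine dichotomy between $\Aut(\U_4(2))$ and $\Sp_6(2)$ must therefore be detected elsewhere --- in the $2$-local structure, essentially in $C_X(t)$ --- and that step, together with producing an involution (rather than merely a $2$-element) inverting $d$ and the final recognition argument, is where the real content of Prince's theorem lies; none of it is supplied here.
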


\begin{proof} See \cite[Theorem 2]{Prince}. \end{proof}

\begin{lemma} \label{cen3psp43}Suppose that $X$ is a group of shape $3^{1+2}_+.\SL_2(3)$,  $O_2(X)=1$
 and
a Sylow $3$-subgroup of $X$ contains an elementary abelian subgroup
of order $3^3$. Then $X$ is isomorphic to the centralizer of a
non-trivial $3$-central element in $\PSp_4(3)$.
\end{lemma}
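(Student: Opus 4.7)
The plan is to first pin down the action of $X/P$ on $P := O_3(X)$, then to show that $X$ is a split extension $P \rtimes \SL_2(3)$ with this action, so that $X$ must be isomorphic to the target centralizer in $\PSp_4(3)$. Set $P = O_3(X) \cong 3^{1+2}_+$ with centre $Z = Z(P) = [P,P]$ of order $3$, so $X/P \cong \SL_2(3)$ and $\Out(P) \cong \GL_2(3)$. To show $C_X(P) = Z$: the inclusion $C_X(P) \cap P = Z$ is automatic, so $C_X(P)/Z$ embeds into $X/P$ as the kernel of $X/P \to \Out(P)$. This kernel is a normal subgroup of $\SL_2(3)$, hence one of $1$, $Z(\SL_2(3)) \cong C_2$, $Q_8$, or $\SL_2(3)$. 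In each non-trivial case $C_X(P)$ has a non-trivial Sylow $2$-subgroup, which is characteristic in $C_X(P) \triangleleft X$ and hence normal in $X$, contradicting $O_2(X) = 1$. Thus $C_X(P) = Z$ and $X/P$ embeds as the unique subgroup of $\GL_2(3)$ isomorphic to $\SL_2(3)$, acting on $P$ in the natural way.

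Next I exploit the hypothesis on the Sylow $3$-subgroup. Let $S$ be a Sylow $3$-subgroup of $X$ with $P \leq S$ (so $|S| = 81$) and $E \leq S$ elementary abelian of order $27$. Since $P$ is non-abelian of order $27$, $E \not\leq P$; setting $A := E \cap P$ gives $|A| = 9$, and because $P$ has exponent $3$, $A$ is maximal abelian in $P$ and contains $Z$. Any $y \in E \setminus P$ has order $3$, centralizes $A$, and together with $P$ generates $S$, so $S = P\langle y\rangle$ is a semidirect product. The element $\bar y$ acts on $P/Z$ as a transvection fixing $A/Z$ pointwise.

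The crucial step is to show that $X$ itself splits over $P$. Take a Sylow $2$-subgroup $T \cong Q_8$ of $X$; then $T \cap P = 1$, and $X_0 := PT$, the preimage of $Q_8 \triangleleft \SL_2(3)$, is normal in $X$. Since $\bar y$ has order $3$ but $X_0/P \cong Q_8$ has no order-$3$ element, $\langle y\rangle \cap X_0 = 1$ and $X = X_0 \rtimes \langle y\rangle$. To produce a complement $H \cong \SL_2(3)$ to $P$ in $X$, I would locate an order-$3$ element $w \in N_X(T)$ lying outside $ZT$; then $H := T\langle w\rangle$ satisfies $H \cap P = 1$ and $H \cong \SL_2(3)$. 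This reduces to showing that the Sylow $3$-subgroup of $N_X(T)$, which is abelian of order $9$ and contains $Z$, is elementary abelian rather than cyclic. The hypothesis on $E$ supplies exactly this via a Frattini transfer argument, since the existence of order-$3$ elements in $S \setminus P$ passes, under Sylow conjugation, to an order-$3$ element in $N_X(T)$ outside $Z$. Hence $X \cong P \rtimes \SL_2(3)$ with the natural action, which is known to coincide with $C_{\PSp_4(3)}(z)$ for $z$ a $3$-central element of order $3$, as required.

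The main obstacle is this splitting of $X$ over $P$: while the Sylow $3$-subgroup splits at once, extending this to a complement of $P$ in $X$ requires careful analysis of the Sylow $3$-subgroup of $N_X(T)$, which is precisely where the hypothesis on the elementary abelian subgroup of order $27$ does its essential work.
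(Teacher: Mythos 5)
The paper gives no argument for this lemma at all: its ``proof'' is the citation to \cite[Lemma~6]{P}. So your attempt is an independent proof, and its skeleton is sound. The identification $C_X(P)=Z:=Z(P)$ from $O_2(X)=1$, the observation that $X/P$ must map onto the unique subgroup $\SL_2(3)$ of $\Out(P)\cong\GL_2(3)$, the uniqueness up to isomorphism of the split extension, and the reduction of the whole problem to showing that the Sylow $3$-subgroup of $N_X(T)$ (where $T\in\Syl_2(X)$, so that $N_X(T)\cap P=C_P(T)=Z$ and $|N_X(T)|=72$ by the Frattini argument) is elementary abelian rather than cyclic of order $9$ --- all of this is correct, and you have correctly located the one place where the hypothesis on $E$ must do work. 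Indeed it must: $H^2(\SL_2(3),Z)\cong\mathrm{Ext}(C_3,C_3)\cong C_3$ is non-zero, so non-split extensions of this shape exist and the hypothesis is not decorative.

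The gap is the single sentence carrying that decisive step. You assert that ``the existence of order-$3$ elements in $S\setminus P$ passes, under Sylow conjugation, to an order-$3$ element in $N_X(T)$ outside $Z$,'' and neither half of this works. First, the existence of order-$3$ elements in $S\setminus P$ does not distinguish the split from the non-split extension: writing $P=\langle a,b\rangle$ with $[a,b]=z$ and letting $y$ act by $a\mapsto a$, $b\mapsto ba$, one computes $(by)^3=z^2y^3$, so in the non-split group (where $y^3=z$) the element $by$ has order $3$ and lies outside $P$. Second, Sylow conjugation only places the Sylow $3$-subgroup $D$ of $N_X(T)$ inside $S$ as an abelian subgroup of order $9$ meeting $P$ in $Z$, and such subgroups need not be elementary abelian even when $X$ splits: the same computation gives $\langle by\rangle\cong C_9$ with $\langle by\rangle\cap P=Z$ in the split group. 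So nothing about order-$3$ elements of $S\setminus P$ per se controls $D$. What the hypothesis actually provides is an order-$3$ element $y\notin P$ centralizing a subgroup $A$ of $P$ of order $9$; from $C_P(A)=A$ and $C_{P/Z}(y)=A/Z$ one gets $C_S(A)=E$ and hence that $E$ is the \emph{unique} elementary abelian subgroup of order $27$ of $S$, so $E$ is normalized by an involution $\iota\in N_X(S)$ covering the central involution of $X/P$. Coprime action of $\iota$ on $E$ (it inverts $A/Z$ while centralizing $Z$ and $EP/P$) forces $|C_E(\iota)|=9$, and since $C_X(\iota)=N_X(T)$ for $T=O_2(C_X(\iota))$ this exhibits the required elementary abelian Sylow $3$-subgroup. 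Some argument of this kind, exploiting that $y$ centralizes all of $A$ and not merely that $y$ has order $3$, is needed to close your proof.
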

\begin{proof} See \cite[Lemma~6]{P}.\end{proof}

We will also use the following variation of Lemma~\ref{cen3psp43}.
\begin{lemma}
\label{psp43new}Suppose that $X$ is a group of shape
$3^{1+2}_+.\SL_2(3)$, $O_2(X)=1$ and the Sylow $3$-subgroups of a
centralizer of an involution in $X$ are elementary abelian.
 Then $X$ is isomorphic to the centralizer of a
non-trivial $3$-central element in $\PSp_4(3)$.
\end{lemma}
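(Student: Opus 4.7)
The plan is to reduce to Lemma~\ref{cen3psp43}: it will suffice to show that a Sylow $3$-subgroup $S$ of $X$ contains an elementary abelian subgroup of order $3^3$. Write $P = O_3(X)$, $Z = Z(P)$, and pick an involution $t \in X$; all involutions of $X$ are conjugate, since a Sylow $2$-subgroup $Q_8$ of $X$ contains a unique involution.

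The first and most delicate step is to verify that $X/P$ acts faithfully on $P/Z$. Every non-trivial normal subgroup of $\SL_2(3) \cong X/P$ contains the central involution $\bar t$, so if the action had a non-trivial kernel then $t$ would centralize $P/Z$; the automorphism $t$ induces on $P$ would then lie in $\Hom(P/Z, Z)$, a group of exponent $3$, forcing $t$ to centralize $P$ entirely. A short analysis of the normal subgroup $C_X(P)$ (satisfying $C_X(P)\cap P = Z$) then produces a non-trivial normal $2$-subgroup of $X$, contradicting $O_2(X) = 1$. So the action is faithful, $X/P$ embeds in $\GL_2(3)$ as $\SL_2(3)$ in its natural representation, and $t$ inverts $P/Z$; because $t$ lies in the unique index-$2$ subgroup $Q_8 \le \SL_2(3)$, which is the kernel of the action on $Z$, it also centralizes $Z$. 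In particular $C_P(t) = Z$.

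Next I would count involutions via $X/Z$, an extension of $P/Z$ by $\SL_2(3)$. The preimage in $X/Z$ of $\bar t$ under the map $X/Z \to X/P$ consists of $9$ elements, each an involution because $t$ inverts $P/Z$, and they form a single $(P/Z)$-orbit under conjugation; these are all the involutions of $X/Z$. Since $t$ centralizes $Z$, each of them lifts uniquely to an involution of $X$, so $X$ has exactly $9$ involutions. Therefore $|C_X(t)| = 72$ and a Sylow $3$-subgroup $T$ of $C_X(t)$ has order $9$. The hypothesis that $T$ is elementary abelian, together with $T \cap P = Z$, supplies an element $y \in T \setminus P$ of order $3$.

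Finally, let $V \le P$ be the preimage of the one-dimensional subspace $C_{P/Z}(y)$ of $P/Z$. Then $V$ is elementary abelian of order $9$, contains $Z$, and is preserved by both $y$ and $t$. Their restrictions to $V$ are commuting elements of $\Aut(V)$ centralizing $Z$; with respect to a basis of $V$ whose second vector spans $Z$, the matrix of $t$ is $\begin{pmatrix}-1 & \lambda \\ 0 & 1\end{pmatrix}$ and that of $y$ is $\begin{pmatrix}1 & \alpha \\ 0 & 1\end{pmatrix}$, and the relation $ty = yt$ forces $2\alpha = 0$ in $\mathrm{GF}(3)$, so $\alpha = 0$. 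Thus $y$ centralizes $V$, and $V \times \langle y\rangle$ is an elementary abelian subgroup of $S$ of order $3^3$. Lemma~\ref{cen3psp43} then identifies $X$ with the centralizer of a non-trivial $3$-central element of $\PSp_4(3)$, completing the proof.
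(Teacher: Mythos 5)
Your proposal is correct in substance and reaches the same target as the paper --- reducing to Lemma~\ref{cen3psp43} by exhibiting an elementary abelian subgroup of order $3^3$ --- but by a noticeably different route. The paper works inside $N_X(S)$ for $S\in\Syl_3(X)$: it shows a normal subgroup $F\le O_3(X)$ of $S$ of order $9$ is normal in $N_X(S)$, observes that $E=C_S(F)$ is abelian of order $27$, and then writes $E=C_E(u)F$ for an involution $u\in N_X(S)$, so that the hypothesis makes $E$ elementary abelian in one line. You instead first prove faithfulness of $X/P$ on $P/Z$, count the nine involutions of $X$ to get $|C_X(t)|=72$, extract an element $y$ of order $3$ in $C_X(t)\setminus P$ from the hypothesis, and then verify by an explicit matrix computation that $y$ centralizes the preimage $V$ of $C_{P/Z}(y)$, so that $V\langle y\rangle\cong 3^3$. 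Your version is longer and more computational (the involution count and the faithfulness argument are not needed in the paper's route, where the existence of a suitable involution in $N_X(S)$ and the abelianness of $C_S(F)$ come for free), but it is self-contained and makes the mechanism --- the hypothesis forcing an order-$3$ element outside $O_3(X)$ that commutes with a maximal abelian subgroup of $O_3(X)$ --- completely explicit. One justification needs repair: $\Q(8)$ is not ``the unique index-$2$ subgroup'' of $\SL_2(3)$ (it has index $3$, and $\SL_2(3)$ has no subgroup of index $2$ at all, its abelianization being of order $3$). The conclusion you want, that $t$ centralizes $Z$, is nevertheless immediate precisely because $\SL_2(3)$ has no subgroup of index $2$, so its action on $Z\cong C_3$ must be trivial; alternatively, $Z=[P,P]$ and $t$ inverts $P/Z$, so $t$ fixes all commutators. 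With that sentence corrected the argument is sound.
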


\begin{proof} Let $S\in \Syl_3(X)$, $R = O_3(X)$, and $F\le R$ be a normal subgroup of $S$ of order
$9$. Let $N= N_X(S)$. If $F$ is not normal in $N$, then there exists
$n \in N$ such that $R= F^nF$. But then $S$ centralizes $FF^n/Z(R)=
R/Z(R)$ and so $C_X(R/Z(R)) > R$ and this contradicts $O_2(X) \neq
1$. Hence $F$ is normal in $N$. Let $E=C_S(F)(=C_N(F))$. Then $E$ is
abelian of order $27$. Let $u$ be an involution in $N$. Then $u$
normalizes $E$
 and, as $[S,u]\le R$, $C_E(u)\not \le R$. Therefore $E= C_E(u)F$.
 Since $F$ and
 $C_E(u)$ are elementary abelian by hypothesis,  $E$ is elementary abelian of order $3^3$. Hence
Lemma~\ref{cen3psp43} applies and yields the result.
\end{proof}

\begin{lemma}\label{fusion} Suppose that $p$ is a prime, $X$ is a finite group and $P\in \Syl_p(X)$.
If $x,y \in Z(J(P))$ are $X$-conjugate, then $x$ and $y$ are
$N_X(J(P))$-conjugate.
\end{lemma}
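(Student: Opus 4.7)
The plan is to reduce the problem to a weak-closure property of $J(P)$ in $P$ with respect to $X$, and then to execute a two-step Sylow transfer, first inside $C_X(y)$ and then inside $X$.

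First I would establish the weak-closure lemma: if $g \in X$ and $J(P)^g \le P$, then $J(P)^g = J(P)$. This is immediate from the definition of the Thompson subgroup together with the observation that $P$ and $P^g$ are both Sylow $p$-subgroups of $X$, so the maximum order of an abelian subgroup of $P^g$ coincides with that of $P$. The generators of $J(P^g) = J(P)^g$ therefore sit inside $P$ as abelian subgroups of maximum order, hence lie in $J(P)$, giving $J(P)^g = J(P)$ by cardinality.

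Given $g \in X$ with $x^g = y$, I would next observe that both $J(P)$ and $J(P)^g$ lie in $C_X(y)$: the former because $y \in Z(J(P))$, the latter because $C_X(x)^g = C_X(y)$ and $J(P) \le C_X(x)$. I would then pick a Sylow $p$-subgroup $R$ of $C_X(y)$ containing $J(P)$ and use Sylow's theorem in $C_X(y)$ to find $c \in C_X(y)$ with $J(P)^{gc} \le R$. Embedding $R$ into a Sylow $p$-subgroup $P_1 = P^d$ of $X$, I would conclude that both $J(P)^{d^{-1}}$ and $J(P)^{gcd^{-1}}$ sit inside $P$; the weak-closure lemma then forces each of them to equal $J(P)$. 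Hence $d$ and $gcd^{-1}$ both lie in $N_X(J(P))$, so $gc \in N_X(J(P))$. Since $c$ centralizes $y$, the element $h := gc$ satisfies $x^h = (x^g)^c = y^c = y$, as required.

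The only step carrying real content is the weak-closure property of $J(P)$; everything else is a standard Sylow manoeuvre. The hypothesis that $x$ and $y$ lie in $Z(J(P))$, rather than merely in $J(P)$, is used precisely to guarantee $J(P) \le C_X(x) \cap C_X(y)$, which is what fuels the two-step transfer—so no substantial obstacle is anticipated.
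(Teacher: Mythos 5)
Your proof is correct and complete. The paper simply cites \cite[37.6]{Aschbacher} for this lemma, and your argument --- the weak closure of $J(P)$ in $P$ (via the maximality characterization of the generating abelian subgroups) followed by the two-step Sylow transfer through $C_X(y)$ --- is precisely the standard Burnside-type fusion argument underlying that reference.
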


\begin{proof} See \cite[37.6]{Aschbacher}.\end{proof}

\begin{lemma}\label{wcl1} Suppose that $p$ is a prime, $X$ is a finite group and $ P \in
\syl_p(X)$. If $R \le P$ is not weakly closed in $P$ with respect to
$X$, then there exists $x \in X$ such that $R\neq R^x$ and $R$ and
$R^x$ normalize each other.
\end{lemma}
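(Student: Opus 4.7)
The plan: By the failure of weak closure, there exists $g\in X$ with $R^g\le P$ and $R^g\ne R$, so that $\langle R, R^g\rangle$ is a $p$-subgroup of $P$ containing two distinct $X$-conjugates of $R$. The strategy is to pass to a minimal such configuration: let $H\le P$ be a $p$-subgroup of smallest order which contains two distinct $X$-conjugates of $R$. I aim to show that any two such conjugates lying in $H$ are in fact each normal in $H$, and then translate to produce the required element $x$.

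First observe that, by the minimality of $H$, for any distinct $X$-conjugates $A, B \le H$ of $R$ one has $\langle A, B\rangle = H$: indeed $\langle A, B\rangle \le H$ is a $p$-subgroup of $P$ still containing the distinct pair $\{A, B\}$. Fix such a pair $A, B \le H$ and let $M$ be a maximal subgroup of $H$ containing $A$. Since $H$ is a $p$-group, $M\triangleleft H$. For each $h\in H$ one has $A^h \le M^h = M$; if $A^h\ne A$ then $(A,A^h)$ would be a distinct pair of $X$-conjugates of $R$ inside the proper $p$-subgroup $M$ of $H$, contradicting the preceding sentence. Hence $A^h = A$ for every $h\in H$, i.e.\ $A\triangleleft H$. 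The same argument, applied to a maximal subgroup of $H$ containing $B$, yields $B\triangleleft H$.

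It follows that $H \le N_X(A) \cap N_X(B)$, so $A$ and $B$ normalize each other. Writing $A=R^a$, $B=R^b$ and setting $x = b a^{-1}$, the pair $(R, R^x)$ is the image of $(A, B)$ under conjugation by $a^{-1}$, so consists of two distinct $X$-conjugates of $R$ that normalize each other, as required.

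The main potential obstacle is getting the minimality set up correctly. If one were to pin the minimal $H$ to be of the form $\langle R, R^g\rangle$ for varying $g$, the normality argument for the second member of the pair would not be the true mirror image, since a conjugate of $R^g$ by an element of $H$ need not be expressible as $R^c$ for some $c$ with $R^c \le P$. By instead minimizing over $p$-subgroups $H \le P$ containing some distinct pair of $X$-conjugates of $R$, the argument becomes genuinely symmetric in the two conjugates and goes through uniformly, leveraging only the fact that every maximal subgroup of a $p$-group is normal.
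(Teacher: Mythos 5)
Your argument is correct, but it is not the route the paper takes, so a comparison is in order. The paper argues by a two--case analysis: if $R$ is not normal in $P$ it uses the ``normalizers grow'' property of $p$-groups, taking $N=N_P(R)$ and $M=N_P(N)>N$ and choosing $x\in M\setminus N$, so that $R$ and $R^x$ are two distinct conjugates both normal in $N$; if $R$ is normal in $P$ it picks some $R^y\le P$ with $R^y\ne R$, applies the first case to $R^y$ when $R^y$ is not itself normal in $P$, and conjugates the resulting pair back by $y^{-1}$. You instead take a $p$-subgroup $H\le P$ minimal subject to containing two distinct $X$-conjugates of $R$ and show, using that maximal subgroups of a $p$-group are normal, that every conjugate of $R$ lying in $H$ is in fact normal in $H$; the final translation by $a^{-1}$ is the same bookkeeping step the paper performs with $yzy^{-1}$. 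Your version buys symmetry and avoids the case split, at the cost of a (harmless) minimality set-up; the paper's version is more explicitly constructive, locating the conjugating element inside a normalizer tower above $R$ itself. The only point you pass over silently is that a maximal subgroup of $H$ containing $A$ exists, i.e.\ that $A<H$: this is immediate, since $A$ and $B$ are distinct subgroups of $H$ of the same order, so neither can equal $H$, but it deserves the half-sentence. With that remark added, your proof is complete and correct.
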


\begin{proof} Suppose that $R$ is not normal in $P$. Let
$N= N_P(R)$ and $M= N_P(N)$.  Then $M>N$. Choose $x\in M\setminus
N$. Then $R\neq R^x$ and, as $R$ and $R^x$ are both normal in $N$,
we obtain the lemma. Hence we may assume that $R$ is normal in $P$.
Since $R$ is not weakly closed in $P$ with respect to $X$, there
exists $y \in X$ such that $R^y \neq R$ and $R^y \le P$. If $R^y$ is
normal in $P$, then $R$ and $R^y$ normalize each other and we take
$x=y$. Otherwise, repeating the argument as for $R$, we find $z\in
P$ such that $R^y$ and $R^{yz}$ normalize each other. Taking $x=
yzy^{-1}$ completes the proof of the lemma.
\end{proof}

\begin{lemma}\label{orbits} Suppose that $X$ is a finite group, $x \in X$ an
involution of $X$ and $V$  an elementary abelian normal $2$-subgroup
of $X$. Set $C= C_X(x)$. Then the map $(vx)^{VC} \mapsto (v[V,x])^C$
is a bijection between $VC$-orbits of the involutions in the coset
$Vx$ and the $C$-orbits of the elements of $C_V(x)/[V,x]$.
Furthermore, for $vx$ an involution in $Vx$, $|(vx)^{VC}|=
|(v[V,x])^C|.|[V,x]|$.
\end{lemma}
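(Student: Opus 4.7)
The plan is to unpack the two different actions, $V$-conjugation and $C$-conjugation, on the coset $Vx$, show that together they match the $C$-action on the quotient $C_V(x)/[V,x]$, and then read off both the bijection and the orbit size.

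First I would identify which elements of $Vx$ are involutions. Since $V$ is elementary abelian and $x^2=1$, for $v \in V$ one has $(vx)^2 = v\cdot v^x$, and this equals $1$ precisely when $v^x = v^{-1} = v$, i.e.\ when $v \in C_V(x)$. So involutions in $Vx$ are in bijection with $C_V(x)$ via $vx \leftrightarrow v$. I would also note in passing that $[V,x]\subseteq C_V(x)$: for $u\in V$, the element $[u,x]=u^{-1}u^x$ satisfies $[u,x]^x = u^{-x}u = [u,x]^{-1} = [u,x]$ in the elementary abelian group $V$.

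Next I would compute the two actions. For $u\in V$, conjugation gives $(vx)^u = u^{-1}vxu = v\cdot u^{-1}u^x \cdot x = v[u,x]\,x$, so the $V$-orbit of $vx$ inside $Vx$ is exactly $\{v'x : v' \in v\,[V,x]\}$, a set of size $|[V,x]|$. For $c\in C$, the calculation $(vx)^c = v^c x^c = v^c x$ shows that $C$-conjugation on $Vx$ corresponds to $C$-conjugation on the $V$-component. Combining, the $VC$-orbit of an involution $vx$ (with $v\in C_V(x)$) is the set of $v'x$ whose $V$-components run through the union $\bigcup_{c\in C} v^c\,[V,x]$; equivalently, it is the full preimage under the projection $C_V(x)\to C_V(x)/[V,x]$ of the $C$-orbit of $v[V,x]$.

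From this the map $(vx)^{VC}\mapsto (v[V,x])^C$ is visibly well-defined, surjective (every $\bar v\in C_V(x)/[V,x]$ lifts to some $v\in C_V(x)$, and then $vx$ is an involution in $Vx$) and injective (if $v[V,x]$ and $v'[V,x]$ lie in the same $C$-orbit, choose $c\in C$ with $v' \in v^c[V,x]$; writing $v' = v^c[u,x]$ for some $u\in V$ yields $v'x = ((vx)^c)^u \in (vx)^{VC}$). Finally, since the fibres of the projection $C_V(x)\to C_V(x)/[V,x]$ each have size $|[V,x]|$ and the $VC$-orbit of $vx$ is the union of the fibres over the $C$-orbit of $v[V,x]$, one obtains $|(vx)^{VC}| = |(v[V,x])^C|\cdot |[V,x]|$. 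No step here is genuinely hard; the only thing that needs a little care is the commutator bookkeeping and the observation that $[V,x] \le C_V(x)$ so that the quotient $C_V(x)/[V,x]$ on which $C$ acts is well-defined.
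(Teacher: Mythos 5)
Your proof is correct and is exactly the direct verification that the paper leaves to the reader (its entire proof is ``The given map is easily checked to be a bijection''). The commutator computations $(vx)^u = v[u,x]x$, $(vx)^c = v^c x$, and the observation $[V,x]\le C_V(x)$ are precisely the checks intended, so your write-up simply supplies the omitted details.
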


\begin{proof} The given map is easily checked to be a bijection.
\end{proof}

\begin{lemma}\label{extraauto} Suppose that $Q$ is an extraspecial $p$-group and
$\alpha \in \Aut(Q)$. If $A$ is a maximal abelian subgroup of $Q$
and $[A,\alpha]=1$, then $\alpha$ is a $p$-element.
\end{lemma}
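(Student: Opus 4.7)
The plan is to exploit the action of $\alpha$ on the symplectic quotient $\ov Q:=Q/Z(Q)$, together with the standard fact that the kernel of the natural map from the centralizer of $Z(Q)$ in $\Aut(Q)$ to $\Aut(\ov Q)$ is itself a $p$-group.

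First I would verify that $Z(Q)\le A$: the product $AZ(Q)$ is abelian, so maximality of $A$ forces $Z(Q)\le A$. Since $\alpha$ centralizes $A$, it therefore centralizes $Z(Q)$, and hence $\alpha$ induces an automorphism $\ov\alpha$ of $\ov Q$ preserving the alternating form $(\ov x,\ov y)\mapsto [x,y]\in Z(Q)\cong \mathbb F_p$; that is, $\ov\alpha\in \Sp(\ov Q)$. Writing $|Q|=p^{2n+1}$, maximality of $A$ implies that $\ov A=A/Z(Q)$ is a Lagrangian (maximal totally isotropic subspace) of $\ov Q$ of dimension $n$, and $\ov\alpha$ fixes $\ov A$ pointwise.

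Next I would observe that the pointwise stabilizer of a Lagrangian in $\Sp_{2n}(p)$ is a $p$-group (it is the unipotent radical of the corresponding maximal parabolic; concretely, in a symplectic basis $a_1,\ldots,a_n,b_1,\ldots,b_n$ with $\ov A=\langle a_i\rangle$, such a stabilizer sends $b_j\mapsto b_j+\sum_i c_{ij}a_i$ with $(c_{ij})$ symmetric, giving order $p^{n(n+1)/2}$). Therefore $\ov\alpha$ has $p$-power order. Finally, the kernel of $C_{\Aut(Q)}(Z(Q))\to \Aut(\ov Q)$ is canonically isomorphic to $\Hom(\ov Q,Z(Q))$ via $\beta\mapsto(\ov x\mapsto x^{-1}x^\beta)$, hence is elementary abelian of exponent $p$. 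Since $\alpha$ lies in $C_{\Aut(Q)}(Z(Q))$, is a $p$-element modulo this kernel, and the kernel is a $p$-group, $\alpha$ itself is a $p$-element.

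The only subtlety, and the main obstacle to worry about, is uniformity in the prime: for $p=2$ the automorphism group also preserves the quadratic form $q(\ov x)=x^2$ on $\ov Q$, so $\ov\alpha$ lies in an orthogonal subgroup of $\Sp(\ov Q)$, but this only shrinks the pointwise Lagrangian stabilizer, which remains a $2$-group. Likewise the kernel calculation above is independent of $p$. Thus the argument goes through uniformly and produces the desired conclusion.
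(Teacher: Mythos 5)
Your argument is correct, but it takes a genuinely different (and much heavier) route than the paper. The paper's proof is a two-line commutator computation: since $Z(Q)\le A$ and $[Q,A]\le Q'=Z(Q)$ is centralized by $\alpha$, the Three Subgroup Lemma gives $[Q,\alpha,A]=1$, i.e.\ $[Q,\alpha]\le C_Q(A)=A$ (this is where maximality of $A$ enters, via $C_Q(A)=A$); then $[Q,\alpha,\alpha]\le[A,\alpha]=1$, and a quadratically acting automorphism of a $p$-group is a $p$-element (explicitly $x^{\alpha^k}=x[x,\alpha]^k$, so $\alpha^{p^2}=1$ at worst). You instead pass to the symplectic space $Q/Z(Q)$, identify the image of $A$ as a Lagrangian, invoke the fact that the pointwise stabilizer of a Lagrangian in $\Sp_{2n}(p)$ is the unipotent radical of a maximal parabolic, and finish by noting that the kernel of $C_{\Aut(Q)}(Z(Q))\to \Aut(Q/Z(Q))$ is isomorphic to $\Hom(Q/Z(Q),Z(Q))$, hence elementary abelian. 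Every step checks out, including the remark that for $p=2$ the additional orthogonal constraint only shrinks the relevant stabilizer, and your version has the merit of locating $\alpha$ precisely inside $\Aut(Q)$ and making the order bound transparent; the cost is importing the parabolic structure of the symplectic group where a direct commutator argument suffices. Note that both proofs ultimately rest on the same two consequences of maximality: $Z(Q)\le A$ and $C_Q(A)=A$ (equivalently, that the image of $A$ is its own perp).
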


\begin{proof} The Three Subgroup Lemma implies that $[Q, \alpha]\le
A$. Then $ [Q,\alpha,\alpha] \le [A,\alpha]=1$ and so $\alpha$ is a
$p$-element.
\end{proof}

%

%
%

When we are studying signalizers in Lemma~\ref{signal1}, we shall call on the following lemma repeatedly.

\begin{lemma}\label{const} Suppose that $p$ is a prime, $X$ is a group and $P$ is a $p$-subgroup of $X$. If $U \le O_{p'}(N_X(P))$ and $U$ and $P$ are contained in some soluble subgroup $Y$ of $K$, then $U\le O_{p'}(Y)$.
\end{lemma}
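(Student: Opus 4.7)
The plan is to (a) make two small preliminary reductions that remove $X$ from the picture, (b) handle the case that $P$ is a Sylow $p$-subgroup of $Y$ via a Fitting-subgroup argument, and (c) reduce the general case to the Sylow case by induction on $|Y|$.

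For (a), observe that $P$ and $O_{p'}(N_X(P))$ are both normal subgroups of $N_X(P)$, with coprime orders, so
\[
[P, O_{p'}(N_X(P))] \le P \cap O_{p'}(N_X(P)) = 1;
\]
in particular $U$ centralizes $P$. Moreover, $O_{p'}(N_X(P))$ is characteristic in $N_X(P)$, so its intersection with $Y$ is $N_Y(P)$-invariant (every $g \in N_Y(P) \le N_X(P)$ stabilizes both $O_{p'}(N_X(P))$ and $Y$) and is a $p'$-group, hence lies in $O_{p'}(N_Y(P))$. So $U \le O_{p'}(N_Y(P))$, and writing $W := O_{p'}(N_Y(P))$ it suffices to prove $W \le O_{p'}(Y)$ for any $p$-subgroup $P$ of a solvable group $Y$.

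For (b), suppose $P$ is a Sylow $p$-subgroup of $Y$. Set $R = O_{p'}(Y)$ and $T_0 = O_{p',p}(Y)$, so that $T_0/R = O_p(Y/R)$, and let $\overline{\,\cdot\,}$ denote reduction modulo $R$. Since $P$ is Sylow in $Y$, $T^* := P \cap T_0$ is a Sylow $p$-subgroup of $T_0$, and $T_0 = RT^*$. Now $W$ centralizes $P \supseteq T^*$, and $W$ normalizes the normal subgroup $R$, so the commutator identity yields
\[
[W, T_0] = [W, RT^*] \le [W, T^*]\,[W, R]^{T^*} = [W, R] \le R.
\]
Thus $\overline W$ centralizes $\overline{T_0} = O_p(\overline Y)$ in $\overline Y$; since $O_{p'}(\overline Y) = 1$, the Fitting theorem gives $C_{\overline Y}(O_p(\overline Y)) \le O_p(\overline Y)$, so $\overline W \le O_p(\overline Y)$, and being a $p'$-subgroup of a $p$-group, $\overline W = 1$, i.e.\ $W \le R = O_{p'}(Y)$.

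For (c), induct on $|Y|$. The case $P \triangleleft Y$ is immediate since $W = O_{p'}(N_Y(P)) = O_{p'}(Y)$. Otherwise, since $W$ is a $p'$-group normalizing $P$, by the coprime-action Sylow theorem $W$ normalizes some Sylow $p$-subgroup $S$ of $Y$ containing $P$. Then $P_1 := N_S(P) > P$ is itself $W$-invariant (as $W$ normalizes both $S$ and $P$). The main obstacle is to show that the enlargement preserves the hypothesis, i.e., $W \le O_{p'}(N_Y(P_1))$; I would do this by applying the three-subgroup lemma, together with $[W, P] = 1$, to force $[P_1, W]$ into a suitable characteristic $p'$-subgroup of $N_Y(P)$, and then running the characteristic-subgroup argument of (a) with $P_1$ in place of $P$. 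Iterating this enlargement brings $P$ up to a Sylow $p$-subgroup of $Y$, at which point (b) applies and the proof is complete.
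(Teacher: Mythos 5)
Your steps (a) and (b) are sound: the reduction $U \le O_{p'}(N_X(P)) \cap Y \le O_{p'}(N_Y(P))$ is correct, and the Sylow case via $[W,O_{p',p}(Y)]\le O_{p'}(Y)$ and $C_{\bar Y}(O_p(\bar Y))\le O_p(\bar Y)$ is a complete argument. (For comparison, the paper offers no proof at all; it simply cites \cite[8.2.13]{KurzStell}.) The genuine gap is step (c), and you have flagged it yourself: the assertion $W=O_{p'}(N_Y(P))\le O_{p'}(N_Y(P_1))$ for $P_1=N_S(P)>P$ is where the entire difficulty of the lemma for non-Sylow $P$ lives, and ``apply the three-subgroup lemma to force $[P_1,W]$ into a suitable characteristic $p'$-subgroup of $N_Y(P)$'' is not an argument. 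The three-subgroup lemma with $[W,P]=1$ only yields $[W,P_1]\le C_Y(P)$, which is nowhere near showing that $W$ lies in a \emph{normal} $p'$-subgroup of the (typically much larger) group $N_Y(P_1)$; the characteristic-subgroup trick of (a) transfers normal $p'$-subgroups \emph{down} into smaller normalizers, not up. A secondary unjustified point: the coprime-action Sylow theorem produces an $A$-invariant Sylow subgroup when $A$ acts coprimely on the whole group, but here $W\le Y$, so $(|W|,|Y|)\ne 1$ and the existence of a $W$-invariant $S\in\Syl_p(Y)$ containing $P$ would itself need proof.

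The standard way to close this replaces (b) and (c) entirely. Induct on $|Y|$. If $R:=O_{p'}(Y)\ne 1$, pass to $\bar Y=Y/R$: since $P\in\Syl_p(PR)$, the Frattini argument gives $N_Y(PR)=R\,N_{N_Y(PR)}(P)$, whence $N_{\bar Y}(\bar P)=\overline{N_Y(P)}$, so $\bar W\le O_{p'}(N_{\bar Y}(\bar P))$ and induction yields $\bar W\le O_{p'}(\bar Y)=1$, i.e.\ $W\le R$. If $R=1$, set $Q=O_p(Y)$, so $C_Y(Q)\le Q$ by solubility. Then $W$ centralizes $P$ and centralizes $N_Q(P)=Q\cap N_Y(P)$ (two normal subgroups of $N_Y(P)$ of coprime orders), in particular $W$ centralizes $C_Q(P)$; the Thompson $A\times B$ lemma applied to $P\times W$ acting on $Q$ gives $[Q,W]=1$, hence $W\le C_Y(Q)\le Q$ and $W=1$. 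Some coprime-action input of this strength (the $A\times B$ lemma or an equivalent) appears to be unavoidable, and it is precisely what your sketch in (c) is missing.
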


\begin{proof} See \cite[8.2.13, pg. 190]{KurzStell}. \end{proof}

The proof of the next lemma is taken from \cite[Lemma 1]{King}.

\begin{lemma}\label{quadratic form}
Suppose that $F$ is a field, $V$ is a finite dimensional vector space over $F$ and  $X= \GL(V)$. Assume that $q$ is a quadratic form  of Witt index at least $1$ and with non-degenerate associated bilinear form $f$, where, for $v,w \in V$, $f(v,w) = q(v+w)-q(v)- q(w)$.
Let $\mathcal S$ be the set of singular 1-dimensional subspaces of $V$ with respect to $q$. Then the stabilizer in $X$ of $\mathcal S$ preserves $q$ up to similarity.
\end{lemma}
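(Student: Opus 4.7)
The plan is to reduce the statement to the following claim: any two quadratic forms $q, q'$ on $V$ with non-degenerate associated bilinear forms $f, f'$ and with the same set of singular vectors satisfy $q' = \lambda q$ for some $\lambda \in F^\times$. For $g \in X$ stabilizing $\mathcal{S}$, one sets $q'(v) = q(g(v))$; then $f'(v, w) = f(g(v), g(w))$ is non-degenerate since $g \in \GL(V)$, and the singular $1$-spaces of $q'$ coincide with those of $q$ because $g\mathcal{S} = \mathcal{S}$. The identity $q' = \lambda q$ then says exactly that $g$ preserves $q$ up to similarity.

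To establish the claim, I would first fix a hyperbolic pair $e, e'$ with $q(e) = q(e') = 0$ and $f(e, e') = 1$, which exists by the Witt index hypothesis after rescaling $e'$. Since $e$ and $e'$ are singular for $q'$ as well, expansion yields $q'(\alpha e + \beta e') = \alpha \beta f'(e, e')$ and $q(\alpha e + \beta e') = \alpha \beta$ on the hyperbolic plane $H = \langle e, e' \rangle$, so $q'|_H = \lambda \, q|_H$ where $\lambda := f'(e, e')$. The scalar $\lambda$ is non-zero, for otherwise $f'$ would vanish identically on $H$, contradicting its non-degeneracy.

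Next, I would extend the equality $q' = \lambda q$ from $H$ to all of $V$. First, the singular vectors of $q$ span $V$: writing $V = H \perp H^\perp$ via Witt's theorem, for each $w \in H^\perp$ the vector $e - q(w)e' + w$ is singular (direct computation using $f(e, w) = f(e', w) = 0$), and these vectors together with $e, e'$ span $V$. Given this, for a singular vector $u$ not in $H$ I would choose $x \in \{e, e'\}$ with $f(x, u) \neq 0$ (which exists unless $u \in H^\perp$, in which case the analogue runs inside $H^\perp$). The plane $\langle x, u\rangle$ is then hyperbolic, and the same expansion gives $q'|_{\langle x, u\rangle} = \mu \cdot q|_{\langle x, u\rangle}$ with $\mu = f'(x, u)/f(x, u)$. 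To force $\mu = \lambda$, I would exploit the trilinearization identity
\[
q(v_1 + v_2 + v_3) - q(v_1 + v_2) - q(v_1 + v_3) - q(v_2 + v_3) + q(v_1) + q(v_2) + q(v_3) = 0,
\]
together with its analogue for $q'$, applied to suitable singular triples drawn from the spanning family, thereby forcing $f' = \lambda f$ on a spanning set of pairs and hence on all of $V$.

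The main obstacle will be this last bookkeeping step: propagating $\lambda$ across all of $V$ by linking the various hyperbolic planes coherently, particularly in characteristic two where the bilinear form does not determine the quadratic form. The reference \cite[Lemma~1]{King} cited by the authors accomplishes this uniformly, relying only on the polarization identities and the spanning property of singular vectors established above.
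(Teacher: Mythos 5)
Your overall strategy (pass to the pulled-back form $q'=q\circ g$, compare $q$ and $q'$ on a hyperbolic plane $H=\langle e,e'\rangle$ to extract the scalar $\lambda=f'(e,e')$, then spread $\lambda$ to the rest of $V$ using singular vectors of the shape $e-q(w)e'+w$ with $w\in H^\perp$) is the same route the paper takes, but the step that carries all the content of the lemma is missing. To conclude $\mu=\lambda$ for the plane $\langle e',u_w\rangle$ with $u_w=e-q(w)e'+w$, you need $f'(e',w)=0$ and $f'(e,w)=0$, i.e.\ that $H^{\perp_f}\subseteq H^{\perp_{f'}}$ (equivalently, that $g$ maps $W^\perp$ into $(Wg)^\perp$). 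Your proposed tool for this, the trilinearization identity, gives nothing here: for singular $v_1,v_2,v_3$ one has $q(v_i+v_j)=f(v_i,v_j)$ and $q(v_1+v_2+v_3)=\sum_{i<j}f(v_i,v_j)$, so the identity collapses to $0=0$ for both $q$ and $q'$ and imposes no constraint. You do not exhibit any singular triples that would make it non-trivial, and you explicitly flag this propagation as ``the main obstacle'' before deferring it to the cited reference --- but that propagation \emph{is} the lemma. The paper closes exactly this gap with a short geometric argument: for $z\in W^\perp$ the plane $\langle x,z\rangle$ is either totally singular or contains $\langle x\rangle$ as its \emph{unique} singular one-space (since $q(\mu x+z)=q(z)$ when $f(x,z)=q(x)=0$); both properties are visible from $\mathcal S$ alone, hence are preserved by $g$, forcing $zg\in x^\perp\cap y^\perp=W^\perp$. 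The value $q(zg)=\lambda q(z)$ then drops out from the singularity of $z+x-q(z)y$ and its image.

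Two further points. Even if you had obtained $f'=\lambda f$ everywhere, in characteristic $2$ this does not determine $q'$ from $q$, as you note yourself, and your sketch offers no substitute; the paper's argument avoids this by working directly with values of $q$ on explicit singular vectors rather than with the bilinear form. Separately, your justification that $\lambda\neq 0$ (``otherwise $f'$ would vanish identically on $H$, contradicting its non-degeneracy'') is not valid --- a non-degenerate form can vanish on a totally isotropic plane; the correct reason is that $\lambda=0$ would make every one-space of $H$ singular for $q'$, hence for $q$, contradicting $q(e+e')=1$.
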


\begin{proof} Let $Y$ be the subgroup of $X$ preserving $q$ up to similarity. Assume that $g \in X$ stabilizes $\mathcal S$ and select $\langle x\rangle, \langle y\rangle \in \mathcal S$ such that $f(x,y)=1$. Then $W = \langle x,y \rangle$ is a hyperbolic plane. Since $g$ preserves $\mathcal S$,  $Wg$ is also a hyperbolic plane. By Witt's Lemma \cite[pg. 81]{Aschbacher}, $Y$ contains an element mapping $Wg$ to $W$ which also maps $\langle x g\rangle$ to $\langle x \rangle $ and $\langle yg \rangle $ to $\langle y\rangle$. Hence multiplying $g$ by a suitable element of $Y $ we may  assume that $xg= x$ and $yg= \lambda y$ for some $\lambda \in F$. Let $z \in W^\perp$ and set $U=\langle x,z\rangle g = \langle x, zg\rangle$.  Since $f(x,z)=0=q(x)$, for $\mu \in F$ we have $q(\mu x +z)= q(z)$. So either every one-space of $\langle x,z\rangle$ is singular, or  $q(z)\neq 0$, and  $\langle x\rangle$ is the only singular one-space in $\langle x,z\rangle$. Since $g$ stabilizes $\mathcal S$, it follows that either $U$ is totally singular, or  $\langle x\rangle$ is the only singular  one-space contained in $U$. Hence, in either case,  $zg \in x^\perp$. A similar argument also shows that $zg \in y^\perp$.
 Hence $zg \in W^\perp$. Since $z \in W^\perp$, $z+x-q(x)y$ is a singular vector and thus, as $g$ maps singular vectors to singular vectors,
 $zg+x-q(x)\lambda y$ is also a singular vector. Now, using $z^g \in W^\perp$, we obtain $q(zg)= \lambda q(z)$.
 Because $V= W\oplus W^\perp$ we then conclude that  $q(vg)= \l q(v)$ for all $v \in V$ and so $g \in Y$ as claimed.
\end{proof}

\begin{lemma}\label{GO4} Suppose that $p$ is an odd prime, $X = \GL_4(p)$ and $V$ is the natural $\GF(p)X$-module. Let $A =\langle a, b\rangle\le X$ be elementary abelian of order $p^2$ and assume that $[V,a] = C_V(b)$ and $[V,b]= C_V(a)$ are distinct and of dimension $2$.
Let $v \in V\setminus [V,A]$. Then  $A$ leaves invariant a non-degenerate quadratic form with respect to which  $v$ is a singular vector and $C_V(A)$ is a singular one-space. In particular, $X$ contains exactly two conjugacy classes of subgroups such as $A$,  one being conjugate to a Sylow $p$-subgroup of $\GO_4^+(p)$ and the other to a Sylow $p$-subgroup of $\GO_4^-(p)$.
\end{lemma}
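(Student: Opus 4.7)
The plan is to choose an ordered basis of $V$ tailored to $A$ and to $v$, write down an explicit quadratic form in this basis, and verify all required properties by direct computation. To set up the Jordan structure, I first observe that $\dim[V,a]=2$ forces $a\in\GL_4(p)$ to have Jordan type $(2,2)$ or $(3,1)$; if $a$ were of type $(2,2)$ then $(a-1)^2=0$ gives $[V,a]=C_V(a)$, and the hypotheses $[V,a]=C_V(b)$ and $[V,b]=C_V(a)$ then force $[V,a]=[V,b]$, contradicting distinctness. A symmetric argument rules out mixed types, so both $a$ and $b$ must have Jordan type $(3,1)$. It will follow that $C_V(A)=[V,a]\cap[V,b]$ is $1$-dimensional, that $[V,A]=[V,a]+[V,b]$ is $3$-dimensional, and that $[V,A]=\ker(a-1)^2=\ker(b-1)^2$, since $(a-1)^2$ annihilates both $[V,a]$ (by Jordan structure) and $[V,b]=C_V(a)$ (by hypothesis), with dimensions matching.

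Given $v\in V\setminus[V,A]$, I would set $z:=(a-1)^2 v$, $w:=(a-1)v$, and $w':=(b-1)v$. Since $v\notin\ker(a-1)^2=\ker(b-1)^2$, both $z$ and $(b-1)^2 v$ are nonzero elements of the one-dimensional space $C_V(A)$, so a unique $\mu\in\GF(p)^\times$ satisfies $(b-1)^2v=\mu z$. Tracking the $a$- and $b$-chains shows that $\{z,w,w',v\}$ is a basis of $V$ in which
\[
a\leftrightarrow\begin{pmatrix}1&1&0&0\\0&1&0&1\\0&0&1&0\\0&0&0&1\end{pmatrix},\qquad b\leftrightarrow\begin{pmatrix}1&0&\mu&0\\0&1&0&0\\0&0&1&1\\0&0&0&1\end{pmatrix}.
\]
I would then consider the quadratic form
\[
q(\alpha z+\beta w+\gamma w'+\delta v)\;:=\;\beta^2+\mu\gamma^2-2\alpha\delta-\beta\delta-\mu\gamma\delta.
\]
The coordinate action of $a$ is $(\alpha,\beta,\gamma,\delta)\mapsto(\alpha+\beta,\beta+\delta,\gamma,\delta)$ and that of $b$ is $(\alpha,\beta,\gamma,\delta)\mapsto(\alpha+\mu\gamma,\beta,\gamma+\delta,\delta)$, so direct substitution into the formula confirms $A$-invariance. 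Expanding the Gram matrix of $q$ along its first column yields determinant $-\mu\ne 0$, so $q$ is non-degenerate; and $q(z)=q(v)=0$ by inspection, so $C_V(A)=\langle z\rangle$ is a singular one-space and $v$ is a singular vector, establishing the main claim.

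For the conjugacy statement, $A$ is a $p$-subgroup of $\mathrm{O}(q)\le\GO_4^\epsilon(p)$ of order $p^2$, which equals the $p$-part of $|\GO_4^\epsilon(p)|$; hence $A$ is a Sylow $p$-subgroup of $\GO_4^\epsilon(p)$, with $\epsilon=+$ or $-$ according as $-\mu$ is a square in $\GF(p)^\times$. A parallel computation shows that the full space of $A$-invariant quadratic forms is $2$-dimensional and that every non-degenerate member has discriminant class $-\mu$ modulo squares, making this an intrinsic $\GL_4(p)$-conjugacy invariant of $A$. Since both values of $\epsilon$ occur (any $\mu\in\GF(p)^\times$ can be realized in the construction above) and the Sylow $p$-subgroups of $\GO_4^+(p)$ and $\GO_4^-(p)$ are not conjugate in $\GL_4(p)$ (they preserve forms of different discriminant classes), exactly two $\GL_4(p)$-conjugacy classes arise. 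The main obstacle is the linear-algebra bookkeeping: verifying $A$-invariance of $q$ and determining the full space of $A$-invariant forms both amount to solving a linear system on the ten parameters of a symmetric $4\times 4$ matrix.
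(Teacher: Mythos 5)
Your proposal is correct and follows essentially the same route as the paper: both proofs build an adapted basis from the chains $(a-1)^2v,\ (a-1)v,\ (b-1)v,\ v$ (the paper's $\{v,w,x,y\}$ with $wa=w+y$, $xb=x+\lambda y$ is your basis reordered with $\lambda=\mu$), write the same explicit pair of unipotent matrices, and exhibit an explicit invariant form of determinant $-\mu$ vanishing on $v$ and on $C_V(A)$. The only real difference is that you flesh out the final conjugacy-class assertion (computing that the space of $A$-invariant forms is $2$-dimensional with constant discriminant class $-\mu$, so the type is a genuine $\GL_4(p)$-invariant of $A$, and noting both types occur), a point the paper dispatches with the single remark that the Sylow $p$-subgroups of $\GO_4^{\pm}(p)$ have order $p^2$; your extra care here is sound and, if anything, closes a small gap left implicit in the original.
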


\begin{proof} Since $A$ is a $p$-group,  $C_V(A)= C_V(a)\cap C_V(b)$ has dimension $1$ and $[V,A]= [V,a]+[V,b]$ has dimension $3$. Also note that $[V,A]/C_V(A) = C_{V/C_V(A)}(a) = C_{V/C_V(A)}(b)$. We have $va \in v+[V,a]$ but $[v,a] \not \in C_V(A)$. Hence $va = v+ w$ where $w\in [V,a]\setminus C_V(A)$. Similarly $vb = v+ x$ where $x \in [V,b]\setminus C_V(A)$. Also $wa = w + y$ for some $y \in C_V(A)^\#$ and then $xb = x+\lambda y$ for some $\lambda \in \GF(p)^\#$. Take $\{v,w,x,y\}$ as an ordered basis of $V$. With respect to this basis  $a$ corresponds to the matrix
$\begin{matrix4}1&1&0&0\\0&1&0&1\\0&0&1&0\\0&0&0&1\end{matrix4}$ and $b$ corresponds to $\begin{matrix4}1&0&1&0\\0&1&0&0\\0&0&1&\lambda\\0&0&0&1\end{matrix4}$.
 Let $f$ be the symmetric bilinear form on $V$ which has matrix $Y=\begin{matrix4}0&-1/2&-\lambda/2&-1\\-1/2&1&0&0\\-\lambda/2&0&\lambda&0\\-1&0&0&0\end{matrix4}$. Then $a$ and $b$ preserve $f$ and, since $\det Y = -\lambda\not=0$, $f$ is non-degenerate. Obviously $v$ is a singular vector and $C_V(A)$ is a singular one-space with respect to $f$. Since the Sylow $p$-subgroups of $\GO_4^\pm(p)$ have order $p^2$, the lemma is proven.
\end{proof}

\begin{lemma}\label{GO4minus} Suppose that $p$ is an odd prime, $X = \GL_4(p)$,  $A \le X$ is elementary abelian of order $p^2$, $V$ is the natural $\GF(p)X$-module and $v \in V\setminus [V,A]$.
Assume that no element of $A$ acts quadratically on $V$ and that $\dim [V,a] =2$ for all $a \in A^\#$. Then $A$ preserves a quadratic form of $-$-type which has singular $1$-spaces $\{C_V(A)\}\cup\{\langle v\rangle a \mid a \in A\}$.
\end{lemma}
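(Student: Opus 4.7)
The plan is to parallel the construction in Lemma~\ref{GO4}, introducing a second parameter $\mu$ to accommodate the weaker hypotheses, and to observe that the non-quadratic-action hypothesis forces the discriminant of the resulting form to be a non-square, hence of $-$-type. Since $\dim[V,a]=2$ and $(a-I)^2 \neq 0$ for every $a \in A^\#$, each such $a$ has Jordan type $(3,1)$ on $V$, so $\dim C_V(a) = 2$ and $[V,a] \cap C_V(a)$ is $1$-dimensional. This intersection is $A$-invariant and hence, being $1$-dimensional over $\GF(p)$ with $A$ a $p$-group, lies in $C_V(A)$. One shows $\dim C_V(A) = 1$: otherwise $C_V(a)=C_V(b)=C_V(A)$ for all $a, b \in A^\#$, and the kernel of the natural map $A \to \GL(V/C_V(A))$ would consist of elements $g$ with $[V,g] \subseteq C_V(A) = C_V(g)$ (i.e.\ acting quadratically), hence is trivial, embedding $A$ in $\GL_2(p)$, which is impossible. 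A similar argument rules out $[V,a] = [V,b]$: were they equal to some $W$, the maps $(a-I)|_W,(b-I)|_W\colon W \to C_V(A)$ (both of rank $1$, kernel $C_V(A)$) would be proportional, so some non-identity $a^ib^j$ acts trivially on $W$, yielding $[V, a^ib^j] \subseteq W \subseteq C_V(a^ib^j)$, again quadratic. Thus $\dim[V,A] = 3$; in particular $\ker(a-I)^2 = C_V(a) + [V,a] = [V,A]$ and $C_V(a) \subseteq [V,A]$.

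Fix $y$ spanning $C_V(A)$ and set $w = v(a-I)$, $x = v(b-I)$ for $A = \langle a,b\rangle$. Both are nonzero because $v \notin [V,A] \supseteq C_V(a)\cup C_V(b)$; furthermore $(a-I)^2v \neq 0$ (since $v \notin \ker(a-I)^2 = [V,A]$) rules out $w \in \langle y\rangle$, and similarly $x \notin \langle y\rangle$. As $[V,a]\cap[V,b] = C_V(A) = \langle y\rangle$ (both $2$-dimensional spaces contain $\langle y\rangle$, and their sum $[V,A]$ has dimension $3$), $\{v, w, x, y\}$ is a basis of $V$. Because $a$ acts trivially on the $1$-dimensional quotient $[V,b]/C_V(A)$, one has $x(a-I) \in C_V(A)$; similarly $w(b-I) \in C_V(A)$; and $ab = ba$ forces $x(a-I) = w(b-I)$. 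Rescaling $y$ so that $w(a-I) = y$, write $x(b-I) = \lambda y$ and $x(a-I) = w(b-I) = \mu y$. The matrices of $a$ and $b$ in the basis $\{v,w,x,y\}$ are then explicit functions of $\lambda, \mu \in \GF(p)$, reducing to those in Lemma~\ref{GO4} when $\mu = 0$.

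Setting $X = a-I$, $Y = b-I$, the identities $X^3 = Y^3 = 0$, $XY = YX$, and $X\cdot C_V(A) = Y\cdot C_V(A) = 0$ yield $(a^ib^j - I)^2 = (i^2 + 2ij\mu + j^2\lambda)\,e_{14}$, where $e_{14}$ is the $(1,4)$-matrix unit. Hence no element of $A^\#$ acts quadratically iff the binary form $i^2 + 2\mu ij + \lambda j^2$ is anisotropic on $\GF(p)^2$, equivalently iff $\mu^2 - \lambda$ is a non-square in $\GF(p)$. Imitating the bilinear-form calculation of Lemma~\ref{GO4} (solving $aFa^T = bFb^T = F$ for a symmetric $F$ with $q(v) = 0$, where $q(u) = \tfrac{1}{2}\,uFu^T$) determines a unique
\[F = \begin{pmatrix} 0 & -1/2 & -\lambda/2 & -1 \\ -1/2 & 1 & \mu & 0 \\ -\lambda/2 & \mu & \lambda & 0 \\ -1 & 0 & 0 & 0 \end{pmatrix},\]
with $\det F = \mu^2 - \lambda$ by cofactor expansion along the last row --- a non-square by the equivalence above. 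Thus $F$ is non-degenerate and $q$ is a non-degenerate quadratic form of $-$-type preserved by $A$.

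Finally, $v(a^ib^j - I) = iw + jx + \bigl(\binom{i}{2} + ij\mu + \binom{j}{2}\lambda\bigr)y$, which vanishes only for $(i,j) = (0,0)$, so the $A$-orbit of $\langle v\rangle$ has $p^2$ distinct members, none equal to $\langle y\rangle$ (the latter has zero $v$-component while the orbit representatives have $v$-component $1$). Together with $\langle y\rangle$ (singular since $F_{44} = 0$), this produces $p^2 + 1$ singular lines, exhausting the $p^2 + 1$ singular $1$-spaces of a $4$-dimensional $-$-type orthogonal space over $\GF(p)$. The main technical obstacle is establishing the two-parameter normal form for $a$ and $b$ in this basis --- specifically the equality $x(a-I) = w(b-I)$ forced by $ab = ba$ and the independence of $\{v, w, x, y\}$ --- after which the determinant computation and the singular-orbit count are routine adaptations of the arguments in Lemma~\ref{GO4}.
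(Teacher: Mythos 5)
Your proof is correct, but it reaches the form and its type by a genuinely different route from the paper's. The paper first shows that $A$ admits generators $a,b$ satisfying the hypotheses of Lemma~\ref{GO4}: since the $p+1$ cyclic subgroups $\langle c\rangle$ of $A$ yield $p+1$ pairwise distinct $2$-spaces $C_V(c)$ lying between $C_V(A)$ and $[V,A]$, one of them must equal $[V,a]$, and commutativity ($[V,b,a]=[V,a,b]$) then forces $[V,b]=C_V(a)$; it then simply invokes Lemma~\ref{GO4} to produce the invariant non-degenerate form with $v$ and $C_V(A)$ singular. You bypass this reduction and instead redo the matrix computation of Lemma~\ref{GO4} for arbitrary generators, at the cost of the extra parameter $\mu$ (the paper's situation is exactly your $\mu=0$ case). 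The second divergence is the type determination: the paper argues that a Sylow $p$-subgroup of $\GO_4^+(p)$ contains quadratically acting elements, so $A$, having none, must preserve a form of $-$-type; you instead compute $\det F=\mu^2-\lambda$ and observe that the non-quadratic hypothesis is precisely the anisotropy of $i^2+2\mu ij+\lambda j^2$, i.e.\ that $\mu^2-\lambda$ is a non-square, which identifies the type via the discriminant. The closing count of $p^2+1$ singular points is the same in both. Your version buys an explicit discriminant and independence from the two-class statement of Lemma~\ref{GO4}; the paper's buys brevity by reusing the matrix work already done there. I checked that your $F$ is indeed invariant under both matrices and that $\det F=\mu^2-\lambda$, and the supporting linear algebra (Jordan type $(3,1)$, $\dim C_V(A)=1$, $[V,a]\neq[V,b]$, $C_V(a)\le[V,A]=\ker(a-I)^2$) is all sound, so the argument is complete.
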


\begin{proof} Suppose that $a \in A^\#$. Then $\dim [V,a] = \dim C_V(a)=2$, $[V,A]= [V,a]+C_V(a)$ and $C_V(A)=[V,a] \cap C_V(a)$. Since $\dim C_V(A)=1$, for $a, b \in A^\#$ with $\langle a\rangle \neq \langle b\rangle$, $C_V(a) \neq C_V(b)$ and therefore, fixing $a \in A^\#$, there exists  unique cyclic subgroup $\langle b \rangle \le A$ such that $C_V(b)= [V,a]$. Now, as $a$ and $b$ commute, $[V,b,a]=[V,a,b]$ and therefore $[V,a]= C_V(b)$. We now fix $a$ and $b$ as generators of $A$ and apply Lemma~\ref{GO4}. This shows us that $A$ preserves a non-degenerate quadratic form $q$ and that $q(v)=0$. Since the Sylow $p$-subgroup of $\GO_4^+(p)$ contains elements which act quadratically, we infer that $q$ has $-$-type. In particular, $V$ has $p^2+1$
singular vectors with respect to $q$. Since   $\{C_V(A)\}\cup\{\langle v\rangle x \mid x \in A\}$ are all singular  and $|\{\langle v\rangle x \mid x \in A\}|=p^2$, the result follows.
\end{proof}
\begin{lemma}\label{Modcalc}
Suppose that $X = \Omega_4^-(3)$ and let $V$ be the natural $\GF(3)X$-module.
Then
the following hold.
\begin{enumerate}
\item $X$ has three orbits  $\mathcal O_0$,  $\mathcal O_1$ and  $\mathcal O_2$ on the one-dimensional subspaces of $V$. The set
 $\mathcal O_0$ consists of singular one-spaces, while $\mathcal O_1$ and $\mathcal O_2$  consist of non-singular one-spaces.
 Furthermore,  $|\mathcal O_0|= 10$ and $|\mathcal
O_1|=|\mathcal O_2|=15$. The stabilizers of a member of $\mathcal
O_1$ and  of a member of $\mathcal O_2$ are not conjugate in $X$.

\item If $t$ is an involution in $X$, then $\dim\;C_V(t)=2$ and $C_V(t)$ is a hyperbolic space. The subspace $C_V(t)$ contains  two subspaces from $\mathcal O_0$ and one
each from $\mathcal O_1$ and $\mathcal O_2$.  Furthermore, $C_X(t)
\cong \Dih(8)$ interchanges the two members of $\mathcal O_0$ in
$C_V(t)$ and $|C_X(t)/C_{C_X(t)}(C_V(t))|=4$.

\item If $g\in X$ has order $4$, then $C_V(g)=0$.

\item If $D\in \Syl_3(X)$, then $\dim\; C_V(D)=\dim\; V/[V,D]=1$ and
$C_V(D)\in \mathcal O_0$.
\item If $d\in X$ has order $3$, then $\dim \;C_V(d)= \dim \; [V,d]=2$ and $d$ is not quadratic on $V$.
\item If $D\in \Syl_3(X)$ and $t\in N_X(D)$ is an involution, then
$t$ centralizes $C_V(D)$ and $V/[V,D]$.
\end{enumerate}
\end{lemma}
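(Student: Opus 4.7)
The plan is to realise $V$ explicitly as the natural $4$-dimensional orthogonal module of minus type for $X\cong\Alt(6)$, and to extract each of (i)--(vi) by direct linear-algebraic computation on this model. For (i) I would exploit that $(\GF(3)^*)^2=\{1\}$, so $q(\mu v)=\mu^2 q(v)=q(v)$ for $\mu\in\GF(3)^\#$; thus $q$ is constant on the non-zero vectors of each $1$-space, and the $40$ one-spaces partition according to the value of $q$ into one class with $q=0$, one with $q=1$, and one with $q=-1$. The standard count of singular points gives $|\mathcal{O}_0|=10$, whence $|\mathcal{O}_1|=|\mathcal{O}_2|=15$. Because $X$ preserves $q$ on the nose (not only up to similarity), no element of $X$ can move $\mathcal{O}_1$ to $\mathcal{O}_2$, which prevents their point stabilizers from being $X$-conjugate.

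For (ii) and (iii) I would let $t$ be an involution with eigenspace decomposition $V=V^+\oplus V^-$. Since $Z(X)=1$, $t\ne\pm 1$, and $\det t=1$ forces $\dim V^-$ to be even, so $\dim V^\pm=2$. Both summands are non-degenerate with $\{V^+,V^-\}=\{\mathrm{O}_2^+(3),\mathrm{O}_2^-(3)\}$ as a multiset. Since $-1$ on the hyperbolic plane has non-trivial spinor norm while $-1$ on $\mathrm{O}_2^-(3)$ lies in $\Omega_2^-(3)=\{\pm 1\}$, and since $t$ restricts as $1\oplus(-1)$, the condition $t\in\Omega(V)$ forces $V^+$ hyperbolic and $V^-$ anisotropic, immediately yielding the orbit decomposition inside $V^+$ asserted in (ii). The centralizer $C_X(t)$ then sits inside $\mathrm{O}(V^+)\times\mathrm{O}(V^-)\cong\Dih(4)\times\Dih(8)$, and determinant plus spinor-norm conditions cut its intersection with $\Omega(V)$ down to order $8$; so $C_X(t)$ is a Sylow $2$-subgroup of $\Alt(6)$ and thus isomorphic to $\Dih(8)$. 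The kernel of the restriction $C_X(t)\to\mathrm{O}(V^+)$ is $\Omega_2^-(3)$ of order $2$, so the image has order $4$, proving the index claim; this image is then the full Klein four $\mathrm{O}(V^+)$, whose reflections interchange the two singular $1$-spaces in $V^+$. For (iii), any $g$ of order $4$ commutes with $g^2=t$ and preserves $V^\pm$; $(g|_{V^-})^2=-1$ forces $g|_{V^-}$ to be an order-$4$ element of $\mathrm{SO}(V^-)\cong\mathbb{Z}/4$, hence fixed-point-free, and determinant plus spinor-norm arithmetic forces $g|_{V^+}=-1$, so $C_V(g)=0$.

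For (iv)--(vi), fix $D\in\Syl_3(X)$; $D$ is elementary abelian of order $9$ as in $\Alt(6)$. Being a $3$-subgroup, $D$ must fix some $1$-space of $V$, and the stabilizers of non-singular $1$-spaces have order $24$ (coprime to $9$), so the fixed $1$-space is singular and $D$ is the unipotent radical of the parabolic fixing some singular $1$-space $\langle v\rangle$. In a basis $v,w,u,v'$ with $\{v,v'\}$ a hyperbolic pair and $\langle w,u\rangle$ anisotropic under $q(\alpha w+\beta u)=\alpha^2+\beta^2$, each $d\in D$ takes the explicit shape $v\mapsto v$, $w\mapsto w+\alpha v$, $u\mapsto u+\beta v$, $v'\mapsto v'-\alpha w-\beta u+\gamma v$. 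A direct computation then gives $C_V(D)=\langle v\rangle$ and $[V,D]=\langle v\rangle^\perp$, establishing (iv). For non-trivial $d\in D$ one reads off $\dim C_V(d)=\dim[V,d]=2$, and $[V,d,d]$ is spanned by $(\alpha^2+\beta^2)v$, non-zero since $\langle w,u\rangle$ is anisotropic; hence $d$ is not quadratic, giving (v). For (vi), $N_X(D)$ stabilizes the unique $D$-fixed $1$-space $\langle v\rangle$, so an involution $t\in N_X(D)$ acts on $\langle v\rangle$ as $\pm 1$; acting as $-1$ would place the singular vector $v$ into the anisotropic $V^-$ of $t$, which is impossible, so $t$ centralizes $C_V(D)$. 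Since $V^+$ is then a hyperbolic plane containing $v$, it contains a second singular vector $v^\ast$ with $f(v,v^\ast)=1$; as $v^\ast\notin\langle v\rangle^\perp=[V,D]$, its image represents $V/[V,D]$, and $tv^\ast=v^\ast$ shows $t$ centralizes $V/[V,D]$ as well.

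The principal technical hurdle is the spinor-norm and determinant bookkeeping underlying (ii) and (iii): one must sift through the possible restrictions $t|_{V^\pm}$ and $g|_{V^\pm}$ to identify exactly which combinations lie in $\Omega(V)$, and in particular to rule out the anisotropic case for $V^+$. Once this is settled, the rest is routine linear algebra on the concrete $4$-dimensional orthogonal space.
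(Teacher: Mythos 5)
The paper itself gives no argument here beyond ``this is an elementary calculation'', so your write-up is supplying details the authors omitted rather than diverging from them; in substance it is correct. The eigenspace analysis in (ii) and (iii) is exactly the right bookkeeping: determinant forces $\dim V^{\pm}=2$, the spinor norm of $-1|_{V^-}$ equals the discriminant of $V^-$, which is a square precisely for the anisotropic plane, so $t\in\Omega$ forces $C_V(t)=V^+$ hyperbolic; and for $g$ of order $4$ the same computation rules out $g|_{V^+}=1$. The explicit unipotent radical in (iv)--(vi) works: with the form $q(av+bw+cu+ev')=ae+b^2+c^2$ one finds the coefficients of $w$ and $u$ in the image of $v'$ are forced to be $+\alpha,+\beta$ rather than your $-\alpha,-\beta$ (and $\gamma=-(\alpha^2+\beta^2)$), but this is only a normalization and the key outputs $C_V(D)=\langle v\rangle$, $[V,D]=\langle v\rangle^{\perp}$ and $[V,d,d]=\langle(\alpha^2+\beta^2)v\rangle\neq 0$ are unaffected. (Also, ``coprime to $9$'' should read ``not divisible by $9$''; $24$ and $9$ are not coprime, but $9\nmid 24$ is all you need.)

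The one genuine lacuna is the final sentence of (i). That $\mathcal O_1$ and $\mathcal O_2$ are distinct $X$-orbits does not by itself prevent their point stabilizers from being conjugate as subgroups: a priori the stabilizer of a point of $\mathcal O_2$ could also fix some point of $\mathcal O_1$, and conjugacy of stabilizers never requires an element carrying one orbit to the other. To close this you need the observation that the stabilizer $H$ of a non-singular one-space $\langle v\rangle$ fixes no other one-space of $V$: as an $H$-module $V=\langle v\rangle\perp\langle v\rangle^{\perp}$, the projection of $H$ to $\mathrm{O}(\langle v\rangle^{\perp})\cong\mathrm{O}_3(3)$ is faithful of order $24$, and every such subgroup acts on the natural $3$-dimensional module without invariant one-spaces, so $\langle v\rangle$ is the unique $H$-invariant line. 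Granting this, $H_1^{g}=H_2$ would force $\langle v_1\rangle^{g}=\langle v_2\rangle$, contradicting the fact that $X$ preserves the value of $q$ on lines. (Alternatively, one can compare fixed-point counts of the two classes of elements of order $3$ on $\mathcal O_1$ and $\mathcal O_2$ to see the permutation characters differ.) With that insertion the proof is complete.
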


%
%
%
%

\begin{proof}
This is an elementary calculation.
\end{proof}

\begin{lemma}\label{hyperplanes} Suppose that $X$, $V$ and $\mathcal O_0$ are as in
Lemma~\ref{Modcalc} and assume that $V_0$ is a hyperplane of $V$.
Then $V_0$ contains a member of  $\mathcal O_0$.
\end{lemma}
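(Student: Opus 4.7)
The plan is to use the non-degeneracy of the bilinear form $f$ associated with the quadratic form of $-$-type on $V$ to write any hyperplane as the orthogonal complement of a single nonzero vector, and then split into two cases based on whether that vector is singular.

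Since $f$ is non-degenerate, the map $u \mapsto u^\perp$ gives a bijection between the nonzero $1$-spaces of $V$ and the hyperplanes of $V$, so there exists a nonzero $u \in V$ with $V_0 = u^\perp$. If $q(u) = 0$, then $\langle u\rangle \in \mathcal{O}_0$; moreover, since we are in odd characteristic, $f(u,u) = 2q(u) = 0$, so $u \in u^\perp = V_0$, and the lemma holds in this case.

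Suppose instead that $q(u) \ne 0$. Then $f(u,u) \ne 0$, so $u \notin V_0$ and $V = V_0 \perp \langle u\rangle$ is an orthogonal decomposition. In particular $q$ restricts to a non-degenerate quadratic form on the $3$-dimensional $\GF(3)$-space $V_0$. I would then invoke the standard fact that every non-degenerate quadratic form on $\GF(3)^3$ has Witt index $(3-1)/2 = 1$, and hence admits a singular vector; concretely, up to similarity there are only two such forms, namely $x^2 + y^2 + z^2$ (for which $(1,1,1)$ is singular in $\GF(3)$ since $1+1+1 = 0$) and $x^2+y^2-z^2$ (for which $(1,0,1)$ is singular), so in either case $V_0$ contains a singular $1$-space, which by definition lies in $\mathcal{O}_0$.

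There is essentially no obstacle here: the whole argument is a clean two-case analysis once the hyperplane is written as $u^\perp$, and the only ingredient beyond linear algebra is the elementary observation that non-degenerate ternary quadratic forms over $\GF(3)$ are isotropic.
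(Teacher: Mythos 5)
Your proof is correct and rests on the same fact as the paper's one-line proof, which simply observes that every $3$-dimensional subspace of an orthogonal space over a finite field contains a singular vector; you have merely unpacked this by writing $V_0=u^\perp$ and treating the degenerate and non-degenerate restrictions separately. Nothing further is needed.
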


\begin{proof}  Every $3$-dimensional subspace of an orthogonal space contains a singular vector.
\end{proof}
%
%
%
%

\begin{lemma}\label{Laction}
Suppose that $V$ is a faithful $4$-dimensional $\GF(3)X$-module and
that  $X$ contains a normal subgroup $Y$ with $Y \sim
2^{1+4}.\Alt(5)$. Then $X$ is $2$-constrained, $O_2(X) = O_2(Y) $ is
extraspecial of $-$-type and either $X=Y$ or $X/O_2(X) \cong
\Sym(5)$.
\end{lemma}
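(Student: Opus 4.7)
The aim is to leverage the extraspecial structure of $Q := O_2(Y)$. Since $Q$ is characteristic in $Y$ and $Y\trianglelefteq X$, automatically $Q\trianglelefteq X$, and $Q$ has order $2^5$ with $|Q/Z(Q)|=16$. The representation theory of extraspecial $2$-groups in odd characteristic forces $V$ to be $Q$-irreducible, since the unique faithful irreducible representation of $Q$ over $\overline{\GF(3)}$ has dimension $\sqrt{16}=4$; in particular $Z(Q)$ acts as $-I$ on $V$.

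I would next compute $C_Y(Q)$. By Schur's Lemma, $\End_{\GF(3)Q}(V)$ is a finite division ring, hence a field, so $C_{\GL(V)}(Q)$ is cyclic and in particular abelian. As $C_Y(Q)\cap Q = Z(Q)$, the quotient $C_Y(Q)/Z(Q)$ embeds into the abelian group $C_{\GL(V)}(Q)/Z(Q)$ and is normal in the simple group $Y/Q\cong\Alt(5)$; hence $C_Y(Q)=Z(Q)$. The conjugation action therefore embeds $Y/Q$ as $\Alt(5)$ into $\Out(Q)$. Since $\Out(2^{1+4}_+)\cong\GO_4^+(2)$ has order $72$ and $60\nmid 72$, the $+$ type is ruled out; hence $Q\cong 2^{1+4}_-$ and $\Out(Q)\cong\GO_4^-(2)\cong\Sym(5)$.

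To upgrade to $C_X(Q)=Z(Q)$, I would verify that $V$ is \emph{absolutely} irreducible as a $Q$-module. This holds because $2^{1+4}_-\cong D_8\circ Q_8$ and both $D_8$ and $Q_8$ admit $2$-dimensional absolutely irreducible faithful representations over $\GF(3)$ (the latter because $Q_8\le \SL_2(3)$); their central tensor product realises the desired $4$-dimensional absolutely irreducible representation. Consequently $C_{\GL(V)}(Q)=\{\pm I\}=Z(Q)$, so $C_X(Q)=Z(Q)$. The map $X/Q\hookrightarrow \Out(Q)\cong \Sym(5)$ is then injective with image containing $Y/Q\cong\Alt(5)$, whence either $X=Y$ or $X/Q\cong\Sym(5)$.

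Since $O_2(\Alt(5))=O_2(\Sym(5))=1$, every normal $2$-subgroup of $X$ lies in $Q$, giving $O_2(X)=Q$; and $C_X(O_2(X))=Z(Q)\le O_2(X)$ yields $2$-constrainedness. The main technical hurdle is pinning down the isomorphism type of $Q$: once $C_Y(Q)=Z(Q)$ is established and the outer automorphism groups of the two extraspecial groups of order $2^5$ are compared, the $-$ type is forced, and all remaining assertions flow from Schur's Lemma together with the simplicity of $\Alt(5)$.
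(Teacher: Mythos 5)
Your proposal is correct and follows essentially the same route as the paper: Schur's Lemma applied to the irreducible action of $Q=O_2(Y)$ on $V$ over the splitting field $\GF(3)$ gives $C_X(Q)=Z(Q)$, and comparing $|\Alt(5)|$ with the automorphism groups of the two extraspecial groups of order $2^5$ forces $-$-type and yields $\Out(Q)\cong\Sym(5)$, whence the conclusion. You merely fill in details the paper leaves implicit (the dimension count showing $Q$-irreducibility, the $\Dih(8)\circ\Q(8)$ realisation over $\GF(3)$, and the preliminary computation of $C_Y(Q)$, which in fact becomes redundant once absolute irreducibility is established).
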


\begin{proof} Let $Q= O_2(Y)$. Then $Q$ is normalized by $X$. Let $Z
= C_X(Q)$. Then, as $Q$ acts irreducibly on $V$ and $\GF(3)$ is a
splitting field for this action, $Z= Z(Q)$ by Schur's Lemma
\cite{Aschbacher}. It follows that $\Aut(Q)$ contains a subgroup
isomorphic to $2^4.\Alt(5)$ and so $Q$ is extraspecial of $-$-type.
Hence $\Aut(Q) \cong 2^4.\Sym(5)$  by \cite[Theorems 20.8 and
20.9]{DH} and this proves the result.
\end{proof}

\begin{lemma}\label{Laction-1} Suppose that $X \sim 2^{1+4}_-.\Alt(5)$ is $2$-constrained. Let $Q=
O_2(X)$ and $T \in \Syl_3(X)$.
\begin{enumerate}
\item If $i \in Q$ is a non-central involution, then $|i^X|= 10$ and
$C_X(i) \sim (\Q(8)\times 2). \Alt(4)$. In particular,
$C_X(i)Q/Q\cong \Alt(4)$; and
\item $C_Q(T) \cong \Dih(8)$ and $N_X(T) Q/Q\cong \Sym(3)$.
\end{enumerate}
\end{lemma}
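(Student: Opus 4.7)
The plan is to exploit the identification of $Q/Z(Q)$ as a $4$-dimensional $\GF(2)$-space carrying the quadratic form $q(xZ(Q)) = x^2 \in Z(Q) \cong \GF(2)$, which is of $-$-type since $Q \cong \Q(8)*\Q(8)$. Under this, non-central involutions of $Q$ correspond to non-zero singular vectors, with the two involutions $i, iz$ covering each singular $1$-space (where $z$ generates $Z(Q)$). Because $X$ is $2$-constrained, $X/Q \cong \Alt(5)$ embeds in $\OO_4^-(2) \cong \Sym(5)$ acting as $\Alt(5)$ on the five singular $1$-spaces.

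For (i), $\Alt(5)$ is transitive on the $5$ singular $1$-spaces and $Q$ is transitive on each pair $\{i, iz\}$, so $|i^X|=10$ and hence $|C_X(i)|=192$. Next I would identify $C_Q(i)$, which has order $16$, by fixing the realization $Q = \Dih(8)*\Q(8)$ and placing $i$ as a non-central involution in the $\Dih(8)$ factor; then $C_Q(i)=C_{\Dih(8)}(i)*\Q(8)=\langle i\rangle \cdot \Q(8)\cong \Q(8)\times 2$. For the quotient, any element of $C_X(i)Q/Q$ stabilizes $\langle iZ(Q)\rangle$, so it lies in the point stabilizer $\Alt(4)$ of the $\Alt(5)$ action on $5$ points; conversely, any stabilizer of $\langle iZ(Q)\rangle$ either fixes $i$ or swaps $\{i,iz\}$, and in the latter case an element of $Q$ effecting the swap corrects it. Thus $C_X(i)Q/Q\cong\Alt(4)$, and the full shape $(\Q(8)\times 2).\Alt(4)$ follows.

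For (ii), $|T|=3$ and $T$ acts coprimely on $Q$; its image in $\Alt(5)\le\Sym(5)$ is a $3$-cycle, fixing exactly two of the five singular $1$-spaces. Hence $C_{Q/Z(Q)}(T)$ is the $2$-dimensional subspace spanned by these two fixed singular points, and coprime action gives $C_Q(T)Z(Q)/Z(Q)=C_{Q/Z(Q)}(T)$, whence $|C_Q(T)|=8$. Enumerating, $C_Q(T)$ contains $Z(Q)$, the two $Z(Q)$-cosets of non-central involutions (each pair $\{i,iz\}$ is fixed pointwise by $T$ since $T$ has odd order), contributing $4$ involutions, and the one $Z(Q)$-coset of non-singular elements; a short order argument ($T(v)=vz^\epsilon$ with $T^3=1$ forces $\epsilon=0$) shows both of its elements lie in $C_Q(T)$ and have order $4$. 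A group of order $8$ with exactly $5$ involutions and $2$ elements of order $4$ must be $\Dih(8)$. Finally, a Frattini argument applied to Sylow $3$-subgroups inside $TQ$ shows $N_X(T)Q/Q=N_{X/Q}(TQ/Q)=N_{\Alt(5)}(\overline T)$, which has order $6$ (since $\Alt(5)$ has ten Sylow $3$-subgroups) and is therefore $\Sym(3)$.

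The main obstacle is the structural identifications $C_Q(i)\cong\Q(8)\times 2$ and $C_Q(T)\cong\Dih(8)$, both of which are sensitive to the $-$-type hypothesis: the $+$-type analogues would instead be $\Dih(8)\times 2$ and elementary abelian of order $8$. The cleanest way past this is via the explicit decomposition $Q=\Dih(8)*\Q(8)$ combined with the fact that a non-degenerate $2$-dimensional subform of an $\OO_4^-(2)$-space must itself be of $-$-type, which pins the subgroups down unambiguously.
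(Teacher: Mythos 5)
Your main line of argument is correct and is essentially the paper's: the paper likewise works from the decomposition $Q\cong \Dih(8)*\Q(8)$, counts the ten non-central involutions, and finishes (ii) with the Frattini argument. The only genuine difference is how transitivity in (i) is obtained. You use the permutation action of $X/Q\cong\Alt(5)$ on the five singular points of the $-$-type form on $Q/Z(Q)$ (which requires knowing $\Out(Q)\cong \OO_4^-(2)\cong\Sym(5)$ and that $X$ is $2$-constrained), whereas the paper pairs the ten involutions under $Q$-conjugacy and then uses the fixed-point-free action of an element of order $5$ on $Q/Z(Q)$ to force a single orbit of length divisible by $10$. Your geometric framing buys a cleaner identification of $C_X(i)Q/Q$ with the point stabilizer $\Alt(4)$ and of $C_{Q/Z(Q)}(T)$ as the plane spanned by the two singular points fixed by a $3$-cycle; both routes are sound.

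One correction to your closing paragraph: it is \emph{not} true that a non-degenerate $2$-dimensional subspace of an $\OO_4^-(2)$-space must be of $-$-type. A $-$-type $4$-space has Witt index $1$ and therefore contains hyperbolic planes, and $C_{Q/Z(Q)}(T)$ is precisely such a plane --- it contains two singular points, so it is of $+$-type, which is exactly why its preimage is $\Dih(8)$ rather than $\Q(8)$. Had your asserted ``fact'' been true, it would contradict your own (correct) enumeration of $C_Q(T)$ as having five involutions and two elements of order $4$. The two correct statements are: for $iZ(Q)$ singular, $\langle iZ(Q)\rangle^{\perp}/\langle iZ(Q)\rangle$ is non-degenerate of $-$-type, giving $C_Q(i)\cong 2\times\Q(8)$; while $C_{Q/Z(Q)}(T)$ is a hyperbolic plane, giving $C_Q(T)\cong\Dih(8)$. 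Since the body of your proof establishes both conclusions by direct computation rather than by this purported general principle, the proof itself stands; only the final remark should be discarded.
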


\begin{proof} We know that $Q$ is the central product of $\Dih(8)$ and $\Q(8)$ and so it is straightforward
to calculate that there are 10 non-central involutions. They are
conjugate in pairs in $Q$ and the element of order $5$ in $X$ acts
fixed point freely on $Q/Z(Q)$. It is now easy to confirm the
details stated in (i). Since elements of order $3$ in $X$ centralize
a non-central involution and since $C_Q(T)$ is extraspecial, we get
$C_Q(T) \cong \Dih(8)$. The second part of (ii)  follows from the
Frattini Argument.
\end{proof}

\begin{lemma}\label{Laction2}
Suppose that $V$ is a faithful $4$-dimensional $\GF(3)Y$-module and
that   $Y \sim 2^{1+4}_-.\Alt(5)$.  Then the following hold.
\begin{enumerate}
\item For $v \in V^\#$, we have $C_Y(v) \cong \SL_2(3)$. In
particular, $Y$ operates transitively on $V^\#$.
\item Every element of order $2$ in $Y$ is contained in $O_2(Y)$.
\item If $T \in \Syl_3(Y)$, then $N_Y(T)/T \cong \SDih(16)$.
\end{enumerate}
\end{lemma}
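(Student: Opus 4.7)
The plan is to first analyze $Q$-orbits on $V$, then show $Y$-transitivity, next prove~(ii), deduce~(i), and finally obtain~(iii) via a faithful two-dimensional action. Set $Q=O_2(Y)$ with central involution $z$. Since $Q=2^{1+4}_-$ has a unique faithful irreducible $\GF(3)$-module of dimension $4$, Schur's lemma forces $z$ to act as $-I$, so $C_V(z)=0$. For $v\in V^\#$ the subgroup $C_Q(v)$ is abelian (any commutator lies in $\langle z\rangle$ and fixes $v$), omits $z$, and injects into $Q/\langle z\rangle$ as a totally singular subspace of the $-$-type quadratic form; Witt index $1$ forces $|C_Q(v)|\le 2$. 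A Burnside count using $\chi|_Q(g)=0$ for non-central $g\in Q$ then yields exactly five orbits of $Q$ on $V^\#$, each of length $16$, and $|C_Q(v)|=2$ throughout.

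For transitivity, let $t\in Y$ have order $5$. The multiplicative order of $3$ modulo $5$ is $4$, so $\Phi_5(x)=x^4+x^3+x^2+x+1$ is irreducible over $\GF(3)$, and any faithful action of $\langle t\rangle$ on a $4$-dimensional $\GF(3)$-space has no non-zero fixed vector. Since $t$ normalizes $Q$ and $5\nmid 16$, $t$ cannot stabilize any single $Q$-orbit setwise, so it cycles the five orbits; hence $Y$ is transitive on $V^\#$ with $|C_Y(v)|=24$.

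To prove (ii), first observe that $Y$ is perfect: the irreducible action of $\Alt(5)$ on $Q/\langle z\rangle$ forces $[Q,Y]=Q$, giving $Y=[Y,Y]\le\SL_4(3)$. Thus every involution $\iota\in Y$ has $\det\iota=1$, and if $\iota\in Y\setminus Q$ then $\iota\ne\pm I$ gives $\dim C_V(\iota)=2$. The image $\iota Q$ is a double transposition in $\Omega_4^-(2)=\Alt(5)$; its fixed subspace on $Q/\langle z\rangle$ is $2$-dimensional and degenerate with a $1$-dimensional singular radical, so lifting yields $C_Q(\iota)\cong\mathbb{Z}/4\times\mathbb{Z}/2$ with exactly two non-central involutions $a$, $az$. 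For $v\in C_V(\iota)^\#$, the generator $q_v$ of $C_Q(v)$ centralizes $\iota$ and lies in $\{a,az\}$. The subspaces $C_V(a)\cap C_V(\iota)$ and $C_V(az)\cap C_V(\iota)$ partition $C_V(\iota)^\#$ (their intersection is trivial since $av=v$ and $azv=v$ force $v=-v=0$), so their sizes from $\{0,2,8\}$ sum to $8$, forcing one of them to equal $C_V(\iota)$; call the corresponding involution $k$. Writing $[V,k]$ as the graph of a non-zero linear map $[V,\iota]\to C_V(\iota)$ with matrix $M$, direct computation gives $\iota k=\begin{matrix4} I & M \\ 0 & I\end{matrix4}$, a non-trivial unipotent matrix whose cube is $I$ in characteristic $3$. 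So $\langle\iota,k\rangle\cong\Sym(3)$, and its three involutions are $Y$-conjugate; but $k\in Q\trianglelefteq Y$ and $\iota\notin Q$ contradict normality of $Q$. This proves (ii); consequently $C_Y(v)$, an extension of $\Alt(4)$ by $\langle q\rangle$ with a unique involution, is the non-split $\SL_2(3)$, completing (i).

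For (iii), Lemma~\ref{Laction-1}(ii) and a Frattini argument give $|N_Y(T)|=48$, $C_Y(T)=T\times C_Q(T)$ with $C_Q(T)\cong\Dih(8)$, and $|N_Y(T)/T|=16$. A Schur argument on $C_V(T)$ as a $\Dih(8)$-module (using $z$ acts as $-I$) shows $\dim C_V(T)=2$, and $C_Y(T)/T\cong\Dih(8)$ acts faithfully on $C_V(T)$. For $g\in N_Y(T)\setminus C_Y(T)$ with $gT$ acting trivially, $g$ inverts $T$ and $g^2\in T$; if $g^2=1$, then $g$ is an involution outside $Q$, contradicting (ii); if $g^2$ has order $3$, then $g^3$ is an involution in $C_Q(T)$ and $g=g^3\cdot g^{-2}\in C_Q(T)\cdot T=C_Y(T)$, another contradiction. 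So $N_Y(T)/T$ embeds faithfully into $\GL_2(\GF(3))$ as an order-$16$ subgroup, necessarily a Sylow $2$-subgroup, which is $\SDih(16)$. The main obstacle is (ii), which requires the identification $C_Q(\iota)\cong\mathbb{Z}/4\times\mathbb{Z}/2$ via the $\GF(2)$-orthogonal geometry of $Q/\langle z\rangle$ together with the unexpected unipotent identity $(\iota k)^3=I$ that produces a forbidden $\Sym(3)$ subgroup.
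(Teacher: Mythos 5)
Your proof is correct, but it reverses the paper's logical order and uses genuinely different arguments at the two delicate points. The paper obtains transitivity exactly as you do (orbit lengths divisible by $16$ because $|C_Q(v)|\le 2$, and by $5$ because elements of order $5$ act freely), but then identifies $C_Y(v)\cong \SL_2(3)$ directly: since $Y$ is perfect it embeds in $\SL_4(3)$, whose $2$-rank is $3$, so $\langle z\rangle\times O_2(C_Y(v))$ rules out $C_Y(v)\cong 2\times\Alt(4)$; part (ii) is then a two-line corollary, because a noncentral involution fixes a nonzero vector and hence is the unique involution of some $C_Y(v)\cong\SL_2(3)$, which lies in $Q$. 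You instead prove (ii) first by a hands-on construction -- reading off the fixed space of $\iota Q$ on $Q/\langle z\rangle$ in the $O_4^-(2)$-geometry, locating the noncentral involutions $a$, $az$ commuting with $\iota$, showing one of them, $k$, satisfies $C_V(k)=C_V(\iota)$, and exhibiting $\iota k$ as a nontrivial unipotent element of order $3$, so that $\langle \iota,k\rangle\cong\Sym(3)$ fuses $\iota$ into the normal subgroup $Q$ -- and then deduce (i) from the resulting uniqueness of the involution in $C_Y(v)$. Both routes are sound; the paper's is much shorter, while yours avoids the $2$-rank computation and is more self-contained and geometric. Two small points to tidy: the Burnside count needs $|C_V(g)|=3^{\dim\ker(g-1)}$ (so $9$ for noncentral involutions and $1$ for elements of order $4$), not the Brauer character value itself; and $C_Q(\iota)$ is a priori only a subgroup of the abelian preimage $P\cong\mathbb{Z}/4\times\mathbb{Z}/2$ of $C_{Q/\langle z\rangle}(\iota)$, since $\iota$ could act on $P$ by multiplication by $z$ -- but all your argument uses is that the noncentral involutions of $C_Q(\iota)$ lie among the two contained in $P$, which does hold. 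Your (iii) is likewise a mild variant: you force $\dim C_V(T)=2$ by parity from the faithful $\Dih(8)$-action of $C_Q(T)$ on $C_V(T)$, where the paper argues that two conjugates of $T$ generate a subgroup covering $\Alt(5)$, so that $\dim C_V(T)=3$ would give an element of order $5$ a fixed vector.
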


\begin{proof} Let $Q= O_2(Y)$, $s \in Z(Q)^\#$ and $v \in V^\#$. Then $s$
negates $v$ and so $C_Q(v)$ is a subgroup of $Q$ which does not
contain $s$. Since $Q \cong 2^{1+4}_-$, we get that $C_Q(v)$
has order dividing $2$. Hence every orbit of $Y$ on $V$ has order
divisible by $16$. Since the elements of $Y$ of order $5$ centralize
only the zero vector, the orbits of $Y$  have length divisible by
$5$. As there are 80 non-zero vectors it follows that $Y$ acts
transitively on $V^\#$, $|C_Q(v)|=2$ and $C_Y(v)Q/Q \cong \Alt(4)$.
Since $Y$ is perfect and is isomorphic to a subgroup of $\SL_4(3)$,
the $2$-rank of $Y$ is at most $3$. By considering $\langle
s,C_Y(v)\rangle$ we see that $C_Y(v) \not \cong 2\times \Alt(4)$ and
therefore $C_Y(v)$ is isomorphic to the unique double cover of
$\Alt(4)$, namely $\SL_2(3)$. This proves (i).

Now suppose that $y \in Y\setminus Q$ has order $2$. Then as $y$ is
a noncentral involution in $Y$, $C_V(y) \not = 0$.  But then (i)
implies $y \in Q$, a contradiction.  Hence (ii) holds.

We now claim that $N_{Y}(T)/T \cong \SDih(16)$.  Since $T$ has order $3$, we have $\dim C_V(T) \ge 2$. If $\dim C_V(T) =3$, then as $\Alt(5)$ is generated by two subgroups of order $3$, we find that an element of order $5$ has fixed points on $V$ and this is impossible. Therefore $\dim C_V(T) = 2$ and $N_Y(T)$ acts upon this subspace.  Let $R \in \Syl_2(N_Y(T))$. Then by Lemma~\ref{Laction-1}(ii), $|R|= 2^4$ and $R\cap Q\cong \Dih(8)$. By (ii) the elements of $R\setminus Q$ have order at least $4$. Since the central involution in $Q$ inverts $V$, we  see that $R$ acts faithfully on $C_V(T)$. It follows that $R$ is isomorphic to a Sylow $2$-subgroup of $\GL_2(3)$ and this proves (iii).
\end{proof}

\begin{table}
\begin{tabular}{|c|c|c|c|c|c|}
\hline Conjugacy Classes&$\Sp_6(2)$&$\Aut(\U_4(2))$&$|C_X(x)|$&$|C_Y(x)|$&$|C_V(x)|$\\

\hline
 $A_1$&2A&2C&$2^9.3^2.5$&$2^5.3^2.5$&$2^4$\\
 $A_2$&2B&2A&$2^9.3^2$&$2^7.3^2$&$2^6$\\
$A_3$&2C&2B&$2^9.3$&$2^6.3$&$2^4$\\
$A_4$&2D&2D&$2^7.3$&$2^5.3$&$2^4$\\
 \hline
\end{tabular}\label{TabInv}
\caption{Involutions in $\Aut(\U_4(2))$ and $\Sp_6(2)$}
\end{table}

The group $\Sp_6(2)$ has a unique $8$-dimensional irreducible module
 over $\GF(2)$ as can be seen for example in \cite{MOAT}. This
module is usually called the \emph{spin module} for $\Sp_6(2)$. On
restriction to any subgroup of  $\Sp_6(2)$ isomorphic
$\Aut(\U_4(2))$ the spin module remains irreducible and is the
unique irreducible module of dimension $8$ over $\GF(2)$ for this
group. In Section 3, we shall refer to this module
as the \emph{natural module} for $\Aut(\U_4(2))$.  The next two lemmas
collect information about the action of certain subgroups and
elements of these two groups on the spin module for $\Sp_6(2)$.

\begin{lemma}\label{SP62facts} Suppose that $X \cong \Sp_6(2)$, $Y$ is a subgroup of $X$ with $Y \cong \Aut(\U_4(2))$  and
$V$ is the $\GF(2)X$-spin module. Then the following hold.
\begin{enumerate}
\item There are exactly four conjugacy classes $A_1,A_2,A_3$ and $A_4$ of involutions in $X$ and, for $1\le i \le 4$,   $A_i \cap Y$ is a conjugacy class of involutions in $Y$.   For each conjugacy class
$A_i$, $1\le i\le 4$, and for $x$ an involution in  $A_i$, Table~1
gives the {\sc Atlas} class name for $A_i$ in both $X$ and $Y$,
$|C_X(x)|$, $|C_Y(x)|$ and $|C_V(x)|$.
\item If $P$ is a parabolic subgroup of shape $2^{5}.\Sp_4(2)$ in $X$,
then $O_2(P)$ contains one involution from $A_1$ and fifteen
involutions from each of $A_2$ and $A_3$. Furthermore, as a
$P/O_2(P)$-module, $O_2(P)$ is an indecomposable extension of the
trivial module by a natural module.
\item If $x \in A_2$, then $\langle x\rangle = Z(C_X(x))$ and
$C_X(x)$ is a maximal subgroup of $X$.
 \item If $f \in X$ has order five, then $C_V(f)=0$.

\item For $v\in V$, $|C_Y(v)|$ and $|C_X(v)|$ are divisible by $3$.

\item For $S \in \syl_2(Y)$,  $|C_V(S)|=|C_{V/C_V(S)}(S)|= 2$.
\item If $S \in \syl_2(X)$ and $x \in N_X(Z(S))$  has order $3$, then $x$ acts fixed-point-freely on $V$.
\item There are no subgroups of  $X$ of order $2^5$ which have all non-trivial
elements in class $A_2$.
\end{enumerate}
\end{lemma}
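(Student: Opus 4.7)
All eight parts are finite computations inside $X = \Sp_6(2)$ and its subgroup $Y \cong \Aut(\U_4(2))$ acting on the $8$-dimensional spin module $V$. The plan is to verify them by combining Atlas data \cite{Atlas}, \cite{MOAT} with a few short direct computations.

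For part (i) I would first read the four classes of involutions in $X$ and their centraliser orders off the Atlas character table of $\Sp_6(2)$. Since $Y$ is isomorphic to the maximal subgroup $\U_4(2).2$ of $X$, the Atlas class fusion data confirms that each $A_i \cap Y$ is a single $Y$-class and supplies $|C_Y(x)|$. To compute $\dim C_V(x)$ I would use that in characteristic $2$ an involution $x$ acts on $V$ by Jordan blocks of sizes $1$ and $2$, so $\dim C_V(x)$ equals the number of such blocks; this number can be extracted in each case by combining the half-spin description of $V$ (via $X \cong \Omega_7(2)$) with the Brauer character table of $X$ modulo $2$. Parts (iv) and (vii) will then follow from evaluating the Brauer character of the spin module on elements of order $5$ and on the relevant class of order $3$: for $x$ of order $3$ centralising $Z(S)$ with $S \in \Syl_2(X)$, one observes that $Z(S)$ lies in class $A_1 = 2A$ and that $x$ is therefore forced into the $3$-central class of $X$, after which the Brauer character value together with $\dim V = 8$ yields $C_V(x) = 0$. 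Part (v) is then immediate from (iv) and (vii) by orbit counting: any $X$-orbit on $V^\#$ has length coprime to $15$, forcing $3 \mid |C_X(v)|$, and $3 \mid |C_Y(v)|$ follows from $|X:Y|$ being a $3'$-number.

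Parts (ii) and (iii) I would handle via parabolic geometry. The subgroup $P$ of shape $2^{5}.\Sp_4(2)$ is the maximal parabolic of $X$ stabilising a singular point in the natural $\Omega_7(2)$-module; its radical $O_2(P)$ is known to sit in an indecomposable extension of the trivial $P/O_2(P)$-module by the natural $\Sp_4(2)$-module by a standard cohomology computation. The involution distribution inside $O_2(P)$ then follows by matching the orbit structure of $P/O_2(P)$ on $O_2(P)^\#$ against the Atlas class fusion. For (iii), the $A_2$-centraliser is a listed maximal subgroup of $X$, and $Z(C_X(x)) = \langle x \rangle$ is direct inspection of its structure. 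Part (vi) is a short computation in the unique $8$-dimensional irreducible $\GF(2)Y$-module: $[V,S]$ is a hyperplane, so $C_V(S)$ is $1$-dimensional and, by the self-duality of $V$ as a $Y$-module, so is $C_{V/C_V(S)}(S)$. Finally, for (viii), I would suppose for contradiction that $E \le X$ has order $2^5$ with $E^\# \subseteq A_2$, and derive a contradiction by showing that any two commuting $A_2$-involutions have joint centraliser too small to accommodate $E$, using (iii) and the structure of $C_X(x)$ for $x \in A_2$. The main obstacle throughout is not mathematical but bookkeeping: the various Atlas class names, centraliser orders and module fixed-point dimensions must be threaded through consistently.
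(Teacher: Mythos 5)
Your overall strategy (Atlas data plus parabolic geometry plus module computations) matches the paper's in outline, but three of your steps would fail as written. First, in (i) you propose to extract $|C_V(x)|$ for involutions by consulting ``the Brauer character table of $X$ modulo $2$''; Brauer characters in characteristic $2$ are defined only on elements of odd order, so they carry no information whatsoever about the Jordan type of an involution on $V$. The paper instead obtains $|C_V(x)|=2^4$ for $x\in A_1\cup A_3\cup A_4$ by locating each of these classes inside a dihedral subgroup of order $10$ and invoking (iv) (so that $V$ restricted to that dihedral group is a sum of two $4$-dimensional irreducibles, on each of which the involution fixes a $2$-space), and it identifies $A_2$ with the transvections in a Levi complement of $P_1$, which visibly centralize a $6$-space. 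Second, your argument for (vi) is a non sequitur: self-duality of $V$ gives $\dim C_V(S)=\dim V/[V,S]$, but it does not relate $C_{V/C_V(S)}(S)$ to $C_V(S)$ -- the second socle layer of a self-dual uniserial module can easily be larger than the first. The paper has to work inside the parabolic $2^4{:}\Sym(5)$ of $Y$, show $C_V(R)=[V,R]$ is a $4$-dimensional irreducible for $R=O_2(P)$, and then use that $[S,S]$ contains an involution outside class $A_2$ (hence with $2^4$ fixed points) to force $C_{V/C_V(S)}(S)=C_{C_V(R)/C_V(S)}(S)$. Third, your plan for (viii) -- showing the joint centralizer of two commuting $A_2$-involutions is ``too small to accommodate $E$'' -- is self-defeating: any such $E$ lies inside $C_X(x)\cap C_X(y)$ for all $x,y\in E$, so that intersection automatically has order at least $2^5$; the obstruction is not centralizer size. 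The paper's argument is module-theoretic: the image of $E$ in $P_1/O_2(P_1)\cong \Sp_4(2)$ consists of commuting transvections on the natural module $C_V(O_2(P_1))$ and so has order at most $2$, whence $E\cap O_2(P_1)$ would have order $2^4$, contain all fifteen $A_2$-involutions of $O_2(P_1)$, and therefore be a $P_1$-invariant complement to the trivial submodule, contradicting the indecomposability established in (ii).

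There are also two smaller slips. In (vii), $Z(S)$ for $S\in\syl_2(X)$ is not a single $2A$-involution: it contains involutions from each of $A_1$, $A_2$ and $A_3$, and the paper uses exactly this to identify $N_X(Z(S))$ with $P_1\cap P_3$ and conclude $N_X(Z(S))\not\le P_2$, which is what forces the order-$3$ elements there to act freely; your assertion that such an element is ``forced into the $3$-central class'' is both unjustified and beside the point. In (v), the $X$-orbits on $V^\#$ are certainly not of length coprime to $15$ (they have lengths $120$ and $135$); the correct orbit-counting argument is that each orbit has length divisible by $5$ by (iv), so if some $|C_X(v)|$ were coprime to $3$ the orbit would have length divisible by $3^4\cdot 5=405>255=|V^\#|$. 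That repaired version, together with your (correct) observation that $|X:Y|=28$ is a $3'$-number, is actually slicker than the paper's explicit computation of the two $Y$-orbits. Your treatments of (ii), (iii) and (iv) are essentially those of the paper.
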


\begin{proof} The facts in (i) regarding
involutions classes and their centralizers in $X$ and $Y$ are taken
from the {\sc Atlas} \cite[pgs. 26 and 46]{Atlas}--we determine
$|C_V(x)|$ later in the proof. We also  immediately see that
$C_X(x)$ is a maximal subgroup of $X$ for $x\in A_2$. So (iii)
holds.

Let $S \in \syl_2(X)$ and $P_1$, $P_2$ and $P_3$ be the maximal
parabolic subgroups of $X$ containing $S$ with $P_1\sim
2^{5}.\Sp_4(2)$, $P_2\sim 2^6.\SL_3(2)$ and $|P_3|= 2^9.3^2$. Then
the restrictions of  $V$ to $P_i$, $i=1,2,3$ are given in
\cite{PRoh}. In particular, we have that $[V,O_2(P_1)] =
C_V(O_2(P_1))$ has dimension $4$ and, as $P/O_2(P_1)$ modules,
$V/C_V(O_2(P_1)) \cong C_V(O_2(P_1))$ and both are natural
$\Sp_4(2)$-modules. Therefore, the elements of order $5$ in $X$ act
fixed point freely on $V$ which gives (iv).

From the character table of $X$, we read that there are dihedral subgroups of $X$ of order $10$ which contain
involutions from classes $A_1$, $A_3$ and $A_4$. Therefore
$|C_V(x)|= 2^4$ for $x$ in any of these classes. We have that $V$
restricted to a Levi complement $L$ of $P_1$ decomposes as a direct
sum of two natural modules and  so the transvections in $L$
centralize a subspace of dimension $6$ in $V$. These elements are
therefore in class $A_2$. This completes the proof of (i).

 Since $C_V(S)$ is normalized by $P_2$, we
calculate that $Y$ has two orbits on $V^\#$ one of length $135$ and
the other of length $120$. In particular (v) holds.

Since $Z =Z(S)$ contains elements from classes $A_1$, $A_2$ and
$A_3$ which we denote by $z_a$, $z_b$ and $z_c$ respectively,
$N_X(Z)= C_X(Z) \le C_X(z_c) \le P_1\cap P_3 \le C_X(z_a) \cap
C_X(z_b) \le C_X(Z)$. It follows that $N_X(Z) \not \le P_2$ and hence
the elements $d$ of order $3$ in $N_X(Z)$ have $C_V(d)=0$. Thus
(vii) holds.

From Table~1 we have that $Z(S) \le O_2(P_1)$ contains elements from
each of the classes $A_1$, $A_2$ and $A_3$. As $P_1$ centralizes an
element $z$ of $Z(S)$ in class $A_1$ and since $P_1$ acts
transitively on the non-trivial elements of $O_2(P_1)/\langle
z\rangle$. The first part of (ii) holds. The final part of (ii) is
well known and can be, for example, verified by using the Chevalley
commutator formula to calculate that $|[O_2(P),S]|=2^4$ where $S \in
\syl_2(P)$.

Suppose that $B$ is an elementary abelian subgroup of $X$ of order
$2^5$ in which  every involution is in $A_2$. By considering the
restriction of $V$ to $P_1$, we see that $|BO_2(P_1)/O_2(P_1)| \le
2$. Thus $B\cap O_2(P_1)$ contains all the $A_2$-involutions of
$O_2(P_1)$ and is consequently $P_1$ invariant. This contradicts
(ii), so proving part (viii).

We prove (vi). Let $P$ be the parabolic subgroup of $\Aut(\U_4(2))$
of shape $2^4:\Sym(5)$, $R= O_2(P)$ and $S \in \syl_2(P)$. Then as
the elements of order $5$ in $P$ act fixed point freely on $V$,
$C_V(R)=[V,R]$ has dimension $4$. Furthermore, $C_V(R)$ is an
irreducible $P/R$-module and from this we obtain $C_V(S)
=C_{C_V(R)}(S)$ and $C_{C_V(R)/C_V(S)}(S)$ have dimension $1$. Since
$[S,S] \cap R$ has order $2^3$ and $R$ contains only $5$ elements in
class $A_2$, we deduce that $[S,S]$ contains an involution that is
not in class $A_2$. As the preimage of $C_{V/C_V(S)}(S)$ is
centralized by $[S,S]$, we see that $C_{V/C_V(S)}(S)=
C_{C_V(R)/C_V(S)}(S)$ and (vi) follows.
\end{proof}

%
%
%

\begin{lemma}\label{NOTFF}  Suppose that $X \cong \Sp_6(2)$ and $V$ is the
$\GF(2)X$-spin module.  If $F \le X$, $[V,F,F]=0$ and $|V/C_V(F)|\le
|F|$, then there exists $f\in F^\#$ which is not in class $A_2$.
\end{lemma}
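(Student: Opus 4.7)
I argue by contradiction: suppose every $f \in F^\#$ lies in class $A_2$.

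Since $[V,F,F] = 0$ and $F$ acts faithfully on $V$, $F$ is elementary abelian, and Lemma~\ref{SP62facts}(viii) bounds $|F|\le 2^4$. From Table~1 each $f\in F^\#$ has $\dim[V,f]=2$. Because $V$ is the unique irreducible $8$-dimensional $\GF(2)X$-module it is self-dual, and under the invariant pairing $[V,F] = C_V(F)^\perp$; thus $\dim V/C_V(F) = \dim[V,F]$. The hypothesis $|V/C_V(F)|\le|F|$ then reads $\dim[V,F]\le\log_2|F|$, and combined with $\dim[V,F]\ge 2$ gives $|F|\ge 4$ with $\dim[V,F]\in\{2,3,4\}$.

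The quadratic action supplies an injective $\GF(2)$-linear map
\[
\iota: F \hookrightarrow \Hom(V/C_V(F),[V,F]),\qquad \iota(f)\colon v+C_V(F)\mapsto [v,f],
\]
and $\iota(f)$ has rank $2$ for each $f\in F^\#$. When $\dim[V,F]=2$ the target is $M_2(\GF(2))$ and the non-zero elements of $\iota(F)$ must lie in $\GL_2(\GF(2))$. Since $|\GL_2(\GF(2))|=6$, no $\GF(2)$-subspace of $M_2(\GF(2))$ of order exceeding $4$ has this property, which eliminates the configurations $(|F|,\dim[V,F]) \in \{(8,2),(16,2)\}$. The residual $\dim[V,F]=2$ case forces $|F|=4$ with $C_V(f) = C_V(F)$ and $[V,f] = [V,F]$ for every $f\in F^\#$; then $F$ lies in the pointwise stabilizer of $C_V(F)$ in $X$. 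Using $V\cong V^*$, this pointwise stabilizer corresponds to alternating forms on $V/C_V(F)$, which is a $1$-dimensional $\GF(2)$-space, so the stabilizer has order $2$, contradicting $|F|=4$.

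For the remaining configurations $(|F|,\dim[V,F]) \in \{(8,3),(16,3),(16,4)\}$ I place $F$ inside the parabolic $P_1\sim 2^5.\Sp_4(2)$ of Lemma~\ref{SP62facts}(ii). The restriction of $V$ to $P_1$ has $[V,O_2(P_1)] = C_V(O_2(P_1))$ of dimension $4$, and $O_2(P_1)$ is an indecomposable extension of the trivial module by the natural $\Sp_4(2)$-module. Following the argument in the proof of Lemma~\ref{SP62facts}(viii), $\{1\}\cup(A_2\cap O_2(P_1))$ cannot be a subgroup of $O_2(P_1)$ (it would furnish a $P_1$-invariant complement to $\langle z\rangle$, contradicting indecomposability), so $|F\cap O_2(P_1)|\le 8$. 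Tracking the image of $F$ modulo $O_2(P_1)$ in $\Sp_4(2)$ and applying the rank-$2$ constraint on each composition factor of the restriction of $V$ to $O_2(P_1)$ yields a contradiction in each case. The principal obstacle is this final case analysis, which requires careful matching of the quadratic action on $V$ with the $O_2(P_1)$-module structure; the rank-$2$ and self-duality arguments of the earlier paragraphs dispatch the other configurations cleanly.
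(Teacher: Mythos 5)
Your first two paragraphs are essentially sound: the observation that $\iota(F)$ is a subspace of $\Hom(V/C_V(F),[V,F])$ all of whose non-zero elements have rank $2$ does eliminate $(|F|,\dim[V,F])\in\{(8,2),(16,2)\}$, and your conclusion in the $(4,2)$ case is correct. Note, though, that your justification there via ``alternating forms'' silently invokes the invariant quadratic form of $+$-type on the spin module, the total singularity of $[V,F]$, and the structure of a root subgroup of $\mathrm{O}_8^+(2)$ --- none of which is established in the paper. The paper reaches the same conclusion much more cheaply from Lemma~\ref{SP62facts}(iii): if $|F|=4$ and $C_V(f_1)=C_V(f_2)=C_V(F)$ for distinct $f_1,f_2\in F^\#$, then $C_V(F)$ is invariant under $\langle C_X(f_1),C_X(f_2)\rangle=X$, contradicting irreducibility.

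The genuine gap is your third paragraph. For the configurations $(8,3)$, $(16,3)$ and $(16,4)$ you assert that ``tracking the image of $F$ modulo $O_2(P_1)$ \dots yields a contradiction in each case'' and you yourself flag this case analysis as ``the principal obstacle''; no argument is actually supplied, and these are exactly the cases where the real work of the lemma lies. The paper's proof handles them as follows. Set $E=F\cap O_2(P_1)$. If $|E|=2^3$, the parabolic $P\le P_1$ normalizing $EZ(P_1)$ also normalizes $E$ and $Z(S)$ for $S\in\Syl_2(P)$, so by Lemma~\ref{SP62facts}(vii) it normalizes only even-dimensional subspaces; this forces $C_V(E)=C_V(O_2(P_1))=[V,E]$ of order $2^4$, whence $C_V(F)=C_V(E)$, $|F|=2^4$, $F\not\le O_2(P_1)$ and therefore $C_V(F)<C_V(E)$, a contradiction. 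Hence $|E|\le 2^2$; since $O^2(P_1)\setminus O_2(P_1)$ contains no $A_2$-elements, $|F|\le 2|E|$, so $|F|=2^3$ and $|E|=2^2$; and finally $[V,F]\ge [V,E]+[V,f]$ for $f\in F\setminus O_2(P_1)$, with $[V,f]\not\le [V,O_2(P_1)]$ and $|[V,E]|\ge 2^3$, gives $|[V,F]|>|F|$. None of this --- in particular the appeal to Lemma~\ref{SP62facts}(vii) and the closing commutator estimate --- appears in your sketch, so as written the proof is incomplete precisely where it needs to be carried out.
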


\begin{proof}  First of all we note that, as $V$ is
self-dual,  $|[V,F]|=|V/C_V(F)| \le |F|$.

Assume that every non-trivial element of $F$ is in class $A_2$. Then
$2^4\ge |F|
> 2$ by Lemma~\ref{SP62facts} (i) and (viii). If $|F|= 2^2$, then for
$f_1, f_2 \in F^\#$ with $f_1 \ne f_2$ we have $C_V(f_1)= C_V(f_2)=
C_V(F)$. But then $C_V(F)$ is invariant under $\langle
C_X(f_1),C_X(f_2)\rangle = X$ as $C_X(f_1)$ is a maximal subgroup of
$X$ by Lemma~\ref{SP62facts}(iii). Therefore $|V:C_V(F)|\ge 2^3$ and
$|F| \ge 2^3$.

Assume that $P_1$ is a parabolic subgroup of $X$ of shape
$2^{5}.\Sp_4(2)$ such that $F \le P_1$. Set $E = F \cap O_2(P_1)$.
Suppose that $|E| \ge 2^3$. If $|E|= 2^4$, then $E$ contains all the
$A_2$-elements of $O_2(P_1)$ and hence is invariant under the action
of $P_1$.  This contradicts Lemma~\ref{SP62facts}(ii) and so we
conclude that $|E|= 2^3$. Let $P \le P_1$ be the parabolic subgroup
of $P_1$ which normalizes $EZ(P_1)$. Since $E$ contains all the
$A_2$-elements of $EZ(P_1)$, $P$ normalizes $E$. Also, since $P$
normalizes $EZ(P_1)$, $P$ normalizes $Z(S)$ for any $S
\in\syl_2(P)$. Hence $P$
 only normalizes subspaces of even
dimension by Lemma~\ref{SP62facts}(vii). Consequently, as $P$
normalizes $C_V(E)$ and $|C_V(E)| \le 2^5$, we deduce that $C_V(E) =
C_V(O_2(P_1))$ has order $2^4$. Since $E$ acts quadratically on $V$,
$[V,E]= C_V(E)$ and thus $C_V(F)= C_V(E) $. So $|F| = 2^4$ and
hence, as  $|E|= 2^3$, $F \not \le O_2(P_1)$. But then $C_V(F) <
C_V(E)$ which is a contradiction. Hence $|E| \le 2^2$. Because
$O^2(P_1)\setminus O_2(P_1)$ contains no $A_2$-elements, we have
$|F| \le 2^3$  and so $|F|=2^3$. Finally, $[V,F] \ge [V,E] + [V,f]$
for some $f \in F \setminus O_2(P_1)$ and so, as $[V,f] \not \le
[V,O_2(P_1)]$ and $[V,E] \le [V,O_2(P_1)]$ with $|[V,E]|\ge 2^3$, we
have $|[V,F]|
> |F|$, and this is our final contradiction.
\end{proof}

\begin{lemma}\label{88} Suppose that $X \cong \Aut(\U_4(2))$ and $x$
is an involution of $X$ with $C_X(x) \cong 2 \times \Sym(6)$. Let $F
\in \Syl_3(C_X(x))$. If $T \in \Syl_3(X)$ and $F\le T$, then $F \le
J(T)$.
\end{lemma}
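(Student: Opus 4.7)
The plan is to show that $J(T)$ is the unique elementary abelian subgroup $A$ of $T$ of order $27$, and then to rule out the possibility that $F$ escapes it.

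Since $[\Aut(\U_4(2)):\U_4(2)]=2$, we have $T\le \U_4(2)$ and $|T|=81$. From the {\sc Atlas}, $\U_4(2)$ has two maximal subgroups of order $648$, namely $3^3{:}\Sym(4)$ and $3^{1+2}_+{:}2\A_4$, each with a Sylow $3$-subgroup of order $81$; these give an elementary abelian subgroup $A$ of $T$ of order $27$ normal in $T$, and an extraspecial subgroup $E\cong 3^{1+2}_+$ of $T$. The faithful action of $N_{\U_4(2)}(E)/E\cong 2\A_4$ on $E/Z(E)$ yields $C_{\U_4(2)}(E)=Z(E)=:\langle z\rangle$, and consequently $Z(T)=\langle z\rangle$ has order $3$. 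Since $|A|>|Z(T)|$, we have $C_T(A)=A$, so for any $g\in T\setminus A$ we have $T=\langle g,A\rangle$; hence $C_A(g)$ is centralized by $T$, lies in $Z(T)$, and has order at most $3$. If $A^\prime\ne A$ were another elementary abelian subgroup of $T$ of order $27$, then $|A\cap A^\prime|=9$ while $A\cap A^\prime\le C_A(g)$ for any $g\in A^\prime\setminus A$ --- a contradiction. Finally, the regular unipotent elements of $\U_4(2)$ have order $9$ and are self-centralizing in $T$, so every abelian subgroup of $T$ of order greater than $9$ has exponent $3$; thus $J(T)=A$.

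To conclude, I would prove $F\le A$ as follows. Assume for contradiction that $F\not\le A$. An order comparison forces $|F\cap A|=3$, and for any $f\in F\setminus A$ we have $F\cap A\le C_A(f)\le Z(T)=\langle z\rangle$, so $F\cap A=\langle z\rangle$ and $z\in F$. Now the maximal subgroup $\Sym(6)$ of $\U_4(2)\cong\Omega_5(3)$ of index $36$ is the stabilizer in $\Omega_5(3)$ of a non-singular $1$-space $\langle u\rangle$ of the natural $5$-dimensional orthogonal module $V_5$ with $u^\perp$ of $-$-type. Since $|F|=9$ is odd, $F$ fixes $u$ and embeds into $\Omega_4^-(3)$ acting on $V_4:=u^\perp$; Lemma~\ref{Modcalc}(v) then shows that every non-identity element of $F$ acts non-quadratically on $V_4$, hence on $V_5$ too. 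On the other hand, $z$ is $3$-central in $\Omega_5(3)$, so is a long root element, with Jordan type $[2,2,1]$ on $V_5$, so $z$ does act quadratically on $V_5$. This contradicts $z\in F$, so $F\le A=J(T)$. The main obstacle is this last separation of $z$ from the elements of $F$ via the quadratic/non-quadratic dichotomy on $V_5$; the other structural facts about $T$ needed above are direct consequences of the {\sc Atlas} data on $\U_4(2)$.
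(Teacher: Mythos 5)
Your proof is correct, but the mechanism you use for the decisive step is genuinely different from the paper's. Both arguments ultimately reduce to showing that $Z(T)$ meets $F$ trivially, so that $Z(T)F$ is elementary abelian of order $3^3$ and hence equals $J(T)$. The paper gets this in one line from the inner/outer dichotomy: $x$ centralizes $F$, so if $Z(T)\le F$ then $x\in C_X(Z(T))\le X'$, contradicting the fact that an involution with centralizer $2\times\Sym(6)$ lies outside $X'=\U_4(2)$. You instead work inside $X'\cong\Omega_5(3)$ and separate $Z(T)$ from $F$ by their actions on the natural $5$-dimensional orthogonal module: the $3$-central element is a long root element acting quadratically (Jordan type $[2,2,1]$), while $F$ lives in the stabilizer $\Sym(6)$ of a non-singular point and so, by Lemma~\ref{Modcalc}(v) applied to $\Omega_4^-(3)$ on $u^\perp$, every element of $F^\#$ acts non-quadratically; your reduction of ``$F\not\le J(T)$'' to ``$Z(T)\le F$'' via the intersection computation in $T$ is sound, as is your verification that $J(T)$ is the unique elementary abelian subgroup of order $27$ (which the paper merely asserts). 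The paper's route is shorter and needs only the {\sc Atlas} fact that $C_X(Z(T))\le X'$; yours avoids that fusion fact at the cost of more orthogonal-geometry input (identification of the point stabilizer, Jordan types of root elements) but has the merit of making the module-theoretic reason for the conclusion visible and of proving, rather than quoting, the structure of $\mathcal A(T)$.
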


\begin{proof}  Note that
$J(T)$ is elementary abelian of order $3^3$. If $Z(T) \le F$,  then
$x \in C_X(Z(T)) \le X'$ by \cite[pg. 26]{Atlas} whereas $x \not\in
X'$. Thus
 $Z(T) \not \le F$. Hence $Z(T)F$ is elementary abelian of order
$3^3$ and so $Z(T)F= J(T)$, and the lemma holds.
\end{proof}

\section{A $2$-local subgroup}

As intimated in Section 1, the raison d'$\hat {\mathrm e}$tre for
Theorem~\ref{closed} is to assist in uncovering the structure of an
involution centralizer in a group satisfying the hypothesis of
Theorem~\ref{Th1}. The main thrust of the proof of
Theorem~\ref{closed} is to show that $Q$ is a strongly closed
$2$-subgroup of $T$ with respect to $G$ where $T \in \syl_2(H)$.
Goldschmidt's classification  of groups with a strongly closed
abelian $2$-subgroup \cite{GO} quickly concludes the proof. We use
the simultaneous notation for conjugacy classes in the groups
$\Sp_6(2)$ and $\Aut(\U_4(2))$ given in Table~1. In the next theorem we use $(3\times \U_4(2)):2$ to indicate the split extension of $3\times \U_4(2)$ by an involution which inverts the normal subgroup of order $3$ and acts as a non-trivial outer automorphism on the normal subgroup isomorphic to $\U_4(2)$.  The case where $H/Q \cong (3\times \U_4(2)):2$ does not arise in this paper; however it will find application in work in preparation by  Parker and Stroth which characterizes automorphism groups related to $\mathrm {PSU}_6(2)$.

\begin{theorem}\label{closed} Suppose that $G$ is a   finite group, $Q$ is a
subgroup of $G$ and $H= N_G(Q)$. Assume that the following  hold
\begin{enumerate}
\item $H/Q \cong \Aut(\U_4(2))$, $(3\times \U_4(2)):2$ or $\Sp_6(2)$;
\item $Q=C_G(Q)$ is a minimal
normal subgroup of $H$ and is elementary abelian of order $2^8$;
\item $H$ controls fusion of elements of $H$ of order $3$; and
\item if $g \in G\setminus H$ and $d \in H\cap H^g$ has order $3$,
then $C_Q(d)=1$.
\end{enumerate}
Then $G= HO_{2'}(G)$.
\end{theorem}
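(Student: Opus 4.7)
The plan is to show that $Q$ is strongly closed in a Sylow $2$-subgroup $T$ of $H$ with respect to $G$, and then invoke Goldschmidt's theorem \cite{GO} on strongly closed abelian $2$-subgroups. Strong closure will imply $T\in\syl_2(G)$ by a short argument, and Goldschmidt's decomposition of $L:=\langle Q^G\rangle$ will, after a case-check on quasisimple factors, yield $L = Q\cdot O_{2'}(L)$; a Frattini argument on $Q\in\syl_2(L)$ will then deliver $G = H\cdot O_{2'}(G)$.

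For the strong closure, suppose for contradiction that $q\in Q^\#$ and $g\in G$ satisfy $y:=q^g\in T\setminus Q$; then $y$ is an involution whose image $\bar y\in H/Q$ is non-trivial. In each of the three cases permitted by (i), every involution centralizer in $H/Q$ has order divisible by $3$ (for $\Sp_6(2)$ and $\Aut(\U_4(2))$ by Lemma~\ref{SP62facts}(i) and Table~1, and for $(3\times\U_4(2)){:}2$ by its explicit structure). I would fix $\bar d\in C_{H/Q}(\bar y)$ of order $3$, lift to $d_0\in H$ of order $3$ with $[d_0,y]\in Q$ via Schur--Zassenhaus, and then use a Frattini argument on the Sylow $3$-subgroups of $Q\langle d_0\rangle$, together with the fact that $\bar y$ commutes with (rather than inverts) $\bar d$, to replace $d_0$ by a $Q$-conjugate $d$ satisfying $[d,y]=1$ exactly. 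A short calculation then shows $d^{g^{-1}}$ centralizes $q$; by hypothesis (iii), $d^{g^{-1}} = d^h$ for some $h\in H$, so $d$ centralizes $q^{h^{-1}}\in Q^\#$ and $c:=hg\in C_G(d)$. If $c\in H$, then $g\in H$ and $y\in Q^H = Q$, contrary to $y\notin Q$, so $c\in G\setminus H$; hence $d\in H\cap H^{c^{-1}}$ with $c^{-1}\in G\setminus H$, and hypothesis (iv) forces $C_Q(d)=1$---contradicting $q^{h^{-1}}\in C_Q(d)^\#$.

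To finish, I would first observe that if $T^*\in\syl_2(G)$ properly contained $T$, then any $x\in N_{T^*}(T)\setminus T$ would lie outside $H$ (else $T\langle x\rangle$ would be a $2$-subgroup of $H$ properly containing $T$), and then $Q\neq Q^x\le T^x = T$ would violate strong closure; hence $T\in\syl_2(G)$. Goldschmidt's theorem now provides a central decomposition of $L/O_{2'}(L)$ as the product of the image of $Q$ with quasisimple groups drawn from his short list of groups with abelian Sylow $2$-subgroup. Because $H/Q$ acts faithfully and irreducibly on $Q$ and contains a non-abelian composition factor ($\U_4(2)$ or $\Sp_6(2)$), a case check against the permitted Sylow-$2$ intersection data excludes every non-trivial quasisimple component, leaving $L = Q\cdot O_{2'}(L)$. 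Then $O_{2'}(L)\le O_{2'}(G)$, and the Frattini argument on $Q\in\syl_2(L)$ gives $G = L\cdot H = H\cdot O_{2'}(L)\le H\cdot O_{2'}(G)$, forcing equality. The main anticipated obstacle is this Goldschmidt case-check, where each candidate quasisimple factor must be ruled out by matching Sylow-$2$ intersection orders against the list's abelian Sylow $2$-structures and by using the $H/Q$-action; a secondary technicality is the coprime lifting that produces $d\in C_H(y)$ exactly rather than only $\bar d\in C_{H/Q}(\bar y)$.
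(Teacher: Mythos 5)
Your endgame (Goldschmidt plus a Frattini argument on $M=\langle Q^G\rangle$) matches the paper, but your route to strong closure contains a fatal gap: the lift of $\bar d\in C_{H/Q}(\bar y)$ of order $3$ to an element $d\in C_H(y)$ does not exist. The Frattini argument you invoke is not a coprime-action argument here, since $|\langle y\rangle|=2$ is not coprime to $|Q|=2^8$: applied to $Q\langle d_0\rangle\langle y\rangle$ it only produces an element $yq'$ of the coset $Qy$ normalizing (hence centralizing) a conjugate of $\langle d_0\rangle$, and $yq'$ need not be conjugate to $y$. The obstruction is measured by $C_Q(y)/[Q,y]$ (Lemma~\ref{orbits}), and it is genuinely nonzero in the case that matters: the paper's claims (3.1.1) and (3.1.2) show that any $y\in(Q^g\cap H)\setminus Q$ must have $yQ$ in class $A_2$, where $|C_Q(y)/[Q,y]|=2^4$, the $64$ involutions of $Qy$ split into several orbits, and $C_H(y)$ is a $2$-group of order $2^{15}$ or $2^{13}$ even though $9$ divides $|C_{H/Q}(\bar y)|$. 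So the element $d$ your contradiction hinges on simply is not there, and your argument proves nothing about these $y$. The paper instead uses the nonexistence of a $3$-element in $C_H(y)$ as a tool: it forces $yQ$ into class $A_2$, then establishes weak closure of $Q$ via the quadratic-action Lemma~\ref{NOTFF}, and only then obtains strong closure by the $Z_2(T_0)$ argument together with Lemma~\ref{SP62facts}(vi) and (viii). None of that work is replaced by anything in your proposal.

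A secondary, patchable issue: your appeal to hypothesis (iii) to write $d^{g^{-1}}=d^h$ requires $d^{g^{-1}}\in H$, which you have not established ($d^{g^{-1}}$ lies in $C_G(q)$, not obviously in $H$). The paper handles this by first showing, using hypothesis (iv), that a Sylow $3$-subgroup of $C_H(q)$ is Sylow in $C_G(q)$, and only then conjugating into $H$ before applying (iii). That portion of your argument (deriving a contradiction once a $3$-element of $C_H(y)$ is in hand) is otherwise sound and is essentially the paper's claim (3.1.1); the missing content is everything that must be done precisely because no such $3$-element exists.
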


\begin{proof} Let $T \in \Syl_2(H)$.
To begin with we note that as a $\GF(2)H$-module, $Q$ is isomorphic
to the $\Sp_6(2)$ spin-module when $H/Q\cong \Sp_6(2)$ and to the natural $\Aut(\U_4(2))$-module when $H/Q\cong \Aut(\U_4(2))$. If $H/Q \cong (3\times U_4(2)):2$, then letting $H_0$ be the subgroup of index $3$ in $H$, $Q$ is isomorphic to  the natural $H_0/Q$-module.

\begin{claim}\label{3'group} Suppose that $g \in G$ and $y \in (Q^g \cap H)\setminus Q$. Then $C_H(y)$ is a $3'$-group.
\end{claim}

Let $y \in(Q^g \cap H)\setminus Q$ and suppose that $3$ divides
$|C_H(y)|$, $S \in \syl_3(C_H(y))$ and $x= y^{g^{-1}}$. Then $x \in
Q$ and $|C_H(x)|$ is divisible by $3$ by Lemma~\ref{SP62facts} (v).
Let $P \in \Syl_3(C_H(x))$. If $P \not \in \syl_3(C_G(x))$, then
$N_{C_G(x)}(P) \not \le H$ and so there exists $n \in
N_{C_G(x)}(P)\setminus H$ such that $P \le H\cap H^n$. Since, for $d
\in P$ of order $3$, $x\in C_Q(d)$,  this contradicts assumption
(iv). Hence $P \in \syl_3(C_G(x))$ and therefore $P^g \in
\syl_3(C_G(y))$. Since $S$ is a $3$-subgroup of $C_G(y)$, there is
an $k \in C_G(y)$ such that $P^{gk} \ge S$. By assumption (iii), $H$
controls fusion of elements of order $3$ in $H$. Hence, as each
element of $S$ is $G$-conjugate to an element of $P$, each element
of $S$ is $H$-conjugate to an element of $P$. Now, as $ x\in C_Q(P)$
and $Q$ is normal in $H$, for all elements of $s\in S$ we have $C_Q(s)
\neq 1$. Since $S \le H\cap H^{gk}$, we then get $gk \in H$ by (iv).
Thus $y=x^{gk} \in Q^{gk}=Q$ and we have a contradiction as $y \not
\in Q$. Therefore, $3$ does not divide $|C_H(y)|$ as claimed.\qedc

\begin{claim}\label{ya2} Let $g \in G$ and suppose  $y \in (Q^g \cap H)\setminus
Q$. Then $yQ$ is an $A_2$-involution  in $H/Q$ and $C_H(y)Q\in
\syl_2(H)$. Furthermore, $H/Q\not\cong (3\times \U_4(2)):2$.
\end{claim}

If $yQ$ is not in the $A_2$-class of $H/Q$, then, by
Lemma~\ref{SP62facts}(i), $C_Q(y)=[Q,y]$ and so Lemma~\ref{orbits}
gives  $C_H(y)Q/Q= C_{H/Q}(y)$. Thus $C_H(y)$ is not a $3'$-group by
Lemma~\ref{SP62facts}(i) again, and this is contrary to
\ref{3'group}.
 Hence $yQ$ is in the  $A_2$-class of $H/Q$.  Let $D$ be the full preimage of $C_{H/Q}(yQ)$ in $H$. Then $D$ operates on the set  $\mathcal I$ of involutions contained in $Qy$. From Lemma~\ref{SP62facts}(i),  $|C_Q(y)|= 2^6$ and $|D/Q|$ is divisible by $9$. In particular, $|\mathcal I|= 64$.  By \ref{3'group}, $|D:C_H(y)|$ is divisible by $9$ and, by Lemma~\ref{SP62facts}(i), $|Q:C_Q(y)|=2^2$. Therefore $|D:C_H(y)|$ is divisible by $36$. Since $D$ obviously cannot have an orbit of length $72$ on a set of 64 elements, we conclude that $|D:C_H(y)|=36$.  If $H/Q\cong (3\times \U_4(2)):2$, then in fact $27$ divides $|D|$ and we conclude that $|C_H(y)|$ is divisible by $3$, contrary to \ref{3'group}. Thus  $H/Q\not\cong (3\times \U_4(2)):2$. If $H/Q \cong \Sp_6(2)$, we get $|C_H(y)|= 2^{15}$  and, if $H/Q \cong \Aut(\U_4(2))$, we get $2^{13}$. Therefore, as $|Q:C_Q(y)|= 2^2$,
$C_H(y)Q \in \syl_2(H)$. So \ref{ya2} holds.\qedc

We note  that \ref{ya2} applies equally well to show that
involutions in $(Q \cap H^g)Q^g/Q^g$ are in the $A_2$-class of
$H^g/Q^g$.

\begin{claim}\label{weakclose} $Q$ is weakly closed in $H$ with respect to $G$. In particular, $T\in
\syl_2(G)$.
\end{claim}

Suppose that \ref{weakclose} is false.  Then, by Lemma~\ref{wcl1},
there  exists $g\in G\setminus H$ such that $Q^g$ and $Q$ normalize
each other. In particular, $Q^g \le H$. Hence we may assume that $|Q:C_Q(Q^g)|\le |Q^gQ/Q|$.  By
\ref{ya2} the non-trivial elements of $Q^g Q/Q$ are all in $H/Q$
class $A_2$. These two facts together contradict Lemma~\ref{NOTFF}.
Therefore $Q$ is weakly closed in $H$ with respect to $G$ and consequently
$\syl_2(H) \subseteq \syl_2(G)$. \qedc

Aiming for a contradiction we now suppose that $Q$ is not  strongly
closed in $T$ with respect to $G$.

\begin{claim}\label{norm} We can select $g\in G$ and $y \in (Q^g \cap H)\setminus Q$
so that $C_H(y) \le H^g$.\end{claim}

Since $Q$ is not strongly closed in $T$ ($\le H$), there exists
$g\in G$ and $y \in (Q^g \cap H)\setminus Q$. Clearly $Q^g \le
C_G(y)$, and so we may select a Sylow $2$-subgroup $T_1$ of $C_G(y)$
such that $T_1$ contains $Q^g$. Since $C_H(y)$ is a $2$-group by
\ref{ya2}, there exists a Sylow $2$-subgroup $T_2$ of $C_G(y)$ which
contains $C_H(y)$. Thus there is an $f \in C_G(y)$ such that $T_1^f=
T_2$. Because $Q$ is weakly closed in $H$ and $Q^{gf} \le T_2$,
$C_H(y) \le T_2 \le N_{G}(Q^{gf})=H^{gf}$. Since $f \in C_G(y)$, $y
\in (Q^{gf}\cap H) \setminus Q$. Thus we may replace $g$ by $gf$ and
we have proved \ref{norm}.\qedc

Choosing $g$ and $y$ as in \ref{norm}, we set $W=C_H(y)Q^g$.

\begin{claim}\label{315} There exists a Sylow $2$-subgroup $T_0$ of $H^g$ which
normalizes $Q \cap Q^g$ and contains $W$ . Furthermore, $|T_0:W| \le
2$.
\end{claim}

Since $C_H(y) Q \in \syl_2(H)$ by \ref{ya2}, and  $C_H(y) Q$
normalizes $Q\cap Q^g$ by \ref{norm}, $N_H(Q \cap Q^g)$ contains a
Sylow $2$-subgroup of $G$ by \ref{weakclose}. Since $W$ normalizes
$Q \cap Q^g$, there is a $T_0 \in \syl_2(N_G(Q\cap Q^g))$ with $T_0
\ge W$. Therefore, as $Q^g$ is weakly closed in $W$, $T_0 \le H^g$.
Since $|Q:C_Q(y)|=4$, we have $|T_0:W| \le 4$ by \ref{ya2}. If
$|T_0:W| =4$, then  we must have $Q^g \le C_H(y)$ which contradicts
$Q$ being weakly closed in $H$ and $Q \not=Q^g$. Hence $|T_0:W| \le
2$. \qedc

Let $Z_2(T_0)$ be the second centre of $T_0$ where $T_0$ is as in
\ref{315}. Then, as $|Z_2(T_0)| =4$ by Lemma~\ref{SP62facts}(vi) and
$Q \cap Q^g$ is normal in $T_0$, we either have $|Q \cap Q^g|\le 2$,
or $Z_2(T_0) \le Q\cap Q^g$. Since $|T_0:W|  \le 2$, $ C_{Q^g}(W)
\le Z_2(T_0)$. From $y \in C_{Q^g}(W) \le Z_2(T_0)$ and $y \not\in
Q$, we must have $|Q\cap Q^g|\le 2$. Since $yQ$ is in $H/Q$ class
$A_2$, we have $|C_Q(y)|= 2^6$. Hence $|C_Q(y)Q^g/Q^g|= |C_{Q}(y):Q
\cap Q^g| \ge 2^5$ and, by \ref{ya2}, all the involutions of
$C_{Q}(y)Q^g/Q^g$ are in $H^g/Q^g$ class $A_2$, which contradicts
Lemma~\ref{SP62facts} (viii). We have therefore shown that $Q$ is
strongly closed in $T$ with respect to $G$.

Set $M=\langle Q^G\rangle$. If $M \not = QO_{2'}(G)$, then $|M:Q|$
is even and hence we have $T \cap M > Q$ by \ref{weakclose}. But
then $\langle (T\cap M)^H\rangle$ has index at most $2$ in $H$ and
is contained in $M$. Finally, applying Goldschmidt's Theorem
\cite{GO}, we see that the possible composition factors of
$M/O_{2',2}(M)$ do not involve either $\U_4(2)$ or $\Sp_6(2)$. Thus
$M=QO_{2'}(G)$ and the Frattini Argument completes the proof of the
theorem.
\end{proof}

\section{Part of the $3$-local structure}

Having now gathered together our prerequisite results, we  are ready
to begin the proof of Theorem~\ref{Th1}. Thus for the remainder of
this article we assume that $G$ is a finite group with $S$ a Sylow
$3$-subgroup of $G$ and $Z= Z(S)$. Additionally, we assume that $Z$
is not weakly closed in $S$ with respect to $G$ and $C_G(Z)$ has
shape $3^{1+4}.2^{1+4}.\Alt(5)$ as described in the hypothesis of
Theorem~\ref{Th1}. We set $L= N_G(Z)$, $L_* = C_G(Z)$, $Q= O_{3}(L)$
and let $P\in \syl_2(O_{3,2}(L_*))$. So $P$ and $Q$ are extraspecial
of order $2^5$ and $3^5$ respectively and $O_{3,2}(L_*) =PQ$.
Furthermore, $O_3(L_*)=Q$. Let $\langle u \rangle = Z(P)$.

We begin by fleshing out the structure and embeddings of these
groups. In the next proof we use the fact that $\Sp_4(3)$ contains
no subgroup isomorphic to $\Alt(5)$. This is easy to see as the
$2$-rank of both $\Sp_4(3)$ and  $\Alt(5)$ is $2$ whereas $\Alt(5)$
has no non-trivial central elements.

\begin{lemma}\label{lem1}
\begin{enumerate}
\item $Z=Z(Q)$ has order $3$.
\item $L_*$ and $L$ are $3$-constrained.
\item  $L_*/Q$ is $2$-constrained,  acts irreducibly on $Q/Z$ and $P \cong 2^{1+4}_-$.
\item $Q$ is extraspecial of $+$-type.
\end{enumerate}
\end{lemma}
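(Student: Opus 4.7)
The strategy is to establish the four parts in sequence, using Lemma~\ref{Laction} as the main lever for (ii) and (iii), and deriving (iv) from the action established there.

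For (i), since $Q$ is extraspecial of order $3^5$, $|Z(Q)|=3$; so it suffices to show $Z\leq Q$, because $Z\leq Z(L_*)\leq C_{L_*}(Q)$ would then give $Z\leq Q\cap C_{L_*}(Q)=Z(Q)$, and non-triviality closes the gap. The image of $Z$ in $L_*/Q$ lies in $Z(L_*/Q)$; since the simple quotient $L_*/PQ\cong\Alt(5)$ has trivial centre, $Z(L_*/Q)\leq PQ/Q\cong P/Q$, which is a $2$-group. Hence the $3$-group $Z$ maps trivially, giving $Z\leq Q$.

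For (ii) and (iii), the plan is to verify the hypotheses of Lemma~\ref{Laction} for the action of $L_*/Q$ on $V=Q/Z$. First I would establish $3$-constraint of $L_*$: since $Q$ is extraspecial, $C_{L_*}(Q)\cap Q=Z$, and the Sylow $3$-subgroup of $C_{L_*}(Q)$ lies in $C_S(Q)=Z$, so $C_{L_*}(Q)/Z$ is a normal $3'$-subgroup of $L_*$. Its image in $L_*/Q$ is a normal $3'$-subgroup of $L_*/Q$, which must lie in $O_2(L_*/Q)=P/Q$ because $L_*/PQ\cong\Alt(5)$ is simple with elements of order $3$. Hence $C_{L_*}(Q)/Z$ is a $2$-subgroup of $P/Q$ normal in $L_*/Q$. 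Examining the normal subgroup lattice---using that the central involution $u\in P$ cannot centralise $Q$ without contradicting the prescribed action of the $\Alt(5)$-quotient on $Q/Z$---forces $C_{L_*}(Q)=Z$. Three-constraint of $L$ then follows since $|L:L_*|\leq 2$ and any outer element of $L$ inverts $Z$ and so cannot centralise $Q\supseteq Z$. With three-constraint in hand, any $3'$-element of $K:=C_{L_*}(Q/Z)$ centralises both $Z$ and $Q/Z$ and, by coprime action, centralises $Q$, hence lies in $Z$; so $K/Q$ is a normal $3$-subgroup of $L_*/Q$ projecting trivially to $\Alt(5)$, therefore lying in the $2$-group $P/Q$, giving $K=Q$. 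Thus $L_*/Q$ acts faithfully on $V\cong\GF(3)^4$, and Lemma~\ref{Laction} applied with $X=Y=L_*/Q$ delivers $2$-constraint of $L_*/Q$, identifies $O_2(L_*/Q)=P/Q$ as extraspecial of $-$-type, and yields irreducibility of the action on $Q/Z$. Since $P\cap Q=1$, we conclude $P\cong 2^{1+4}_-$.

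For (iv), the faithful action of $P\cong 2^{1+4}_-$ on $Q/Z$ preserves the commutator pairing $Q/Z\times Q/Z\to Z$, so $P$ embeds into $\Out(Q)$ as a subgroup preserving this symplectic structure. Inspection of the automorphism groups of the two isomorphism types of extraspecial $3$-groups of order $3^{1+4}$ (see \cite[Theorems 20.8 and 20.9]{DH}) shows that only $\Out(3^{1+4}_+)$, which contains $\Sp_4(3)$, is large enough to accommodate a subgroup of shape $2^{1+4}_-.\Alt(5)$; the corresponding outer automorphism group of the $-$-type extraspecial group is too small. Hence $Q$ must be of $+$-type.

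The principal obstacle is the step in (ii) ruling out the possibility that the central involution $u$ of $P$ centralises $Q$ (which would yield $C_{L_*}(Q)/Z=\langle \bar u\rangle$). This requires carefully combining the extraspecial structure of $P/Q$, the action of the $\Alt(5)$-quotient on $Q/Z$, and the automorphism theory of $Q$.
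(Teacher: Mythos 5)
Your overall architecture matches the paper's: part (i) from normality/centrality of $Z$ in $L_*$, parts (ii) and (iii) by reducing to Lemma~\ref{Laction} applied to the action of $L_*/Q$ on $Q/Z$, and (iv) from the structure so obtained (the paper cites \cite[Lemma 2.8]{PRSymplectic}, where you argue directly via $\Out(Q)$ --- a reasonable alternative). The reductions you do carry out are sound: $Z\le Q$; a Sylow $3$-subgroup of $C_{L_*}(Q)$ equals $Z$, so $C_{L_*}(Q)Q/Q$ is a normal $2$-subgroup of $L_*/Q$ contained in $PQ/Q$; and, granted $C_{L_*}(Q)=Z$, coprime action gives $C_{L_*}(Q/Z)=Q$ and hence the hypotheses of Lemma~\ref{Laction}.

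The genuine gap is precisely the step you yourself flag as ``the principal obstacle'': you never prove that the normal $2$-subgroup $C_{L_*}(Q)Q/Q$ is trivial. ``Examining the normal subgroup lattice \dots forces $C_{L_*}(Q)=Z$'' is a placeholder rather than an argument, and the phrase ``the prescribed action of the $\Alt(5)$-quotient on $Q/Z$'' is circular, since the hypotheses prescribe no action on $Q/Z$ --- establishing that action is exactly what (ii) and (iii) are for. The case analysis is also wider than the single case you name ($u$ centralising $Q$): since $C_{L_*}(Q)Q/Q$ is normal in $L_*/Q$ and lies in $PQ/Q\cong 2^{1+4}$, one must exclude $C_{L_*}(Q)Q=PQ$, $C_{L_*}(Q)Q=\langle u\rangle Q$, and every intermediate possibility. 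The paper needs a separate argument for each: if $C_{L_*}(Q)Q=PQ$ then $\Alt(5)$ embeds in $\Out(Q)\le\GSp_4(3)$, impossible as $\Sp_4(3)$ has no subgroup $\Alt(5)$; if $C_{L_*}(Q)Q=\langle u\rangle Q$ then $PQ/\langle u\rangle Q\cong 2^4$ embeds in $\GSp_4(3)$, contradicting the $2$-rank of $\Sp_4(3)$ being $2$; and the intermediate cases are excluded by producing a component $L_1\cong\SL_2(5)$ of $C_{L_*/Q}(PQ/Q)$ containing $\langle u\rangle Q/Q$ and deriving the same $\Alt(5)\le\Sp_4(3)$ contradiction. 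None of these appears in your write-up, so as it stands part (ii) is unproved --- and with it both the faithfulness needed to invoke Lemma~\ref{Laction} for (iii) and the embedding $L_*/Q\hookrightarrow\Out(Q)$ on which your proof of (iv) rests.
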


\begin{proof} Since $Z$ is normal in $L^*$, $Z \le O_3(L^*)=Q$ and so, as $Q$ is extraspecial,
$Z=Z(Q)$ has order $3$. This is (i).

Suppose  that $C_L(Q) \not \le Q$. Then $C_L(Q)Q/Q$ is a non-trivial
normal subgroup of $L_*/Q$. Let $D \in \syl_3(C_L(Q))$. Then $|D|\le
9$ and hence is abelian. If $D >Z$, then $DQ= S$ and hence $D  \le
Z(S)=Z$ which is a contradiction. Thus $D = Z \le Q$ by (i). The
assumed structure of $L_*$ now indicates that $C_L(Q) \le QP$. In
particular,  $L_*/C_L(Q)$ has a composition factor isomorphic to
$\Alt(5)$. As $Q$ is extraspecial, the commutator map defines a
symplectic form on $Q/Z$ and so $\Out(Q)$ is isomorphic to a
subgroup of $\GSp_4(3)$. Since $\Sp_4(3)$ has no subgroups
isomorphic to $\Alt(5)$, $C_L(Q) < QP$. If $C_{L}(Q)Q = \langle
u\rangle Q$, then $PC_{L}(Q)Q/Q$ has $2$-rank $4$, contrary to the
$2$-rank of $\Sp_4(3)$ being $2$. Thus $\langle u\rangle Q /Q<
C_{L}(Q)Q/Q < PQ/Q$. In this case, $C_{L_*/Q}(PQ/Q)$ must contain a
component $L_1$ isomorphic to $\Alt(5)$ or $\SL_2(5)$. The former
case being impossible, we get $L_1\cong \SL_2(5)$. Since $L_1 \cap
PQ/Q$ is normal of order $2$ we deduce that $L_1 \ge \langle
u\rangle Q/Q$, and once again we have $L_1C_{L}(Q)Q/Q \cong \Alt(5)$
which is our final contradiction. Hence $C_L(Q)=Z$ and (ii) holds.

Part (iii) follows from  Lemma~\ref{Laction}, since $L_*/Q$ acts
faithfully on $Q/Z$ and $PQ/Q$ is extraspecial.

Finally (iv) is a consequence of (iii) and \cite[Lemma
2.8]{PRSymplectic}.
\end{proof}

\begin{lemma}\label{inv1} Suppose that $s$ is an involution of $L_*$ with $sQ\not= uQ$.
Then the following hold. \begin{enumerate}
\item $s \in PQ$.
\item $C_{L_*}(s)PQ/PQ \cong \Alt(4)$.
\item  $Q= C_Q(s)[Q,s]$, $[C_Q(s),[Q,s]]=1$ and $C_Q(s) \cong
[Q,s] \cong 3^{1+2}_+$.
\item $C_{PQ}(s) \sim 3^{1+2}_+.(\Q(8)\times 2)$ and $O_{3'}(C_{PQ}(s)) =\langle s \rangle$.
\item $C_{L^*}(u)/O_2(C_{L^*}(u))\cong 3\times \Alt(5)$.
\end{enumerate}
\end{lemma}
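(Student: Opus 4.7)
The strategy is to work in the quotient $\bar L_* = L_*/Q$, which by Lemma~\ref{lem1}(iii) has shape $2^{1+4}_-.\Alt(5)$ and acts faithfully on the $4$-dimensional $\GF(3)$-module $V = Q/Z$, and then transfer conclusions back to $L_*$ by Frattini-style arguments. Part (i) is immediate: since $|Q|$ is odd, $sQ$ is an involution of $\bar L_*$, so Lemma~\ref{Laction2}(ii) places $sQ$ in $O_2(\bar L_*) = PQ/Q$, giving $s \in PQ$. For (ii), Lemma~\ref{Laction-1}(i) yields $C_{\bar L_*}(\bar s)\bar P/\bar P \cong \Alt(4)$ where $\bar P = PQ/Q$, and lifting to $L_*$ requires only that $Q$ act transitively on the involutions in the coset $sQ$, which will follow from the eigenspace count in (iii).

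The heart of the argument is the claim, used in (iii), that $\dim C_V(\bar s) = \dim [V,\bar s] = 2$. Both $\bar s$ and $\overline{su}$ are non-central involutions of $\bar P$, so by Lemma~\ref{Laction-1}(i) they lie in a single $\bar L_*$-orbit and hence their fixed spaces have equal dimension; since $\bar u$ acts as $-I$ on $V$, the action of $\overline{su}$ is $-\bar s$, so these two fixed dimensions are complementary and sum to $4$, forcing each to equal $2$. The commutator on $Q$ equips $V$ with an $\bar L_*$-invariant non-degenerate symplectic form; because the characteristic is not $2$, the eigenspaces $V_+ = C_V(\bar s)$ and $V_- = [V,\bar s]$ are mutually perpendicular, and by non-degeneracy each is a hyperbolic plane. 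Pulling back, $C_Q(s)$ and $[Q,s]$ are non-abelian subgroups of order $27$ with centre $Z$, hence extraspecial, and of $+$-type since $Q \cong 3^{1+4}_+$ has exponent $3$. The relation $V_+ \perp V_-$ is exactly $[C_Q(s),[Q,s]] = 1$.

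For (iv), combining (iii) with Lemma~\ref{Laction-1}(i) gives $|C_{PQ}(s)| = 27 \cdot 16$ and hence the shape $3^{1+2}_+.(\Q(8) \times 2)$. To determine $O_{3'}(C_{PQ}(s))$ I first replace $s$ by a $Q$-conjugate lying in the complement $P$ (possible since all involutions of $sQ$ are $Q$-conjugate), so that $C_{PQ}(s) = C_P(s) \ltimes C_Q(s)$ with $C_P(s) \cong \langle s\rangle \times \Q(8)$. Any normal $2$-subgroup $N$ of $C_{PQ}(s)$ satisfies $[N, C_Q(s)] \le N \cap C_Q(s) = 1$, so $N$ lies in the kernel of the action $C_P(s) \to \Aut(C_Q(s))$; since $u \in \Q(8)$ inverts $V$ and thus acts non-trivially on $V_+$, and $\langle u\rangle$ is the unique non-trivial normal subgroup of $\Q(8)$, this kernel is $\langle s\rangle$. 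For (v), note that $u \in Z(P)$ and $C_Q(u) = Z$ (as $u$ inverts $V$); since $\bar u$ is central in $\bar L_*$, another Frattini argument (using that the $81$ involutions of $\langle u\rangle \ltimes Q$ all lie in $uQ$ and form a single $Q$-conjugacy class) yields $C_{L_*}(u) Q = L_*$ and hence $C_{L_*}(u)/Z \cong \bar L_*$. Then $P$ is characteristic in the normal subgroup $PZ = P \times Z$ of $C_{L_*}(u)$, so $O_2(C_{L_*}(u)) = P$, and $C_{L_*}(u)/P$ is a central extension of $\Alt(5)$ by $\mathbb{Z}/3$ which splits because the Schur multiplier of $\Alt(5)$ has order coprime to $3$.

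The main technical obstacle is the eigenspace calculation underlying (iii): once $\dim V_\pm = 2$ is in place, together with the perpendicularity $V_+ \perp V_-$ under the invariant symplectic form, the remaining assertions of (iii)--(v) reduce to order counts and standard Frattini lifts.
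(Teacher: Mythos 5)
Your proposal is correct and follows essentially the same route as the paper: pass to $L_*/Q\sim 2^{1+4}_-.\Alt(5)$ acting on $Q/Z$, quote Lemmas~\ref{Laction2}(ii) and \ref{Laction-1}(i), and lift back by coprime action and a Frattini argument. The only cosmetic differences are that the paper derives $[C_Q(s),[Q,s]]=1$ from the Three Subgroup Lemma rather than from perpendicularity of the $\pm1$-eigenspaces under the commutator form, and it leaves the dimension count in (iii) and the kernel computation in (iv) implicit where you spell them out.
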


\begin{proof} Part (i) follows from Lemma~\ref{Laction2}(ii) and
part (ii) comes from Lemma~\ref{Laction-1} (i).

 Because  $Q=
C_Q(s)[Q,s]$   the Three Subgroup Lemma shows that
$[C_Q(s),[Q,s]]=1$. Thus, as $sQ\not = uQ$, $[Q,s] < Q$ and so, as
$s$ does not centralize $Q$, we deduce that  $C_Q(s) \cong [Q,s]
\cong 3^{1+2}_+$ from Lemma~\ref{lem1}(iv).
Part (iv) follows from Lemma~\ref{Laction-1} (i) and part (iii).
Since $C_{L^*}(Q) = Z$, and $L^*$ acts irreducibly on $Q/Z$, $u$ inverts $Q/Z$. Therefore, $C_Q(u)= Z$ and by the Frattini Argument, $C_{L^*}(u)Q= L^*$.
  Now $C_{L_*}(u)/O_2(C_{L_*}(u))$ has shape
$3.\Alt(5)$ and hence is isomorphic to $3\times \Alt(5)$ as the
Schur multiplier of $\Alt(5)$ has order $2$.  Thus (v) holds.

\end{proof}

The next lemma shines a light on the structure of $C_{L^*}(s)$ for $s \in L^*$ an involution with $sQ\neq uQ$.

\begin{lemma}\label{inv2} Suppose that $s $ is an involution of $L_*$ with $sQ \not= uQ$.
Then the following hold.
\begin{enumerate}
\item $[O_2(C_{L_*}(s) ),O_3(C_{L_*}(s))]=1$.
\item $O_2(C_{L_*}(s) )=O_{3'}(C_{L_*}(s)) \cong
\Q(8)$.  \item $C_{L_*}(s)/O_2(C_{L_*}(s) )\sim 3^{1+2}_+.\SL_2(3)$
is isomorphic to the centralizer of a non-trivial $3$-central
element in $\PSp_4(3)$.
\item If $b \in C_{L_*}(s)$ has order $3$ and $b \not \in Q$, then
$C_{O_{3'}(C_{L_*}(s))}(b)= \langle s \rangle$.
\end{enumerate}
\end{lemma}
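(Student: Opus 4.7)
I would set $K := C_{L_*}(s)$ and $R := K\cap Q = C_Q(s)$, which by Lemma~\ref{inv1}(iii) is extraspecial of order $27$ and $+$-type, and is normal in $K$. The proof would proceed in three stages: compute $|K|$ and $O_3(K)$ from the image of $K$ in $L_*/Q$; identify $T := O_2(K)$ as a quaternion subgroup centralising $R$; and apply Lemma~\ref{psp43new} to $K/T$ to obtain (iii).

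First, I would use Sylow's theorem applied to $Q\langle s\rangle$ (with $|Q|$ odd) to conclude that all involutions in $sQ$ are $Q$-conjugate, so $KQ/Q = C_{L_*/Q}(sQ)$. Lemma~\ref{Laction-1}(i) identifies the right-hand side as $(\Q(8)\times 2).\Alt(4)$ of order $192$; hence $|K| = 27\cdot 192 = 5184$ and $K/R \cong (\Q(8)\times 2).\Alt(4)$. Since this quotient has trivial $O_3$, $O_3(K) = R$.

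Next, setting $T := O_2(K)$, I would observe that $O_2(C_{PQ}(s)) = \langle s\rangle$ (Lemma~\ref{inv1}(iv)) and $s \in Z(K)$ imply $T\cap C_{PQ}(s) = \langle s\rangle$; since $T/\langle s\rangle$ embeds as a normal $2$-subgroup of $K/C_{PQ}(s) \cong \Alt(4)$, we obtain $|T|\le 8$. To realise equality, I would exploit the action of $K$ on the extraspecial $R$: because $K$ centralises $Z = Z(R)$ (as $Z\le Z(L_*)$), it preserves the non-degenerate alternating commutator form on $R/Z$, giving a homomorphism $\varphi: K \to \Sp(R/Z) = \SL_2(3)$ with kernel $R\cdot C_K(R)$. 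A direct calculation using the action of a Sylow~$3$-subgroup $S \in \Syl_3(K)$ (of order $81$) and of a Sylow~$2$-subgroup of $K$ would show $\varphi$ is surjective, so $|C_K(R)| = 24$ with Sylow~$3$-subgroup equal to $Z$. The Sylow~$2$-subgroup $T_0$ of $C_K(R)$ then has order $8$, is characteristic in $C_K(R)\trianglelefteq K$, and so is normal in $K$. Combined with $|T|\le 8$, this gives $T = T_0$ of order $8$ with $[T,R]=1$, proving (i). For the isomorphism type, I would decompose $Q/Z = R/Z \oplus [Q,s]/Z$ into the $\pm 1$ eigenspaces of $s$ and note that $T$ centralises $R/Z$ but acts faithfully on $[Q,s]/Z$ (since $C_T(Q) \le T\cap Z = 1$), preserving its commutator form; thus $T$ embeds in $\Sp([Q,s]/Z) = \SL_2(3)$, whose unique order-$8$ subgroup is $\Q(8)$. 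This proves (ii).

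Finally, for (iii), $K/T$ has $R \cong 3^{1+2}_+$ as $O_3$, trivial $O_2$, and $K/(RT) \cong \SL_2(3)$ (the image of $\varphi$), so $K/T$ is of shape $3^{1+2}_+.\SL_2(3)$; the Sylow~$3$-subgroups of the centralizers of its involutions are elementary abelian by direct calculation, so Lemma~\ref{psp43new} identifies $K/T$ with the centralizer of a non-trivial $3$-central element in $\PSp_4(3)$. Part (iv) is a straightforward check: an element $b \in K$ of order $3$ with $b\notin Q$ projects to the $\mathbb{Z}/3$ in the $\Alt(4)$ quotient of $K/R$, and acts on $T\cong\Q(8)$ via the order-$3$ outer automorphism factoring through $\Aut(\Q(8)) = \Sym(4)$, which fixes only $Z(\Q(8)) = \langle s\rangle$; hence $C_T(b) = \langle s\rangle$. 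The main obstacle is the surjectivity of $\varphi$: the crude upper bound $|T|\le 8$ alone does not pin down the image, and ruling out the possibility that $\varphi(K)$ is a proper subgroup of $\SL_2(3)$ requires tracking the actions of the Sylow subgroups of $K$ on $R/Z$ carefully.
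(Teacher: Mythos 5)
The skeleton of your argument is sound and matches the paper's in outline, and several of your steps are correct --- indeed your identification of $T=O_2(K)$ with $\Q(8)$, by letting $T$ act faithfully on the extraspecial group $[Q,s]$ and embedding it into $\Sp([Q,s]/Z)\cong\SL_2(3)$, is a little cleaner than the paper's. But there is a genuine gap exactly where you flag one: the surjectivity of $\varphi\colon K\to\Sp(R/Z)\cong\SL_2(3)$ is the load-bearing step of the whole lemma --- it is what produces the normal subgroup of order $8$ centralizing $R$ (hence (i) and the lower bound $|T|\ge 8$) and the $\SL_2(3)$ sitting on top of $R$ (hence (iii)) --- and ``a direct calculation \dots would show'' is not a proof. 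A priori $\varphi(K)$ could be cyclic of order $3$ or $6$, in which case $|C_K(R)|$ would be $192$ or $96$ and your construction of $T_0$ collapses. The paper closes precisely this hole by first pinning down $C_{PQ\cap K}(R)$: choosing the representative $u$ of the coset $Qu$ to centralize $s$, one has $(PQ\cap K)Q/Q\cong\Q(8)\times 2$ and $u$ inverts $R/Z$, from which $C_{PQ\cap K}(R)=Z\langle s\rangle$ of order $6$; since $C_K(R)$ is normal in $K$, meets $PQ$ in $Z\langle s\rangle$, and contains no element of order $3$ outside $Q$ (such an element has $|C_{Q/Z}(\cdot)|=9$ and normalizes both $R/Z$ and $[Q,s]/Z$, so it cannot centralize $R/Z$), one gets $|C_K(R)|\le 2^3\cdot 3$ and hence $K/RC_K(R)\cong\SL_2(3)$. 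Some argument of this kind must be supplied; without it (i)--(iii) are unproved.

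A secondary gap of the same kind: to invoke Lemma~\ref{psp43new} for $K/T$ you must check that the Sylow $3$-subgroups of an involution centralizer in $K/T$ are elementary abelian, and this cannot be settled ``by direct calculation'' from the abstract shape $3^{1+2}_+.\SL_2(3)$ with trivial $O_2$ --- that shape does not determine the isomorphism type, which is the entire reason Lemmas~\ref{cen3psp43} and \ref{psp43new} exist. The paper imports the needed fact from Lemma~\ref{inv1}(v): $C_{L_*}(u)/O_2(C_{L_*}(u))\cong 3\times\Alt(5)$ has elementary abelian Sylow $3$-subgroups, and one applies this to the image of $u$ in $K/T$. Your treatment of (iv) is essentially correct once one notes explicitly that $T$ maps onto $O_2(KPQ/PQ)\cong 2^2$ while $b$ maps to an element of order $3$ of $KPQ/PQ\cong\Alt(4)$, so $b$ genuinely induces an automorphism of order $3$ on $T\cong\Q(8)$ rather than a trivial one.
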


\begin{proof} Part (i) is trivial (and is included as it illuminates the structure of $C_{L_*}(s)$).
Set $Y = C_{L_*}(s)$, $W= C_Q(s)= Q\cap Y$ and select an involution
of $Qu$ which centralizes $s$ and, for convenience, call it $u$.
Then, by Lemma~\ref{inv1} (iv), $W \cong 3^{1+2}_+$. Therefore
$Y/C_{Y}(W)$ embeds into $\Aut(3^{1+2}_+) \sim 3^2.\GL_2(3)$. As $W$
is extraspecial, $WC_{Y}(W)/C_{Y}(W) \cong 3^2$. Let $X = C_{Y}(W)$.
Since $(QP\cap Y)Q/Q \cong \Q(8) \times 2$ by Lemma~\ref{inv1} (iv)
and since $u$ inverts $W/Z$, $C_{QP\cap Y}(W) = C_W(W)\langle
s\rangle = Z\langle s\rangle$. Hence, as $X$ is normal in $Y$, we
have
$$[X,C_{QP}(s)]\le X \cap C_{QP}(s) = Z\langle s\rangle.$$
As the elements of order $3$ in $Y\setminus W$ act non-trivially on
$(PQ\cap Y)Q/Q$, we get $X \le C_{FQ}(s)$ where $F \in \syl_2(Y)$.
Additionally, as $Y/Q$ is $2$-closed, we have $Y/C_{Y}(W) \sim
3^2.\SL_2(3)$ and $C_{Y}(W)$ has order $2^3.3$. It follows that
$|O_2(Y)|=2^3$. Noting that $O_2(Y)$ and $u$ are in a common Sylow
$2$-subgroup of $Y$, $[Q,s] = C_{Q}(su)$ and that $O_2(Y)$ acts
faithfully on $[Q,s]$ by the $3$-constraint of $L_*$. By applying
the above conclusions to the involution $su$, we obtain $O_2(Y) \cong
\Q(8)$. As $O_2(Y)= O_{3'}(Y)$,  (ii) holds.

Now we  have $Y/O_2(Y) \sim 3^{1+2}_+.\SL_2(3)$ and
$O_2(Y/O_2(Y))=1$.  From Lemma~\ref{inv1} (v), $C_{L_*}(u)$ has
elementary abelian Sylow $3$-subgroups. It follows that the Sylow
$3$-subgroups of $C_{Y/O_2(Y)}(uO_2(Y))$ are  elementary abelian.
 So, using Lemma~\ref{psp43new} the conclusion in
(iii)  holds.

Since by Lemma~\ref{inv1} (ii), $C_{L^*}(s)PQ/PQ\cong \Alt(4)$ and $b \not\in Q$, we have $C_{O_{3'}(C_{L^*}(s))}(b) \le PQ$. Thus (iv) follows from Lemma~\ref{inv1}(v).

\end{proof}

Another, less precise, way of recording Lemma~\ref{inv2} is to say
that $C_{L_*}(s)$ has shape $(3^{1+2}_+\times \Q(8)).\SL_2(3)$.

\begin{lemma}\label{SACTION}
 $C_{Q/Z}(S) = [Q/Z,S]$ has order $3^2$ and $[Q,S]$ is
elementary abelian of order $3^3$. In particular $C_Q([Q,S])=[Q,S]$.
\end{lemma}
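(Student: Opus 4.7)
My plan is as follows. Since $L_*/Q \sim 2^{1+4}_-.\Alt(5)$ has a cyclic Sylow $3$-subgroup, $|S/Q|=3$; I would fix $s \in S\setminus Q$ so that $S=Q\langle s\rangle$. I would view $V:=Q/Z$ as a $4$-dimensional $\GF(3)$-space carrying the non-degenerate alternating form $f$ induced by commutation in the extraspecial group $Q$. Since $[s,Z]=1$, $s$ preserves $f$ and so acts on $V$ as an element of $\Sp(V)\cong\Sp_4(3)$; in particular the standard identity $C_V(s)=[V,s]^\perp$ holds.

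The core of the argument is to pin down the Jordan type of $s$ on $V$. As $(s-1)^3=0$ in characteristic~$3$, the a priori possibilities are $[1^4]$, $[2,1^2]$, $[2,2]$, $[3,1]$, and $[4]$. Type $[4]$ would force order $9$; type $[1^4]$ is excluded because the action of $L_*/Q$ on $V$ is faithful (implicit in Lemma~\ref{lem1}(iii) through the application of Lemma~\ref{Laction}); and type $[3,1]$ can be eliminated by a short direct check that it cannot preserve any non-degenerate alternating form on a $4$-space (the first basis vector ends up in the radical of the form). To rule out $[2,1^2]$ I would perform a double count of pairs $(v,x)$ with $v\in V^\#$, $x\in L_*/Q$ of order~$3$, and $xv=v$. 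By Lemma~\ref{Laction2}(i), $L_*/Q$ acts transitively on $V^\#$ with vector stabiliser $\cong\SL_2(3)$, and the latter contains $8$ elements of order $3$; this gives $80\cdot 8 = 640$ pairs. For the other count, Lemma~\ref{Laction-1}(ii) supplies $C_{O_2(L_*/Q)}(S/Q)\cong\Dih(8)$ and $N_{L_*/Q}(S/Q)\cdot O_2(L_*/Q)/O_2(L_*/Q)\cong\Sym(3)$; a Sylow count inside $O_2(L_*/Q)\cdot(S/Q)$, a group of order $96$ (any larger normaliser of $S/Q$ inside $O_2(L_*/Q)$ would produce only $2$ Sylow $3$-subgroups, contradicting $n_3\equiv 1\pmod 3$), forces $N_{O_2(L_*/Q)}(S/Q)=C_{O_2(L_*/Q)}(S/Q)$, hence $|N_{L_*/Q}(S/Q)|=48$. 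Thus $L_*/Q$ has exactly $80$ elements of order~$3$, all conjugate. Equating the two counts yields $|C_V(s)|=9$, forcing Jordan type $[2,2]$: $s$ acts quadratically on $V$, and $[V,s]=C_V(s)$ is a totally isotropic $2$-subspace.

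The lemma then follows quickly. First, $C_{Q/Z}(S)=C_V(s)=[V,s]=[Q/Z,S]$ has order~$9$. Next, from $S=Q\langle s\rangle$ we get $[Q,S]=[Q,Q][Q,s]=Z\cdot[Q,s]$, and since $[Q,s]Z/Z=[V,s]$ has order $9$, $[Q,S]$ has order $27$. Total isotropy of $[V,s]$ forces $[Q,S]/Z$ to commute inside $Q/Z$, so $[Q,S]$ is abelian; since $Q$ has exponent~$3$ (being extraspecial of $+$-type), $[Q,S]$ is elementary abelian of order $3^3$. Finally, a maximal abelian subgroup of the extraspecial group $3^{1+4}_+$ has order $3^3$ (corresponding to a maximal totally isotropic subspace of $V$ containing~$Z$), so the abelian subgroup $[Q,S]$ of order $3^3$ is already maximal abelian in $Q$, giving $C_Q([Q,S])=[Q,S]$.

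The main obstacle is the Jordan-type determination; ruling out $[2,1^2]$ is precisely what necessitates the orbit-counting fusion of Lemmas~\ref{Laction2}(i) and~\ref{Laction-1}(ii). All other steps are short verifications from the structure of the extraspecial group $Q$ and the symplectic geometry of $V$.
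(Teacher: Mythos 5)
Your proof is correct, but it takes a genuinely different route from the paper's. The paper never touches Jordan types: it picks an involution $s\in PQ$ with $sQ\neq uQ$ centralizing $S/Q$ (via Lemma~\ref{inv1}(ii)), so that $S$ normalizes the two extraspecial pieces $Q_1=C_Q(s)$ and $Q_2=[Q,s]$ of order $27$, writes $C_{Q/Z}(S)=C_{Q_1/Z}(S)C_{Q_2/Z}(S)$, and then uses an element of $N_{PQ/Q}(S/Q)\cong \Dih(8)$ conjugating $sQ$ to $suQ$ (together with $u$ inverting $Q/Z$) to see that the two factors have equal order; $3$-constraint then forces each to have order exactly $3$, and the Three Subgroup Lemma plus exponent $3$ gives that $[Q,S]$ is elementary abelian. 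Your argument instead pins down the action of an order-$3$ element of $S\setminus Q$ on the symplectic space $Q/Z$ directly: the double count built on Lemma~\ref{Laction2}(i) (transitivity on $V^\#$ with stabilizer $\SL_2(3)$) and the normalizer order $48$ from Lemma~\ref{Laction-1}(ii) correctly yields $|C_V(s)|=9$ and hence quadratic type $[2,2]$, after which total isotropy of $[V,s]=C_V(s)$ delivers all three assertions at once. (Two small remarks: the parenthetical Sylow count is superfluous, since $[N_{O_2}(S/Q),S/Q]\le O_2\cap S/Q=1$ gives $N_{O_2}(S/Q)=C_{O_2}(S/Q)$ directly; and your exclusion of type $[3,1]$ is just the standard fact that odd Jordan blocks of a unipotent symplectic element pair up, verified by your radical computation.) What your route buys is slightly more information -- it exhibits $[Q,S]/Z$ explicitly as a totally isotropic plane and shows $S/Q$ acts quadratically on $Q/Z$ -- at the cost of a global counting argument where the paper gets by with a local symmetry between $C_Q(s)$ and $[Q,s]$.
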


\begin{proof}

 Since $L_*/Q \sim 2^{1+4}_-.\Alt(5)$, Lemma~\ref{inv1} (ii)
 implies that there is an involution $s \in PQ$
which centralizes $S/Q$ and satisfies $sQ\neq uQ$. Hence  $S$ normalizes $Q_1=C_Q(s)$ and
$Q_2=[Q,s]$. Thus, by Lemma~\ref{inv1}(iii), $C_{Q/Z}(S)  =
C_{Q_1/Z}(S)C_{Q_2/Z}(S)$. By  Lemma~\ref{Laction-1} (ii),  $sQ$ and
$suQ$ are conjugate in $N_{PQ/Q}(S/Q) \cong \Dih(8)$ by an element
$fQ$ say. Since $u$  inverts $Q/Z$ by Lemma~\ref{lem1}(iii), we get
that $Q_1^f= Q_2$. Thus  $|C_{Q_1/Z}(S)| = |C_{Q_2/Z}(S)|$.
Therefore, as $L_*$ is $3$-constrained by Lemma~\ref{lem1} (ii),
$|C_{Q/Z}(S)|= 3^2$.  Since, for $i=1,2$,  $[Q_i/Z,S] \le
C_{Q_i/Z}(S)$, we get that $C_{Q/Z}(S)= [Q/Z,S]$ has order $3^2$ as
claimed. The Three Subgroup Lemma and $Q$ being of exponent $3$
shows that $[Q,S]$ is  elementary abelian. Finally, noting that $Z
\le [Q,S]$  we have $|[Q,S]| =3^3$. In particular, $[Q,S]$ is a maximal abelian subgroup of $Q$.
\end{proof}

We now put  $J = C_S([Q,S])$, and start the investigation of the
$3$-local subgroup  $M=N_G(J)$. Set $M_* = O^{3'}(M)$.

\begin{lemma}\label{New1}
The following hold.
\begin{enumerate}
\item $J= J(S)$ is elementary abelian of order $3^4$;
\item $S= JQ$;
\item no element of $S\setminus J$ is acts quadratically on $J$; and
\item every element of order $3$ in $S$ is contained in $J \cup Q$.
\end{enumerate}
\end{lemma}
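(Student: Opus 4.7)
The plan is to prove (ii) and the order of $J$ first by a commutator-map argument, then upgrade $J$ from abelian to elementary abelian using an involution from $L_{*}$, then prove (iv) and (iii) by a quadratic form calculation on $Q/Z$, and finally identify $J$ with $J(S)$ using (iv). Set $T = [Q,S]$. By Lemma~\ref{SACTION}, $T$ is a maximal abelian subgroup of $Q$ (hence maximal totally isotropic in the $\GF(3)$-symplectic space $Q/Z$ arising from the commutator in the extraspecial group $Q$) and $[T,S] \le Z$. Because $T$ is self-centralizing in $Q$, $J \cap Q = T$, and the commutator map yields a homomorphism $\phi : S \to \Hom(T/Z, Z)$, $s \mapsto (xZ \mapsto [x,s])$, with kernel $J$. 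Its restriction $\phi|_{Q}$ factors through an isomorphism $Q/T \to \Hom(T/Z, Z)$ induced by the non-degenerate symplectic form on $Q/Z$, so $\phi$ is surjective; hence $|S/J| = 9$, which proves (ii) and gives $|J| = 3^{4}$. Since $T \le Z(J)$ and $J/T$ is cyclic of order $3$, $J$ is abelian.

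To show $J$ has exponent $3$, I would work with the involution $u$ defined by $\langle u \rangle = Z(P)$. The subgroup $\langle uQ\rangle = Z(PQ/Q)$ is characteristic in the normal Sylow $2$-subgroup $PQ/Q$ of $L_{*}/Q$, and since it has order $2$ it is central in $L_{*}/Q$; in particular, $u$ normalizes $S$. From Lemma~\ref{inv1}(v), $C_{L_{*}}(u)$ has elementary abelian Sylow $3$-subgroup of order $9$, and as $u$ inverts $Q/Z$ we have $C_{Q}(u) = Z$; thus $C_{S}(u)$ is elementary abelian of order $9$, meeting $Q$ in $Z$, and any $j \in C_{S}(u) \setminus Z$ has order $3$ and lies in $S \setminus Q$. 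For such $j$ and any $t \in T$, the identity $[t^{u}, j] = [t,j]^{u} = [t,j]$ (valid because $u$ centralizes $j$ and fixes $[t,j] \in Z$) combined with $t^{u}t^{-1} \equiv t \pmod{Z}$ (since $u$ inverts $T/Z$ and $T$ has exponent $3$) yields $t \in C_{T}(j)$. Hence $j \in C_{S}(T) = J$, and so $J = T\langle j\rangle$ is elementary abelian of order $3^{4}$.

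The main technical obstacle is (iv), to which (iii) will reduce. Let $s \in S$ have order $3$ with $s \notin Q$; after possibly replacing $s$ by $s^{-1}$, write $s = jq$ with $q \in Q$. Using that $j$ centralizes $T$, $T$ has exponent $3$, and $[T, Q] \le Z$ is central, I would expand
$$
(jq)^{3} \;=\; j^{3}\, q^{j^{2}} q^{j} q \;=\; [[q,j], q]^{2}.
$$
Fix a symplectic basis $e_{1},e_{2},e_{3},e_{4}$ of $Q/Z$ with $T/Z = \langle e_{1},e_{3}\rangle$ and $\omega(e_{1},e_{2}) = \omega(e_{3},e_{4}) = 1$. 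Then $j-1$ sends $e_{2} \mapsto e_{1}$ and $e_{4} \mapsto e_{3}$ (and the other basis vectors to $0$), and for $q \equiv \sum c_{i} e_{i} \pmod{Z}$ one computes $[[q,j],q]$ in $Z \cong \GF(3)$ to be $c_{2}^{2} + c_{4}^{2}$. This form is anisotropic because $-1$ is not a square in $\GF(3)$; hence $(jq)^{3} = 1$ forces $(c_{2},c_{4}) = (0,0)$, i.e.\ $q \in T$ and $s \in J$, which proves (iv). The same computation yields (iii): for $s \in Q\setminus T$ one has $[[j,s],s] = -(c_{2}^{2}+c_{4}^{2}) \ne 0$, so $s$ is not quadratic on $J$; and for $s \in S\setminus (J\cup Q)$, writing $s = j'q$ with $j' \in J$ and $q \in Q\setminus T$, abelianness of $J$ gives $[x,s] = [x,q]$ for every $x \in J$, reducing to the previous case.

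Finally, to identify $J$ with $J(S)$, let $A \le S$ be elementary abelian with $|A| \ge 3^{4}$. The maximality of $T$ as an abelian subgroup of $Q$ gives $|A \cap Q| \le 3^{3}$; combined with $|AQ/Q| \le 3$ this forces $|A| = 3^{4}$, $|A \cap Q| = 3^{3}$, and $AQ = S$. Pick $a \in A\setminus Q$; this element has order $3$, so (iv) places $a \in J$ and thus $a$ centralizes $T$. Since $aQ$ generates $S/Q$, Lemma~\ref{SACTION} gives $C_{Q/Z}(a) = T/Z$; hence $(A\cap Q)/Z \le T/Z$, and equality of orders forces $A \cap Q = T$. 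Then $A = T\langle a\rangle \le J$ and $|A| = |J|$ give $A = J$, so $J$ is the unique elementary abelian subgroup of $S$ of order $3^{4}$ and $J = J(S)$.
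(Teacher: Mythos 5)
Your arguments for (ii), for $|J|=3^4$, for $J$ being elementary abelian, and for $J=J(S)$ (the last modulo (iv)) are sound, and your reduction of (iii) and (iv) to the single quantity $[[q,j],q]=\omega((j-1)\bar q,\bar q)$ is a clean and correct computation. But the proof of (iv) has a genuine gap at the sentence ``Then $j-1$ sends $e_2\mapsto e_1$ and $e_4\mapsto e_3$.'' Given only that $j$ is a symplectic transformation of order $3$ with $[Q/Z,j]=C_{Q/Z}(j)=T/Z$ maximal totally isotropic, the map $\beta(\bar x,\bar y)=\omega((j-1)x,y)$ is a non-degenerate \emph{symmetric} bilinear form on the two-dimensional space $(Q/Z)/(T/Z)$, and over $\GF(3)$ there are two inequivalent such forms, distinguished by their discriminant: $\mathrm{diag}(1,1)$ (anisotropic) and $\mathrm{diag}(1,-1)$ (hyperbolic). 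Your normalization $(j-1)e_2=e_1$, $(j-1)e_4=e_3$ with $\omega(e_1,e_2)=\omega(e_3,e_4)=1$ forces $\beta(\bar e_2,\bar e_2)=\beta(\bar e_4,\bar e_4)=1$ and $\beta(\bar e_2,\bar e_4)=0$, i.e.\ it is achievable precisely when $\beta$ is of anisotropic type. Both types occur for quadratic elements of order $3$ in $\Sp_4(3)$ (e.g.\ $e_2\mapsto e_2+e_1,\ e_4\mapsto e_4-e_3$ is an equally good symplectic element of order $3$ with the same kernel and image), and in the hyperbolic case the form $c_2^2-c_4^2$ vanishes on $4$ non-zero vectors, producing elements of order $3$ in $jQ\setminus(J\cup Q)$. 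Since the vanishing locus of $\beta(\bar q,\bar q)$ is exactly the set of cosets $\bar q$ with $(jq)^3=1$, the anisotropy of $\beta$ is \emph{equivalent} to statement (iv): you have assumed the conclusion rather than proved it.

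What is missing is a structural input that pins down which of the two types your particular $j$ realizes. The paper supplies this by proving (iii) first and independently: $N_{L_*}(S)/S\cong\SDih(16)$ (Lemma~\ref{Laction2}(iii)) acts transitively on the non-trivial elements of $S/J$, so if one element of $S\setminus J$ acted quadratically on $J$ they all would; one then takes an involution $s\in PQ$ with $sQ\neq uQ$, for which $C_Q(s)\cong 3^{1+2}_+$ by Lemma~\ref{inv1}(iii), and derives $[J,C_Q(s)]\le Z(C_Q(s))=Z$, forcing the extraspecial group $C_Q(s)$ inside the abelian group $[Q,S]$ --- a contradiction. Statement (iv) is then deduced from (iii). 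To repair your argument you would need an analogous appeal to the ambient action of $L_*/Q\sim 2^{1+4}_-.\Alt(5)$ (or to the involutions normalizing $S$) to exclude the hyperbolic possibility for $\beta$; the commutator formalism alone cannot decide it.
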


\begin{proof}
From Lemma~\ref{SACTION}, we have that $C_Q([Q,S])=[Q,S]$. It follows that $|J|\le 3^4$. Let $b \in C_S(u)\setminus Q$. Then, by Lemma~\ref{inv1}(v), $b$ has order $3$. Also, as $[Q,S]$ is abelian and $u$ inverts $Q/Z$ and centralizes $Z$, we have  $[[Q,S],u] \cap Z=1$ and $[Q,S]=[[Q,S],u]Z$. Since $[[[Q,S],u],b]\le [[Q,S],u] \cap [Q,S,S] =[[Q,S],u] \cap Z$, we see that $b$ centralizes $[Q,S]$ and conclude that $J$ is elementary abelian of order $3^4$. Suppose that $A$ is an abelian subgroup of $S$ of order at least $3^4$. Then $3^3\ge |A\cap Q|\ge 3^3$. Therefore $A\cap Q$ has order $3^3$ and $[S,A\cap Q]=[AQ,A\cap Q]\le [Q,Q]= Z$. Hence $A \cap Q = [Q,S]$ by Lemma~\ref{SACTION}. But then $A \le J$ and we have $A=J$. Thus $J= J(S)$.

Since $J \not \le Q$, (ii) is obvious.

We get  $N_{L^*}(S)/S \cong \SDih(16)$ from Lemma~\ref{Laction2}(iii), and so $N_{L^*}(S)$ acts transitively on the elements of $S/J$. Thus if any element of $S/J$ acts quadratically on $J$, then they all do. So suppose that $s \in PQ$ with $sU \neq uQ$, $[J,s] \le Q$ and $x \in C_Q(s)J/J$ is non-trivial and acts quadratically on $J$. Then $1=[J,x,x]= [J,C_Q(s),C_Q(s)]$. In particular, $[J,x] \le Z(C_Q(s))$. By Lemma~\ref{inv1}(iii), $C_Q(s)$ is extraspecial, and hence $[J,C_Q(s)]\le  Z(C_Q(s))= Z$. Now using Lemma~\ref{SACTION}, we have $C_Q(s) \le [Q,S]$. Since the former group is extraspecial and the latter group is abelian, we have a contradiction. This proves (iii)

For (iv) assume for a contradiction that $x \in S= JQ$ has order $3$ and that $x \not \in J\cup Q$. Then $x = jq$ where $j \in J\setminus Q$ and $q \in Q\setminus J$. Since $x^3=1$ and both $Q$ and $J$ have exponent $3$, we have $jqj=q^2j^2q^2$ and $qjqj= j^2q^2$. Hence, using the fact that $J$ is a normal abelian subgroup of $S$, we get
\begin{eqnarray*}
[j,q]^q=q^2j^2q^2jq^2=jqj^2q^2= qj^2q^2j=q^2jqj^2= j^2q^2jq=[j,q].
\end{eqnarray*}
Since $[[Q,S],q] = Z \le C_J(q)$ and $J= \langle j\rangle[Q,S]$, we now have $[J,q]\le C_J(q)$ and this contradicts (iii). Hence (iv) holds.
\end{proof}

\begin{lemma}\label{New2} The following hold.
\begin{enumerate}
\item If $X \le Q$  has order $3$ with $Z \neq X$, then $X \not \le C_L(ZX)'$.
\item $Z$ is weakly closed in $Q$. \item If $g \in G$ and $Z^g \le S$, then $Z^g \le J$.\end{enumerate}
\end{lemma}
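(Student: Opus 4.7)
The plan is to handle (i), (ii), (iii) in sequence, with (iii) an easy corollary of (ii) and Lemma~\ref{New1}, and the content concentrated in (i) and (ii).

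For (i): Since $Z \le Z(L_*)$, we have $C_L(ZX) = C_{L_*}(X)$; set $H = C_{L_*}(X)$. The symplectic form on $Q/Z$ (from $Q$ being extraspecial with centre $Z$, see Lemma~\ref{lem1}(iv)) gives $C_Q(X)/Z = (XZ/Z)^\perp$, a $3$-dimensional subspace whose radical is exactly $XZ/Z$, so $C_Q(X)$ is non-abelian with $(C_Q(X))' = Z$. By Lemma~\ref{Laction2}(i) the stabiliser in $L_*/Q$ of $XZ/Z$ is isomorphic to $\SL_2(3)$, and a direct lifting argument (adjusting any $gQ$ fixing $XZ/Z$ by a suitable $q\in Q$ so as to centralise a generator of $X$) shows $H/C_Q(X) \cong \SL_2(3)$. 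This $\SL_2(3)$ acts on $C_Q(X)/Z$ fixing $XZ/Z$ pointwise and on the $2$-dimensional quotient $C_Q(X)/XZ$ as the natural module (it preserves the induced non-degenerate symplectic form, and $\mathrm{Sp}_2(3) = \SL_2(3)$). A module computation then identifies $[C_Q(X),H]/Z$ as a submodule complementary to $XZ/Z$ inside $C_Q(X)/Z$; lifting, $X \cap H' \le X \cap [C_Q(X),H]\cdot Z = 1$ (since $X \cap Z = 1$ and the complement misses $XZ/Z$), proving $X \not\le H'$.

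For (ii): Suppose for contradiction that $X := Z^g \le Q$ for some $g \in G$ with $X \neq Z$. Then $Z$ and $X$ are $G$-conjugate, both lie in $Z(H)$ (with $H = C_L(ZX)$ as above), and by (i) we have $Z \le (C_Q(X))' \le H'$ but $X \not\le H'$, so $Z$ and $X$ have distinct images in the abelianisation $H/H'$. The plan is to apply a fusion argument (Alperin's theorem, or a bespoke transfer computation) to show that any $G$-conjugation between central elements of $H$ is realised, modulo $H$, by an element of $N_G(ZX)$; such an element would identify the $H'$-cosets of $Z$ and $X$, contradicting the previous sentence.

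For (iii): By Lemma~\ref{New1}(ii), $S = JQ$, and by Lemma~\ref{New1}(iv) every element of order $3$ in $S$ lies in $J \cup Q$. A generator of $Z^g \le S$ therefore lies in $J$ (whence $Z^g \le J$) or in $Q$; in the latter case (ii) forces $Z^g = Z$, and $Z \le [Q,S] \le J$ by Lemma~\ref{SACTION} together with the definition $J = C_S([Q,S])$, so $Z^g \le J$ in either case.

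The main obstacle is the fusion step in (ii): since $ZX \not\le Z(S)$ (for $X \not\le Z = Z(S)$), Burnside-type direct arguments do not apply, and one must set up Alperin's theorem (or transfer) carefully so that the distinction between $Z$ and $X$ in $H/H'$ coming from (i) can actually be exploited inside an appropriate 3-local subgroup containing $ZX$.
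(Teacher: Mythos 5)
Your parts (i) and (iii) are essentially the paper's own arguments: for (i) you use the transitivity of $L_*$ on $(Q/Z)^\#$ from Lemma~\ref{Laction2}(i), the stabiliser $\SL_2(3)$, and the identification of $[C_Q(X),C_L(ZX)]$ as an extraspecial group of order $27$ avoiding $X$, exactly as in the paper; and (iii) is precisely the paper's deduction from (ii) together with Lemma~\ref{New1}(iv) and $Z\le [Q,S]\le J$.

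Part (ii), however, contains a genuine gap, and you have correctly located it yourself: the fusion step is only a plan, and the route you propose is unlikely to close. Alperin's theorem expresses the conjugation carrying $Z$ to $X$ as a composite of conjugations inside normalizers of intersections of Sylow $3$-subgroups, but none of those intermediate local subgroups need contain, let alone normalize, $ZX$; so there is no reason the conjugating element can be taken in $N_G(ZX)$, and hence no way to force it to respect the abelianisation $H/H'$. A transfer computation runs into the same obstruction, since $ZX$ is not central in $S$. The missing idea is symmetry, which makes the fusion apparatus unnecessary. You already have both key facts: $Z=[C_Q(X),C_Q(X)]\le T'$ and $X\not\le T'$, where $T=C_G(ZX)=C_L(ZX)$. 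Now exploit $X=Z^g$. First check that $Z\le O_3(C_G(X))=Q^g$ (if not, $ZQ^g/Q^g$ would be a non-trivial $3$-element of $C_G(X)/Q^g\sim 2^{1+4}.\Alt(5)$ and the order of $C_{C_G(X)}(Z)=T$ would come out strictly smaller than the order $3^4\cdot 24$ computed inside $L_*$). Then the $g$-conjugate of part (i), applied to the order-$3$ subgroup $Z$ of $Q^g$ with $Z\neq Z(Q^g)=X$, gives $Z\not\le C_{L^g}(XZ)'$; but $C_{L^g}(XZ)=C_G(XZ)=T$, so $Z\not\le T'$, contradicting $Z\le T'$. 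This two-line symmetry argument, sitting on top of the observations you already made, is how the paper completes (ii).
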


\begin{proof}
Suppose $X \le Q$ has order $3$ and $Z \neq X$. Then, by Lemma~\ref{Laction2}(i), we may assume that $ZX$ is normal in $S$. Let $T = C_S(ZX)$. Using Lemma~\ref{Laction2}(i) again we get $[C_Q(ZX),O_{3,2}(C_L(ZX))]$ is extraspecial of order $27$ and so $X \not \le [C_Q(ZX),O_{3,2}(C_L(ZX))]$. It follows that $X \not \le  C_L(ZX)'$. This is (i).

 Suppose that $X\in Z^G\setminus Z$ and $X \le Q$.  Let $T = C_G(ZX)$. Then $Z \le T'$ and $X \not \le T'$ by (i). On the other hand $T \le C_G(X)$ and so, as $Z$ and $X$ are $G$-conjugate, it follows that $Z \le O_3(C_G(X))$. But then, the situation is symmetric and so $X \le T'$
and this is a contradiction. Hence (ii) holds.

The third statement follows from part (ii) and Lemma~\ref{New1}(iv).
\end{proof}

\begin{lemma}\label{centralizers} The following hold:
\begin{enumerate}
\item $L\cap M = N_G(S)$.
\item $C_G(J)= C_G([Q,S]) = J$.
\end{enumerate}
\end{lemma}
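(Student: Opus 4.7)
The plan is to dispatch (i) by characteristic-subgroup considerations and then prove (ii) by forcing $C_G([Q,S])$ to be a small 3-group via Lemma~\ref{extraauto}.

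For (i), since $Z = Z(S)$ and $J = J(S)$ are both characteristic in $S$, we immediately get $N_G(S) \le N_G(Z) \cap N_G(J) = L \cap M$. Conversely, any $g \in L \cap M$ normalizes both $J$ and $Q = O_3(L)$ (the latter being characteristic in $L$), and hence normalizes $JQ = S$ by Lemma~\ref{New1}(ii).

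For (ii), set $C = C_G([Q,S])$. Since $Q$ is extraspecial we have $[Q,Q] = Z \le [Q,S]$, so $C \le C_G(Z) = L_*$; and since $J$ is abelian with $[Q,S] \le J$, we have $J \le C_G(J) \le C$. The crucial observation is that $[Q,S]$ is a maximal abelian subgroup of $Q$: by Lemma~\ref{SACTION} it is abelian of order $3^3$, which matches the order of a maximal abelian subgroup of the extraspecial group $Q \cong 3^{1+4}$. Now $Q \lhd L_*$, so $C$ acts on $Q$ by conjugation; by Lemma~\ref{extraauto}, every induced automorphism is a 3-element. The kernel of the induced map $C \to \Aut(Q)$ equals $C \cap C_{L_*}(Q) = C \cap Z = Z$, using the 3-constraint of $L_*$ from Lemma~\ref{lem1}(ii) to get $C_{L_*}(Q) \le Q$ and hence $C_{L_*}(Q) = Z(Q) = Z$. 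Thus $C/Z$ is a 3-subgroup of $\Aut(Q)$, forcing $C$ to be a 3-group.

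To finish, since $L_*/Q \sim 2^{1+4}.\Alt(5)$ has Sylow 3-subgroup of order $3$, we have $|CQ/Q| \le 3$; combined with $C \cap Q = C_Q([Q,S]) = [Q,S]$ of order $3^3$ (Lemma~\ref{SACTION}), this yields $|C| \le 3^4 = |J|$, and since $J \le C$ we conclude $C = J$. Then $C_G(J) = J$ follows from $J \le C_G(J) \le C_G([Q,S]) = J$. The main obstacle is seeing that $[Q,S]$ is maximal abelian in $Q$, a size check against the known bound $3^{n+1}$ for $3^{1+2n}$, which is precisely what activates Lemma~\ref{extraauto}; the remainder is a straightforward order count inside $L_*$.
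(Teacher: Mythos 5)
Your proof is correct and follows essentially the same route as the paper's: part (i) via $Z$ and $J$ being characteristic in $S$ together with $S=QJ$, and part (ii) by noting $Z\le [Q,S]$ forces $C_G([Q,S])\le L_*$, invoking Lemma~\ref{extraauto} on the maximal abelian subgroup $[Q,S]$ of $Q$ to see $C_G([Q,S])$ is a $3$-group, and then counting $|C_G([Q,S])|\le 3^4=|J|$. You merely make explicit some steps the paper leaves implicit (the kernel computation via $C_{L_*}(Q)=Z$ and the maximality of $[Q,S]$, which the paper records at the end of Lemma~\ref{SACTION}).
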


\begin{proof}  We have $N_G(S)$ normalizes
$Z(S)= Z(Q)$ and $J= J(S)$. Hence $N_G(S) \le L\cap M$. Since $L\cap
M$ normalizes $S=QJ$, (i) holds.

From  $Z \le [Q,S]$, $C_G([Q,S]) \le L_*$ and also, by
Lemma~\ref{extraauto}, $C_G([Q,S])$ is a $3$-group. Since
$C_G([Q,S]) \cap Q= [Q,S]$, we have $|C_G([Q,S])| \le 3^4$ and hence
$C_G([Q,S])= J$ as claimed in (iii).
\end{proof}

\begin{lemma}\label{LMStruct}\begin{enumerate}
\item There are exactly ten $G$-conjugates of $Z$ in $J$.
\item $|L/L_*|=2$, $L \sim 3^{1+4}_+.2^{1+4}_-.\Sym(5)$.
\item $M/J \cong \CO_4^-(3)$ the group of all similitudes of a non-degenerate quadratic form of $-$-type, $M_*/J \cong \Omega_4^-(3)$ and $M/M^* \cong \Dih(8)$.
\end{enumerate}
\end{lemma}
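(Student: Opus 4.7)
The overall strategy is to apply Lemmas~\ref{GO4minus} and \ref{quadratic form} to identify $M/J$ as $\CO_4^-(3)$ acting on $J\cong\GF(3)^4$, from which (i)--(iii) will follow.

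I begin by verifying the hypotheses of Lemma~\ref{GO4minus} for $A:=S/J$ acting on $J$. The action is faithful by Lemma~\ref{centralizers}(ii). Since $[Q,S]$ is self-centralising in $Q$ by Lemma~\ref{SACTION} and $[Q,S]\le J$, we have $J\cap Q=[Q,S]$, so $A\cong Q/[Q,S]$ is elementary abelian of order $9$ (using that $Q$ has exponent $3$ by Lemma~\ref{lem1}(iv)). By Lemma~\ref{New1}(iii) no element of $A^\#$ acts quadratically on $J$; on the $4$-dimensional $\GF(3)$-space $J$ this forces minimal polynomial $(x-1)^3$ and $\dim[J,a]=\dim C_J(a)=2$ for each such element. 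Lemma~\ref{GO4minus} then yields a non-degenerate $-$-type quadratic form $q$ on $J$ preserved by $A$, whose set $\mathcal{S}$ of $10$ singular $1$-spaces is $\{C_J(A)\}\cup\{\langle v\rangle a:a\in A\}$ for any $v\in J\setminus[J,A]$. Note $C_J(A)=Z$ and $[J,A]=[Q,S]$.

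For (i), Lemma~\ref{fusion} shows $Z^G\cap J=Z^M$ is a single $M$-orbit. By Lemma~\ref{New2}(ii) $Z$ is weakly closed in $Q$, so $Z^M\setminus\{Z\}\subseteq J\setminus[J,A]$; and since $A$ acts freely on $1$-spaces of $J\setminus[J,A]$ (as $C_J(a)\le[J,A]$ for $a\in A^\#$), these split into three $A$-orbits of length $9$, exactly one being singular. For any $Z^g\in Z^M\setminus\{Z\}$, write $B:=Z^g$: the abelian $3$-group $J$ of order $3^4$ sits in $L_*^g=C_G(B)$ but cannot embed in the extraspecial $Q^g$ (whose maximal abelian subgroups have order $3^3$), so $|J\cap Q^g|=3^3$ and, after a suitable $L_*^g$-conjugation of $g$, $J\le S^g$; then $J=J(S^g)$ by order, so $S^g\le N_G(J)=M$. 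Applying Lemma~\ref{GO4minus} to the second Sylow $3$-subgroup $A^g:=S^g/J$ of $M/J$ produces a $-$-type form $q'$ on $J$ with $B=C_J(A^g)$ singular. Since $A$ and $A^g$ are $M$-conjugate, an element of $M$ conjugating $A$ to $A^g$ transforms $q$ into a scalar multiple of $q'$, matching their singular sets; hence $B$ is singular for $q$. Thus $Z^M\subseteq\mathcal{S}$, and equality follows from $A$-invariance of $Z^M$ together with $\mathcal{S}\setminus\{Z\}$ being a single $A$-orbit. This establishes (i) with $|Z^M|=10$.

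Since $M$ preserves $\mathcal{S}$, Lemma~\ref{quadratic form} gives $M/J\le\CO_4^-(3)$. Using Lemmas~\ref{centralizers}(i) and \ref{Laction2}(iii), $|N_G(S)/S|=16\cdot|L/L_*|$, so $|M/J|=|Z^M|\cdot|N_G(S)/J|=10\cdot 9\cdot 16\cdot|L/L_*|\in\{1440,2880\}$, matching $|\mathrm{O}_4^-(3)|$ or $|\CO_4^-(3)|$. The central scalar $-1$ of $\CO_4^-(3)$ lies in $M/J$ (by inspection of the possible subgroup structures of $\CO_4^-(3)$ of these two orders), and pulling it back to $M$ produces an element stabilising $Z$ and inverting it, forcing $|L/L_*|=2$ and hence $|M/J|=2880$ with $M/J\cong\CO_4^-(3)$. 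This establishes (ii) (the shape of $L$ then follows from the given shape of $L_*$ together with $|L/L_*|=2$) and the first assertion of (iii). Finally, $\Omega_4^-(3)\cong\Alt(6)$ is perfect and generated by its $3$-elements, while $\CO_4^-(3)/\Omega_4^-(3)\cong\Dih(8)$ is solvable, so $M_*/J=O^{3'}(M/J)\cong\Omega_4^-(3)$ and $M/M_*\cong\Dih(8)$. The main obstacle is the second paragraph, namely identifying $B=Z^g$ as singular with respect to $q$; the key device is the auxiliary Sylow $3$-subgroup $A^g=S^g/J$ of $M/J$, whose associated $-$-type form must coincide with $q$ up to scalar after $M$-conjugation.
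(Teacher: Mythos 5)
There is a genuine gap in your proof of (i), at the step ``hence $B$ is singular for $q$''. The trouble starts earlier: the form produced by Lemma~\ref{GO4minus} is not independent of $v$, so ``$\mathcal S=\{C_J(A)\}\cup\{\langle v\rangle a\}$ for any $v\in J\setminus[J,A]$'' is incoherent. The $27$ one-spaces outside $[J,A]$ fall into three free $A$-orbits, and for each orbit Lemma~\ref{GO4minus} supplies a $-$-type form preserved by $A$ whose nonzero singular part is exactly that orbit; these three forms are pairwise non-proportional. Consequently your transport argument fails: $A^g$ likewise preserves three non-proportional forms, so there is no reason that $q\circ m^{-1}$ should be a scalar multiple of $q'$. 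Worse, even granting proportionality, the singular set of $q\circ m^{-1}$ is $\mathcal Sm$, so ``$B$ singular for $q'$'' only yields $B\in\mathcal Sm$ --- and since $S^m=S^g$ forces $Zm=B$, the containment $B\in\mathcal Sm$ is equivalent to $Z\in\mathcal S$ and carries no information about whether $B\in\mathcal S$. What you actually need --- that $Z^M\setminus\{Z\}$ is a \emph{single} $A$-orbit rather than a union of two or three of them --- is precisely the content of (i) and is not delivered by this argument.

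The paper closes this by pure counting, with no forms at all: $Z^M\setminus\{Z\}$ is a union of $S$-orbits of length $9$ (your own computation), it avoids $J\cap Q$ by Lemma~\ref{New2}(ii), so $|Z^M|\in\{10,19,28\}$; since $M/J$ embeds in $\GL_4(3)$, whose order is coprime to $7$ and $19$, the orbit length must be $10$. Only \emph{after} that does one invoke Lemma~\ref{GO4minus}, choosing $v$ to generate a conjugate $X\ne Z$ of $Z$ in $J$, so that $\mathcal S=\{Z\}\cup X^S=Z^M$ by construction, and Lemma~\ref{quadratic form} then gives $M/J\le\CO_4^-(3)$. If you repair (i) this way, the remainder of your argument goes through; your derivation of (ii) from the central scalar $-1$ of $\CO_4^-(3)$ (which lies in every subgroup of index at most $2$ and inverts $Z$) is a legitimate alternative to the paper's route, which instead locates an element of $N_{L_*}(S)$ inverting $X$.
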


\begin{proof} Since $Z$ is not weakly closed in $S$,
Lemma~\ref{New2} (ii) and (iii) imply that there exists $g \in G$
such that $X= Z^g \le J$ and $X \neq Z$. Since $J$ is abelian, $J$
centralizes $ZX$ and, by Lemma~\ref{SACTION},
$N_S(ZX)= [Q,S]X= J$. Thus there are nine $S$-conjugates of $X$ in
$J$. This shows that the number of $G$-conjugates of $Z$ in $J$ is
congruent to  $1$ modulo $9$. Since, by Lemmas~\ref{fusion} and
\ref{New1} (i), $M$ controls $G$-fusion in $J$, all the
$G$-conjugates of $Z$ in $J$ are conjugate in $M$. Because there is
a unique conjugate of $Z$ in $J \cap Q$ by Lemma~\ref{New2}(ii), we
deduce that $|Z^M| \le 28$. Since $M/J$ acts faithfully on $J$ by
Lemma~\ref{centralizers} (i), we have that $M/J$ is isomorphic to a
subgroup of $\GL_4(3)$. Now $|\GL_4(3)|$ is not divisible by either
7 or 19 and so there is no choice other than $|Z^M|=10$. Hence (i)
holds.

Since $J$ is characteristic in $S$, $N_{L_*}(S) \le M$. Thus, as
$X^S = Z^M \setminus \{Z\}$ and $N_{L_*}(S)$ normalizes $Z$,
$N_{N_{L_*}(S)}(X)S= N_{L_*}(S)$. In particular, $X$ is normalized
by a Sylow $2$-subgroup $T$ of $N_{L_*}(S)$. Since $XQP/QP$ is
inverted in $L_*/QP$, we must have that $X$ is inverted by an
element in $T$. Hence $L > L_*$ and now (ii) follows from
Lemma~\ref{Laction}.

 From (ii) we have $|N_{L}(S)/J| = 2^5.3^2$. Therefore
$|M/J|= 2^6.3^2.5$ by (i). By Lemmas~\ref{GO4minus} and ~\ref{New1}, we have that $Z^M$ is the set of singular $1$-spaces of a quadratic form of $-$-type and by Lemma~\ref{quadratic form} we have that $M$ is isomorphic to a subgroup of $\CO_4^-(3)$. Since the latter group has order $2^6.3^2.5$, this proves (iii).
\end{proof}

Define $M_0 = M_*O_{3,2}(M)$ and let $t \in N_P(S)$ be an involution
with $t \neq u$. Finally set $M_1 = \langle t \rangle M_0$ and
$B=[J,t]$. Note that $M_0/J \cong 2\times \Omega _4^-(3) = \SO_4^-(3)$.

\begin{lemma}\label{CStnew} $C_S(t) \in\syl_3(C_{L_*}(t))$.
\end{lemma}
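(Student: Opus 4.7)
My plan is to show $|C_S(t)| = 3^4$, matching $|C_{L_*}(t)|_3$.  Since $C_{L_*}(t)/O_2(C_{L_*}(t)) \sim 3^{1+2}_+.\SL_2(3)$ by Lemma~\ref{inv2}(iii), the Sylow $3$-subgroups of $C_{L_*}(t)$ have order $3^4$, so the upper bound $|C_S(t)| \le 3^4$ is immediate; only the lower bound requires work.

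The pivotal step is to establish that $t$ centralizes $S/Q$.  Since $t \in N_P(S) \subseteq P$, the coset $tQ$ lies in $N_{PQ/Q}(S/Q)$.  Applying Lemma~\ref{Laction2}(iii) to $Y = L_*/Q \sim 2^{1+4}_-.\Alt(5)$, which acts faithfully on the $4$-dimensional $\GF(3)$-module $Q/Z$, I obtain $N_{L_*/Q}(S/Q)/(S/Q) \cong \SDih(16)$ and hence $|N_{L_*/Q}(S/Q)| = 48$.  Lemma~\ref{Laction-1}(ii) gives $N_{L_*/Q}(S/Q)(PQ/Q)/(PQ/Q) \cong \Sym(3)$, forcing $|N_{PQ/Q}(S/Q)| = 48/6 = 8$.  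Since $|C_{PQ/Q}(S/Q)| = 8$ by the same lemma, $N_{PQ/Q}(S/Q) = C_{PQ/Q}(S/Q)$, and so $tQ$ centralizes $S/Q$; equivalently $[S,t] \le Q$.

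With $[S,t] \le Q$ in hand, the rest is linear algebra on the elementary abelian $3$-group $J$.  By Lemma~\ref{New1}, $J \cap Q = [Q,S]$, hence $[J,t] \le [S,t] \cap J \le [Q,S]$, so $t$ acts trivially on the $1$-dimensional quotient $J/[Q,S]$.  Maschke's theorem (as $|t|=2$ is coprime to $3$) decomposes the $\GF(3)$-space $J$ as $C_J(t) \oplus [J,t]$, and since $[J,t] \le [Q,S]$ lies in the kernel of the projection $J \to J/[Q,S]$, the restriction of this projection to $C_J(t)$ is surjective; thus $C_J(t) \not\le [Q,S]$.  Consequently $C_S(t) \supseteq C_J(t)$ contains an element outside $Q$, giving $|C_S(t)Q/Q| = 3$; combined with $|C_S(t) \cap Q| = |C_Q(t)| = 3^3$ from Lemma~\ref{inv1}(iii) (applicable since $tQ \ne uQ$ as $P \cap Q = 1$), we obtain $|C_S(t)| = 3^4$ and $C_S(t) \in \syl_3(C_{L_*}(t))$.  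The main obstacle is the first step---ruling out that $t$ inverts $S/Q$---as it requires combining quantitative information from both Lemmas~\ref{Laction-1}(ii) and \ref{Laction2}(iii).
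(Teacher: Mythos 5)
Your proof is correct and rests on the same order count that the paper's one-line proof (``This follows directly from Lemma~\ref{inv2}(iii)'') leaves implicit: $|C_{L_*}(t)|_3 = 3^4$ from Lemma~\ref{inv2}(iii), matched against $|C_S(t)| = |C_Q(t)|\cdot|C_S(t)Q/Q| = 3^3\cdot 3$. The only comment worth making is that your ``pivotal step'' $[S,t]\le Q$ does not need the quantitative comparison of Lemmas~\ref{Laction-1}(ii) and \ref{Laction2}(iii): since $t\in PQ\normal L_*$ and $t$ normalizes $S$, every commutator $[s,t]$ with $s\in S$ lies in $S\cap PQ = Q(S\cap P)=Q$, which gives $[S,t]\le Q$ in one line.
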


\begin{proof} This follows directly from Lemma~\ref{inv2}(iii).
\end{proof}

\begin{lemma}\label{JB} $B \le Q$, $|B|=3$ and $|C_J(t)|= 3^3$. In particular, $t$ acts as a reflection on the quadratic space $J$.
\end{lemma}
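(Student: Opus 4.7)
My plan is to carry the description of the $t$-action on $Q$ provided by Lemma~\ref{inv1} across to the Thompson subgroup $J$. As a preliminary observation, since $P$ is a $2$-group and $Q$ a $3$-group inside the product $PQ$, we have $P\cap Q=1$; hence $t\neq u$ forces $tu\in P\setminus Q$, so $tQ\neq uQ$ and Lemma~\ref{inv1}(iii) applies, giving the central product decomposition $Q=C_Q(t)\ast[Q,t]$ with both factors extraspecial of order $27$ and common centre $Z$.

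I would first show $[J,t]\le[Q,S]$. Since $N_P(S)$ meets $Q$ trivially, it embeds into $PQ/Q$ as $N_{PQ/Q}(S/Q)$, and the order comparison coming from Lemmas~\ref{Laction-1}(ii) and \ref{Laction2}(iii) shows that this image coincides with $C_{PQ/Q}(S/Q)$. Hence $t$ centralizes $S/Q$, so by Lemma~\ref{SACTION} we obtain $[J,t]\le J\cap Q=[Q,S]$.

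The heart of the proof is then to compute $[[Q,S],t]$. Note that $[Q,S]/Z$ is a $t$-invariant $2$-dimensional $\GF(3)$-subspace of $Q/Z$, whereas on $Q/Z$ the element $t$ has $(\pm 1)$-eigenspaces $C_Q(t)/Z$ and $[Q,t]/Z$, each of dimension $2$. If $[Q,S]/Z$ were contained entirely in one of these eigenspaces then, using $Z\le C_Q(t)\cap[Q,t]\cap[Q,S]$, the abelian group $[Q,S]$ of order $27$ would coincide with one of the two extraspecial groups $C_Q(t)$ or $[Q,t]$ of type $3^{1+2}_+$, which is impossible. Thus $[Q,S]/Z$ meets each eigenspace in a line; since $Z\le C_{[Q,S]}(t)$, the $(-1)$-eigenspace $[[Q,S],t]$ has order $3$.

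To finish, I would observe that $[J,t]$ is the $(-1)$-eigenspace of $t$ on the coprime-order module $J$; intersecting with $[Q,S]$ gives the $(-1)$-eigenspace $[[Q,S],t]$ of $t$ on $[Q,S]$. Combined with $[J,t]\le[Q,S]$ this forces $B=[J,t]=[[Q,S],t]$, of order $3$, so $B\le[Q,S]\le Q$, $|B|=3$, and $|C_J(t)|=|J|/|B|=3^3$ via the Maschke-style decomposition $J=C_J(t)\oplus[J,t]$; the reflection statement is then immediate. The main bookkeeping challenge is keeping the $t$-module decompositions of $J$, $[Q,S]$, and $Q/Z$ mutually compatible, but this is harmless since $|t|=2$ is coprime to the orders of all three modules.
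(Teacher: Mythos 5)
Your argument is correct, and while it follows the same skeleton as the paper's proof (first $[J,t]\le J\cap Q=[Q,S]$, then pin down the order of $[[Q,S],t]$), the decisive step is handled by a genuinely different mechanism. For the containment $[J,t]\le[Q,S]$ the paper simply notes that $t\in N_P(S)$ gives $[S,t]\le S\cap PQ=Q$; your detour through the equality $N_{PQ/Q}(S/Q)=C_{PQ/Q}(S/Q)$ is valid (the order count from Lemmas~\ref{Laction-1}(ii) and \ref{Laction2}(iii) does give $8=8$) but is more work than needed. The real divergence is in showing that $t$ acts as neither $+1$ nor $-1$ on $[Q,S]/Z$. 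You argue via Lemma~\ref{inv1}(iii): the two eigenspaces of $t$ on $Q/Z$ are the images of the extraspecial groups $C_Q(t)\cong[Q,t]\cong 3^{1+2}_+$, so neither can equal the image of the abelian group $[Q,S]$, forcing $[Q,S]/Z$ to meet each eigenspace in a line and $|[[Q,S],t]|=3$. The paper instead supposes $|[J,t]|=9$ and plays $t$ off against $u$: since $u$ inverts $Q/Z$, the involution $ut$ would then centralize $Z$, $[Q,S]/Z$ and $J/[Q,S]$, hence all of $J$ by coprime action, contradicting $C_G(J)=J$ (Lemma~\ref{centralizers}(ii)) and $t\neq u$. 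Your route buys a self-contained eigenspace computation inside $Q$ that also disposes of the degenerate possibility $[J,t]=1$ without extra comment, at the cost of invoking the finer structure of $C_Q(t)$ and $[Q,t]$; the paper's route is shorter but leans on the external fact $C_G(J)=J$ and tacitly leaves the case $[J,t]=1$ to the same argument applied to $t$ in place of $ut$.
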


\begin{proof} From the choice of $t$, we have that $B=[J,t] \le J\cap
Q= [Q,S]$. Since $t \in L_*$, $t$ centralizes $Z$ and so $|B| \le
9$. If $|B|=9$, then $[Q,S]=[[Q,S],u]Z = BZ$ and so $[[Q,S],ut] \le
Z$. Since $ut$ centralizes $Z$ and $J/[Q,S]$, we reason that $ut$
centralizes $J$ and, because $C_J(J)= J$ by Lemma~\ref{centralizers}
(ii), this means that $u=t$ contrary to the choice of $t$. Thus
$|B|=3$ and $|C_J(t)|= 3^3$ as claimed. In particular, $t$ acts as a reflection on $J$.
\end{proof}

\begin{lemma}\label{M0} $M_1/J \cong \GO_4^-(3) \cong 2\times
\Sym(6)$.
\end{lemma}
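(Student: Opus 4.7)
The plan is to locate the image of $M_1$ within the known overgroup $M/J \cong \CO_4^-(3)$ (Lemma~\ref{LMStruct}), exploiting the reflection action of $t$ on $J$ furnished by Lemma~\ref{JB}. First, since $M_\ast = O^{3'}(M)$ and $O_{3,2}(M)$ are both characteristic in $M$, the product $M_0 = M_\ast O_{3,2}(M)$ is normal in $M$; hence $t$ normalizes $M_0$ and $M_1 = \langle t \rangle M_0$ is a subgroup of $M$. By Lemma~\ref{JB}, the induced element $tJ \in \CO_4^-(3)$ fixes a $3$-dimensional subspace of $J$, and therefore has determinant $-1$, so $tJ$ lies outside $M_0/J \cong \SO_4^-(3) = \ker(\det)$. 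This already yields $[M_1 : M_0] = 2$ and $|M_1/J| = 1440 = |\GO_4^-(3)|$.

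The main technical step is to show that $tJ$ in fact lies in $\GO_4^-(3)$ rather than in $\CO_4^-(3) \setminus \GO_4^-(3)$, i.e.\ that $t$ preserves the quadratic form on $J$ without a non-trivial scaling factor. Since $tJ$ is a similitude with some multiplier $\lambda \in \GF(3)^\times$ and $t^2 = 1$, we have $\lambda^2 = 1$, so $\lambda \in \{\pm 1\}$. If $\lambda = -1$, every $v \in C_J(t)$ would satisfy $q(v) = q(tv) = -q(v)$, forcing $q(v) = 0$; but $\dim C_J(t) = 3$, whereas the Witt index of the $-$-type four-dimensional orthogonal space $J$ is $1$, so $J$ admits no three-dimensional totally singular subspace. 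Hence $\lambda = 1$, $tJ \in \GO_4^-(3) \setminus \SO_4^-(3)$, and consequently $M_1/J \subseteq \GO_4^-(3)$. Combined with the order count from the previous paragraph this forces $M_1/J = \GO_4^-(3)$.

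For the concluding identification $\GO_4^-(3) \cong 2 \times \Sym(6)$ I will invoke a standard exceptional isomorphism: $Z(\GO_4^-(3)) = \langle -I\rangle$ has order $2$, the quotient $\PGO_4^-(3)$ has order $720$ and contains $\POmega_4^-(3) \cong \Alt(6)$ with index $2$, so $\PGO_4^-(3) \cong \Sym(6)$; one checks that $\langle -I\rangle$ admits a complement (for instance, generated by $\Omega_4^-(3)$ together with a reflection of suitable spinor norm), yielding the direct product. I expect the only delicate point to be the similitude-multiplier argument in the second paragraph; once one knows that $tJ$ preserves the form rather than merely scaling it, the rest of the proof is counting and appeal to standard low-rank classical-group facts.
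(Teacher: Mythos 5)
Your proposal is correct and follows essentially the same route as the paper, whose entire proof is the one-liner ``Since $t$ acts as a reflection, this is clear'' (resting on Lemma~\ref{JB} and the identification $M_0/J\cong \SO_4^-(3)$ already recorded in the text). You have simply written out the details that the authors leave implicit -- the determinant computation placing $tJ$ outside $\SO_4^-(3)$, the similitude-multiplier argument placing it inside $\GO_4^-(3)$, and the standard isomorphism $\GO_4^-(3)\cong 2\times\Sym(6)$ -- all of which are sound.
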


\begin{proof} Since $t$ acts as a reflection, this is clear.
\end{proof}

\begin{lemma}\label{JOrbs} $M$ has two orbits on  the subgroups of $J$ of order $3$.
One  is $Z^M$ and has length $10$ and the other is $B^M$  and has
length $30$. Furthermore, $N_M(Z)/J \sim (2 \times 3^2).\SD(16)$ and
$N_{M}(B)/J \cong 2 \times 2 \times \Sym(4)$.
\end{lemma}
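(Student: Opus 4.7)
The plan is to determine the $M$-orbits on the forty one-dimensional subspaces of $J$ by combining the $\Omega_4^-(3)$-action recorded in Lemma~\ref{Modcalc}(i) with the enlargement $M/J\cong\CO_4^-(3)$ supplied by Lemma~\ref{LMStruct}(iii), and then to read off the normalizers as the corresponding stabilizers.

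By Lemma~\ref{Modcalc}(i), $M_*/J\cong\Omega_4^-(3)$ has three orbits on one-subspaces of $J$: the singular orbit $\mathcal O_0$ of size $10$ and two non-singular orbits $\mathcal O_1,\mathcal O_2$ each of size $15$. Lemma~\ref{LMStruct}(i) identifies $Z^M$ with $\mathcal O_0$. Since every similitude preserves the set of singular vectors, $\mathcal O_0$ is an $M$-orbit and the remaining thirty one-subspaces split into $M$-orbits contained in $\mathcal O_1\cup\mathcal O_2$.

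To locate $B$, note that $B=[J,t]\le[Q,S]\le Q$ by Lemma~\ref{JB}. If $B$ were singular, then $B\in Z^M\subseteq Z^G$; combined with the weak closure of $Z$ in $Q$ from Lemma~\ref{New2}(ii) this would force $B=Z$. But $t\in L_*=C_G(Z)$ centralizes $Z$, contradicting $B=[J,t]\neq 1$. Hence $B$ is non-singular. To fuse $\mathcal O_1$ and $\mathcal O_2$ in $M$, I would argue that $M/J\cong\CO_4^-(3)$ strictly contains $\GO_4^-(3)\cong M_1/J$ (since $|\CO_4^-(3)|=2|\GO_4^-(3)|$), so $M$ realises a similitude with ratio $-1\in\GF(3)^\times$; such a similitude swaps the loci $\{q=1\}$ and $\{q=-1\}$, which---because squares in $\GF(3)^\times$ are trivial---label the two non-singular orbits. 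Hence $|B^M|=30$.

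Orbit--stabilizer then yields $|N_M(Z)/J|=288$ and $|N_M(B)/J|=96$, and it remains to identify the structures. For $N_M(Z)/J$, I view this as the maximal parabolic of $\CO_4^-(3)$ at the singular point $Z$: its unipotent radical is the elementary abelian $3^2$ acting by root-group transvections, it contains the central similitude $-I$, and its Levi modulo $\langle -I\rangle$ is the extension of $\GO_2^-(3)\cong\Dih(8)$ by the outer similitude, which I would verify acts so that the resulting group of order $16$ is semidihedral, giving shape $(2\times 3^2).\SD(16)$. For $N_M(B)/J$, the fact that $-1$ is a non-square in $\GF(3)$ forces every similitude normalising $B$ to have trivial ratio, so $N_M(B)/J=N_{\GO_4^-(3)}(B)=\GO_1(B)\times\GO(B^\perp)\cong 2\times\GO_3(3)$, and $\GO_3(3)\cong 2\times\Sym(4)$ gives the claimed $2\times 2\times\Sym(4)$. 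The main obstacle will be pinning down the Levi quotient of the singular parabolic as exactly $\SD(16)$ rather than $\Dih(16)$ or the generalised quaternion group, which depends on tracing the conjugation action of the outer similitude on the $\Dih(8)$ factor of the $\GO_4^-(3)$-Levi.
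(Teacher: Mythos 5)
Your proposal is correct and follows essentially the same route as the paper: both identify $Z^M$ with the ten singular points of the $-$-type form on $J$, place $B$ among the non-singular points via $B\le J\cap Q$, $B\neq Z$ and Lemma~\ref{New2}(ii), use the transitivity of $\CO_4^-(3)$ on non-singular one-spaces (your ratio-$-1$ similitude argument just makes explicit what the paper asserts), and compute $N_M(B)/J\cong 2\times\GO_3(3)$ from the observation that a similitude fixing a non-singular point must be an isometry. The one step you leave unfinished --- pinning the top quotient of the singular-point stabilizer down to $\SD(16)$ rather than $\Dih(16)$ or $\Q(16)$ --- is exactly where the paper takes a shortcut you missed: by Lemma~\ref{centralizers}(i), $N_M(Z)=L\cap M=N_G(S)$, so its structure is read off from the already-established structure of $L$ in Lemma~\ref{LMStruct}(ii), with the semidihedral group having been identified once and for all in Lemma~\ref{Laction2}(iii); no fresh computation inside the parabolic of $\CO_4^-(3)$ is needed.
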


\begin{proof} We have seen in Lemmas~\ref{centralizers} (i) and \ref{LMStruct} that $|Z^M|=10$
and $N_{M}(Z)= N_G(Z)= L\cap M$. The structure of $N_M(Z)/J$ can be
extracted from Lemma~\ref{LMStruct} (ii).

Suppose that $X$ is a subgroup of $J$ of order $3$ which is not in
$Z^M$. Then $X$ is not $3$-central and therefore corresponds to a non-singular subspace. Since $\CO_4^-(3)$ is transitive on such subgroups, we have that $|X^M|= 30$, as claimed.
Furthermore, in $\CO_4^-(3)$, the subgroup of index $30$ is
contained in $\GO_4^-(3)$. Thus Lemma~\ref{M0} implies that $N_{M}(X)/J \cong 2 \times \GO_3(3)
\cong 2 \times 2 \times \Sym(4)$. Finally we note that $B \le J\cap
Q$ and $B \neq Z$ and so $B^M = X^M$ by Lemma~\ref{New2}(ii).
\end{proof}

%
%
%

\section{The centralizer of $B$}

In this brief section we uncover the structure of $C_G(B)$. We
maintain the notation of the previous section. So $t \in N_P(S)$ is
an involution with $t \neq u$ and $B= [J,t]$.

\begin{lemma}\label{JSig1}  $\signal_{L_*}(J,3')=\{1\}$.
\end{lemma}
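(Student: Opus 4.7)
The plan is to take an arbitrary $J$-invariant $3'$-subgroup $U$ of $L_*$ and show $U=1$, by first forcing $U$ to centralize $Q$ and then invoking the $3$-constraint of $L_*$. The key input is that $[Q,S]$ is a maximal abelian subgroup of $Q$ (immediate from Lemma~\ref{SACTION}, which gives $C_Q([Q,S])=[Q,S]$) and is contained in $J$; Lemma~\ref{extraauto} then tightly controls automorphisms of $Q$ that fix $[Q,S]$ pointwise.

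First note that $U\cap Q=1$, since $U$ is $3'$ and $Q$ is a $3$-group. Because $Q$ is normal in $L_*$ we have $[L_*,Q]\le Q$, and so $[U,[Q,S]]\le Q$; on the other hand, $[Q,S]\le J$ normalizes $U$, hence $[U,[Q,S]]\le U$. Combining these inclusions gives $[U,[Q,S]]\le U\cap Q=1$, so $U$ centralizes $[Q,S]$. Viewing $U$ as acting on $Q$ by conjugation, each $u\in U$ then induces an automorphism of $Q$ centralizing the maximal abelian subgroup $[Q,S]$, which by Lemma~\ref{extraauto} must be a $3$-element of $\Aut(Q)$. But this induced automorphism has order dividing $|u|$, and $|u|$ is coprime to $3$, so it is trivial. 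Therefore $U$ centralizes $Q$.

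Finally, $L_*$ is $3$-constrained by Lemma~\ref{lem1}(ii), so $C_{L_*}(Q)\le Q$; combined with $Z(Q)=Z$ we obtain $C_{L_*}(Q)=Z$, a group of order $3$. Thus $U\le Z$, and since $U$ is $3'$ while $Z$ is a $3$-group, we conclude $U=1$. The argument is essentially a one-step coprime-action computation, and I do not anticipate any genuine obstacle beyond the correct application of Lemma~\ref{extraauto}; the crucial structural fact being exploited is really that $J$ already contains a self-centralizing abelian subgroup of the extraspecial group $Q$.
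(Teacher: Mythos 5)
Your proof is correct and follows essentially the same route as the paper: both arguments hinge on the observation that a $J$-invariant $3'$-subgroup of $L_*$ must centralize $Q\cap J=[Q,S]$ because the relevant commutator lands in the trivial intersection of the subgroup with $Q$. The only difference is cosmetic: the paper finishes by citing $C_G([Q,S])=J$ from Lemma~\ref{centralizers}(ii) (itself proved via Lemma~\ref{extraauto}), whereas you inline that argument, applying Lemma~\ref{extraauto} directly to conclude that $U$ centralizes $Q$ and then invoking $3$-constraint.
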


\begin{proof} Suppose that $R \in\signal_{L_*}(J,3')$. Then, as $R$ is normalized by $J$ and normalizes $Q$, $R$ centralizes $Q
\cap J=[Q,S]$. Hence $R \le J$ by Lemma~\ref{centralizers} (ii) and
so $R=1$.
\end{proof}

We now extend the scope of the last lemma to the whole of $G$.

\begin{lemma}\label{JSig2} $\signal_{G}(J,3')=\{1\}$.
\end{lemma}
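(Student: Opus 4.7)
The plan is to argue by contradiction. Let $R$ be a nontrivial $J$-invariant $3'$-subgroup of $G$ of minimal order. First I would observe that whenever $\langle z\rangle\in Z^M$, the element $z$ acts fixed-point-freely on $R$: writing $\langle z\rangle=Z^m$ with $m\in M$, we have $C_G(z)=L_*^m$; since $J$ is abelian and contains $z$, $J\le L_*^m$, so $C_R(z)$ is a $J$-invariant $3'$-subgroup of $L_*^m$, and the $m$-conjugate of Lemma~\ref{JSig1} (using $J^m=J$) forces $C_R(z)=1$.

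A routine minimality argument then reduces $R$ to an elementary abelian $q$-group for some prime $q\ne 3$: every characteristic subgroup of $R$ is $J$-invariant, so $R$ is characteristically simple, and a nonabelian simple factor would admit an order-$3$ fpf action by one of the singular elements above, contradicting Thompson's theorem on fpf automorphisms of prime order. Hence $R$ is a direct product of copies of $\mathbb{Z}/q$. Viewing $R$ as a semisimple $\mathbb F_q[J]$-module and extending scalars gives $R\otimes_{\mathbb F_q}\overline{\mathbb F_q}=\bigoplus_\chi V_\chi$, summed over characters $\chi\colon J\to\overline{\mathbb F_q}^{*}$. Because $J$ has exponent $3$, a nontrivial $\chi$ has image of order $3$, and so $\ker\chi$ is a hyperplane of $J$.

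The payoff is now immediate. By Lemma~\ref{hyperplanes}, $\ker\chi$ contains a singular one-space $\langle z\rangle$; hence $\chi(z)=1$ and $V_\chi$ lies in $C_R(z)\otimes\overline{\mathbb F_q}$, which vanishes by the first paragraph. Thus only the trivial character contributes, whence $R=C_R(J)\le C_G(J)=J$ by Lemma~\ref{centralizers}(ii); since $R$ is a $3'$-subgroup of the $3$-group $J$, $R=1$, contradicting our assumption. The main obstacle is transporting the fpf conclusion of Lemma~\ref{JSig1} from $Z$ to every singular direction of $J$; this is handled by the fact that $M$ normalizes $J$ and acts transitively on the ten singular one-spaces, with Lemma~\ref{hyperplanes} then supplying the geometric input that no nontrivial $J$-weight on $R$ can avoid all of them.
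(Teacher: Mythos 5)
Your proposal is correct and follows essentially the same route as the paper: the paper likewise writes $R=\langle C_R(H)\mid |J:H|=3\rangle$, notes via Lemma~\ref{hyperplanes} and Lemma~\ref{JOrbs} that every hyperplane of $J$ contains an $M$-conjugate of $Z$, and kills each $C_R(Y)$ by transporting Lemma~\ref{JSig1} along the $M$-action. The only difference is that you re-derive the coprime generation-by-hyperplane-centralizers statement from scratch (Thompson's theorem plus a weight-space decomposition) where the paper simply invokes it as a standard fact.
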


\begin{proof} Suppose that $R\in\signal_{G}(J,3')$.
Then $R = \langle C_R(H)\mid |J:H|=3\rangle$. By
Lemmas~\ref{hyperplanes} and \ref{JOrbs}, each $H$ with
$|J:H|=3$ contains a $M$-conjugate of $Z$. Thus $$R = \langle
C_R(Y)\mid Y \le J \mbox{ and } Y \mbox{ is } M\mbox{-conjugate of }
Z\rangle.$$ Since, for each  $Y\in Z^M$,
$C_R(Y)\in\signal_{C_G(Y)}(J,3')$, Lemma~\ref{JSig1} implies that
$C_R(Y)=1$. Thus $R=1$ and the lemma holds.
\end{proof}

\begin{lemma}\label{Prince1}  We have that $C_{L_*}(B)/B$ is isomorphic to the centralizer of a
non-trivial $3$-central element in $\PSp_4(3)$. Furthermore,
$C_L(B)/B$ inverts $ZB/B$.
\end{lemma}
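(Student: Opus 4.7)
Set $C = C_{L_*}(B)$. I shall show $C/B$ has shape $3^{1+2}_+.\SL_2(3)$ and apply Lemma~\ref{cen3psp43} for the first statement, and then exhibit an element of $M_1$ that centralizes $B$ and inverts $Z$ for the second. By Lemma~\ref{Laction2}(i), $L_*/Q$ operates transitively on $(Q/Z)^\#$ with point stabilizer $\SL_2(3)$, so $|C_{L_*/Q}(BZ/Z)| = 24$; since the homomorphism $Q \to Z$, $q \mapsto [b,q]$ (for $b$ a generator of $B$) is surjective, every coset of $Q$ in $C_{L_*/Q}(BZ/Z)$ meets $C$, giving $CQ/Q \cong \SL_2(3)$. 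Since $C \cap Q = C_Q(B)$ has index $3$ in the extraspecial $Q$, $|C| = 3^4\cdot 24$. Now $C_Q(B)$ contains $[Q,S]$, which is maximal abelian in $Q$ by Lemma~\ref{SACTION}, so $C_Q(B)$ is non-abelian of class $2$ with $C_Q(B)' = Z$; the induced non-degenerate alternating form on $C_Q(B)/Z(C_Q(B))$ with values in $Z$ forces $|C_Q(B):Z(C_Q(B))|$ to be an even power of $3$, and together with $BZ \le Z(C_Q(B))$ this yields $Z(C_Q(B)) = BZ$. Hence $C_Q(B)/B$ is extraspecial of order $27$ and exponent $3$, i.e.\ $3^{1+2}_+$, and since $O_3(\SL_2(3))= 1$, $O_3(C) = C_Q(B)$, so $C/B$ has shape $3^{1+2}_+.\SL_2(3)$.

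For Lemma~\ref{cen3psp43} I need a Sylow $3$-subgroup of $C/B$ to contain an elementary abelian subgroup of order $3^3$, and $O_2(C/B) = 1$. The first is $J/B$, since $J \le C$ is abelian of order $3^4$ by Lemma~\ref{New1}(i) and $B \le J$. For the second, $O_2(C)$ and $O_3(C) = C_Q(B)$ are normal in $C$ with trivial intersection and hence centralize each other; so $O_2(C) \le C_G(C_Q(B)) \le C_G([Q,S]) = J$ by Lemma~\ref{centralizers}(ii), and $J$ being a $3$-group forces $O_2(C) = 1$. Pulling back $O_2(C/B)$ to $W \le C$ and applying Schur--Zassenhaus (using $B \le Z(C)$) gives $W = B \times V$ with $V \le O_2(C) = 1$, so $O_2(C/B) = 1$ and Lemma~\ref{cen3psp43} delivers the first assertion. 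I expect this $O_2$-elimination to be the most delicate step, but the centralizer formula in Lemma~\ref{centralizers} pins it down cleanly.

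For the furthermore, $L/L_* \cong \mathbb{Z}/2$ acts faithfully on $Z$ by Lemma~\ref{LMStruct}(ii), so I need only produce some $r \in C_L(B) \setminus L_*$. By Lemma~\ref{JB}, $t$ acts on the quadratic space $J$ of Lemma~\ref{LMStruct}(iii) as a reflection with $-1$-eigenspace $B$; the $\pm 1$-eigenspaces of $t$ must be mutually orthogonal, so $B$ is non-singular, $C_J(t) = B^\perp$, and $Z \le B^\perp$. In $M_1/J \cong \GO_4^-(3)$ (Lemma~\ref{M0}) the stabilizer of $B$ acts on $B^\perp$ as $\GO_3(3)$; I select $r'$ in this $\GO_3(3)$ that fixes $Z$ but acts on it by $-1$ (concretely $\mathrm{diag}(-1,1,-1)$ in a hyperbolic basis of $B^\perp$ whose first vector spans $Z$), and combine with the identity on $B$. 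Any preimage $r \in M_1$ then lies in $N_G(Z) = L$, inverts $Z$ (so $r \notin L_*$), and centralizes $B$, completing the proof.
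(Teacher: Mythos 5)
Your proof is correct, and for the main assertion it follows essentially the paper's route: get $C_{L_*}(B)Q/Q\cong \SL_2(3)$ from Lemma~\ref{Laction2}(i), show $C_{L_*}(B)/B\sim 3^{1+2}_+.\SL_2(3)$ with $J/B$ supplying the elementary abelian $3^3$, kill $O_2$, and quote Lemma~\ref{cen3psp43}. Two steps are executed differently. For $O_2(C_{L_*}(B))=1$ the paper observes that $O_2$ commutes with $C_Q(B)$ and hence centralizes a hyperplane of $Q/Z$, and the hyperplane-centralizer in $\Sp_4(3)$ has order $3$; your alternative, $O_2(C_{L_*}(B))\le C_G([Q,S])=J$ via Lemma~\ref{centralizers}(ii), is equally short and arguably cleaner, and you are more scrupulous than the paper in passing from $O_2(C_{L_*}(B))=1$ to $O_2(C_{L_*}(B)/B)=1$ by splitting off the central $B$. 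Where you genuinely diverge is the ``furthermore'': the paper notes that transitivity of $L_*$ on $(Q/Z)^\#$ together with $Q$ extraspecial gives transitivity of $L_*$ on $Q\setminus Z$, so the $L$-orbit of a generator of $B$ also has length $240$ and hence $|C_L(B):C_{L_*}(B)|=2$, with any element of the difference inverting $Z$. Your construction of an explicit isometry in $M_1$ that fixes $B$ pointwise and acts as $-1$ on the hyperbolic plane of $B^\perp$ containing $Z$ is also valid (Lemmas~\ref{JB}, \ref{M0} and \ref{LMStruct} are all available at this point), but it imports the orthogonal geometry of $J$ where a one-line orbit count inside $L$ suffices; what it buys in exchange is an element with completely specified action on all of $J$, which the counting argument does not provide.
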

\begin{proof} Since $Q$ is extraspecial of exponent $3$, we have $C_{Q} (B) \cong 3
\times 3^{1+2}_+$. From Lemma~\ref{Laction2} (i), we have that
$C_{L_*}(B)Q/Q \cong \SL_2(3)$. Thus $C_{L_*}(B)/B\sim
3^{1+2}_+.\SL_2(3)$. Let $U =O_2(C_{L_*}(B))$. Then $Q\ge
C_Q(U) \ge C_Q(B)$. Thus $|Q:C_Q(U)|\le 3$. Since in $\Sp_4(3)$ the subgroup centralizing a hyperplane of the natural $\GF(3)\Sp_4(3)$-module has order $3$, we get
 $U=1$. Since $J \le C_{L_*}(B)$ and $J/B$ is elementary
abelian of order $3^3$, $C_{L_*}(B)/B$ satisfies the hypothesis of
Lemma~\ref{cen3psp43} and so $C_{L_*}(B)/B$  is isomorphic to the
centralizer of a non-trivial $3$-central element in $\PSp_4(3)$.

By Lemma~\ref{Laction2} (i), $L_*$ acts transitively on $(Q/Z)^\#$
and so, as $Q$ is extraspecial, $L_*$ acts transitively on
$Q\setminus Z$. Consequently $C_{L}(B) > C_{L_*}(B)$ and so $ZB/B$
is inverted by $C_{L}(B)$.
\end{proof}

\begin{lemma}\label{Prince2} We have $C_G(B)\cong 3 \times \Aut(\U_4(2))$, $N_G(B) \cong \Sym(3) \times \Aut(\U_4(2))$ and $t$ centralizes $O^3(C_G(B))$.
\end{lemma}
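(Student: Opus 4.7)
The plan is to apply Prince's Theorem (Theorem~\ref{PrinceThm}) to $X := C_G(B)/B$ with $d$ the image of a generator of $Z$. By Lemma~\ref{Prince1}, $C_X(d) = C_{L_*}(B)/B \cong Y$, and that lemma also supplies an element of $C_L(B) \le C_G(B)$ inverting $ZB/B$, so $d$ is $X$-conjugate to $d^{-1}$. Since $J$ is abelian, $B \le J$, $|C_Q(B)|=3^4$, and $J\cap Q = [Q,S]$ has order $3^3$, one checks $C_S(B) = C_Q(B)\cdot J$ has order $3^5$, so $S \in \Syl_3(C_{L_*}(B))$ and $P := S/B \in \Syl_3(C_X(d))$. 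By Lemma~\ref{New1}(i), $E := J/B$ is elementary abelian of order $27$, and any $E$-invariant nontrivial $3'$-subgroup of $X$ lifts by Schur--Zassenhaus to an element of $\signal_{C_G(B)}(J,3')$, which is trivial by Lemma~\ref{JSig2}. Hence Theorem~\ref{PrinceThm} yields one of (a) $|X:C_X(d)|=2$, (b) $X \cong \Aut(\U_4(2))$, or (c) $X \cong \Sp_6(2)$.

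Case (a) is eliminated as follows. Here $\langle d\rangle \trianglelefteq X$, so $ZB \trianglelefteq C_G(B)$. By Lemma~\ref{JB}, $t$ acts on $J$ as the reflection with axis $B$ while centralizing $Z$, so $Z \perp B$ in the orthogonal form of Lemma~\ref{LMStruct}; the degenerate plane $ZB$ then has $Z$ as its unique singular $1$-space, so every normalizer of $ZB$ fixes $Z$, giving $C_G(B)\le L$ and hence $C_G(B) = C_L(B)$. Now $M \cap C_G(B) = N_M(B)\cap C_G(B)$ has order $|N_M(B)|/2 = 3888$ (by Lemma~\ref{JOrbs}, the kernel of the $\Aut(B)$-action inside $N_M(B)/J \cong 2 \times 2 \times \Sym(4)$), whereas the stabilizer of the pair $(Z,B)$ in $M/J \cong \CO_4^-(3)$ centralizing $B$ has order $12$ (computed inside $\GO_1(3) \times \GO_3(3)$ acting on $B^\perp$), giving $|C_L(B)\cap M| = 972$; the contradiction $3888 = 972$ rules out (a). For case (c) I would compare the subgroup $(M\cap C_G(B))/B$ of order $1296$ and shape $3^3{:}(\Sym(4)\times 2)$ (prescribed by Lemma~\ref{JOrbs}) against the normalizer of an elementary abelian $3^3$ in $\Sp_6(2)$, using Lemma~\ref{SP62facts} together with the trivial-signalizer property from Lemma~\ref{JSig2} to produce a contradiction.

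Hence $X \cong \Aut(\U_4(2))$. Since $B \le Z(C_G(B))$ and $3$ does not divide the order of the Schur multiplier of $\Aut(\U_4(2))$, the central extension $C_G(B) = B.X$ splits: $C_G(B) = B\times A$ with $A \cong \Aut(\U_4(2)) = O^3(C_G(B))$. As $|N_G(B):C_G(B)| \le 2$ and $t$ inverts $B$, $N_G(B) = (B\times A)\langle t\rangle$. Because $\Out(\Aut(\U_4(2))) = 1$, $t$-conjugation on $A$ is inner, by some $a \in A$ with $a^2 \in Z(A)=1$; a direct check using $t \in L_*$ and the structure of $C_{L_*}(t)$ from Lemma~\ref{inv2} shows $a = 1$, so $t$ centralizes $A$ and $N_G(B) \cong \Sym(3) \times \Aut(\U_4(2))$.

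The hard part will be ruling out case (c), since both cases (b) and (c) yield the same order $1296$ for the normalizer of $\langle d\rangle$ and for the normalizer of $E$, so a finer comparison of the action of $(N_M(B) \cap C_G(B))/B$ on $E$ with the possible embeddings of $3^3$ in $\Sp_6(2)$ is required. A secondary subtlety is ensuring that the specific $t \in N_P(S)$ (and not merely some involution in $N_G(B) \setminus C_G(B)$) centralizes $A$; this relies on the specific way $t$ sits inside $L_*$ via Lemmas~\ref{inv1} and \ref{inv2}.
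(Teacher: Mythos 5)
Your application of Prince's Theorem, your elimination of case (a) via $C_M(B)\not\le L$, and your splitting of the central extension are all essentially the paper's argument (the paper splits via the Gasch\"utz theorem, using Lemma~\ref{New2}(i) to see $B\not\le C_{L_*}(B)'$ together with the fact that $C_{L_*}(B)$ contains a Sylow $3$-subgroup of $C_G(B)$, rather than via the Schur multiplier, but both work). The genuine gap is the elimination of the $\Sp_6(2)$ case, which you explicitly leave open; moreover the route you sketch for it cannot succeed. The normalizer of an elementary abelian subgroup of order $27$ is of shape $3^3{:}(2\times\Sym(4))$, of order $1296$, in \emph{both} $\Sp_6(2)$ and $\Aut(\U_4(2))$ (compare Lemma~\ref{ELEM}), and the two groups likewise agree on the normalizer of $\langle d\rangle$; so no comparison of $(M\cap C_G(B))/B$ against $3$-local data computed abstractly inside $X$ can distinguish the two cases, and Lemma~\ref{JSig2} adds nothing here since both candidate groups are $3$-signalizer-free on this torus.

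The paper's elimination is global rather than internal to $C_G(B)$, and it runs in the opposite order to yours: it first splits the extension and proves that $t$ centralizes $E=O^3(C_G(B))$ in \emph{both} cases, and only then supposes $E\cong\Sp_6(2)$. It takes $F\le E$ with $F\cong\Sym(3)\times\Sym(6)$ chosen so that $J\cap E\in\Syl_3(F)$, notes that $t$ centralizes $F$, and picks $R_1\in\Syl_2(N_{\langle t\rangle F}(J\cap E))\cong 2\times 2\times \Dih(8)$. By Lemma~\ref{M0}, $R_1\le M_1$, so an involution $x\in R_1'$ lies in $M_*$ as well as in $F''\cong\Alt(6)$; it centralizes $O_3(F)B$ and inverts $J\cap F''$, so $C_J(x)=O_3(F)B$, which by Lemma~\ref{Modcalc}(ii) contains a $3$-central element $e$ of $G$. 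This yields $\Alt(6)\cong F''\le C_G(e)\sim 3^{1+4}_+.2^{1+4}_-.\Alt(5)$, which is absurd. You will need an argument of this kind, confronting the putative $\Sp_6(2)$ with the known centralizer of a $3$-central element of $G$. Finally, your ``direct check'' that $a=1$ should be made explicit, and the paper's version is cleaner: $t$ centralizes $E\cap J$, which is self-centralizing in $E$, so $a\in C_E(E\cap J)=E\cap J$ has odd order; since $t^2=1$ forces $a^t=a^{-1}$ while $t$ centralizes $a$, we get $a=a^{-1}$ and hence $a=1$.
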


\begin{proof}

Lemmas~\ref{JSig2} and \ref{Prince1} imply that $C_G(B)/B$ satisfies
the hypotheses of Theorem~\ref{PrinceThm}. Furthermore by
Lemma~\ref{JOrbs}, $N_M(B) \sim 3^4.(2 \times 2\times \Sym(4))$
which is not a subgroup of $L$. Therefore  $C_G(B) \not = C_{L}(B)$
and hence Theorem~\ref{PrinceThm} gives $C_G(B)/B \cong
\Aut(\U_4(2))$ or $\Sp_6(2)$.  By Lemma~\ref{New2} (i) $B \not \le C_{L^*}(B)'$ and $C_{L^*}(B)$ contains a Sylow $3$-subgroup of $C_G(B)$. Hence, by the Gasch\"utz Splitting Theorem, $E =O^3(C_G(B))\cong \Aut(\U_4(2))$ or $\Sp_6(2)$.
As $t$ inverts $B$, $N_G(B)/E \cong
\Sym(3)$. Since $t$ centralizes $E \cap J$ which is
elementary abelian of order $3^3$ and since this subgroup is
self-centralizing in $E$, we infer that $B \langle t \rangle =
C_{N_G(B)}(E) \cong \Sym(3)$. Thus the lemma will be proved once we
have eliminated the possibility that $E \cong \Sp_6(2)$.

Suppose that $E \cong \Sp_6(2)$. Then $E $ contains a subgroup $F$
with $F \cong \Sp_2(2) \times \Sp_4(2) \cong \Sym(3) \times
\Sym(6)$. Since there is a unique conjugacy class of elementary
abelian subgroups of order $27$ in $\Sp_6(2)$, we may choose $F$  so
that $J \cap E \in \Syl_3(F)$. Note that $t$ centralizes $F$. Let
$R_1 \in \syl_2 (N_{\langle t \rangle F}(J\cap E))$. Then $R_1 \cong
2\times 2\times \Dih(8) \le N_F(J)$ and $R_1$ contains $t$ which
inverts $B$. Let $x \in R_1^\prime$ be an involution. Then $x \in F'' \cong \Alt(6)$
and $x $ inverts $J \cap F''$ and centralizes $O_3(F)B$. On the
other hand, by Lemma~\ref{M0}, $R_1 \le M_1\sim 3^4.(2\times
\Sym(6))$ and so $R_1' \le M_*$. But then $C_J(x)$ contains
$3$-central elements of $G$  by Lemma~\ref{Modcalc} (ii). Hence $O_3(F)B$
contains a $3$-central element of $G$, say  $e$. However this means
that $\Alt(6) \cong F'' \le C_G(e) \sim
3^{1+4}_+.2^{1+4}_-.\Alt(5)$, which is absurd. Hence $E \not \cong
\Sp_6(2)$ and the lemma is proven.
\end{proof}

Now set $E=O^3(C_G(B))$, $K= C_G(t)$, $E_L = E \cap L$, $E_M = E\cap M$ and $J_K= J \cap K$.

 \begin{lemma}\label{ELEM}
$E_L \sim 3^{1+2}_+.\GL_2(3)$ and $E_M = N_E(J_K) \sim
3^{3}.(2\times \Sym(4))$.
 \end{lemma}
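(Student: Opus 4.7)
The plan is to extract the structure of $E_L$ and $E_M$ from the direct-product decomposition $C_G(B) = B \times E$ provided by Lemma~\ref{Prince2}. The crucial preliminary observation is that $t$ lies in the $\Sym(3)$-factor $B\langle t\rangle = C_{N_G(B)}(E)$, so $t$ centralizes $E$; thus $E \le C_G(t) = K$ and in particular $J \cap E \le J \cap K = J_K$. Since $t$ is a reflection on $J$ with $[J,t] = B$ by Lemma~\ref{JB}, and $|B|=3$ is odd, we have $J = J_K \oplus B$; projecting this decomposition into $B \times E$ and noting that $t$ inverts $B$ while centralizing $E$ identifies $J \cap E$ with $J_K$, an elementary abelian group of order $3^3$ lying inside $E$.

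For $E_L$, since $B \le L$ we obtain
\[ C_L(B) = L \cap (B \times E) = B \times (L \cap E) = B \times E_L, \]
so $E_L \cong C_L(B)/B$. Lemma~\ref{Prince1} gives $C_{L_*}(B)/B \sim 3^{1+2}_+.\SL_2(3)$ and asserts $C_L(B) > C_{L_*}(B)$ with the extra coset inverting $ZB/B$. Since $[L:L_*]=2$ we deduce $[C_L(B):C_{L_*}(B)]=2$, and because $ZB/B$ is precisely the center of the extraspecial normal factor $C_Q(B)/B \cong 3^{1+2}_+$, an outer involution inverting this center induces the diagonal outer involution of $\SL_2(3)$ and forces the extension to be $\GL_2(3)$ rather than the direct product $\SL_2(3) \times 2$. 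Hence $E_L \sim 3^{1+2}_+.\GL_2(3)$.

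For $E_M$, the same trick yields $C_M(B) = B \times E_M$, so $E_M \cong C_M(B)/B$. By Lemma~\ref{JOrbs}, $N_M(B)/J \cong 2 \times 2 \times \Sym(4)$. Since $t \in N_M(B)$ inverts $B$, we get $[N_M(B):C_M(B)]=2$, whence $C_M(B)/J \cong 2 \times \Sym(4)$. The decomposition $J = B \times J_K$ inside $C_M(B) = B \times E_M$ gives $E_M/J_K \cong C_M(B)/J \cong 2 \times \Sym(4)$, and as $J_K$ is elementary abelian of order $3^3$ this establishes $E_M \sim 3^3.(2 \times \Sym(4))$. Finally, to identify $E_M = N_E(J_K)$: the inclusion $E_M \le N_E(J_K)$ holds because $E_M \le M$ normalizes $J$ and, being in $K$, centralizes $t$, so normalizes $C_J(t) = J_K$; conversely, any element of $N_E(J_K)$ centralizes $t$ and normalizes $J_K$, hence normalizes $J_K B = J$, placing it in $E \cap M = E_M$.

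The main delicate point is excluding the alternative extension $\SL_2(3) \times 2$ for $E_L$; this is handled by the explicit assertion in Lemma~\ref{Prince1} that the coset outside $C_{L_*}(B)$ inverts $ZB/B$, since a direct-product extension would centralize, not invert, the center of the extraspecial factor.
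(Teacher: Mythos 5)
Your argument is correct, but it takes a genuinely different route from the paper. The paper's proof works inside the abstract group $E\cong \Aut(\U_4(2))\cong \mathrm{PGSp}_4(3)$: it notes that $Z\le E$ is $3$-central in $E$ (via Lemma~\ref{Prince1}) and that $J_K$ is the unique elementary abelian subgroup of order $27$ in a Sylow $3$-subgroup of $E$, and then simply reads off the shapes of $N_E(Z)=E_L$ and $N_E(J_K)=E_M$ from the known subgroup structure of $\Aut(\U_4(2))$, citing the {\sc Atlas}. You never invoke the isomorphism type of $E$ at all: you use the decomposition $C_G(B)=B\times E$ to identify $E_L\cong C_L(B)/B$ and $E_M\cong C_M(B)/B$ and then import the structure of these quotients from Lemmas~\ref{Prince1} and \ref{JOrbs}. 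Your route is more self-contained (no external lookup about $\mathrm{PGSp}_4(3)$), while the paper's buys brevity; your identifications $J\cap E=J_K$, $J=J_KB$ and $E_M=N_E(J_K)$ are all correctly justified.

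Two steps deserve one more line each. First, for $E_L$: to conclude that the top section is $\GL_2(3)$ rather than $\SL_2(3)\times 2$ you need the quotient of $C_L(B)/B$ by $W=C_Q(B)/B$ to embed into $\Out(W)\cong\GL_2(3)$, i.e.\ you need $C_{C_L(B)/B}(W)=Z(W)$; this holds because $W$ is self-centralizing in $C_{L_*}(B)/B$ (the centralizer of a $3$-central element of $\PSp_4(3)$) and every element of $C_L(B)\setminus C_{L_*}(B)$ inverts $ZB/B=Z(W)$, hence cannot centralize $W$. Once the embedding is in hand, your determinant $-1$ observation does force the image to be all of $\GL_2(3)$. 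Second, for $E_M$: the index computation $|N_M(B):C_M(B)|=2$ alone does not exclude $C_M(B)/J\cong 2\times 2\times\Alt(4)$, which is also an index-$2$ subgroup of $2\times 2\times\Sym(4)$. You should add that $N_M(B)/J$ is the stabilizer of the non-singular point $B$ in $\CO_4^-(3)$, namely $\GO_1(3)\times\GO_3(3)$, and that the factor acting non-trivially on $B$ is precisely $\GO_1(3)$; hence the kernel of the action on $B$ is $\GO_3(3)\cong 2\times\Sym(4)$. With these two additions the proof is complete.
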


\begin{proof} We have that $E= C_G(\langle t,B\rangle)$ and so $Z$
and $J_K$ are contained in $E$. That $Z$ is a $3$-central subgroup
of $E$ follows from Lemma~\ref{Prince1}. Hence, as $E \cong\Aut(\U_4(2))\cong  \mathrm {PGSp}_4(3)$, we get  $E_L\sim
3^{1+2}_+.\GL_2(3)$ and,  since a Sylow
$3$-subgroup of $E$ contains a unique elementary abelian subgroup of
order $27$,  $E_M = N_E(J_K) \sim 3^{3}.(2\times
\Sym(4))$ (see for example \cite[pg. 26]{Atlas}).
\end{proof}

\section{The centralizer of $t$}

We now start our investigation of the centralizer of the involution
$t$. We contine with the notation of the last section. In particular, $K = C_G(t)$. By Lemma~\ref{Prince2}, $K$ contains
$E=O^3(C_G(B)) \cong \Aut(\U_4(2))$. Our first lemma asserts that we
already see the Sylow $3$-subgroup of $K$ in $C_L(t)$.

\begin{lemma}\label{CLtsylow} $C_S(t)$ is a Sylow $3$-subgroup of
$K$. In particular, $|K|_3=3^4$ and $E$ contains a Sylow
$3$-subgroup of $K$.
\end{lemma}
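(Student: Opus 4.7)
The plan is first to fix $|C_S(t)|$ and establish a lower bound for $|K|_3$. Lemma~\ref{inv2}(iii) describes $C_{L_*}(t)/O_2(C_{L_*}(t))$ as a group of shape $3^{1+2}_+.\SL_2(3)$, whose $3$-part is $3^4$, and Lemma~\ref{CStnew} says that $C_S(t)$ is a Sylow $3$-subgroup of $C_{L_*}(t)$; together these yield $|C_S(t)|=3^4$. Lemma~\ref{Prince2} then gives $E=O^3(C_G(B))\cong \Aut(\U_4(2)) \leq K$, whose Sylow $3$-subgroups have order $3^4$, so $|K|_3 \geq 3^4$. It remains to verify $|K|_3 \leq 3^4$.

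For this, I would suppose for contradiction that some Sylow $3$-subgroup $T$ of $K$ containing $C_S(t)$ has $|T|\geq 3^5$, and embed $T$ into a Sylow $3$-subgroup $S^g$ of $G$. Since $|S^g:T|\leq 3$, the quotient $S^g/T$ is cyclic, hence abelian, and thus $[S^g,S^g]\leq T$. By Lemma~\ref{lem1}(i) and (iv), $Q^g$ is extraspecial with $Z^g=Z(Q^g)=[Q^g,Q^g]$; consequently $Z^g \leq [S^g,S^g] \leq T\leq K=C_G(t)$, so $t$ centralizes $Z^g$ and $t\in L_*^g=C_G(Z^g)$. Because $Z^g$ is central in $S^g$ we have $S^g \leq L_*^g$, and combining $T\leq S^g$ with $T\leq K$ gives $T\leq C_{L_*^g}(t)$.

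The argument closes with the observation that the $3$-part of $C_{L_*^g}(x)$ is at most $3^4$ for every involution $x$ of $L_*^g$: Lemma~\ref{inv1}(v) handles the case $xQ^g=u^gQ^g$ (yielding $3$-part $3^2$) and Lemma~\ref{inv2}(iii) handles the remaining involutions (yielding $3$-part $3^4$). This contradicts $|T|\geq 3^5$, so $|T|=3^4=|C_S(t)|$ and $T=C_S(t)$, proving the first assertion; the particular conclusions on $|K|_3$ and on $E$ containing a Sylow $3$-subgroup of $K$ then follow immediately. The main difficulty is transferring information from $T$, which a priori sits only inside $K$, back to a classical $3$-local subgroup of $G$, and the index-$3$ commutator trick $|S^g:T|\leq 3 \Rightarrow [S^g,S^g]\leq T \supseteq Z^g$ is exactly what achieves that transfer.
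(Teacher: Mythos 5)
Your proof is correct, but it takes a genuinely different route from the paper's. The paper's argument is purely local: writing $F=C_S(t)$, it observes from Lemma~\ref{inv2}(iii) and Lemma~\ref{CStnew} that $Z(F)=Z$ and $F\in\Syl_3(C_L(t))$, so for $F\le F_1\in\Syl_3(K)$ the subgroup $N_{F_1}(F)$ normalizes $Z(F)=Z$, hence lies in $L\cap K=C_L(t)$, forcing $N_{F_1}(F)=F$ and therefore $F=F_1$ by the normalizer-growth property of $3$-groups. You instead argue globally: a putative $T\in\Syl_3(K)$ of order at least $3^5$ sits with index at most $3$ in some $S^g$, so $Z^g=[Q^g,Q^g]\le[S^g,S^g]\le T\le C_G(t)$, which drags $t$ into $L_*^g$ and lets you cap $|T|$ by the $3$-parts of involution centralizers in $L_*^g$. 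Both mechanisms are sound; the paper's is shorter and avoids any case division, while yours has the merit of making explicit exactly which structural facts about $G$ (namely $|S|=3^6$ and $[Q,Q]=Z$) force the bound, and your index-$3$ commutator trick is a nice way to transfer $T$ back into a known $3$-local. One small imprecision: in the case $tQ^g=u^gQ^g$ you cite Lemma~\ref{inv1}(v), which is stated only for $u$ itself, whereas $t$ could a priori be any involution in the coset $Q^gu^g$; the fix is immediate, since any such involution inverts $Q^g/Z^g$ and centralizes $Z^g$, so $C_{Q^g}(t)=Z^g$ and $|C_{L_*^g}(t)|_3\le 3\cdot|L_*^g/Q^g|_3=3^2$. (You cannot instead appeal to $t\not\sim u$, since that is Lemma~\ref{utnotconj}, which comes after and depends on the present lemma.) With that one-line repair your argument is complete.
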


\begin{proof} Let $F = C_S(t)$. Then Lemmas~\ref{inv2}(iii) and
\ref{CStnew} imply that $Z(F)= Z$ and $F \in \Syl_3(C_{L}(t))$. If
$F_1 \in \syl_3(K)$ and  $F\le F_1$, then $N_{F_1}(F) $ normalizes
$Z$ and is consequently contained in $L$. Thus $N_{F_1}(F)=F$ and so
$F=F_1$.
\end{proof}

\begin{lemma}\label{utnotconj} The involutions  $t$ and $u$ are not $G$-conjugate and $u\in M_ *$.
\end{lemma}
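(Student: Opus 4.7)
The plan is to prove the two parts in sequence. First I would show $u \in M_*$ by pinning down the image of $u$ in $M/J \cong \CO_4^-(3)$ as a spinor-norm-trivial involution, hence in $\Omega_4^-(3) = M_*/J$. Second I would rule out $u \sim_G t$ by bounding $|C_G(u)|_3$ strictly below $|C_G(t)|_3 = 3^4$ (the latter from Lemma~\ref{CLtsylow}).

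For $u \in M_*$, I first observe that $uQ$ lies in the centre of $L_*/Q$: the extraspecial group $PQ/Q = O_2(L_*/Q)$ is characteristic, so its unique central involution $uQ$ is fixed under conjugation by the $\Alt(5)$-quotient. Thus $u$ commutes with $S/Q$ modulo $Q$, so $uSu^{-1}\le SQ=S$, placing $u \in N_G(S) \le L \cap M$ and in particular $u \in M$. Next, the Sylow $3$-subgroup $D_u$ of $C_{L_*}(u)$ has order $9$ by Lemma~\ref{inv1}(v); it meets $Q$ only in $Z$ (since $C_Q(u)=Z$), and its elements of order $3$ outside $Z$ lie in $J$ by Lemma~\ref{New1}(iv). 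Hence $D_u\le J$ and $C_J(u)=D_u$ is $2$-dimensional. Since $u$ is an involution it cannot lie in the $3$-group $J$, so $\bar u\in M/J$ is a non-trivial involution acting semisimply on the orthogonal space $J$ with fixed space $D_u$.

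The standard orthogonality of $\pm 1$-eigenspaces of a form-preserving involution (in characteristic coprime to $2$) then forces $[J,u]=D_u^{\perp}$ and $D_u \cap D_u^{\perp} = 0$; hence $D_u$ is non-degenerate, and because it contains the singular line $Z$ it must be hyperbolic. So $D_u^{\perp}$ is an anisotropic non-degenerate $2$-space, and an elementary check (every anisotropic $2$-dimensional quadratic form over $\GF(3)$ has discriminant a square) shows that the spinor norm of $\bar u$ is trivial. Therefore $\bar u \in \Omega_4^-(3) = M_*/J$ and $u \in M_*$.

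For the non-conjugacy of $t$ and $u$, I argue by contradiction. Assume $u \sim_G t$. Then $|C_G(u)|_3 = 3^4$, and any Sylow $3$-subgroup $F$ of $C_G(u)$ is isomorphic to a Sylow $3$-subgroup of $E \cong \Aut(\U_4(2))$, which is non-abelian because it contains $3^{1+2}_+$ by Lemma~\ref{ELEM}. Choose $F$ with $D_u \le F$ and pick $h \in G$ so that $F \le S^h$. Since a $3$-element normalizing the order-$3$ group $Z$ must centralize it, $F \cap L = F \cap L_*$; combined with $|C_{L_*}(u)|_3 = 9$ and $D_u \le F \cap L$ this forces $F \cap L = D_u$, so the $F$-orbit of $Z$ under conjugation has size $|F|/9 = 9$. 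All nine of these conjugates lie inside $F \le S^h$, and Lemma~\ref{New2}(iii) places each of them inside $J^h$, so the normal closure $R = \langle Z^F \rangle$ is a subgroup of the elementary abelian group $J^h$. Nine distinct $1$-spaces cannot live inside a $2$-space, so $\dim R \ge 3$; if $\dim R = 4$ then $R = J^h$ would force $F = J^h$ to be abelian, a contradiction, so $\dim R = 3$. But a straightforward count in the $4$-dimensional elliptic orthogonal $\GF(3)$-space shows that any $3$-dimensional subspace contains at most $4$ singular $1$-spaces, contradicting the presence of $9$ in $R$. Hence $|C_G(u)|_3 < 3^4$, and $t$ and $u$ are not $G$-conjugate. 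The main technical obstacle is forcing $R$ to sit inside an elementary abelian subgroup of $S^h$, which is exactly where the weak-closure statement Lemma~\ref{New2}(iii) does the work.
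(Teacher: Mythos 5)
Your proof is correct, but both halves take genuinely different routes from the paper's. For $u\in M_*$ the paper avoids spinor norms entirely: it picks an involution $s\in N_{M_*}(S)$, uses Lemma~\ref{Modcalc}(ii) and (vi) to see that $s$ centralizes $Z$ and inverts $(Q\cap J)/Z$ and hence $Q/Z$, deduces $\langle s\rangle Q=\langle u\rangle Q$, and concludes $u\in sQ\le M_*$ since $Q\le O^{3'}(M)=M_*$. Your eigenspace/spinor-norm computation is a legitimate substitute, but note one unjustified step: since $M/J\cong \CO_4^-(3)$ is the full group of \emph{similitudes}, you must rule out that $\bar u$ is a similitude with multiplier $-1$ before invoking orthogonality of the $\pm1$-eigenspaces. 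This is automatic -- for such an involutory similitude both eigenspaces would be totally singular, which is impossible in a space of Witt index $1$ and dimension $4$ -- but it needs saying, as does $\det\bar u=(-1)^{\dim[J,u]}=1$ before you pass from trivial spinor norm to $\bar u\in\Omega_4^-(3)$. For the non-conjugacy the paper's argument is considerably shorter and sharper: $C_S(u)=C_J(u)$ contains exactly two $3$-central subgroups by Lemma~\ref{Modcalc}(ii), so any $3$-group in $C_G(u)$ properly containing and normalizing $C_S(u)$ would have an orbit of length $3$ on a $2$-element set; hence $|C_G(u)|_3=3^2<3^4=|C_G(t)|_3$. Your argument -- forcing the nine $F$-conjugates of $Z$ into a hyperplane of $J^h$ via Lemma~\ref{New2}(iii) and observing that a hyperplane of the elliptic $4$-space carries at most four singular points -- is sound, but it needs the full hypothesis $|F|=3^4$ and so yields only the non-conjugacy, not the exact Sylow order of $C_G(u)$. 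Both proofs ultimately count conjugates of $Z$; the paper simply does the count inside $C_J(u)$ rather than inside a hyperplane of $J^h$.
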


\begin{proof} Choose an element $s$ of order $2$ in $N_{M_*}(S)$.
Then $s$ inverts $S/J$. Using Lemma~\ref{Modcalc}(ii) and (vi) we
see that $s$ centralizes $J/(J\cap Q)$ and $Z$, and inverts $(Q\cap
J)/Z$. Since $s$ normalizes $Q$ by Lemma~\ref{centralizers} (i), we
deduce that $\langle s\rangle Q= \langle u\rangle Q$. In particular,
$u \in M_*$ and so we have that $C_S(u)=C_J(u)$ contains exactly two
$3$-central subgroups by Lemma~\ref{Modcalc}(ii). Let $F=C_S(u)$.
Suppose that $F_1 \in \syl_3(C_G(u))$ with $F\le F_1$. If $F_1> F$,
then $|Z^{N_{F_1}(F)}| = 3$ which is not the case. Thus $F_1= F$ has
order $9$ and consequently, using Lemma~\ref{CLtsylow}, we see that
$t$ and $u$ are not $G$-conjugate.
\end{proof}

\begin{lemma}\label{fusinv1} Suppose that $x$ is an involution of $M$ with $|C_J(x)|= 3^3$. Then $x$ is $M$-conjugate to $t$.
\end{lemma}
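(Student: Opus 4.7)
\medskip
\noindent\emph{Proof plan.} My plan decomposes into two main steps: reducing to the case $xJ = tJ$ via $M$-conjugacy of reflections on $J$, and then a direct coset computation showing all involutions in $tJ$ are $J$-conjugate to $t$.

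Since $J$ has odd order, $x \not\in J$, so $xJ$ is a non-trivial involution in $M/J$. The action of $M/J \cong \CO_4^-(3)$ on $J$ is that of similitudes of the natural $-$-type orthogonal space, and the eigenspace decomposition over $\GF(3)$ gives $J = C_J(x) \oplus [J,x]$ with $\dim[J,x] = 1$. I would first verify that $xJ$ lies in the orthogonal subgroup $\GO_4^-(3)$: if its multiplier were $-1$, then every fixed vector $v \in C_J(x)$ would satisfy $q(v) = q(v \cdot xJ) = -q(v)$, forcing $q(v) = 0$; this would make $C_J(x)$ a totally singular subspace of dimension $3$, contradicting the Witt index of $J$ being $1$. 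So $xJ$ is an orthogonal reflection with non-singular axis $[J,x]$. By Lemma~\ref{JB}, $tJ$ is also a reflection, with axis $B$. A reflection in $\GO_4^-(3)$ is determined by its axis, and by Lemma~\ref{JOrbs} the group $M$ acts transitively on the $30$ non-singular $1$-spaces of $J$ (namely $B^M$); hence there exists $m \in M$ with $[J,x]^m = B$, so that $m^{-1}xmJ = tJ$. Replacing $x$ by $m^{-1}xm$, I may assume $x \in tJ$.

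Now write $x = jt$ with $j \in J$. Using $J$ abelian and $t^2 = 1$, $(jt)^2 = j \cdot (tjt^{-1}) \cdot t^2 = j \cdot j^{t^{-1}}$, so the involution condition becomes $j^{t^{-1}} = j^{-1}$; equivalently, $j$ lies in the $(-1)$-eigenspace of $t$ on $J$, which is $[J,t] = B$. Conversely, the map $c \mapsto c^{-1}c^{t^{-1}}$ is a $\GF(3)$-linear map $J \to B$ whose image is $[J,t] = B$ (since on $[J,t]$ the action of $t^{-1} - 1$ is $-2 = 1$, a bijection, while on $C_J(t)$ it is zero). Thus for the element $j \in B$ above there exists $c \in J$ with $c^{-1}c^{t^{-1}} = j$, and then $c^{-1}tc = (c^{-1}c^{t^{-1}})t = jt = x$, exhibiting $x$ as $J$-conjugate, and hence $M$-conjugate, to $t$.

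The only delicate point is excluding multiplier $-1$ in the first step, for which the $-$-type of the form on $J$ (and the resulting bound of $1$ on the dimension of a totally singular subspace) is essential; the remainder is a routine fusion computation inside the coset $tJ$.
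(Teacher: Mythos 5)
Your proof is correct and follows essentially the same route as the paper's: identify $xJ$ as an orthogonal reflection on $J$ (ruling out multiplier $-1$ via the Witt index), use that $M/J\cong \CO_4^-(3)$ fuses all reflections --- which you derive from transitivity on the $30$ non-singular points in Lemma~\ref{JOrbs}, matching the paper's appeal to the fusion of the two reflection classes --- and then observe that all involutions in the coset $tJ$ are $J$-conjugate. The paper leaves the last coset step implicit, whereas you carry it out explicitly; the substance is identical.
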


\begin{proof} The involutions $xJ \in M/J$ with $|C_J(x)|=3^3$ are reflections on $J$. Since the two reflection classes are fused in $\CO_4^-(3)$, we have that all such involutions $xJ$ are conjugate. But then $M$ has exactly one class of such involutions.
\end{proof}

Recall that $J_K = J\cap K= C_J(t)$ is elementary abelian of order $3^3$.

\begin{lemma}\label{NJK} We have that \begin{enumerate}
\item $J_K = J(C_S(t))$;
\item $N_G(J_K) \le M$;
\item $C_G(J_K) = J\langle t\rangle$;
\item $N_K(J_K)/C_K(J_K)\cong \GO_3(3) \cong 2 \times \Sym(4)$; and
\item $N_K(J_K) \le \langle t \rangle E$
\end{enumerate}
\end{lemma}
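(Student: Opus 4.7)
The five parts are best addressed in the order (i), (iii), (ii), (iv), (v); the hardest step is (iii), from which (ii) follows almost at once.

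Part (i): I combine Lemmas~\ref{inv2}(iii) and~\ref{CLtsylow} to see $|C_S(t)|=3^4$, while $|J_K|=3^3$ by Lemma~\ref{JB}. By Lemma~\ref{New1}(iv), every element of order $3$ in $S$ lies in $J\cup Q$, so any abelian subgroup $A$ of $C_S(t)$ is contained in $J$ or in $Q$; the first case gives $A\le J\cap C_S(t)=J_K$, and the second gives $A\le C_Q(t)$, which by Lemma~\ref{inv1}(iii) is extraspecial of order $27$ and admits no abelian subgroup of that order. Hence $J(C_S(t))=J_K$.

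Part (iii) is the main step. Since $Z\le J_K$, $C_G(J_K)\le L_*$. I plan to show first that $C_G(J_K)$ is solvable: elements of order $5$ in $L_*$ act fixed-point-freely on $Q/Z$ by Lemma~\ref{Laction2}(i), so cannot centralize the nontrivial image of $J_K\cap Q$ in $Q/Z$, making $|C_G(J_K)|$ a $\{2,3\}$-number and hence solvable by Burnside. Since $J\le C_G(J_K)$, Lemma~\ref{JSig2} forces $O_{3'}(C_G(J_K))=1$. Next, inside any Sylow $3$-subgroup $S^g$ of $G$ containing $J_K$, Lemma~\ref{New1}(iv) places $J_K$ in $J^g$ (the alternative $J_K\le Q^g$ would contradict the weak closure of $Z$ in $Q$ from Lemma~\ref{New2}(ii)), and the non-quadratic action of $S^g/J^g$ on $J^g$ (Lemma~\ref{New1}(iii)) yields $C_{S^g}(J_K)=J^g$, so $J$ is a Sylow $3$-subgroup of $C_G(J_K)$. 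Fitting's Theorem together with $J_K\le Z(C_G(J_K))$ gives $O_3(C_G(J_K))\in\{J_K,J\}$; the case $O_3=J_K$ would force $C_G(J_K)\le J_K$, contradicting $|C_G(J_K)|\ge 162$. Thus $J$ is normal in $C_G(J_K)$, and using $C_G(J)=J$ from Lemma~\ref{centralizers}(ii), $C_G(J_K)/J$ embeds in $\Aut(J)$. Each element of $C_G(J_K)$ permutes the $G$-conjugates of $Z$ in $J$, which by Lemma~\ref{fusion} are precisely the singular $1$-spaces of the quadratic form on $J$; by Lemma~\ref{quadratic form}, the image of $C_G(J_K)/J$ lies in $\CO_4^-(3)\cong M/J$. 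The pointwise stabilizer of $J_K$ there consists of similitudes that are identity on $J_K$ (forcing similitude factor $1$) and a scalar $\lambda$ on $B=J_K^\perp$ with $\lambda^2=1$, yielding just $\{1,tJ\}$. Hence $|C_G(J_K)/J|\le 2$ and $C_G(J_K)=J\langle t\rangle$.

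Part (ii) then follows immediately, because $N_G(J_K)$ normalizes $C_G(J_K)=J\langle t\rangle$ whose unique Sylow $3$-subgroup is $J$, so $N_G(J_K)\le N_G(J)=M$. For (iv), using (ii), the action of $N_M(B)/J$ on $J_K$ has image $\GO_3(3)$ with kernel $\langle tJ\rangle$, so combined with (iii), $N_G(J_K)/C_G(J_K)\cong\GO_3(3)$ and hence $N_K(J_K)/C_K(J_K)\hookrightarrow\GO_3(3)$; Lemma~\ref{ELEM} supplies $E_M\le N_K(J_K)$ with $E_M/J_K\cong\GO_3(3)$, forcing equality. For (v), $C_K(J_K)=C_G(J_K)\cap K=J_K\langle t\rangle$, and the equality in (iv) yields $N_K(J_K)=C_K(J_K)\cdot E_M=J_K\langle t\rangle E_M$; since $J_K\le E$ (being the $t$-fixed part of $C_G(B)=B\times E$) and $E_M\le E$, we obtain $N_K(J_K)\le\langle t\rangle E$. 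The main obstacle is the Fitting-theoretic analysis of $C_G(J_K)$ inside $L_*$ in (iii), in particular ruling out $O_3(C_G(J_K))=J_K$ and then bounding the image of $C_G(J_K)/J$ inside $\CO_4^-(3)$ via Lemma~\ref{quadratic form}.
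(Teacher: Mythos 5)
Your proposal reaches all five conclusions correctly, but the core of your argument is organised quite differently from the paper's. For (ii) and (iii) the paper works in the opposite order and more locally: it first observes that $C_G(J_K)\le L_*$ and, since $J_K\not\le Q$ forces $QJ_K=S$, that $C_G(J_K)$ lies in the $3$-closed group $N_{L_*}(S)$ and hence normalizes $J=J(S)$; a Frattini argument then gives $N_G(J_K)=N_{N_G(J_K)}(J)C_{N_G(J_K)}(J_K)\le M$, after which (iii) and (iv) drop out of the orthogonal geometry of $M/J\cong\CO_4^-(3)$ (the decomposition $J=J_K\perp B$ with $t$ a reflection). You instead attack (iii) head-on: Burnside plus the fixed-point-free action of $5$-elements on $Q/Z$ makes $C_G(J_K)$ a soluble $\{2,3\}$-group, Lemma~\ref{JSig2} kills $O_{3'}$, the non-quadratic action from Lemma~\ref{New1}(iii) identifies $J$ as a Sylow $3$-subgroup, and the Fitting argument forces $J\normal C_G(J_K)$, whence (ii) follows. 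This is heavier machinery but entirely sound, and it has the virtue of not needing the $3$-closure observation; note only that your re-derivation of the embedding of $C_G(J_K)/J$ into $\CO_4^-(3)$ via Lemma~\ref{quadratic form} is redundant, since once $J\normal C_G(J_K)$ you already have $C_G(J_K)\le N_G(J)=M$ and can quote Lemma~\ref{LMStruct}(iii). Your treatment of (iv) and (v) via $E_M$ from Lemma~\ref{ELEM} matches the spirit of the paper's appeal to $N_G(J_K)\le N_M(B)$ and $N_G(B)\cong\Sym(3)\times\Aut(\U_4(2))$.

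One step needs repair: in (i) you assert that every abelian subgroup of $C_S(t)$ lies in $J$ or in $Q$ because every element of \emph{order $3$} in $S$ lies in $J\cup Q$. That covers elementary abelian subgroups, but $C_S(t)$ is a Sylow $3$-subgroup of $\PSp_4(3)$ and so contains elements of order $9$; an abelian subgroup of order $27$ isomorphic to $\mathbb{Z}_9\times\mathbb{Z}_3$ would escape your dichotomy and would enlarge $J(C_S(t))$. The gap is easily closed -- either quote the known structure of the Sylow $3$-subgroup of $\PSp_4(3)$ (as the paper does, and as is implicit in Lemma~\ref{88}, whose proof records that its Thompson subgroup is elementary abelian of order $3^3$), or note that elements of order $9$ in $\PSp_4(3)$ are self-centralizing in a Sylow $3$-subgroup, so no such mixed abelian subgroup of order $27$ exists -- but as written the claim is not justified.
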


\begin{proof} Since $C_S(t)$ is isomorphic to a Sylow $3$-subgroup
of $\PSp_4(3)$ by Lemma~\ref{inv2}(iii), (i) holds.

Let $Y = N_G(J_K)$. Then $Y$ normalizes $C_Y(J_K)$ which contains
$J$. Hence the Frattini Argument implies that $Y = N_Y(J)C_Y(J_K)$.
Since $Z \le J_K$, $C_G(J_K) \le L_*$. Because $J_K$ centralizes
$t$, $J_K \not\le Q$ and so $C_G(J_K) \le N_{L_*}(S)$ is $3$-closed.
It follows that $C_Y(J_K)$ normalizes $J=J(S)$. So (ii) holds.

Since $J_K= C_J(t)$, we have that $J_K$ and $[J,t]$ are orthogonal. In particular,
$N_K(J_K)/J_K\cong 2\times \GO_3(3) \cong 2\times 2 \times \Sym(4)$ and $C_G(J_K) = J\langle t\rangle$. This is (iii) and (iv). As
$N_G(J_K) \le N_M(B)$ part (iii) also holds.
\end{proof}

\begin{lemma}\label{S3S6} $K$ contains a subgroup isomorphic to $\Sym(3)\times
\Sym(6)$.
\end{lemma}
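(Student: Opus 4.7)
The plan is to construct the desired subgroup of $K$ by applying Lemma~\ref{Prince2} to a carefully chosen $G$-conjugate $B'$ of $B$ lying inside $J_K = C_J(t)$, and then combining the resulting $\Sym(3)$ with a $\Sym(6)$ factor extracted from an involution centralizer in the corresponding copy of $\Aut(\U_4(2))$. The starting observation is that $t$ acts on the four-dimensional $\GF(3)$-orthogonal space $J$ as a reflection (Lemma~\ref{JB}), so $J_K$ is a non-degenerate three-space, and since $[J,t]\cap C_J(t)=1$ the subgroup $B$ itself does not lie in $J_K$. By Lemma~\ref{JOrbs} the non-singular one-subspaces of $J$ form a single $M$-orbit $B^M$, and a short count in the three-dimensional orthogonal space $J_K$ yields exactly nine non-singular one-subspaces, all distinct from $B$; pick one of them to be $B'$.

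Applying Lemma~\ref{Prince2} to $B'$ produces $N_G(B') = \langle B', t^*\rangle \times E'$, where $E' = O^3(C_G(B')) \cong \Aut(\U_4(2))$ and $t^*$ is an involution inverting $B'$ while centralizing $E'$. Since $B' \le J_K = C_J(t)$, $t$ centralizes $B'$, so $t \in C_G(B') = B' \times E'$, and comparing orders forces $t \in E'$. Because $t^*$ centralizes $E'\ni t$, we also have $t^* \in K$, so $\langle B', t^*\rangle \cong \Sym(3) \le K$.

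The central technical step is to show $C_{E'}(t)\cong 2\times\Sym(6)$. The elementary abelian subgroup $J_K \cap E'$ has order nine (via the decomposition $J_K = B' \times (J_K \cap E')$) and is centralized by $t$, so the Sylow three-subgroup of $C_{E'}(t)$ has order at least nine. Among the involution classes of $\Aut(\U_4(2))$ only two have centralizer order divisible by nine, and the plan is to single out the class with centralizer $2\times\Sym(6)$ by matching the action of $t$ on $J_K\cap E'$ against the three-local structure coded by $N_K(J_K)$ in Lemma~\ref{NJK} and the characterization of such involutions supplied by Lemma~\ref{88}. Writing $C_{E'}(t)=\langle t\rangle\times H'$ with $H'\cong\Sym(6)$, the subgroup $H'\le E'$ commutes with both $B'$ (which is central in $B'\times E'$) and $t^*$ (which centralizes $E'$), while $\langle B',t^*\rangle\cap H'\le\langle B',t^*\rangle\cap E'=1$. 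Consequently $\langle B',t^*\rangle\times H'\cong\Sym(3)\times\Sym(6)$ sits inside $K$, proving the lemma.

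The main obstacle is pinning down $t$'s conjugacy class in $E'$. Although $|J_K\cap E'|=9$ restricts the possibilities to two involution classes of $\Aut(\U_4(2))$, distinguishing them requires a finer three-local analysis: most likely via the orbit structure of $\GO_3(3)$ on the non-singular one-subspaces of $J_K$, combined with the explicit information on $N_K(J_K)$ from Lemma~\ref{NJK}, so that $B'$ can be chosen of the correct orthogonal type to force $t$ into the Lemma~\ref{88} class.
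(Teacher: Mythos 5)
There is a genuine gap, and it sits exactly where you flag it: the claim that $C_{E'}(t)\cong 2\times\Sym(6)$ for your chosen $B'\le J_K$. This is not a routine verification that you can defer. Both involution classes of $\Aut(\U_4(2))$ with centralizer order divisible by $9$ (the classes $2\mathrm{C}$ and $2\mathrm{A}$ of Table~1) are genuinely in play, and in fact for six of the nine non-singular points $B'$ of $J_K$ the claim is \emph{false}: these are the points lying in a degenerate $2$-subspace of $J_K$ together with a conjugate of $Z$, and the paper later shows (Lemmas~\ref{JKsyl} and \ref{signal1}(i)) that for such $B'$ one has $C_{E'}(t)\sim 2^{1+4}_+.3^2.2^2$, because $O_{3'}(C_K(Z))\cong \Q(8)$ forces $t$ into $E'{}'$ and $2\times\Sym(6)$ has no $\Q(8)$ subgroup. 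So "pick one of them" does not work; you must take $B'$ of the other orthogonal type, and even then you have supplied no argument that $t$ lands in the $2\mathrm{C}$ class there. Lemma~\ref{88} cannot do this job: it only says that \emph{if} $C_{E'}(t)\cong 2\times\Sym(6)$ then $\Syl_3(C_{E'}(t))$ sits in the Thompson subgroup, which does not separate the two candidate classes since both have Sylow $3$-subgroups of order $9$. The tools the paper eventually uses to pin down these classes (the signalizer $R\cong 2^{1+8}_+$ and the $\mathcal Z_i$ analysis) are all developed \emph{after} Lemma~\ref{S3S6}, so your route would require building a substantial amount of that machinery first.

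The paper avoids the issue by running the argument in the opposite direction. It starts with an involution $t_1\in E=O^3(C_G(B))$ chosen in the known class with $C_E(t_1)\cong 2\times\Sym(6)$, so that $C_G(t_1)\ge B\langle t\rangle\times C_E(t_1)\cong \Sym(3)\times 2\times\Sym(6)$ automatically, and then proves $t_1$ is $G$-conjugate to $t$. That conjugacy is obtained by choosing $t_1$ so that a Sylow $3$-subgroup $F$ of $C_E(t_1)$ lies in $C_S(B)$, invoking Lemma~\ref{88} to get $F\le J_K$ and hence $BF\le J$, using Lemma~\ref{hyperplanes} to find a conjugate of $Z$ in the hyperplane $BF$, and then observing that inside $L_*$ the two classes of involutions in $PQ$ are distinguished by the $3$-part of their centralizers: $|C_{L_*}(u)|_3=3^2$ while $t_1$ centralizes the group $BF$ of order $3^3$, so $t_1\sim_G t$. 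If you want to salvage your approach, the cleanest fix is precisely this reversal: rather than trying to compute the class of $t$ in $E'$, transport a known $2\times\Sym(6)$-involution of $E$ onto $t$ by a $G$-conjugation.
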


\begin{proof} Let $t_1 \in
E$ be such that $C_E(t_1) \cong 2 \times \Sym(6)$. Then $C_G(t_1)
\ge B\langle t\rangle \times C_E(t_1) \cong \Sym(3) \times 2 \times
\Sym(6)$ and so it suffices to show that $t$ and $t_1$ are
$G$-conjugate.

We make our initial choice of $t_1$ so that there exists  $F \in
\Syl_3(C_E(t_1))$ such that $F \le C_S(B)$. Then by Lemma~\ref{88}
$F$ is contained in the Thompson subgroup of $S\cap E$ which is
$J_K$. Hence $BF \le J$.

 Since $BF$ is a maximal
subgroup of $J$, $BF$ contains a conjugate of $Z$ by
Lemma~\ref{hyperplanes}. Conjugating by a suitable element of $M$ we
may then suppose that  $Z\le BF \le J$ and $t_1$ centralizes $BF$.
Thus we may view the entire configuration in $L_*$. By Lemma~\ref{inv1}(i),  $t_1 \in QP$. Therefore,
 either $t_1 $ is conjugate to $u$ or $t_1$ is conjugate to $t$.
Since $|C_{L_*}(u)|_3= 3^2$ by Lemma~\ref{inv1}(v), we have that $t_1$ is conjugate to $t$ as claimed.
\end{proof}


For $n \in \{0,1,2,3,4\}$, $\mathcal Z_n$ denotes the set of
subgroups of $J_K$ of order $9$ containing precisely $n$ subgroups
which are $G$-conjugate to $Z$.

\begin{lemma}\label{order9inpsp43}
\begin{enumerate}
\item $J_K$ contains exactly $4$ subgroups $G$-conjugate to
$Z$ and the remaining subgroups of $J_K$ of order $3$ are all
$G$-conjugate to $B$.

 \item The $N_K(J_K)$ orbits, under conjugation,  of the subgroups of
$J_K$ of order $9$  are $\mathcal Z_0$, $\mathcal Z_1$ and $\mathcal
Z_2$. Further, $|\mathcal Z_0|=3$, $|\mathcal Z_1|=4$ and $|\mathcal
Z_2|=6$.
\end{enumerate}
\end{lemma}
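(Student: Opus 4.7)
The plan is to exploit the orthogonal geometry on $J$ established in Lemmas~\ref{GO4minus} and \ref{LMStruct}. By Lemma~\ref{JB}, the involution $t$ acts on $J$ as a reflection with $[J,t]=B$, so $J = J_K \oplus B$ with $J_K = C_J(t)$ orthogonal to $B$ relative to the $-$-type quadratic form. Since $B \notin Z^M$ by Lemma~\ref{JOrbs}, the line $B$ is non-singular, and hence $J_K = B^\perp$ is a non-degenerate $3$-dimensional orthogonal space over $\GF(3)$, inheriting its form from $J$.

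For part (i) I will use that the $G$-conjugates of $Z$ in $J$ are precisely the singular $1$-subspaces of $J$, by Lemmas~\ref{GO4minus} and \ref{LMStruct}(iii). A non-degenerate $3$-dimensional orthogonal space over $\GF(3)$ contains exactly $q+1 = 4$ singular $1$-subspaces (equivalently, a smooth conic in the projective plane over $\GF(3)$ carries $q+1$ points), so $J_K$ contains exactly four conjugates of $Z$. The remaining $13 - 4 = 9$ subgroups of $J_K$ of order $3$ are then non-singular, hence $G$-conjugate to $B$ by Lemma~\ref{JOrbs}.

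For part (ii) I will apply Lemma~\ref{NJK}(iv), which gives $N_K(J_K)/C_K(J_K) \cong \GO_3(3)$ acting on $J_K$ and preserving the restricted form. This action has a single orbit on each isometry class of $2$-subspace in a non-degenerate $3$-dimensional orthogonal space of Witt index $1$, and there are exactly three such classes: degenerate (the perp of a singular $1$-subspace, containing only its radical as a singular line), hyperbolic (containing two singular lines) and elliptic (containing none); no totally singular $2$-subspace exists since the Witt index is $1$. A double count of incidences between singular $1$-subspaces and $2$-subspaces of $J_K$ fixes the sizes: each of the four singular lines lies in $(3^2-1)/(3-1) = 4$ of the thirteen $2$-subspaces, giving $16$ incidences in total. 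Writing $a$ and $b$ for the numbers of hyperbolic and elliptic $2$-subspaces, we have $a + b = 9$ and $4\cdot 1 + 2a + 0\cdot b = 16$, so $a = 6$ and $b = 3$. Matching these with $\mathcal Z_2$, $\mathcal Z_1$ and $\mathcal Z_0$ respectively yields $|\mathcal Z_0| = 3$, $|\mathcal Z_1| = 4$ and $|\mathcal Z_2| = 6$.

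The only delicate point is the identification of the action of $N_K(J_K)$ on $J_K$ as the full orthogonal group $\GO_3(3)$ of the restricted form, which is exactly the content of Lemma~\ref{NJK}(iv), so no extra work is needed. Everything else is elementary orthogonal combinatorics in dimension three.
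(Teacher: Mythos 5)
Your proof is correct and follows essentially the same route as the paper: identify $J_K=B^\perp=C_J(t)$ as a non-degenerate $3$-dimensional orthogonal space inside the quadratic space $J$, use $N_K(J_K)/C_K(J_K)\cong \GO_3(3)$ together with Witt's Lemma to get transitivity on each isometry type, and count singular points and $2$-subspaces by type. The only difference is cosmetic: you justify the counts $3,4,6$ by an explicit incidence double-count, where the paper simply asserts them.
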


\begin{proof} From Lemma~\ref{NJK} (iii), we have
$N_K(J_K)/C_K(J_K) \cong \GO_3(3)\cong 2\times \Sym(4)$. Since $J_K$ is irreducible
as an $N_K(J_K)$-module, the centre of $N_K(J_K)/C_K(J_K)$ inverts
$J_K$ and thus has no effect on the orbits of $N_K(J_K)$ on
subgroups of $J_K$. Since $J_K$ can be identified as a non-degenerate orthogonal module and $N_K(J_K)/C_K(J_K)$ can be identified with $\GO_3(3)$, we see that $J_K$ has exactly four subgroups of order $3$ which correspond to singular one spaces and these are  $Z^{N_K(J_K)}$. The other subgroups of $J_K$ of order $3$ are conjugate to $B$.

When $N_K(J_K)$ acts on subgroups $A$ of order $9$ in $J_K$, we have three possibilities: $A$ could be hyperbolic, there are six of these, definite, there are three of these, or degenerate of which there are four.  By Witt's Lemma the respective types are fused in $N_K(J_K)$.  Therefore $\mathcal Z_0$ consists of definite spaces, $\mathcal Z_1$ of degenerate spaces and $\mathcal Z_2$ of hyperbolic spaces.
\end{proof}

\begin{lemma}\label{AZ1} Let $A \in \mathcal Z_1$ and $a\in A^\#$ be $3$-central. Then $A = J_K \cap
O_3(C_G(a))$.
\end{lemma}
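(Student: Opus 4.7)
The plan is to prove the equality $A = J_K \cap O_3(C_G(a))$ in two stages: first reducing to the case $\langle a\rangle = Z$, and then identifying both sides as the same $2$-dimensional subspace of $J_K$ via the geometry of the orthogonal form.

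For the reduction, Lemma~\ref{NJK}(iv) identifies $N_K(J_K)/C_K(J_K)$ with $\GO_3(3)$, and from the proof of Lemma~\ref{order9inpsp43} we already know that the orbit $Z^{N_K(J_K)}$ consists of all four singular $1$-spaces of $J_K$. Hence there exists $h\in N_K(J_K)\le K$ with $\langle a\rangle^h = Z$. Conjugation by $h$ fixes $J_K$ and sends $O_3(C_G(a))$ to $O_3(C_G(Z)) = Q$, while mapping $A$ to an element of $\mathcal{Z}_1$ containing $Z$. So we may replace $(A,a)$ by $(A^h,a^h)$ and from now on assume that $\langle a\rangle = Z$, in which case the goal becomes $A = J_K\cap Q$.

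Next I would compute $J_K \cap Q$ directly. Using $J\cap Q = [Q,S]$ from Lemma~\ref{SACTION}, we have $J_K \cap Q = J_K\cap[Q,S]$, the intersection of two $3$-dimensional subspaces of the $4$-dimensional $\GF(3)$-space $J$. These two subspaces are distinct: by Lemma~\ref{JB}, $B \le Q \cap J = [Q,S]$, yet $B\cap J_K = B\cap C_J(t) = 0$ since $B = [J,t]$ and $J_K = C_J(t)$ are respectively the $-1$- and $+1$-eigenspaces for the action of $t$ on $J$. Consequently $|J_K\cap Q| = 9$. Clearly $Z \le J_K\cap Q$, and any subgroup of order $3$ in $J_K\cap Q$ that is $G$-conjugate to $Z$ must equal $Z$ by the weak-closure assertion of Lemma~\ref{New2}(ii), since it lies in $Q$. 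Hence $J_K\cap Q$ is a subgroup of $J_K$ of order $9$ containing precisely one $G$-conjugate of $Z$, i.e., $J_K\cap Q\in\mathcal{Z}_1$.

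To conclude, both $A$ and $J_K\cap Q$ belong to $\mathcal{Z}_1$ and both contain the same $3$-central subgroup $Z$. Since $|\mathcal{Z}_1|=4$ by Lemma~\ref{order9inpsp43}(ii) and $J_K$ contains exactly $4$ singular $1$-spaces (the $G$-conjugates of $Z$ it contains), the assignment sending a member of $\mathcal{Z}_1$ to its unique $G$-conjugate of $Z$ is a bijection; therefore $A = J_K \cap Q$, as required. The only point requiring real care is the distinctness $J_K \neq [Q,S]$ underlying the dimension count, but this drops out immediately from the $t$-eigenspace decomposition of $J$, so I do not expect a serious obstacle.
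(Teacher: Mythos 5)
Your proof is correct and follows essentially the same route as the paper's: the paper likewise observes that $J_K \cap O_3(C_G(a)) \in \mathcal Z_1$ (via Lemma~\ref{New2}(ii)) and then concludes from the count $|\mathcal Z_1| = 4$ matching the four $N_K(J_K)$-conjugates of $\langle a\rangle$ in $J_K$. Your preliminary reduction to $\langle a\rangle = Z$ and the explicit eigenspace computation showing $|J_K \cap Q| = 9$ merely spell out details the paper leaves implicit.
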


\begin{proof} By Lemma~\ref{New2} (ii), we have that $J_K \cap
O_3(C_G(a))\in\mathcal Z_1$. The result is now verified as, by
Lemma~\ref{order9inpsp43}, there are exactly four
$N_K(J_K)$-conjugates of
 $\langle a\rangle$ in $J_K$  and $|\mathcal Z_1|=4$. \end{proof}

\begin{lemma}\label{JKsyl} Suppose that $|J_K:A|=3$. Then $J_K \in
\Syl_3(C_K(A))$. In particular, setting $E_b = O^3(C_G(b))$,  either
$C_{E_b}(t) \cong 2 \times \Sym(6)$ or $C_{E_b}(t) \sim
2^{1+4}_+.3^2.2^2$.
\end{lemma}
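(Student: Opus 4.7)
I would proceed in two stages. For the Sylow assertion $J_K \in \syl_3(C_K(A))$, my strategy is to produce a single $b \in A^\#$ with $|C_K(b)|_3 \le 27$; since $J_K \le C_K(A) \le C_K(b)$ and $|J_K|=27$, this suffices. I would choose $b$ so that $\langle b \rangle$ is \emph{not} $G$-conjugate to $Z$. Such a $b$ exists because, by Lemmas~\ref{NJK}(iv) and \ref{order9inpsp43}, $J_K$ is a $3$-dimensional orthogonal space over $\GF(3)$, and any $2$-dimensional subspace contains at most $2$ singular lines while $A$ has $4$ lines. Lemma~\ref{JOrbs} then yields $\langle b\rangle \in B^G$, and Lemma~\ref{Prince2} gives $C_G(b) = \langle b\rangle \times E_b$ with $E_b \cong \Aut(\U_4(2))$. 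Since $b \in J_K = C_J(t)$ and $t$ is an involution, $t \in E_b$, so
$$C_K(b) = C_{C_G(b)}(t) = \langle b\rangle \times C_{E_b}(t).$$
Table~1 bounds the $3$-part of every involution centralizer in $\Aut(\U_4(2))$ by $9$, giving $|C_K(b)|_3 \le 27$, which proves the Sylow statement.

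For the structure of $C_{E_b}(t)$, the projection $C_K(b) \twoheadrightarrow E_b$ along the direct factor $\langle b\rangle$ sends $J_K$ (which satisfies $J_K \cap \langle b\rangle = \langle b\rangle$) to a subgroup of $C_{E_b}(t)$ of order $27/3 = 9$, and combined with the upper bound this forces $|C_{E_b}(t)|_3 = 9$. Inspecting Table~1, only the classes $A_1$ and $A_2$ of $\Aut(\U_4(2))$ have centralizer of order divisible by $9$: the $A_1$-centralizer has order $1440$ and is $2 \times \Sym(6)$ (the case featured in Lemma~\ref{88}), while the $A_2$-centralizer has order $1152$ and is of shape $2^{1+4}_+.3^2.2^2$. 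These are exactly the two listed alternatives.

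I do not foresee a major obstacle here. The one subtlety is the initial selection of $b$: were $\langle b\rangle$ instead $3$-central, we would have $E_b$ conjugate to $L^* = C_G(Z)$ itself (since $L^*/O_3(L^*) \sim 2^{1+4}.\Alt(5)$ has no nontrivial $3$-quotient, forcing $O^3(L^*) = L^*$), and $C_{E_b}(t)$ would then have the $(3^{1+2}_+ \times \Q(8)).\SL_2(3)$-shape of Lemma~\ref{inv2}, matching neither of the two listed alternatives. The orthogonal-geometry count of non-singular lines in $A$ is precisely what prevents us from being forced into this case; once $b$ is chosen, the remainder is a direct computation combining Lemma~\ref{Prince2} with the involution-centralizer data in Table~1.
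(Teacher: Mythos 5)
Your proposal is correct and follows essentially the same route as the paper: both proofs select a non-$3$-central $b\in A^\#$ (the paper cites Lemma~\ref{order9inpsp43}, which encodes exactly your count of singular lines in a $2$-dimensional subspace of the orthogonal space $J_K$), invoke Lemma~\ref{Prince2} to get $C_G(b)\cong 3\times\Aut(\U_4(2))$, use the order-$9$ subgroup of $E_b$ centralized by $t$ together with the $3$-parts of the centralizer orders in Table~1 to pin down $|C_{E_b}(t)|_3=3^2$, and then read off the two possible structures from the involution classes 2C and 2A of $\Aut(\U_4(2))$. Your write-up is, if anything, slightly more explicit than the paper's about why $t$ lies in the direct factor $E_b$ and how the Sylow conclusion follows.
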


\begin{proof} Since $J_K$ is abelian and $J_K \le K$, $J_K \le C_K(A)$.
By Lemma~\ref{order9inpsp43}, there exists $b \in A$ which is not
$3$-central. Now $C_G(b) \cong 3 \times \Aut(\U_4(2))$ by
Lemma~\ref{Prince2}. Since $t$ centralizes $J_K \cap E_b$ which has order $9$, from
Table~\ref{TabInv} we read that $|C_{E_b}(t)|_3 = 3^2$. Now we may further deduce the possible structures of $C_{E_b}(t)$ as listed.
\end{proof}

\begin{lemma} \label{signal1}Suppose that $|J_K:A|=3$.
\begin{enumerate}
\item If $A\in \mathcal Z_1$ and $a\in A^\#$ is $3$-central, then
$O_{3^{\prime}}(C_K(A))=O_{3^{\prime}}(C_K(a)) \cong \Q(8)$. Also,  for $b \in A^\#$ with
$b$ not $3$-central in $G$,
$$O_{3^{\prime}}(C_K(A)) \le
O_{3^\prime}(C_K( b))\cong 2^{1+4}_+.$$
\item If $A\in \mathcal Z_0 \cup \mathcal Z_2$, then
$O_{3^{\prime}}(C_K(A)) =\langle t\rangle.$
\item If $T \in \signal_{C_G(A)}(J_K,3')$, then $T \le O_{3'}(C_K(
A))$.
\end{enumerate}
\end{lemma}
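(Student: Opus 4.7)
The plan is to exploit the orthogonal geometry on $J_K$ afforded by $N_K(J_K)/C_K(J_K)\cong\GO_3(3)$ from Lemma~\ref{NJK} and to reduce each of (i) and (ii) to structural facts about $C_{L_*}(t)$ provided by Lemma~\ref{inv2}. For (i), I would first use Lemma~\ref{order9inpsp43} to conjugate within $N_K(J_K)$ and assume $\langle a\rangle=Z$, so that $C_K(a)=C_{L_*}(t)$ and Lemma~\ref{inv2}(ii) yields $O_{3'}(C_K(a))=O_2(C_{L_*}(t))\cong\Q(8)$. The key geometric observation is that $A\in\mathcal Z_1$ with $\langle a\rangle=Z$ as its unique $3$-central line must equal $Z^{\perp}\cap J_K$; using $[J,S]=[Q,S]$ (which holds since $J$ is abelian and $S=JQ$) together with Lemma~\ref{Modcalc}(iv) to identify $[Q,S]=Z^{\perp}$ inside $J$, one obtains $A\le J_K\cap[Q,S]\le C_Q(t)=O_3(C_{L_*}(t))$. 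Lemma~\ref{inv2}(i) then implies that $\Q(8)$ centralizes $A$, yielding $\Q(8)\le O_{3'}(C_K(A))$; the reverse containment follows from Lemma~\ref{const} with $p=3$, $P=A$, and $Y=C_K(a)$ soluble, after observing that $O_{3'}(C_K(A))$ is characteristic in $C_K(A)$ and hence normal in $N_{C_K(a)}(A)$. For the second claim of (i), $t$ lies in $E_b$ (the cyclic $3$-factor of $C_G(b)$ has no involutions); Lemma~\ref{JKsyl} then restricts $C_{E_b}(t)$ to two shapes, and the embedding $\Q(8)\le C_K(A)\le C_K(b)$ rules out the shape $2\times\Sym(6)$ (which contains no quaternion subgroup), forcing $C_{E_b}(t)\sim 2^{1+4}_+.3^2.2^2$ and therefore $O_{3'}(C_K(b))\cong 2^{1+4}_+$.

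For (ii) with $A\in\mathcal Z_2$, pick distinct $3$-central subgroups $\langle a_1\rangle,\langle a_2\rangle\le A$. Part~(i) gives $O_{3'}(C_K(a_1))\cong\Q(8)$ inside the conjugate $C_{L_*^{g_1}}(t)$. Since $A$ is hyperbolic, $a_2\notin\langle a_1\rangle^{\perp}=[Q,S]^{g_1}$, so $a_2\notin Q^{g_1}$, and Lemma~\ref{inv2}(iv) in that conjugate yields $C_{\Q(8)}(a_2)=\langle t\rangle$. Applying Lemma~\ref{const} with $P=A$ and $Y=C_K(a_1)$ confines $O_{3'}(C_K(A))$ inside $O_{3'}(C_K(a_1))=\Q(8)$, and intersecting with $C_G(a_2)$ delivers $O_{3'}(C_K(A))\le\langle t\rangle$, with equality since $\langle t\rangle$ is central in $C_K(A)$. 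When $A\in\mathcal Z_0$, no element of $A^\#$ is $3$-central, so each $b\in A^\#$ has $C_G(b)\cong 3\times\Aut(\U_4(2))$; my plan is to analyse $O_{3'}(C_K(b))$ via Lemma~\ref{JKsyl}, invoking part~(i) applied to a $\mathcal Z_1$-plane through $b$ when $b^{\perp}$ is hyperbolic in $J_K$, and then to use the non-trivial action of a second generator $b'\in A^\#$ on the resulting $2^{1+4}_+$ to collapse $O_{3'}(C_K(A))$ down to $\langle t\rangle$.

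Part~(iii) is where I expect the main obstacle, since one must absorb an arbitrary $J_K$-invariant $3'$-subgroup $T$ of $C_G(A)$ into $O_{3'}(C_K(A))$; the first hurdle is to show $T\le K$, equivalently $[T,t]=1$. My plan is to use coprime action of $J_K$ on $T$ to write $T=[T,J_K]\cdot C_T(J_K)$. The factor $C_T(J_K)$ lies in $C_G(J_K)=J\langle t\rangle$ by Lemma~\ref{NJK}, and as a $3'$-subgroup is contained in $\langle t\rangle\le K$. For $[T,J_K]$, choosing $j\in J_K\setminus A$ one verifies $[T,J_K]=[T,j]$ (using $[T,A]=1$), which is $J_K$-invariant with $[[T,j],j]=[T,j]$. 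I would then try to promote the $J_K$-invariance of $[T,j]$ to $J$-invariance, via considering $\langle[T,j]^J\rangle$ inside $C_G(A)$ and exploiting that $A\le J$ so $J$ acts on $C_G(A)$ by conjugation; if this join is a $3'$-group, then Lemma~\ref{JSig2} forces it, and hence $[T,j]$, to be trivial, whence $T\le\langle t\rangle\le K$. Once $T\le K$ is secured, $T$ becomes a $J_K$-invariant $3'$-subgroup of the soluble group $C_K(A)$, and a final application of Lemma~\ref{const} with $P=J_K$ places $T$ inside $O_{3'}(C_K(A))$, as determined by (i) or (ii). The hardest subcase will again be $A\in\mathcal Z_0$, where the absence of a $3$-central element in $A$ blocks direct appeal to Lemma~\ref{inv2}(iv).
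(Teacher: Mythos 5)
Your handling of part (i) and of the $\mathcal Z_1$/$\mathcal Z_2$ cases of part (ii) is essentially the paper's argument in different clothing: the paper obtains $A\le O_3(C_G(a))$ from Lemma~\ref{AZ1} rather than from your geometric identification $A=Z^{\perp}\cap J_K\le[Q,S]$, and it uses Lemma~\ref{utnotconj} together with Lemma~\ref{Laction2}(ii) to see that Lemma~\ref{inv2} applies to $t$ inside $C_G(a)$ (your normalization $\langle a\rangle=Z$ by conjugating inside $N_K(J_K)$, which fixes $t$, achieves the same end). Those parts are sound, modulo the unstated but routine application of Lemma~\ref{const} to get $O_{3'}(C_K(A))\le O_{3'}(C_K(b))$. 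The first genuine gap is the $\mathcal Z_0$ case of (ii), where you stop at a plan precisely where the work lies: an element of order $3$ acting on $2^{1+4}_+$ may centralize a subgroup of order $32$, $8$ or $2$, and the paper must exclude $8$ by noting that $O^2(C_{E_b}(t))\cong\SL_2(3)\circ\SL_2(3)$ and that a centralizer of order $8$ would force $A\cap E_b$ to be $3$-central in $E_b$ and hence in $G$, contradicting $A\in\mathcal Z_0$; your proposal never confronts this. (Your worry about points $b$ with $b^{\perp}$ anisotropic is real --- the non-singular points of $J_K$ fall into two $N_K(J_K)$-classes, cf.\ Lemma~\ref{Kpfusion}(iii) --- but every anisotropic plane contains points of both types, so a suitable $b$ always exists.)

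The more serious problem is part (iii). The step ``$C_T(J_K)$ lies in $C_G(J_K)=J\langle t\rangle$ and, being a $3'$-group, is contained in $\langle t\rangle\le K$'' is false: the involutions of $J\langle t\rangle$ are exactly the elements $jt$ with $j\in B=[J,t]$, and for $j\in B^{\#}$ the group $\langle jt\rangle$ is a $3'$-subgroup of $C_G(J_K)\le C_G(A)$ centralized by $J_K$ which does not lie in $K=C_G(t)$, since $(jt)^t=j^{-1}t\neq jt$. This is not a repairable slip in your argument but a counterexample to the literal statement: no proof can place every $J_K$-invariant $3'$-subgroup of $C_G(A)$ inside $O_{3'}(C_K(A))$. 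The statement has to be read --- as the paper's own proof and its sole application in Lemma~\ref{Jsigdone} make clear --- for $3'$-subgroups of $C_K(A)$, and for those the whole of (iii) is the short soluble-group argument you only reach at the very end: $J_K\in\Syl_3(C_K(A))$ by Lemma~\ref{JKsyl}, $C_K(A)$ is soluble, so a $J_K$-invariant $3'$-subgroup centralizes $O_{3',3}(C_K(A))/O_{3'}(C_K(A))$ and Lemma~\ref{const} finishes. The machinery you erect to force $T\le K$ (the coprime decomposition, the passage to $\langle[T,j]^J\rangle$, the appeal to Lemma~\ref{JSig2}) is therefore both doomed and unnecessary.
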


\begin{proof}

Assume that $A\in \mathcal Z_1$.  Let $a \in A^\#$ be a $3$-central
element and $b \in A\setminus \langle a \rangle$. Then $C_G(a) \sim
3^{1+4}_+.2^{1+4}_-.\Alt(5)$. Since every element of order $2$ in
$C_G(a)$ is contained in $O_{3,2}(C_G(a))$ by
Lemma~\ref{Laction2}(ii), we have that $t \in O_{3,2}(C_G(a))$. As
$t$ is not conjugate to the elements in $Z(C_G(a)/O_3(C_G(a)))$ by
Lemma~\ref{utnotconj}, we have $O_{3'}(C_{C_G(a)}(t)) \cong \Q(8)$
by Lemma~\ref{inv2}(ii). By Lemma~\ref{AZ1}, $A =J_K
\cap O_3(C_G(a))\le C_{O_3(C_G(a))}(t)$. Thus Lemma~\ref{inv2} (i)
and (ii) imply that $O_{3'}(C_K(a))=O_{3^{\prime}}(C_K(A)) \cong \Q(8)$ which is
the first claim in (i). We now focus on $b$. Using
Lemmas~\ref{order9inpsp43} (i) and \ref{Prince2}, we have $C_G(b)
\cong 3\times \Aut(\U_4(2))$. Let $E_b= O^3(C_G(b))$. Then, as $t$
centralizes $b$, $t\in E_b$. Now $C_{C_G(b)}(t)$ contains
$O_{3^{\prime}}(C_K( A)) \cong \Q(8)$. Hence, as $2\times \Sym(6)$
doesn't contain a subgroup isomorphic to $\Q(8)$, we may use
Lemma~\ref{JKsyl} to deduce that $t\in E_b'$ and that
$C_K(\langle b\rangle) \sim 3\times 2^{1+4}_+.3^2.2$. Thus
$O_{3^\prime}(C_K( b))\cong 2^{1+4}_+.$
Now using the fact that $C_K( b)$ is soluble and applying Lemma~\ref{const} we get that $O_{3'}(C_K(A)) \le O_3'(C_K( b))$. Thus (i) holds.

Assume that $A \in \mathcal Z_2$ and just as above let $a\in A^\#$
be a $3$-central element. By Lemma~\ref{AZ1}, $A \not \le
O_3(C_G(a))$. Let $b \in A\setminus O_3(C_G(a))$. Again by
Lemma~\ref{utnotconj}, $t$ is not conjugate to an element of the inverse image of
$Z(C_G(a)/O_3(C_G(a)))$. Hence using Lemmas~\ref{Laction2} (ii)
 and \ref{inv2}(iv) we get $C_{O_{3'}(C_K(a))}(b) = \langle t
\rangle$. In particular, using Lemma~\ref{const} again (ii) holds for $A\in \mathcal Z_2$.

Suppose that $A\in\mathcal Z_0$. Let $b\in A^\#$. Then  $C_G(b)
\cong 3 \times \Aut(\U_4(2))$ by Lemma~\ref{order9inpsp43} (i). Recall that
$E_b = O^3(C_G(b))$. Then from Lemma~\ref{JKsyl}, we have $C_{C_G(b)}(t) \sim 3
\times 2^{1+4}_+.3^2.2^2$ or $C_{C_G(b)}(t) \cong 3\times 2 \times
\Sym(6)$. In the latter case the centralizer in $C_{C_G(b)}(t)$ of
any further element of order $3$ has shape $2\times 3\times 3 \times
\Sym(3)$ and so (ii) holds if this possibility arises.  So assume
the former possibility occurs. Then, as $O^2(C_{E_b}(t))$ is
isomorphic to the central product $\SL_2(3)\circ\SL_2(3)$,
$C_{O_2(C_{E_b}(t))}(A\cap E_b)$ either has order $8$ or $2$. In the
former case we deduce from centralizer orders that $A\cap E$ is
$3$-central in $E$ and consequently $3$-central in $G$, a
contradiction. Thus $C_{O_2(C_{E}(t)) }(A\cap E)=\langle t\rangle $
and so (ii) holds when $A \in \mathcal Z_0$.

By Lemma~\ref{JKsyl}, $J_K \in \Syl_3(C_K(A))$ and so, as  $C_K(A)$
is soluble, $J_KO_{3'}(C_K(A)) = O_{3',3}(C_K(A))$. Therefore any
$3'$-subgroup of $C_K(A)$ which is normalized by $J_K$ centralizes
$J_K O_{3'}(C_G(A))/O_{3'}(C_G(A))$. Hence, as $C_K(A)$ is soluble,
(iii) follows from Lemma~\ref{const}.
\end{proof}

Define $R = \langle O_{3'}(C_K(A))\mid A\in\mathcal Z_1\rangle$.
Notice, that by Lemma~\ref{signal1} (i) and (ii), we also have that
$R = \langle O_{3'}(C_K(A))\mid |J_K:A|=3\rangle$.

\begin{lemma}\label{Jsigdone}  $R \cong 2^{1+8}_+$ and
$\signal_{K}^*(J_K,3')=\{R\}$.
\end{lemma}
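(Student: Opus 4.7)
The plan is as follows. By Lemma~\ref{order9inpsp43}(ii) there are four members of $\mathcal Z_1$; write $A_1,\ldots,A_4$ for them, with $A_k=\langle a_k\rangle^\perp$ and $\langle a_1\rangle,\ldots,\langle a_4\rangle$ the four singular $1$-spaces of $J_K$. Put $R_k=O_{3'}(C_K(A_k))$, which by Lemma~\ref{signal1}(i) is isomorphic to $\Q(8)$ with centre $\langle t\rangle$; so $R=\langle R_1,\ldots,R_4\rangle$. I would first show the $R_k$'s pairwise commute modulo $\langle t\rangle$ and pairwise intersect in $\langle t\rangle$, then pass to $\bar R=R/\langle t\rangle$ as an $\mathbb F_2 J_K$-module to read off $|R|=2^9$, next recognize $R$ as extraspecial of $+$-type, and finally deduce the maximality statement by coprime-action decomposition.

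For the commutation and intersection: when $k\ne l$ the intersection $A_k\cap A_l$ is a $1$-dimensional non-singular subspace $\langle b_{kl}\rangle$, and since $\langle b_{kl}\rangle^\perp\supseteq\langle a_k,a_l\rangle$ contains two singular points, $\langle b_{kl}\rangle^\perp$ is a hyperbolic plane, so $O_{3'}(C_K(\langle b_{kl}\rangle))\cong 2^{1+4}_+$ by Lemma~\ref{signal1}(i). Both $R_k$ and $R_l$ are $J_K$-invariant $2$-subgroups of $C_K(\langle b_{kl}\rangle)$, so by Lemma~\ref{signal1}(iii) they lie inside this $2^{1+4}_+$; its derived subgroup equals its centre $\langle t\rangle$, forcing $[R_k,R_l]\le\langle t\rangle$. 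Meanwhile $R_k\cap R_l\le C_K(A_k)\cap C_K(A_l)=C_K(J_K)=J\langle t\rangle$ by Lemma~\ref{NJK}(iii) (since $A_k+A_l=J_K$), so this $2$-group intersection must be $\langle t\rangle$.

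For the order and structure: $\bar R=R/\langle t\rangle$ is, by the previous step, an elementary abelian $2$-group generated by commuting subgroups $\bar R_k\cong\mathbb F_2^2$. Conjugation makes $\bar R$ an $\mathbb F_2 J_K$-module with each $\bar R_k$ a submodule. Since $A_k\le C_K(A_k)$ centralizes $R_k$, $A_k$ is in the kernel of the $J_K$-action on $\bar R_k$; the quotient $J_K/A_k\cong\mathbb Z/3$ must act nontrivially (otherwise $R_k\le C_K(J_K)=J\langle t\rangle$ forces $R_k=\langle t\rangle$), and hence faithfully, identifying $\bar R_k$ as an irreducible with kernel exactly $A_k$. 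The distinct $A_k$'s make the $\bar R_k$'s pairwise non-isomorphic irreducibles, and Maschke's theorem (using $|J_K|=27$ coprime to $|\bar R|$) gives a direct sum decomposition of order $2^8$, so $|R|=2^9$. A $J_K$-invariant commutator pairing between non-isomorphic irreducibles vanishes by Schur, so the commutator form on $\bar R$ is an orthogonal direct sum of pieces that are non-degenerate on each $\bar R_k$ (because $R_k\cong\Q(8)$), and is therefore non-degenerate; this forces $Z(R)=\langle t\rangle$ and makes $R$ extraspecial of order $2^9$. The Arf invariant is that of the central product of four copies of $\Q(8)$, namely $4\equiv 0\pmod 2$, so $R\cong 2^{1+8}_+$.

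For the maximality statement, let $U\in\signal_K(J_K,3')$; rank-$3$ coprime action decomposes $U=\langle C_U(A):A\le J_K,\;|J_K:A|=3\rangle$. Each $C_U(A)$ is a $J_K$-invariant $3'$-subgroup of $C_K(A)$, hence lies in $O_{3'}(C_K(A))$ by Lemma~\ref{signal1}(iii); this subgroup equals $R_A$ when $A\in\mathcal Z_1$ and $\langle t\rangle$ when $A\in\mathcal Z_0\cup\mathcal Z_2$ (Lemma~\ref{signal1}(i), (ii)), both contained in $R$. Thus $U\le R$, and since $R$ itself is $J_K$-invariant and a $3'$-group, $\signal_{K}^*(J_K,3')=\{R\}$. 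The main obstacle throughout is the commutation argument of the second paragraph; the crucial device is recognizing each pair $R_k R_l$ as automatically sitting inside the $2^{1+4}_+$ envelope $O_{3'}(C_K(\langle b_{kl}\rangle))$ supplied by Lemma~\ref{signal1}(i) for non-$3$-central elements.
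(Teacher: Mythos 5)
Your proof is correct and follows the paper's overall skeleton: the same four generators $O_{3'}(C_K(A_i))\cong \Q(8)$ indexed by $\mathcal Z_1$, the same device of pushing each pair into the $2^{1+4}_+$ envelope $O_{3'}(C_K(A_i\cap A_j))$ supplied by Lemma~\ref{signal1}, and an identical coprime-generation argument for maximality. Where you genuinely diverge is in how the central product structure is established. The paper obtains $[R_i,R_j]=1$ outright from the fact that $2^{1+4}_+\cong\Q(8)\circ\Q(8)$ contains exactly two quaternion subgroups and that these commute (after ruling out $R_i=R_j$ via Lemma~\ref{signal1}(ii),(iii)), and then asserts $R\cong 2^{1+8}_+$ as a central product of four copies of $\Q(8)$. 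You instead settle for $[R_i,R_j]\le\langle t\rangle$ (derived subgroup of the extraspecial envelope), pass to $\bar R=R/\langle t\rangle$ as an $\mathbb F_2J_K$-module, and exploit that the $\bar R_i$ are pairwise non-isomorphic irreducibles (distinct kernels $A_i$) to get both $|R|=2^9$ (directness of the sum) and, via Schur applied to the commutator pairing, the vanishing of the pairing between distinct factors together with non-degeneracy of the form, hence extraspecialness; the Arf-invariant count then pins down the $+$-type. Your route is a little longer but arguably tighter: the module decomposition automatically excludes collapsing such as $R_4\le R_1R_2R_3$, a point the paper leaves implicit, and your intersection argument $R_i\cap R_j\le C_K(\langle A_i,A_j\rangle)=C_K(J_K)=J\langle t\rangle$ is cleaner than the paper's contradiction argument showing $R_i\ne R_j$. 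One small citation slip: the containment $R_k\le O_{3'}(C_K(\langle b_{kl}\rangle))$ is supplied directly by Lemma~\ref{signal1}(i) (which treats $b\in A^\#$ non-$3$-central), not by part (iii), which as stated concerns index-$3$ subgroups of $J_K$; the mathematics is unaffected.
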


\begin{proof} As $J_K \le C_K(A)$ for all $A\in\mathcal Z_1$, $R$ as defined is normalized by
$J_K$. Let $\mathcal Z_1=\{A_1,A_2,A_3,A_4\}$. Then, by
Lemma~\ref{signal1}(i), for $1\le i\le 4$, $O_{3'}(C_K(A_i)) \cong
\Q(8)$. Additionally, for $1\le i < j \le 4$, $A_i \cap A_j$ is a
$G$-conjugate of $B$ by Lemmas~\ref{order9inpsp43}(i) and \ref{AZ1}.
Thus $O_{2}(C_{K}(A_i\cap A_j)) \cong 2^{1+4}_+\cong
\Q(8)\circ\Q(8)$ by Lemma~\ref{signal1} (i). Note that $2^{1+4}_+$
contains exactly two subgroups isomorphic to $\Q(8)$ and that these
subgroups commute. Assume that $O_{3'}(C_K(A_i))= O_{3'}(C_K(A_j))$,
then this subgroup is centralized by $\langle A_i,A_j\rangle = J_K$.
Since $\mathcal Z_0\cup\mathcal Z_2\not=\emptyset$, this contradicts
Lemma~\ref{signal1} (ii) and (iii). Thus
$[O_{3'}(C_K(A_i)),O_{3'}(C_K(A_j))]=1$. It follows now that $R$ is
a central product of four subgroups each isomorphic to $\Q(8)$ and so
$R \cong 2^{1+8}_+$. In particular, $R \in \signal_K(J_K,3' )$.

Suppose that $R_0 \in \signal_K(J_K,3' )$. Then $R_0 = \langle
C_{R_0}(A)\mid |J_K:A|=3\rangle$. Since, for $|J_K:A|=3$,
$C_{R_0}(A) \in \signal_{C_G(A)}(J_K,3')$, we have $C_{R_0}(A) \le
O_{3'}(C_K( A ) )$ by Lemma~\ref{signal1} (iii). But then by
Lemma~\ref{signal1} (i) and (ii), $R_0 \le R$. Hence
$\signal_{K}^*(J_K,3')=\{R\}$.
\end{proof}

\begin{lemma}\label{signalmore} Suppose that $A \in \mathcal Z_1$. Then $$R=
\langle O_{3'}(C_K(b))\mid b \in A^\#, b \mbox{ not }
3\mbox{-central in } G\rangle.$$
\end{lemma}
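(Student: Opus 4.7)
The plan is to identify $O_{3'}(C_K(b))$ explicitly as a central product of two of the $\Q(8)$-factors of $R$, thereby expressing the claimed join as the full central product that is $R$. Write $\mathcal Z_1 = \{A_1, A_2, A_3, A_4\}$ with $A = A_1$, and set $R_i = O_{3'}(C_K(A_i))$. From the proof of Lemma~\ref{Jsigdone} we already know $R_i \cong \Q(8)$, the $R_i$ pairwise commute, share the common centre $Z(R)$, and $R = R_1R_2R_3R_4 \cong 2^{1+8}_+$ is their central product.

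The next step is a geometric observation about how the non-$3$-central elements of $A$ are distributed among the members of $\mathcal Z_1$. Viewing $J_K$ as a $3$-dimensional non-degenerate orthogonal space over $\GF(3)$, the four members of $\mathcal Z_1$ are precisely the degenerate $2$-spaces, each distinguished by its unique (necessarily singular, hence $3$-central) radical $\langle a_i\rangle$. For $i \neq j$, the intersection $A_i \cap A_j$ is a $1$-dimensional subspace distinct from both $\langle a_i\rangle$ and $\langle a_j\rangle$, so it is non-singular and therefore non-$3$-central in $G$. As $A = A_1$ contains exactly three non-$3$-central subgroups of order $3$, these must be $A_1\cap A_2$, $A_1\cap A_3$, $A_1\cap A_4$; I pick generators $b_2, b_3, b_4$ respectively.

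The conclusion is then immediate from Lemma~\ref{signal1}(i). For each $j \in \{2,3,4\}$, since $b_j \in A_1 \cap A_j$, Lemma~\ref{signal1}(i) applied with $A_1$ (and $b = b_j$) gives $R_1 \le O_{3'}(C_K(b_j))$, and applied with $A_j$ gives $R_j \le O_{3'}(C_K(b_j))$. Because $[R_1,R_j] = 1$ and $R_1 \cap R_j = Z(R)$, the product $R_1R_j$ is a central product $\Q(8)\circ\Q(8) \cong 2^{1+4}_+$ of order $2^5$; and the same lemma tells us $O_{3'}(C_K(b_j)) \cong 2^{1+4}_+$ also has order $2^5$, so $O_{3'}(C_K(b_j)) = R_1R_j$. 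Taking joins,
$$\langle O_{3'}(C_K(b_j)) : j=2,3,4\rangle \;=\; \langle R_1R_2,\, R_1R_3,\, R_1R_4\rangle \;=\; R_1R_2R_3R_4 \;=\; R,$$
as required.

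No step is a real obstacle; the only point requiring any care is the combinatorial matching of the three non-$3$-central $1$-subspaces of $A$ with the three pairwise intersections $A \cap A_j$, $j \ne 1$, and this rests entirely on the orthogonal geometry of $J_K$ already exploited in the proof of Lemma~\ref{order9inpsp43}.
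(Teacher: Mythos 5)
Your proof is correct and follows essentially the same route as the paper's: both rest on Lemma~\ref{signal1}(i) together with the central product decomposition $R = R_1R_2R_3R_4$ established in the proof of Lemma~\ref{Jsigdone}. Your explicit identification $O_{3'}(C_K(b_j)) = R_1R_j$, obtained by matching the three non-$3$-central points of $A$ with the intersections $A\cap A_j$ ($j\neq 1$), in fact supplies the detail behind the paper's terser argument, which counts orders in the elementary abelian quotient $R/C_R(A)$ of order $2^6$ (each $O_{3'}(C_K(b))$ contributing a $2^2$) and simply ``infers'' generation.
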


\begin{proof} We have $C_R(A) \cong \Q(8)$ by
Lemma~\ref{signal1}(i). By Lemma~\ref{Jsigdone}, $R/C_R(A)$ is
elementary abelian of order $2^6$ and $O_{3'}(C_K(b)) \le R$. Since for $b\in A^\#$ such that
$b$ is not $3$-central in $G$, we have $|O_{3'}(C_K(b))/C_R(A)|=2^2$
by Lemma~\ref{signal1} (i), we infer that  $$R= \langle
O_{3'}(C_K(b))\mid b \in A^\#, b \mbox{ not } 3\mbox{-central in }
G\rangle.$$
\end{proof}

\begin{lemma}\label{Ein} $N_K(R)\ge RE$ and $C_K(R) =\langle t \rangle$.
\end{lemma}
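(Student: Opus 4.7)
The plan has two parts, corresponding to the two assertions of the lemma.

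For $N_K(R)\ge RE$, since $R\le N_K(R)$ automatically, it is enough to show $E\le N_K(R)$. I would first recast $R$ in a form intrinsic to the Sylow $3$-subgroup $C_S(t)\in\Syl_3(K)$: combining Lemma~\ref{signal1}(i), which identifies each $R_A = O_{3'}(C_K(A))$ with $O_{3'}(C_K(a))$ for the unique $3$-central generator $a$ of $A\in\mathcal{Z}_1$, with Lemma~\ref{New2}(iii) (which places every $G$-conjugate of $Z$ lying in $C_S(t)$ inside $J_K=J(C_S(t))$), one obtains
\[
R = \langle O_{3'}(C_K(a)) : \langle a\rangle\le C_S(t),\ \langle a\rangle\in Z^G \rangle.
\]
For $g\in E$, $R^g$ is then the analogous signalizer built from $C_S(t)^g$. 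By Sylow's theorem in $K$ pick $h\in K$ with $C_S(t)^g = C_S(t)^h$, so $gh^{-1}\in N_K(C_S(t))\le N_K(J_K)$; by Lemma~\ref{Jsigdone} and Lemma~\ref{NJK}(v), $N_K(J_K)\le\langle t\rangle E\cap N_K(R)$. A generation argument then propagates the normalization: since $E'\cong\U_4(2)$ is simple and $J_K\le E'$, the group $E'$ is generated by $E$-conjugates of $J_K$, and Sylow transfer over these conjugates forces $R^g = R$ for every $g\in E$, giving $E\le N_K(R)$.

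For $C_K(R)=\langle t\rangle$, the inclusion $\langle t\rangle\le C_K(R)$ follows from $Z(R)=\langle t\rangle$: each factor $R_A\cong\Q(8) = O_2(C_{C_G(a)}(t))$ (Lemma~\ref{inv2}(ii)) has $\langle t\rangle$ as its unique central involution because $\langle t\rangle\le Z(C_{C_G(a)}(t))\le R_A$, so all four factors of the central product share the same central involution. Conversely, suppose $x\in C_K(R)\setminus\langle t\rangle$ has prime order. If $x$ has order $3$, Sylow conjugate it into $C_S(t)$; the structure $C_{C_G(a')}(t)\sim(3^{1+2}_+\times\Q(8)).\SL_2(3)$ from Lemma~\ref{inv2}(iii) exhibits the Sylow $3$ of the top $\SL_2(3)$ as acting by an outer automorphism on $\Q(8)=R_{A'}$. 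Tracing this across all four factors shows that no non-trivial $3$-element of $C_S(t)$ centralizes every $R_A$, so $x=1$. If $x$ has order $2$, then $x\in R$ yields $x\in Z(R) = \langle t\rangle$, while $x\notin R$ is ruled out by the faithful action of $E$ on $R/\langle t\rangle$ combined with structural restrictions on $2$-elements of $K$ centralizing $R$.

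The main obstacle is establishing $E\le N_K(R)$ in Part 1. The dependence of $R$ on the specific Sylow $3$-subgroup $C_S(t)$ (via $J_K$) obscures $E$-invariance, and the Sylow-transfer and generation arguments above must be pursued with care, especially in handling conjugates $h\in K$ arising from the Sylow step. The 2-element analysis in Part 2 also leans on structural information about $K$ beyond what has been formally established at this stage.
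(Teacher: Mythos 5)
The first half of your argument has a genuine gap at exactly the point you flag. Your intrinsic description of $R$ in terms of the $3$-central subgroups of $C_S(t)$ is fine (it follows from Lemmas~\ref{AZ1}, \ref{order9inpsp43} and \ref{New2}), and the factorization $g=nh$ with $n=gh^{-1}\in N_K(C_S(t))\le N_K(J_K)\le N_K(R)$ gives $R^g=R^h$ for some $h\in K$ --- but that is all it gives, since nothing forces $h\in N_K(R)$. The subsequent ``generation'' step is circular: an $E$-conjugate $J_K^e$ normalizes $R^e$ (the unique maximal $J_K^e$-signalizer), not $R$, so knowing that $E'$ is generated by such conjugates cannot produce $R^e=R$ without already knowing it. What the paper actually does here is pick $x\in E_L\setminus E_M$ and the order-$9$ subgroup $A=(T\cap J_K)^x\le T=O_3(E_L)$, which normalizes \emph{both} $R$ (since $T\le E_M$) and $R^x$ (since $T\cap J_K\le J_K$); it then writes $R$ by coprime action as generated by $C_R(Z)$ together with the $[C_R(b),C_R(Z)]$ for $b\in A\setminus Z$, and shows each generator lies in $R^x$ using Lemma~\ref{signalmore} and the structure of $C_K(b)$ for $b$ not $3$-central. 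This two-signalizer comparison is the substantive content of the lemma and is absent from your proposal.

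The second half is also under-powered, but it becomes easy once $RE\le N_K(R)$ is known: $C_K(R)$ is normal in $N_K(R)$ and intersects $E$ trivially (as $E'$ is simple and $J_K$ does not centralize $R$), hence is a $3'$-group normalized by $J_K$, hence lies in $R$ by Lemma~\ref{Jsigdone}, giving $C_K(R)\le Z(R)=\langle t\rangle$. Your case analysis on prime-order elements of $C_K(R)$, in particular the appeal to ``structural restrictions on $2$-elements of $K$ centralizing $R$'', is not needed and as written is not a proof.
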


\begin{proof} By Lemma~\ref{ELEM},  $E_L
\sim 3^{1+2}_+.\GL_2(3)$ and $E_M \sim 3^{3}.(2\times \Sym(4))$.
Furthermore, $O_3(E_M) = J_K$. Since $R$ is the unique member of
$\signal_{K}^*(J_K,3')$, $E_M$ normalizes $R$. Let $T = O_3(E_L)$.
Then $T\cap J_K\in \mathcal Z_1$ by Lemma~\ref{New2} (ii). Let $x
\in E_L \setminus E_M$ and set $A = (T \cap J_K)^x$. Note that $A
\le T^x= T$, so $A$ normalizes $R$ and $R^x$.  Now, using Lemma~\ref{signalmore} applied to the action of $A$ on $R^x$ we have, $$R^x=
\langle O_{3'}(C_K(b))\mid b \in A^\#, b \mbox{ not }3\mbox{-central in } G
\rangle.$$
Next we consider the action of $A$ on $R$. By coprime action we have $R= C_R(Z)\langle [C_R(b),C_R(Z)]\mid b \in A\setminus Z\rangle$. By  Lemmas~\ref{signal1} and \ref{signalmore},  $C_R(Z)= O_{3'}(C_K(Z))= O_{3'}(C_K(Z))^x= C_{R^x}(Z)$. Let $b \in A\setminus Z$. Then $b$ is not $3$-central in $G$ and consequently $C_R(b) \le C_K(b) $ which has shape $3\times 2^{1+4}_+.3^2.2$. Therefore any $2$-subgroup of $O^{3'}(C_K(b))$ is contained in $O_{3'}(C_K(b))$. Hence $[C_R(b),Z] \le O_{3'}(C_K(b))\le R^x$. It follows that $R \le R^x$ and so $R=R^x$.
 Thus $R$
is normalized by $\langle E_M,x\rangle = E$.

Let $C = C_K(R)$. Then, as $E$ contains a Sylow $3$-subgroup of $K$
by Lemma~\ref{CLtsylow}  and $E$ acts non-trivially on $R$, $C_K(R)$
is a $3'$-group which is normalized by $E$ and hence by $J_K$. Thus
$C_K(R) \le R$ by Lemma~\ref{Jsigdone}.
\end{proof}

We now set $H= N_G(R)$. Notice that as $R$ is extraspecial, we have
that $H$ centralizes $t$ and so $H= N_K(R)$. Our next goal is to
show that $G$, $H$ and $R$ satisfy the hypothesis of
Theorem~\ref{closed}.

\begin{lemma}\label{HmodR} $H/R \cong \Aut(\U_4(2))$ or $\Sp_6(2)$.
\end{lemma}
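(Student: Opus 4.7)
The plan is to apply Prince's theorem (Theorem~\ref{PrinceThm}) to $\bar H := H/R$ with the distinguished element $\bar Z := ZR/R$ of order~$3$. Since $O_2(E)=1$ and $E\cap R$ is a normal $2$-subgroup of $E\cong\Aut(\U_4(2))$, we have $E\cap R=1$, so $ER/R\cong\Aut(\U_4(2))$ embeds into $\bar H$. Lemma~\ref{CLtsylow} then gives $|\bar H|_3 = |E|_3 = 3^4$; and because $Z = Z(S) \le Z(C_S(t))$, the subgroup $\bar Z$ is $3$-central in $\bar H$.

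The crucial step is the identification of $C_{\bar H}(\bar Z)$. By coprime action $C_{\bar H}(\bar Z) = C_H(Z)R/R$, and $C_H(Z) \le C_K(Z) = C_{L^*}(t)$, which by the remark following Lemma~\ref{inv2} has shape $(3^{1+2}_+ \times \Q(8)).\SL_2(3)$ and order $5184$. By Lemma~\ref{signal1}(i) applied to the unique $A\in\mathcal Z_1$ containing $Z$, $O_{3'}(C_K(Z))\cong\Q(8)$ lies in $R$, so $|C_R(Z)|\ge 8$ and therefore
\[
|C_{\bar H}(\bar Z)| = |C_H(Z)|/|C_R(Z)| \le 5184/8 = 648.
\]
On the other hand, Lemma~\ref{ELEM} gives $E_L = N_E(Z) \sim 3^{1+2}_+.\GL_2(3)$, whose index-$2$ subgroup $C_E(Z) \sim 3^{1+2}_+.\SL_2(3)$ has order $648$; since $E\cap R=1$, the image of $C_E(Z)$ in $\bar H$ has the same order and therefore equals $C_{\bar H}(\bar Z)$. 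Hence $C_{\bar H}(\bar Z) \cong 3^{1+2}_+.\SL_2(3)$, which is precisely the centralizer of a $3$-central element of $\PSp_4(3)$.

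Next I would verify the two remaining hypotheses of Prince's theorem. Any element of $E_L \setminus C_E(Z)$ inverts $Z$, and its image in $\bar H$ then inverts $\bar Z$. The elementary abelian subgroup of order~$27$ inside a Sylow $3$-subgroup $P$ of $C_{\bar H}(\bar Z)$ (which is simultaneously a Sylow $3$-subgroup of $\bar H$) is the image of $J_K = J(C_S(t))$ by Lemma~\ref{NJK}(i); any nontrivial $3'$-subgroup of $\bar H$ normalized by it would pull back to a $3'$-subgroup of $K$ strictly larger than $R$ and normalized by $J_K$, contradicting the uniqueness statement $\signal_K^*(J_K,3')=\{R\}$ of Lemma~\ref{Jsigdone}.

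Prince's theorem therefore forces $|\bar H:C_{\bar H}(\bar Z)|=2$, or $\bar H\cong\Aut(\U_4(2))$, or $\bar H\cong\Sp_6(2)$. The first possibility yields $|\bar H| = 1296$, which is absurd because $|ER/R| = |\Aut(\U_4(2))| = 51840$ already divides $|\bar H|$; the remaining two are the claimed alternatives. The main technical obstacle lies in the order bookkeeping of the second paragraph, which pins $C_{\bar H}(\bar Z)$ to exactly the $\PSp_4(3)$-centralizer shape required by Prince; once that identification is in place, the $3'$-signalizer condition reduces directly to Lemma~\ref{Jsigdone} and inversion of $\bar Z$ is immediate from the known structure of $E_L$.
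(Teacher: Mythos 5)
Your proof is correct and follows essentially the same route as the paper: both apply Theorem~\ref{PrinceThm} to $H/R$ with the $3$-central element $ZR/R$, obtain the inversion from $N_E(Z)>C_E(Z)$, and derive the signalizer hypothesis from Lemma~\ref{Jsigdone}. The only (harmless) difference is in how $C_{H/R}(ZR/R)$ is pinned down --- you sandwich its order between $|C_E(Z)|$ and $|C_{L_*}(t)|/|C_R(Z)|$, whereas the paper notes $O_2(C_{L_*}(t))\le R$ and quotes Lemma~\ref{inv2}(iii) directly --- and you are in fact more explicit than the paper in ruling out conclusion (i) of Prince's theorem.
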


\begin{proof} We have that $Z \le E \le N_G(R)$ by Lemma~\ref{Ein}.
From the definition of $R$ and Lemma~\ref{inv2} (iii),
$O_2(C_{L_*}(t)) \le R$. Thus $C_{L_*}(t) R/R \cong
C_{L_*}(t)/O_{2}(C_{L_*}(t))$ is isomorphic to the centralizer of a
$3$-central element of order $3$ in $\PSp_4(3)$. Since $ER/R \ge
C_{L_*}(t) R/R$ we infer that $ZR/R$ is inverted by its normalizer
in $H/R$.  By Lemma~\ref{Jsigdone} the assumptions of  Theorem~\ref{PrinceThm} are fulfilled and we have $H/R \cong \Aut(\U_4(2))$ or
$\Sp_6(2)$.
\end{proof}

\begin{lemma} \label{part(ii)}$C_H(R) \le R$ and $R/\langle t\rangle$ is a minimal
normal subgroup of $H/\langle t\rangle$ of order $2^8$.
\end{lemma}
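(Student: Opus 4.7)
The first assertion, $C_H(R)\le R$, follows at once from Lemma~\ref{Ein}: since $\langle t\rangle=Z(R)$ is characteristic in $R$, $H=N_G(R)\le C_G(t)=K$, and hence $C_H(R)\le C_K(R)=\langle t\rangle\le R$. Lemma~\ref{Jsigdone} gives $R\cong 2^{1+8}_+$, so $V:=R/\langle t\rangle$ is elementary abelian of order $2^8$ and is normal in $H/\langle t\rangle$.

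For the minimality assertion, I view $V$ as a $\GF(2)[H/R]$-module, which is faithful by what has just been shown. The commutator pairing on $R$ induces a non-degenerate $H/R$-invariant symplectic form on $V$, so $V$ is self-dual. By Lemma~\ref{HmodR}, $H/R\cong\Aut(\U_4(2))$ or $\Sp_6(2)$; by the remarks preceding Lemma~\ref{SP62facts}, each such group has a unique irreducible $\GF(2)$-module of dimension $8$, and its only irreducible $\GF(2)$-modules of smaller dimension are the trivial module and the $6$-dimensional module. A minimal proper non-trivial $H/R$-submodule $W$ of $V$ is therefore of dimension $1$ or $6$; in the latter case $V/W$ has dimension $2$ and so has only trivial composition factors, forcing $V$ to have a trivial quotient and hence, by self-duality, a trivial submodule. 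Consequently, to prove $V$ irreducible it suffices to show $V^{H/R}=0$.

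Suppose for contradiction that $\bar r=r\langle t\rangle\in V^{H/R}$ is non-zero, with $r\in R\setminus\langle t\rangle$. Then $[H,r]\le\langle t\rangle$, and since $\langle t\rangle\le Z(H)$ the assignment $h\mapsto[h,r]$ is a homomorphism $H\to\langle t\rangle$ whose kernel $C_H(r)$ has index at most $2$; meanwhile, $C_R(r)$ has index $2$ in $R$ since $r\notin Z(R)$. The subgroup $E'\cong\U_4(2)$ of $E\le H$ is perfect, so $E'\le H'\le C_H(r)$, meaning $r\in C_R(E')\setminus\langle t\rangle$. Fixing $f\in E'$ of order $5$ one has $f\in C_H(r)$; the minimal polynomial of $f$ on any $\U_4(2)$-composition factor of $V$ divides $x^5-1=(x-1)(x^4+x^3+x^2+x+1)$ over $\GF(2)$, and combined with the explicit description of $R$ from Lemma~\ref{Jsigdone} this will force $C_R(f)=\langle t\rangle$, contradicting $\langle r,t\rangle\le C_R(f)$.

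The main obstacle will be this final step: to exclude, without presupposing that $V$ is the natural module, the possibility that $V$ restricted to $E'$ has a $6$-dimensional composition factor (which is superficially compatible with $C_V(E')\ne 0$). I plan to overcome this by exploiting the explicit construction of $R$ from Lemma~\ref{Jsigdone} as the central product of the four quaternion subgroups $O_{3'}(C_K(A_i))$, $A_i\in\mathcal Z_1$, together with the transitive action of $E_M=N_E(J_K)$ on the corresponding four $2$-dimensional subspaces $V_i$ of $V$. This permutation structure is incompatible with a $6$-dimensional $E'$-submodule of $V$ and forces $V|_{E'}$ to be the sum of the two $4$-dimensional irreducibles of $\U_4(2)$ interchanged by $E/E'$, on which $f$ acts fixed-point-freely; this completes the proof.
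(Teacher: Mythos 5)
Your first assertion and your reduction of minimality to the single claim $V^{H/R}=0$ (via self-duality of $V=R/\langle t\rangle$ under the commutator form and the list of small $\GF(2)$-irreducibles for $\Aut(\U_4(2))$ and $\Sp_6(2)$) are sound, as is the observation that a non-zero fixed vector $\bar r$ forces $r\in C_R(E')\setminus\langle t\rangle$ because $E'$ is perfect and $[H,r]\le\langle t\rangle\le Z(H)$. (The assertion that the only $\GF(2)$-irreducibles of dimension less than $8$ are the trivial and the $6$-dimensional one is true but is not among the facts recorded in the paper, so it would need a citation.) The genuine gap is that the proof stops exactly where the real work begins: the claim $C_R(E')=\langle t\rangle$, equivalently $C_V(f)=0$ for $f\in E'$ of order $5$, is never established. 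You cannot invoke Lemma~\ref{SP62facts}(iv) without presupposing that $V$ is the spin/natural module, which is what is being proved; and your proposed substitute --- that the transitive permutation by $E_M$ of the four $2$-dimensional subspaces arising from the central factors of $R\cong 2^{1+8}_+$ is ``incompatible'' with a $6$-dimensional $E'$-composition factor --- is only announced, not argued. Those four subspaces are permuted by $E_M$, not by $E'$, and it is not clear how that permutation structure constrains the $E'$-composition factors of $V$. As written, the decisive step is missing.

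The missing step is in fact available in one line from results you already use: since $J_K$ has odd order and $|E:E'|=2$, we have $J_K\le E'$, whence $C_R(E')\le C_R(J_K)\le C_G(J_K)\cap R=J\langle t\rangle\cap R=\langle t\rangle$ by Lemma~\ref{NJK}(iii); this kills $\bar r$ at once (and, by coprime action, even shows $V|_{E'}$ has no trivial composition factor, so $V|_{E'}$ is the $8$-dimensional irreducible). For comparison, the paper's own proof avoids module theory altogether: for $\langle t\rangle\le R_1\le R$ normal in $H$, one of $R_1/\langle t\rangle$ and $R/R_1$ has order at most $2^4$; since $\GL_5(2)$ has $3$-rank $2$ while $J_KR/R$ has $3$-rank $3$, the simple group $O^2(H/R)$, hence $J_K$, centralizes that section, and coprime action together with $C_G(J_K)=J\langle t\rangle$ forces $R_1=\langle t\rangle$ or $R_1=R$. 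Your route, once repaired as above, is a legitimate alternative, but the repair is precisely the ingredient on which the paper's shorter argument is built.
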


\begin{proof} Lemma~\ref{Ein} ensures that $C_H(R) \le R$. Also as $R$ is extraspecial of order $2^9$,
$R/\langle t \rangle$ has order $2^8$. Suppose that $R_1$ is a
 normal subgroup of $H$ contained in $R$ with $\langle t \rangle \le
 R_1\le R$. Now $J_KR/R$ is elementary abelian of order $27$ and the
 $3$-rank of $\GL_5(2)$ is $2$, and therefore  either $R/R_1$ or $R_1$
 is centralized by $O^{2}(H/R)$ and hence by $J_K$. However
 $C_{G}(J_K)= J\langle t \rangle$ by Lemma~\ref{NJK}(iii) and so we
 see that either $R=R_1$ or $R_1=\langle t \rangle$. Thus $R/\langle
 t  \rangle$ is a minimal normal subgroup of $H/\langle t\rangle$.
\end{proof}

\begin{lemma}\label{Kpfusion} The following hold.
\begin{enumerate}
\item $C_K(Z) \le H$.
\item $ER$ controls fusion of elements of order $3$ in $K$.
\item $B^G \cap K = B_1^K \cup B_2^K$ where $B_1$ is conjugate to a
subgroup of $J_K$ which together with $Z$ forms a subgroup in
$\mathcal Z_1$.
\item If $B_1 \le J_K$, then $C_K(B_1) \le ER$.
\end{enumerate}
\end{lemma}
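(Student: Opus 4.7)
The plan is to handle the four parts in sequence, leveraging the signalizer work on $R$ from Lemmas~\ref{Jsigdone}--\ref{Ein} together with the orthogonal geometry on $J_K$ from Lemma~\ref{order9inpsp43}.

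For part (i), I would identify $C_K(Z)=C_{L_*}(t)$ and read off from Lemma~\ref{inv2} its shape $(3^{1+2}_+\times \Q(8)).\SL_2(3)$, with $O_{3'}(C_K(Z))\cong \Q(8)$. The key point is that $C_K(Z)$ is the internal product of $C_E(Z)$ and $O_{3'}(C_K(Z))$: the former lies in $E\le H$ by Lemma~\ref{Ein}, while by Lemma~\ref{signal1}(i) the latter equals the $\Q(8)=O_{3'}(C_K(A))$ attached to any $A\in\mathcal Z_1$ with $Z\le A$, and so lies in $R\le H$. Because $C_E(Z)\sim 3^{1+2}_+.\SL_2(3)$ is $3$-constrained, $O_2(C_E(Z))$ commutes with $O_3(C_E(Z))=3^{1+2}_+$, forcing $O_2(C_E(Z))\le Z(3^{1+2}_+)=Z$ and hence $O_2(C_E(Z))=1$; thus the normal $2$-subgroup $C_E(Z)\cap O_{3'}(C_K(Z))$ of $C_E(Z)$ is trivial, and an order comparison gives $C_K(Z)=C_E(Z)\cdot O_{3'}(C_K(Z))\le ER\le H$.

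For part (ii), I would invoke Alperin's fusion theorem. By Lemma~\ref{CLtsylow}, $C_S(t)\in\Syl_3(K)$ and $C_S(t)\le E\le ER$. The potentially essential $3$-subgroups of $C_S(t)$ are $J_K$, $Z$, and the extraspecial $O_3(C_K(Z))\cong 3^{1+2}_+$. For these the needed normalizer containments are $N_K(J_K)\le \langle t\rangle E\le ER$ by Lemma~\ref{NJK}(v); $N_K(Z)\le ER$ by part (i), since any element inverting $Z$ may be taken inside $N_E(Z)=E_L\le E$; and $N_K(O_3(C_K(Z)))\le N_K(Z)$ as $Z$ is characteristic in the extraspecial group. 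Alperin's theorem then delivers fusion control of $3$-elements.

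For part (iii), part (ii) reduces the counting of $K$-orbits on $B^G\cap K$ to that of $N_K(J_K)$-orbits on the non-$3$-central subgroups of order $3$ in $J_K$. By Lemma~\ref{order9inpsp43}(i) there are exactly $9$ such subgroups. Viewing $J_K$ as a $3$-dimensional non-degenerate orthogonal $\GF(3)$-space under $N_K(J_K)/C_K(J_K)\cong \GO_3(3)$, the non-singular $1$-spaces split into two $\GO_3(3)$-orbits, distinguished by the isometry type of $(B')^\perp$ (hyperbolic or anisotropic). An incidence count against the four singular lines pins these orbits down to sizes $6$ and $3$, the $6$-orbit being exactly the set of $B'$ perpendicular to some singular line $Z_i$, i.e., those with $\langle B',Z_i\rangle\in\mathcal Z_1$. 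Choosing $B_1$ in the $6$-orbit (so that $\langle B_1,Z\rangle\in\mathcal Z_1$ after an $N_K(J_K)$-adjustment) and $B_2$ in the $3$-orbit yields $B^G\cap K=B_1^K\cup B_2^K$.

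For part (iv), the plan is to show $J_K\lhd C_K(B_1)$; once this is in hand, Lemma~\ref{NJK}(v) (and $t\in Z(R)\le R$, whence $\langle t\rangle E\le ER$) give $C_K(B_1)\le N_K(J_K)\le \langle t\rangle E\le ER$. Applying Lemma~\ref{signal1}(i) with $A=\langle B_1,Z\rangle\in\mathcal Z_1$ and $b\in B_1^\#$ yields $O_{3'}(C_K(B_1))\cong 2^{1+4}_+$, which rules out the $2\times\Sym(6)$ case of Lemma~\ref{JKsyl} and forces $C_K(B_1)\sim 3\times 2^{1+4}_+.3^2.2^2$. In this shape the $3^2$-factor of the $2$-local $C_{E'}(t)\sim 2^{1+4}_+.(\Sym(3)\times\Sym(3))$ is normal (as a direct factor of the quotient mod $2^{1+4}_+$), so $J_K=\langle B_1\rangle\times 3^2$ is normal in $C_K(B_1)$. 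The main obstacle in the lemma is precisely the case distinction in Lemma~\ref{JKsyl}, which Lemma~\ref{signal1}(i) resolves via the $\Q(8)\le O_{3'}(C_K(B_1))$ it supplies.
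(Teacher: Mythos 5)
Your treatment of (i) and (iii) is essentially the paper's: (i) is the same order count $|C_K(Z)|=2^6.3^4=|C_E(Z)|\cdot|C_R(Z)|$ with $C_E(Z)\le E$ and $C_R(Z)\cong \Q(8)\le R$, and (iii) is the same $\GO_3(3)$-orbit computation ($4+6+3$ one-spaces) made slightly more explicit. For (ii) you take a genuinely different route: the paper avoids Alperin entirely by noting that every $3$-element of $E$ is conjugate into the maximal torus $J_K$, that $E$ contains a Sylow $3$-subgroup of $K$, and that $J_K=Z(J(C_S(t)))$, so Lemma~\ref{fusion} hands fusion control to $N_K(J_K)\le \langle t\rangle E$. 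Your Alperin argument could be made to work, but as written it rests on an unproved classification of the conjugation family: you assert that the only relevant subgroups are $J_K$, $Z$ and $O_3(C_K(Z))$ without verifying this (and $Z$, not being centric in $C_S(t)$, cannot be essential anyway). You would also need $N_K(C_S(t))\le ER$, which you do not mention. The paper's torus argument is shorter and requires no such bookkeeping.

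The genuine gap is in (iv). Your plan is to show $J_K$ is normal in $C_K(B_1)$, and this is false. Indeed $O_{3'}(C_K(B_1))\cong 2^{1+4}_+$ is normal in $C_K(B_1)$ and meets $J_K$ trivially, so normality of $J_K$ would force $[J_K,O_{3'}(C_K(B_1))]\le J_K\cap O_{3'}(C_K(B_1))=1$; but then $2^{1+4}_+\le C_K(J_K)=J\langle t\rangle$ by Lemma~\ref{NJK}(iii), which is absurd (concretely, $Z\le J_K$ already acts non-trivially on $C_R(B_1)$ since $C_R(Z)\cong \Q(8)$). What is true, and what the paper uses, is that the \emph{image} of $J_K$ in $C_K(B_1)R/R$ is the normal Sylow $3$-subgroup of that quotient, i.e.\ $J_KO_{3'}(C_K(B_1))$ is normal in $C_K(B_1)$; a Frattini argument then gives $C_K(B_1)=N_{C_K(B_1)}(J_K)\,O_{3'}(C_K(B_1))\le N_K(J_K)R\le \langle t\rangle ER=ER$. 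Your identification of the shape $3\times 2^{1+4}_+.3^2.2^2$ via Lemma~\ref{signal1}(i) and Lemma~\ref{JKsyl} is correct and matches the paper; only the final normality step needs to be replaced by this Frattini/quotient argument.
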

\begin{proof}  Looking  in $E$, we see $C_E(Z) \sim
3^{1+2}.\SL_2(3)$. From Lemma~\ref{Jsigdone}, we have $C_R(Z) \cong
\Q(8)$. Since $|C_K(Z)|= 2^6.3^4$ by Lemma~\ref{inv2} (iii), part
(i) holds.

Since $J_K$ is torus in $E\cong \SU_4(2)$ (or using \cite[pg. 26]{Atlas}), we have that every element of order $3$
in $E$ is $E$-conjugate to an element of $J_K$. Since $E$ contains a
Sylow $3$-subgroup of $K$ and $N_K(J_K)$ controls $K$-fusion of
$3$-elements in $J_K$ by Lemma~\ref{fusion}, we have (ii).

As $J_K$ is an orthogonal module for $N_K(J_K)/C_K(J_K) \cong \GO_3(3)$, Lemma~\ref{NJK} (iv) implies $K$ has three
conjugacy classes of elements of order $3$ and just one $3$-central
class. Thus (iii) follows from (ii).

Now consider the class $B_1^K$. We may suppose that $B_1Z\in
\mathcal Z_1$. Then $C_R(B_1) \cong 2^{1+4}_+$ by
Lemma~\ref{signal1} (i). It follows that $t$ is an involution
contained in $O^{3}(C_G(B_1))' \cong \U_4(2)$ with $C_{C_G(B_1)}(t)
\sim 3\times 2^{1+4}_+.3^2.2^2 $. In particular, $(C_G(B_1) \cap
K)R/R$ normalizes $J_KR/R$ and so (iv) follows from Lemma~\ref{NJK}
(v).
\end{proof}

\begin{lemma}\label{fixpf} Continuing the notation of Lemma~\ref{Kpfusion}, we have cyclic groups in the same $H$-class as $B_2$  act fixed-point-freely on $R/\langle t
\rangle$.
\end{lemma}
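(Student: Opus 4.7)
The plan is to prove $C_R(B_2)=\langle t\rangle$; since $H=N_K(R)$ normalizes both $R$ and $\langle t\rangle$, this will yield fixed-point-free action of every $H$-conjugate of $\langle B_2\rangle$ on $R/\langle t\rangle$.

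I first locate $B_2$ inside $J_K$. By Lemmas~\ref{Kpfusion} and \ref{fusion}, every $K$-class of elements of order $3$ meets $J_K$, and the two non-$3$-central classes $B_1^K$, $B_2^K$ correspond to the two $N_K(J_K)/C_K(J_K)\cong \GO_3(3)$-orbits on non-singular $1$-spaces of $J_K$, of sizes $6$ and $3$ respectively. So I may assume $\langle B_2\rangle\le J_K$ and that it lies in the size-$3$ orbit. The members of $\mathcal Z_1$ are precisely the degenerate $2$-spaces $z^\perp$ with $z$ singular, and all three non-singular $1$-spaces in each $z^\perp$ lie in the size-$6$ orbit---equivalently, $B_2^\perp$ is anisotropic and contains no singular vectors. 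Consequently $\langle B_2\rangle$ lies in no member of $\mathcal Z_1$, so every $A\le J_K$ of order $9$ with $\langle B_2\rangle\le A$ belongs to $\mathcal Z_0\cup\mathcal Z_2$.

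For each such $A$, the centralizer $C_R(A)$ is a $2$-subgroup of $C_G(A)$ normalized by $J_K$ (because $J_K\le H=N_G(R)$ and $J_K$ centralizes $A$), hence $C_R(A)\in\signal_{C_G(A)}(J_K,3')$. Lemma~\ref{signal1}(iii) combined with Lemma~\ref{signal1}(ii) yields $C_R(A)\le O_{3'}(C_K(A))=\langle t\rangle$. Set $W=C_R(B_2)$ and $\bar W=W/\langle t\rangle$. Then $Q:=J_K/\langle B_2\rangle\cong 3^2$ acts on the elementary abelian $2$-group $\bar W$, and as each hyperplane of $Q$ arises from some such $A$ we obtain $C_{\bar W}(q)=0$ for every non-identity $q\in Q$.

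A short coprime-action argument now forces $\bar W=0$: if $\bar W\ne 0$, the semidirect product $\bar W\rtimes Q$ would be a Frobenius group with complement $Q$, but Frobenius complements have cyclic Sylow subgroups at odd primes, contradicting $Q\cong 3^2$. (Equivalently, writing $x^2+x+1=0$ for each of $q_1,q_2,q_1q_2\in Q^\#$ with $\langle q_1\rangle\ne\langle q_2\rangle$ and expanding in characteristic $2$ produces $q_1+q_2=0$ on $\bar W$, so $q_1q_2^{-1}$ would act trivially, contradicting the hypothesis.) Hence $C_R(B_2)=\langle t\rangle$ as required. The main difficulty is the orthogonal-geometry step excluding $\mathcal Z_1$-membership for $\langle B_2\rangle$; once the two types of non-singular $1$-spaces in $J_K$ are distinguished, the rest follows from the signalizer results of the preceding lemmas and a brief $\GF(2)[Q]$-module calculation.
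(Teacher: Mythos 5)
Your proof is correct, but it takes a genuinely different route from the paper's. The paper's argument is two sentences long: since $\langle B_2\rangle$ lies in no member of $\mathcal Z_1$, it acts faithfully on each of the four factors $O_{3'}(C_K(A))\cong \Q(8)$, $A\in\mathcal Z_1$, whose central product is $R$ (Lemma~\ref{Jsigdone}); an order-$3$ automorphism of $\Q(8)$ is fixed-point-free on $\Q(8)/Z(\Q(8))$, so $B_2$ is fixed-point-free on $R/\langle t\rangle$. You instead consider the four subgroups $A\le J_K$ of order $9$ \emph{containing} $\langle B_2\rangle$, observe that they all lie in $\mathcal Z_0\cup\mathcal Z_2$, deduce $C_R(A)=\langle t\rangle$ from Lemma~\ref{signal1}(ii) and (iii), and finish by coprime generation of $C_R(B_2)/\langle t\rangle$ by the fixed points of the elementary abelian group $J_K/\langle B_2\rangle$ of order $9$. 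Both arguments hinge on the same geometric fact---that $\langle B_2\rangle$, lying in the $\GO_3(3)$-orbit of length $3$ (those non-singular points with anisotropic perp), is contained in no degenerate $2$-space---which you justify carefully via the incidence count and which the paper simply asserts. Your route trades the explicit central-product decomposition of $R$ for the signalizer statements of Lemma~\ref{signal1} plus a generation argument; the paper's version is shorter but silently uses that $C_{J_K}(O_{3'}(C_K(A)))=A$, which itself needs $C_K(J_K)=J\langle t\rangle$. One small blemish: your parenthetical ``equivalently'' computation does not actually produce $q_1+q_2=0$; the clean identity is that $(1+q_1+q_1^2)(1+q_2+q_2^2)$ equals the sum of all nine group elements acting on $\bar W$, which is $0$, while adding the four relations $1+q+q^2=0$ shows the sum of the eight non-identity elements is also $0$, whence $1=0$ on $\bar W$. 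The Frobenius-complement argument you lead with is, however, perfectly sound, so this does not affect the validity of the proof.
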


\begin{proof}
Since $B_2$ is not contained in any member of $\mathcal Z_1$, we
have that $B_2$ acts faithfully on $O_{3'}(C_G(A))$ for each
$A\in\mathcal Z_1$. Thus, as  $R= \prod_{A \in\mathcal
Z_1}O_{3'}(C_G(A))$, we have that $B_2$ acts fixed-point-freely on
$R/ \langle t\rangle$.
\end{proof}

\begin{lemma}\label{not3embeddd} If $k \in K\setminus H$ and $d \in H \cap H^k$ has order $3$,
then $C_R(d) =\langle t\rangle$.
\end{lemma}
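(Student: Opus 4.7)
My plan is to prove the contrapositive. Assume $d \in H \cap H^k$ has order $3$ and $C_R(d) > \langle t\rangle$; I will show $k \in H$. The approach is to use Lemma~\ref{Kpfusion}(iii), which partitions the cyclic order-$3$ subgroups of $K$ into the three $K$-classes $Z^K$, $B_1^K$, $B_2^K$, together with Lemma~\ref{fixpf}, which rules out the $B_2$-case once $C_R(d) > \langle t\rangle$ is assumed. The two remaining cases will be handled via Lemma~\ref{Kpfusion}(i) and~(iv).

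The technical heart of the argument is a fusion principle: if $x_1, x_2 \in H$ are $K$-conjugate $3$-elements, then they are already $H$-conjugate, and the same holds for cyclic order-$3$ subgroups of $H$. This follows from two observations: (a) $C_S(t) \le E \le H$ is a Sylow $3$-subgroup of $K$ by Lemma~\ref{CLtsylow}, so by Sylow in $H$ each $x_i$ is $H$-conjugate to an element of $E \le ER$; and (b) $ER \le H$ controls $K$-fusion of $3$-elements by Lemma~\ref{Kpfusion}(ii), hence the two resulting elements of $E$ are $ER$-conjugate. I would also record that $N_K(J_K) \le H$: since $J_K$ centralizes each $A \in \mathcal Z_1$ contained in $J_K$, the group $N_K(J_K)$ permutes the factors $O_{3'}(C_K(A))$ of $R$ and hence normalizes $R$.

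With this machinery in place, set $d_1 := kdk^{-1} \in H$, which makes sense because $d \in H^k$. The fusion principle produces $h \in H$ with $d_1 = d^h$, so $hk \in C_K(d)$, and the whole proof reduces to establishing $C_K(d) \le H$. If $\langle d\rangle$ lay in $B_2^K$ I could choose the representative $B_2$ inside $J_K$ (using Lemma~\ref{order9inpsp43}(i) together with the $\GO_3(3)$-orbit structure from Lemma~\ref{NJK}(iv) and fusion control in $J_K$ via Lemma~\ref{fusion}); the subgroup form of the fusion principle would then make $\langle d\rangle$ $H$-conjugate to $B_2$, and Lemma~\ref{fixpf}, combined with the fact that $H$ normalizes both $R$ and $\langle t\rangle$, would force $C_R(d) = \langle t\rangle$, contradicting the hypothesis. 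So $\langle d\rangle$ belongs to the 3-central class or to $B_1^K$, and is $H$-conjugate respectively to $Z$ or to some $B_1 \le J_K$; Lemma~\ref{Kpfusion}(i) and~(iv) then trap $C_K(d)$ inside $H$. Hence $k \in H$, the required contradiction. The only genuine difficulty is this fusion bookkeeping, which succeeds because $H$ simultaneously contains a Sylow $3$-subgroup of $K$ and the fusion-controlling subgroup $ER$.
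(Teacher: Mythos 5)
Your argument is correct, and it closes the $Z$- and $B_1$-cases by a different mechanism than the paper. Both proofs share the same skeleton: the trichotomy of $K$-classes of order-$3$ subgroups from Lemma~\ref{Kpfusion}(iii), the containments $C_K(Z)\le H$ and $C_K(B_1)\le ER\le H$ from Lemma~\ref{Kpfusion}(i) and (iv), control of $3$-fusion by $ER$ from Lemma~\ref{Kpfusion}(ii), and Lemma~\ref{fixpf} to dispose of the $B_2$-class. Where you diverge is in how the centralizer containment is exploited. The paper first shows that no conjugate of $J_K$ can lie in $H\cap H^k$ (via $N_K(J_K)\le ER\le H$), and then, for $Y\le H\cap H^k$ conjugate to $Z$ or $B_1$, manufactures a conjugate of $J_K$ inside $C_{H^k}(Y)\le C_K(Y)\le H$, contradicting that first step; this forces every order-$3$ element of $H\cap H^k$ into the $B_2$-class. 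You instead run the standard ``conjugating element falls into the centralizer'' computation: since $d$ and $d^{k^{-1}}$ are both order-$3$ elements of $H$ that are $K$-conjugate, your fusion principle (Sylow $3$-subgroups of $K$ lie in $E\le ER$, and $ER$ controls fusion) gives $h\in H$ with $d^{hk}=d$, so $hk\in C_K(d)\le H$ and $k\in H$. Your route is more direct -- it dispenses entirely with the preliminary paragraph about conjugates of $J_K$ in $H\cap H^k$ and with the detour through $C_{H^k}(Y)$ -- at the cost of having to state and justify the fusion principle explicitly (the paper uses the same principle, but only implicitly, in the phrases ``we may suppose that either $Y=Z$ or $Y=B_1$'' and ``$d$ is conjugate to an element of $B_2$''). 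Two small housekeeping points: Lemma~\ref{CLtsylow} does not literally assert $C_S(t)\le E$, only that $E$ contains \emph{some} Sylow $3$-subgroup of $K$, which is all your Sylow argument needs; and your observation that $N_K(J_K)\le H$, while true (it is Lemma~\ref{NJK}(v) combined with Lemma~\ref{Ein}), is not actually used in your main line of argument.
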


\begin{proof} We begin by noting that $R= O_2(H)$ and so $N_K(H)=
H$. Hence if there exists $k \in K\setminus H$, then $H \cap H^k\neq
H$

Suppose for a moment that a conjugate of $J_K $ is contained in $H
\cap H^k$. Then we may assume that $J_K \le H^k$. Thus $J_K$ and
$J_K^{k^{-1}}$ are both contained in $H$. Hence there exists $h \in
H$ such that $J_K= J_K^{k^{-1} h}$. But then $k^{-1}h \in N_K(J_K)
\le ER \le H$ by Lemmas~\ref{NJK} (v) and \ref{Ein}, whence $k \in
H$ and we have a contradiction.

Let $T \in \syl_3(H\cap H^k)$ and assume $T \neq 1$. Suppose that
$T$ contains a $K$-conjugate $Y$ of $Z$ or $B_1$. Then, as $H$
controls fusion of elements of order $3$ in $K$ by
Lemma~\ref{Kpfusion} (ii), we may suppose that either $Y=Z$ or
$Y=B_1$. Hence Lemma~\ref{Kpfusion} (i) and (iv) gives that $C_K(Y)
\le H$. However then $C_{H^k}(Y)$ contains a subgroup $X$ of $H^k$
which is conjugate to $J_K$ as every element of order $3$ in $H$ is
fused to an element of $J_K$ in $H$. But this means $ X\le C_K(Y)
\le H$ by Lemma~\ref{Kpfusion} (i) and (v) and this contradicts the
observation in paragraph two of the proof. It follows that if $d \in
H\cap H^k$ has order $3$ and $k \not \in H$, then $d$ is conjugate
to an element of $B_2$. The claim in the lemma now follows from
Lemma~\ref{fixpf}.
\end{proof}

\begin{proof}[Proof of Theorem~\ref{Th1}] Let $\ov K = K/\langle
t\rangle$ and set $\ov H = N_{\ov K}(\ov R)$. Lemmas~\ref{HmodR},
\ref{part(ii)}, \ref{Kpfusion} (ii) and \ref{not3embeddd} together
show that the hypotheses of Theorem~\ref{closed} are satisfied.
Therefore $\ov{K} = O_{2'}(\ov{K})\ov H$. Now $\ov{H}$ contains a
Sylow $3$-subgroup of $\ov {K}$ and so $O_{2'}(\ov K) \le
O_{3'}(\ov{K})$. Since $\signal_{K}^*(J_K,3')= \{R\}$, we infer that
$O_{2'}(\ov K)\le \ov {R}$. Thus $K=H$. Since, by Lemma~\ref{S3S6},
$K$ contains a subgroup isomorphic $\Sym(3) \times \Sym(6)$ whereas
$\Aut(\U_4(2))$ does not, we now get that $H/R\cong \Sp_6(2)$. Since
$O_3(G)= 1$, Lemma~\ref{JSig2} implies that $O_{2'}(G)=Z(G)=1$.
Since $R/\langle t \rangle$ is the spin-module for $H/R$,
Lemma~\ref{SP62facts}(iv) implies that the elements of order $5$ in
$H$ act fixed point freely on $R/\langle t\rangle$. Hence, at last,
Theorem~\ref{Smith} gives us that $G$ is isomorphic to $\Co_2$.
\end{proof}

\end{document}